\newtheoremstyle{dotless}{}{}{\itshape}{}{\bfseries}{}{ }{}
\newcommand{\image}{\operatorname{Im}}
\newtheorem{Theorem}{Theorem}[section]
\newtheorem{proposition}[Theorem]{Proposition} 
\newtheorem{corollary}[Theorem]{Corollary} 
\newtheorem{lemma}[Theorem]{Lemma}
\newtheorem*{theorem*}{Theorem}
\newtheorem*{corollary*}{Corollary}
\newtheorem*{mainthm}{Main Theorem}
\newtheorem*{propA}{Property A}
\newtheorem*{propA'}{Property A$'$}
\newtheorem*{propB'}{Property B$'$}
\newtheorem*{propB}{Property B}
\newtheorem*{propC}{Property C}
\newtheorem*{propD}{Property D}
\theoremstyle{definition}
\newtheorem{definition}[Theorem]{Definition} 
\newtheorem{remark}[Theorem]{Remark}
\newtheorem{example}[Theorem]{Example}
\newtheorem*{question}{Question}
\DeclareMathOperator{\piprod}{\raisebox{-0.1em}{\huge{$\pi$}}\kern -0.2em}
\newcommand{\ca}{\mathcal {A}}
\newcommand{\cac}{\mathcal {C}} 
\newcommand{\calg}{\mathcal{C}_{\operatorname{alg}}} 
\newcommand{\ctop}{\mathcal{C}_{\operatorname{top}}}
\newcommand{\cgro}{\mathcal{C}_{\operatorname{gro}}}
\newcommand{\de}{\operatorname{Del}}
\newcommand{\mo}{\operatorname{Mod}}
\newcommand{\ce}{\mathcal {E}}
\newcommand{\ch}{\mathcal {H}} 
\newcommand{\ci}{\mathcal {I}} 
\newcommand{\zci}{\ci^{(0)}}
\newcommand{\cm}{\mathcal {M}} 
\newcommand{\cn}{\mathcal {N}}
\newcommand{\cp}{\mathcal {P}} 
\newcommand{\cq}{\mathcal {Q}}
\newcommand{\cs}{\mathcal {S}}
\newcommand{\cv}{\mathcal {V}} 
\newcommand{\cz}{\mathcal {Z}}
\newcommand{\ant}{\operatorname{Ant}}
\newcommand{\eps}{\epsilon}
\newcommand{\cc}{\mathbb{C}}
\newcommand{\pp}{{\mathbb P}} 
\newcommand{\g}{\textbf{G}} 
\newcommand{\QQ}{{\mathbb{Q}}} 
\newcommand{\rr}{{\mathbb R}}
\newcommand{\zz}{{\mathbb Z}}
\newcommand{\minus}{{-1}}
\newcommand{\ti}{\tilde}
\newcommand{\tB}{{\tilde{B}}}
\newcommand{\ve}{{V/E}} 
\newcommand{\h}{\hat}
\newcommand{\wt}{\widetilde}
\newcommand{\ol}{\overline}
\newcommand{\zero}{\mathbf 0}
\newcommand{\act}{\curvearrowright}
\newcommand{\parab}{\operatorname{Par}}
\newcommand{\iparab}{\operatorname{IPar}}
\newcommand{\Ga}{\Gamma}
\newcommand{\su}{\succcurlyeq}
\newcommand{\pr}{\preccurlyeq}
\def\clap#1{\hbox to 0pt{\hss#1\hss}}
\newcommand{\comment}[1]{} 
\newcommand{\ga}{\alpha} 
\newcommand{\gb}{\beta}
\newcommand{\gr}{\rho} 
\newcommand{\gs}{\sigma}
\newcommand{\eq}{\stackrel{*}{=}}
\newcommand{\gD}{\Delta}
\newcommand{\D}[0]{\mathbf{D}}
\newcommand{\G}[0]{\mathbf{G}}
\newcommand{\db}{{\partial}}
\newcommand{\vertex}{\operatorname{Vert}} 
\newcommand{\edge}{\operatorname{Edge}}
\newcommand{\card}{\operatorname{Card}} 
\newcommand{\fan}{\operatorname{Fan}} 
\newcommand{\sfan}{\db\operatorname{Fan}}
\newcommand{\Ker}{\operatorname{Ker}} 
\newcommand{\Ima}{\operatorname{Im}}
\newcommand{\support}{\operatorname{supp}} 
\newcommand{\s}{\operatorname{Sal}} 
\newcommand{\Ts}{\operatorname{TSal}}
\newcommand{\type}{\operatorname{type}}
\newcommand{\prj}{\operatorname{Proj}}
\newenvironment{enumerate1}{ 
\begin{enumerate}[\upshape (1)]}
	{ 
\end{enumerate}
} 
\newenvironment{enumeratei}{ 
\begin{enumerate}[\upshape (i)]}
	{ 
\end{enumerate}
} 
\newenvironment{enumeratei'}{ 
\begin{enumerate}[\upshape (i)$'$]}
	{ 
\end{enumerate}
} 
\newenvironment{enumerate1'}{ 
\begin{enumerate}[\upshape (1)$'$]}
	{ 
\end{enumerate}
}
\newenvironment{enumerateI}{ 
\begin{enumerate}[\upshape (I)]}{ 
\end{enumerate}
} 
\newenvironment{enumeratea}{ 
\begin{enumerate}[\upshape (a)]}{ 
\end{enumerate}
}
\newenvironment{enumeratea'}{ 
\begin{enumerate}[\upshape (a)$'$]}{ 
\end{enumerate}
}
\numberwithin{equation}{section} 
\begin{document}
\title{Bordifications of hyperplane arrangements and their curve complexes}

\author{Michael W. Davis 
\and {Jingyin Huang}  
}
\date{\today} \maketitle

\begin{abstract} 
The complement of an arrangement of hyperplanes in $\cc^n$ has a natural bordification to a manifold with corners formed by removing (or ``blowing up'') tubular neighborhoods of the hyperplanes and certain of their intersections.  When the arrangement is the complexification of a real simplicial arrangement, the bordification closely resembles Harvey's bordification of moduli space.   We prove that the faces of the universal cover of the bordification are parameterized by the simplices of a simplicial complex $\cac$, the vertices of which are the irreducible ``parabolic subgroups'' of the fundamental group of the arrangement complement.  So, the complex $\cac$ plays a similar role for an arrangement complement as the curve complex does for moduli space. Also, in analogy with  curve complexes and with spherical buildings, we prove that $\cac$ has the homotopy type of a wedge of spheres. 
\smallskip

\noindent
\textbf{AMS classification numbers}. Primary: 20F36,  20F65, 52N65, 57S30, 
Secondary: 20F55, 32S22, 57N65
\smallskip

\noindent
\textbf{Keywords}: hyperplane arrangement, curve complex, Artin group, bordification
\end{abstract}

\section*{Introduction}
\paragraph{Background.}
A ``compactification'' of a possibly noncompact manifold $X^\circ$ is a compact manifold with boundary $X$ so that that $X^\circ$ is identified with the interior of $X$. For example, $X^\circ$ could be a finite volume hyperbolic manifold  and $X$ the manifold with boundary obtained by adding a flat manifold as a boundary component for each cusp. Let $Y^\circ$  denote the universal cover of $X^\circ$ and $Y$ the universal cover of $X$, then $Y$ is called a ``bordification'' of $Y^\circ$. Put $G=\pi_1(X^\circ)=\pi_1(X)$. There are some classical examples of locally symmetric manifolds and various moduli spaces each of which can be compactified to manifold with corners (in fact, the commpactification has the extra property of being  a ``manifold with faces'' as in Subsection~\ref{ss:tubular}). Here are two basic examples.
\begin{enumerateI}
	\item
	(\emph{The Borel-Serre compactification of an arithmetic manifold}).  Suppose $G=GL(n,\zz)$, $Y^\circ=O(n)\backslash GL(n,\rr)$ and $X^\circ=Y^\circ /G$.  (For $X$ to be a manifold rather than a an orbifold, we actually should replace $G$ by a torsion-free subgroup of finite index.)  Siegel \cite{siegel} described a $G$-equivariant bordification of $Y^\circ$ to a manifold with faces $Y$.  This was then generalized to other arithmetic groups in the foundational paper of Borel-Serre \cite{bs}.
	\item
	(\emph{The Harvey compactification of moduli space}). Suppose $S_{g,n}$ is a surface, possibly with boundary, of genus $g$ and with $n$ punctures, $G=\mo (g,n)$ the corresponding mapping class group, $Y^\circ$ the corresponding Teichm\"{u}ller space and $X^\circ=Y^\circ/G$ the corresponding moduli space.  In  \cite{harvey} Harvey defined a $G$-equivariant bordification $Y$ of $Y^\circ$.
\end{enumerateI}

In both cases $Y$ is a manifold with faces. Let $\cac$ be the nerve of the covering of $\db Y$ by the codimension-one boundary faces of $Y$: the $(k-1)$-simplices of $\cac$ correspond to the boundary faces of codimension $k$ in $Y$. 

In both cases, $\cac$ admits an alternative group theoretic description.  In case (I) $\cac$ can be identified with the Tits building for $GL(n,\QQ)$ (see  \cite[Theorem 8.4.1]{bs}), where Tits building can be defined from the poset of parabolic subgroups of $G$. In case (II), $\cac$ can be identified with the curve complex for the surface $S_{g,n}$ \cite{harvey}. The curve complex be described in terms of subgroups of $G$: each vertex of $\cac$ corresponds to an infinite cyclic subgroup of $G$ generated by a Dehn twist. A $(k-1)$-simplex in $\cac$ corresponds to a family of $k$ commuting Dehn twists $\zz$-subgroups. 

In both cases (I) and (II),  it turns out that the simplicial complex $\cac$ has the homotopy type of a wedge of spheres \cite{solomon,harvey}. Moreover, $Y$ as well as each of its boundary faces is contractible. 

\paragraph{Algebraic and topological versions of curve complexes.}
Let $\ca$ be a finite arrangement of linear hyperplanes in $\mathbb C^n$ and let $\cm(\ca)$ denote the complement of the union of these hyperplanes. We assume for simplicity that $\ca$ is irreducible, i.e. $\ca$ is not the direct sum of two arrangements. Let $X^\circ$ be the intersection of $\cm(\ca)$ with the unit sphere of $\mathbb C^n$. Then $X^\circ$ is homotopic to $\cm(\ca)$. The goal in this paper is to develop a similar picture as in the previous subsection with such $X^\circ$ and $G=\pi_1 (X^\circ)$. This includes  the case when $G$ is a pure  Artin group of spherical type and $\ca$ is the associated arrangement of reflection hyperplanes. More concretely, we want to do  the following. 
\begin{enumeratea}
	\item Define a natural bordification of $X^\circ$ to a manifold with corners $X$, the universal cover of which universal is denoted  $Y(\ca)$.
	\item Define an analog of curve complex $\ctop$ (=$\ctop (\ca)$) as the nerve of the covering of $\db Y$ by its codimension-one boundary faces.
	\item Define another analog of curve complex $\calg$ ($=\calg(\ca)$) from an algebraic viewpoint by looking at commutation of certain $\mathbb Z$-subgroups.
	\item Prove that $\ctop$ and $\calg$ are isomorphic.
	\item Prove that $\calg$ is homotopy equivalent to a wedge of spheres.
\end{enumeratea}
The simplicial complex $\calg$  captures the intersection pattern of certain collection of abelian subgroups in $G$. In  the study of symmetric spaces, mapping class groups and right-angled Artin groups the analogous simplicial complex has  been fundamental to understanding the asymptotic geometry of $G$.

Point (d) provides the link between the topology of the universal cover $Y(\ca)$ and properties of $\calg$, which leads to (e) and other properties of the purely algebraically defined $\calg$. It seems highly non-trivial to establish the above lists for all arrangement. However, all of them can be done for complexification of central real simplicial arrangements studied by Deligne \cite{deligne}, which covers the case where $G$ is a pure Artin group of spherical type. Most of this paper is devoted to proving (d) for these arrangements. In the remainder of the introduction we elaborate $(a)-(e)$.

Let $V=\mathbb C^n$. Suppose $\cq(\ca)$ denotes the set of  subspaces of $V$ that are equal to an intersection  of hyperplanes in $\ca$.  The set $\cq(\ca)$, partially ordered by inclusion, is called the \emph{intersection poset} of $\ca$.   The idea for constructing a bordification of $\cm(\ca)$ as in (a) is to remove tubular neighborhoods of a collection of subspaces  $\cs\subset\cq(\ca)$ where each hyperplane of $\ca$ belongs to $\cs$.  Actually, rather than removing tubular neighborhoods it is better to attach boundary pieces to $\cm(\ca)$ where the boundary piece corresponding to a subspace $E\in\cs$ is  the normal sphere bundle of $E$ restricted to arrangement complement $\cm(\ca^E)$, where $\ca^E$ means the restriction of $\ca$ to $E$.  In order for  this process to yield  a bordification of $\cm(\ca)$ which is independent of various choices, it is necessary for $\cs$ to be a ``building set'' in the sense of DeConcini-Procesi \cite{dp}.  
This construction is carried out by Gaiffi in \cite{gaiffi03}.  

A subspace $E$ is \emph{irreducible} if the normal arrangement $\ca_{V/E}$ to $E$ is irreducible, i.e., if the induced arrangement on $V/E$ does not decompose as a nontrivial direct sum. The bordification of $\cm(\ca)$ with $\cs$ being the set of irreducible subspaces induces a compactification $X$ of $X^\circ$. Motivated by the examples of locally symmetric spaces and moduli spaces, define a simplicial complex $\ctop(\ca)$ to be the nerve of the covering of $\db Y$ by the codimension-one faces of the universal cover $Y$ of $X$. We call this the ``topological curve complex'' of $\ca$.

There  also is an algebraic version of the curve complex that can be defined in terms of commuting families of $\zz$-subgroups of $G$ ($=\pi_1(\cm(\ca)$).  Given a subspace $E\in \cq(\ca)$ with normal arrangement $\ca_{V/E}$ the image of $\pi_1(\cm(\ca_{V/E})$ in $G$ is called a \emph{parabolic subgroup} of $G$ of type $E$ (cf.\ Definition~\ref{d:parabolic} in Section~\ref{s:complements}); the parabolic subgroup is \emph{irreducible} if $E$ is  irreducible. Each irreducible parabolic subgroup has a well-defined central $\zz$-subgroup corresponding to the Hopf fiber coming from the action of $\cc^*$ on $\cm(\ca)$.  Such a $\zz$-subgroup is the analog of the subgroup of $\mo(g,n)$ generated by a Dehn twist. Take these central $\zz$-subgroups of irreducible parabolic subgroups to be the vertex set of the simplicial complex; two vertices are connected by an edge if and only if the corresponding $\zz$-subgroups commute; the \emph{algebraic curve complex} $\calg(\ca)$ is the associated flag complex.  In other words, $\calg(\ca)$ is the flag complex associated to the ``commutation graph'' of these central $\mathbb Z$-subgroups. (see \cite {kimkoberda} for the analogous notion for right-angled Artin groups).  

In the special case when $G$ is a pure spherical Artin group, $\calg$ coincides with the ``complex of irreducible parabolic subgroups'' defined by Cumplido, Gebhardt, Gonz\'alez-Menses and Wiest in \cite{cgw}.


One advantage of defining $\calg$ in the more general setting of hyperplane complements is that the stabilizer of a simplex of $\calg$ is again the fundamental group of $\cm(\ca')$ where $\ca'$ is a ``simpler'' arrangement. We verify this when $\ca$ is the complexification of a real simplicial arrangement, which is convenient for an inductive way of studying $G$. On the other hand, if one restricts to the class of pure spherical Artin groups, then it is not clear that stabilizer of a simplex of $\calg$ is commensurable to another pure spherical Artin group, as the restriction of a Coxeter arrangement might not be a Coxeter arrangement. 

Both $\ctop$ and $\calg$ admit simplicial $G$-actions and in both cases the quotient complex can be characterized in terms of irreducible subspaces of $\ca$ (cf. Definition~\ref{d:I0}). In this generality it seems possible, albeit unlikely, that the topological and algebraic versions of curve complex are always equal. However, we can verify this whenever $\ca$ is the complexification of a central real simplicial arrangements, in which case we also deduce that $\calg$ is homotopy equivalent to a wedge of spheres.

\paragraph{Statements of the results.}
\begin{mainthm}\textup{(Theorem~\ref{thm:main body} in the sequel).}
	Suppose $\ca$ is the complexification in $\cc^n$ of real simplicial arrangement and that $\ca\cong \ca_1\oplus\cdots \oplus \ca_l$ has $l$ irreducible factors.  Let $X$ be the $(2n-l)$-dimensional manifold with corners discussed above and let $Y$ be  its universal cover.  The following statements are true.
	\begin{enumeratei}
		\item
		The algebraic and topologicial versions of the curve complex $\calg$ and $\ctop$ are identical (and we denote this simplicial complex by $\cac$).
		\item 
		The   faces in  $\db X$ are indexed by the set of simplices in a complex $\ci_0$ which is defined in Definition~\ref{d:I0} in Section~\ref{ss:hyperplanes}.  The faces of $Y$ are indexed by $\cac$ and the group $G=\pi_1(X)$ acts on $\cac$ with quotient space $\ci_0$.
		\item
		The simplicial complex $\cac$ is homotopy equivalent to a wedge of $(n-l-1)$-spheres (where $\dim \cac = n-l-1$).
		\item Each face of $X$ is aspherical and its fundamental group injects into $G$.
		\item
		The fundamental group of each codimensional-one face of $X$ is the normalizer of certain parabolic subgroup.
		\item The stabilizer of each simplex of $\cac$
		is the fundamental group of $\cm(\ca')$ where $\ca'$ is also the complexification of some real simplicial arrangement.
	\end{enumeratei}
\end{mainthm}
Let $A_W$ be the spherical Artin group associated to a finite Coxeter group $W$ and let $S$ be the standard generating set of $A_W$.  A \emph{parabolic subgroup} of $A_W$ is any conjugate of a subgroup generated by a subset of $S$. The \emph{pure Artin group} $PA_W$ is  the kernel of $A_W\to W$. 
Associated to the Coxeter group $W$ there is a reflection arrangement $\ca_W$ of hyperplanes in $\cc^n$ so that $\pi_1(\cm(\ca_W))=PA_W$. The bordification of $\cm(\ca)$ is homotopy equivalent to the compact manifold with corners $X =X(\ca_W)$ described as above. Let $Y$ be the universal cover of $X$. The group $W$ acts freely on $X$ and the fundamental group of $X/W$ is $A_W$. So $A_W$ acts on $Y$ by an action that permutes the boundary faces. The arrangement $\ca_W$ is known to be simplicial; so, the Main Theorem can be applied. Hence, $A_W\act \ctop(\ca_W)=\calg(\ca_W)$.

On the other hand, a simplicial complex $\cac(A_W)$ is defined in \cite{cgw} as follows. The vertices of $\cac(A_W)$ are in one-to-one correspondence with the infinite cyclic subgroups that occur as centers of  irreducible, proper parabolic subgroups of $A_W$. Each $(k-1)$-simplex of $\cac(A_W)$ corresponds to a family of $k$ of these $\zz$-subgroups that mutually commute. Since irreducible parabolic subgroups of $A_W$ correspond to parabolic subgroups arising from irreducible subspaces in  $\cq(\ca_W)$, there is an $A_W$-equivariant isomorphism from between $\calg(\ca_W)$ and $\cac(A_W)$.

\begin{corollary*}\label{c:intro}
	Suppose the spherical Artin group $A_W$ has $l$ irreducible factors.  Then $\cac(A_W)$ in \cite{cgw} is homotopy equivalent to a wedge of $(n-l-1)$-spheres. 
\end{corollary*}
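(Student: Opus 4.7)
The plan is to obtain the corollary as a direct consequence of part (iii) of the Main Theorem applied to the reflection arrangement $\ca_W$, after identifying $\cac(A_W)$ with $\calg(\ca_W)$ $A_W$-equivariantly.

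First I would verify the hypotheses of the Main Theorem for $\ca=\ca_W$. By Coxeter's classical description, the real reflection arrangement of $W$ in $\rr^n$ cuts $\rr^n$ into simplicial cones (the Weyl chambers and their faces), so $\ca_W$ is the complexification of a real simplicial arrangement. Moreover, an irreducible direct product decomposition $W=W_1\times\cdots\times W_l$ induces an irreducible decomposition $\ca_W\cong\ca_{W_1}\oplus\cdots\oplus\ca_{W_l}$, and correspondingly $A_W\cong A_{W_1}\times\cdots\times A_{W_l}$; so the number $l$ of irreducible factors of $A_W$ equals the number of irreducible factors of $\ca_W$. Since $\pi_1(\cm(\ca_W))=PA_W$, the Main Theorem gives that $\calg(\ca_W)$ is homotopy equivalent to a wedge of $(n-l-1)$-spheres.

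Next I would construct an $A_W$-equivariant simplicial isomorphism $\Phi:\cac(A_W)\to\calg(\ca_W)$. On vertices, the recipe is already outlined in the introduction: both complexes have vertex set indexed by the infinite cyclic groups that arise as the centers of the irreducible proper parabolic subgroups. For $\calg(\ca_W)$ the relevant central $\zz$-subgroup of an irreducible parabolic $\pi_1(\cm(\ca_{V/E}))$ comes from the Hopf fiber of the $\cc^*$-action on the normal arrangement complement; for $\cac(A_W)$ it is the center of the corresponding parabolic of $A_W$. Using the bijection between irreducible elements of $\cq(\ca_W)$ and (conjugacy classes of) irreducible standard parabolics of $A_W$, together with Van der Lek's theorem that parabolic subgroups of $A_W$ inject and arise as $\pi_1$ of the complement of the restricted arrangement, these two sets of $\zz$-subgroups coincide inside $A_W$. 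On higher-dimensional simplices, both $\cac(A_W)$ and $\calg(\ca_W)$ are flag complexes on the graph whose edges record commutation of these central $\zz$-subgroups, so $\Phi$ extends uniquely and bijectively to all simplices. Equivariance for the conjugation action of $A_W$ is automatic from the construction.

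Combining the two steps gives $\cac(A_W)\simeq\calg(\ca_W)\simeq\bigvee S^{n-l-1}$, which is the statement of the corollary. The main subtlety is the vertex-level identification: one must check that the center of each irreducible parabolic of $A_W$ really is infinite cyclic and agrees with the Hopf $\zz$-subgroup produced by the $\cc^*$-action on the normal arrangement complement. This is the content of Deligne's work on spherical Artin groups \cite{deligne}, where the $K(\pi,1)$ property and the description of centers of irreducible spherical Artin groups as infinite cyclic (generated by the square of the fundamental element $\Delta$) are established; granting these classical facts, the identification is clean and the rest of the proof is formal.
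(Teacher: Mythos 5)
Your strategy---apply part (iii) of the Main Theorem to the reflection arrangement $\ca_W$, then transport the result along an $A_W$-equivariant isomorphism $\cac(A_W)\cong\calg(\ca_W)$---is exactly the route the paper takes. There is, however, a factual slip in your vertex-level identification. You assert that the center of every irreducible spherical Artin group $A_{W'}$ is generated by $\Delta_{W'}^2$ and hence literally coincides with the Hopf $\zz$-subgroup $Z_P$ of the corresponding irreducible parabolic of $PA_W$. Neither is correct in general: $Z(A_{W'})=\langle\Delta_{W'}\rangle$ whenever the longest element of $W'$ is central in $W'$ (types $A_1$, $B_n$, $D_{2k}$, $E_7$, $E_8$, $F_4$, $H_3$, $H_4$, $I_2(2k)$), while the Hopf subgroup $Z_P=\Ker[\pi_1(\cm(\ca_{V/E}))\to\pi_1(\cm(\pp\ca_{V/E}))]$ is always $\langle\Delta_{W'}^2\rangle$. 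For example, if $E$ is a hyperplane with reflection $s$, the irreducible parabolic of $A_W$ is $\langle\sigma_s\rangle\cong\zz$, equal to its own center, whereas $Z_P=\langle\sigma_s^2\rangle$ is a proper index-two subgroup. So the two vertex sets do not coincide as sets of subgroups of $A_W$.

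What does hold, and is what the paper implicitly uses, is that the correspondence $Z(A_{W'})\leftrightarrow Z_P$ is an $A_W$-equivariant bijection, since both sets of $\zz$-subgroups are parametrized by the $A_W$-conjugates of irreducible subspaces $E\in\cq(\ca_W)$; in particular the support of $Z_P$ in $H_1$ recovers $E$ by Lemma~\ref{l:ZE}. The edge relations then match as well: if $Z(A_{W'_1})$ and $Z(A_{W'_2})$ commute then so do the finite-index subgroups $Z_{P_1}$ and $Z_{P_2}$, and conversely both complexes carry the same combinatorial characterization of when the relevant $\zz$-subgroups commute---namely that $E_1,E_2$ are comparable or orthogonal---via Lemma~\ref{lem:commute} on the $PA_W$ side and the corresponding result of \cite{cgw} on the $A_W$ side. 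With this small repair to your justification of the vertex identification, the argument is complete and matches the paper's.
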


Where this paper differs  from \cite{cgw} is that $\cac(A_W)$ is not only defined algebraically as the complex of irreducible parabolics, but it also has a topological interpretation as the nerve of a bordification. This provides us with extra information as in the above corollary.  

\paragraph{Remarks on the braid arrangement $\ca_{n-1}$.} 
Let $M_{0,n+1}$ be the moduli space of complex structure on $S_{0,n+1}$, the $(n+1)$-punctured 2-sphere. Let $\ca_{n-1}$ be the arrangement of type $A_{n-1}$ in $\mathbb C^{n-1}$ (this is the braid arrangement for the braid group on $n$ strands). The following statements are true. 
\begin{itemize}
	\item Start with the Harvey compactification $X'$ of $M_{0,n+1}$ and take its universal cover $Y'$.  Then the nerve of the covering of $\partial Y'$ by its codimension-one faces is isomorphic to the curve complex of $S_{0,n+1}$, 
see \cite{harvey}.
	\item Let $X$ be the bordification of $\cm(\ca_{n-1})$ discussed above (see Definition~\ref{d:BS} for more details). The natural action of $\mathbb C^*=\mathbb C-\{0\}$ on $\cm(\ca_{n-1})$ extends to $C^*\act X$. Then $X'$ and $X/\mathbb C^*$ are diffeomorphic as smooth manifolds with corners, see \cite{Kapranov} and also \cite[Section 4]{gaiffi99}.
	\item The algebraic curve complex for the arrangement $\cm(\ca_{n-1})$ is canonically isomorphic to the curve complex of $S_{0,n+1}$.
\end{itemize}
So, Harvey's result can be reinterpreted as saying that the topological curve complex for $\ca_{n-1}$ as defined above is isomorphic to its algebraic curve complex. 

\paragraph{Discussion of the proof.}
We begin with the isomorphism between $\calg$ and $\ctop$. The starting point (step 0), is to reduce this isomorphism problem to a collection of purely group theoretic properties of $G=\pi_1(X)$.

Start with an arbitrary central arrangement $\ca$ of hyperplanes in $V=\cc^n$ and let $G=\pi_1(\cm(\ca))$.  Given  an irreducible subspace $E$, let $\ca^E$ and $\ca_{V/E}$ denote, respectively,  the restriction of the arrangement to $E$ and its normal arrangement. There are natural homomorphisms $f_1:\pi_1(\cm(\ca^E))\to G$, $f_2:\pi_1(\cm(\ca_{V/E}))\to G$ and $f_3: \pi_1(\cm(\ca^E\times\ca_{V/E}))\to G$ where $f_3=f_1\times f_2$. Here $f_1$ induced by pushing off $\cm(\ca^E)$ from $E$ into $V$ so that it sits inside $\cm(\ca)$, and $f_2$ is obtained by considering arrangement in  a normal disk at a point in $E$. The image of $f_2$ is  an irreducible parabolic subgroup and the central $\mathbb Z$-subgroup of $\image f_2$ corresponds to the Hopf fiber in $\cm(\ca_{V/E})$.  In Section~\ref{ss:curvetop} we define the following four properties.
\begin{enumeratea}
	\item Property A asserts $f_1$ is injective. (Since there is a retraction $\cm(\ca)\to \cm(\ca_{V/E})$,  $f_2$ is always injective.)
	\item  Property B concerns intersections of images of the homomorphisms $f_3$ arising for different irreducible subspaces. 
	\item Property C states that $\image f_3$ is the normalizer of the central $\mathbb Z$-subgroup in $\image f_2$.
	\item Property D characterizes when two central $\mathbb Z$-subgroups commute.
\end{enumeratea} 
In Proposition~\ref{prop:iso} we show that when these properties hold,  there is a canonical isomorphism between $\calg$ and $\ctop$. This works for any central arrangement $\ca$.

The proof of the Main Theorem uses several facts special to real simplicial arrangements.  First, since the arrangements $\ca^E$ and $\ca_{V/E}$ also are complexifications of real simplicial arrangements, it follows from Deligne \cite{deligne} that each of  the arrangement complements $\cm(\ca)$, $\cm(\ca^E)$ and $\cm(\ca_{V/E})$ is aspherical.  Second, Salvetti \cite{s87} defined a finite CW complex that  is homotopy equivalent to $\cm(\ca)$ (this only requires $\ca$ is real).  Finally, in his proof of asphericity in \cite{deligne}, Deligne used ideas of Garside concerning  the word problem in the associated ``Deligne groupoid.'' 

The first step of the proof of the theorem is to reduce Properties  A through D to properties concerning subcomplexes of Salvetti complexes and their Deligne groupoids. (For this step to work we need $\ca$ to be a real arrangement; however,  it need not to be simplicial.)
It is obvious that $\image f_2$ can be represented as the fundamental group of a subcomplex of the Salvetti complex; however, this is less clear for $\image f_1$. We find a certain subset of the Salvetti complex whose fundamental group corresponds to $\image f_1$ (this subset is very close to being a subcomplex).  This reduces the verification Properties A through D to computations in the one-skeleton of the Salvetti complex, which is the underlying graph for the Deligne groupoid.

The second step of the proof is to prove versions of Properties A through D for the Deligne groupoid. (For this step to work we need $\ca$ to be a simplicial arrangement so that Garside theory is available.) A simple reduction shows that we only need to prove Properties A, C and D. For spherical Artin groups, Properties C and D were proved previously in \cite{Parisparabolic} and \cite{cgw}. Our proof of Properties C and D is along the same line as in \cite{Parisparabolic} and \cite{cgw}, however, our treatment is more geometric and works for any simplicial arrangement. We also prove Property A for simplicial arrangements.

We speculate that these properties hold outside the realm of simplicial arrangements.  For example, there are some partial results on Properties A and C for supersolvable arrangements in \cite{Parissupersolvable}.

To prove part (iii) of the Main Theorem we need to show that $\ctop$ (or $\calg$) is a wedge of spheres.
For a general $\ca$ it follows from previous work on the cohomology of $\cm(\ca)$ in \cite{squier, djlo} that $\ctop$ has the same  homology as a wedge of spheres. So, in order to show that $\calg$ is homotopy equivalent to a wedge of spheres, it suffices to show $\calg$ is simply connected whenever its dimension is $>1$.  We prove this in the case of a real simplicial arrangement by constructing a map from the Deligne complex to $\calg$. In the case of right-angled Artin groups, this map is analogous to the map from the right-angled building to the extension graph.  Since the Deligne complex is simply connected, a careful analysis of this map proves that $\calg$ is simply connected.

\paragraph{Structure of the paper.}
In Section~\ref{s:complements} we discuss hyperplane arrangements and their compactifications, and we explain Properties A to D. Section~\ref{sec:background} gives background on zonotopes, Salvetti complexes and Deligne groupoids. Section~\ref{sec:gar} concerns Garside theoretic computations in Deligne groupoids, and completes the argument in the second step of the proof of the Main Theorem. Section~\ref{sec:sc} concerns the proof that $\calg$ is simply  connected when its dimension is $>1$. This is needed in the proof that it is homotopy equivalent to a wedge of spheres. In Section~\ref{sec:sal} the first step of the proof is finished and the results are tied  together.

\paragraph{Acknowledgment.}
This work was partially supported by the grant 346300 for IMPAN from the Simons Foundation and the matching 2015-2019 Polish MNiSW fund. J.H. thanks the Max-Planck Institute for Mathematics (Bonn) for its hospitality during June 2019, where part of the work was done.

\section{Bordifications of complements of hyperplane arrangements}\label{s:complements}
\subsection{Hyperplane arrangements}\label{ss:hyperplanes}
A \emph{hyperplane arrangement}  in a complex vector space $V$ is a collection $\ca$ of linear hyperplanes in $V$.  The set  of linear subspaces that can be obtained as intersections of  elements of $\ca$ is called the \emph{intersection poset}; it is denoted by $\h{\cq}(\ca)$ or simply by $\h\cq$.  The partial order is inclusion.  The ambient space $V$ (corresponding to the empty intersection) is the maximum element of $\h\cq$.  The set of proper subspaces in $\h\cq$ is denoted $\cq$ (or $\cq(\ca))$. The arrangement $\ca$ is \emph{essential} if the  zero subspace $\zero$ lies in $\cq(\ca)$, i.e., if $\zero$ is the intersection of all hyperplanes in $\ca$.  If $\ca$ is essential, then put $\cq_0:=\cq - \mathbf 0$.

The \emph{normal arrangement} to a subspace $E\in \cq(\ca)$ is the hyperplane arrangement $\ca_{V/E}$ in $V/E$ defined by
\begin{equation}\label{e:normal}
\ca_{V/E}:=\{H/E \mid H\in \ca \text{ and $E\leq H$} \}.
\end{equation}
There is also an arrangement $\ca^E$, called the \emph{restriction} of $\ca$ to $E$,  defined by
\begin{equation}\label{e:restriction}
\ca^E:=\{H\cap E\mid H\in \ca \text{ and $E\nleq H$}\}.
\end{equation}
If $E'<E$ is another subspace in $\cq(\ca)$, then the image of $\ca^{E'}$ in $\ca_{E'/E}$  is a \emph{subnormal arrangement} to $E$, i..e.,   It is the arrangement in $E'/E$ defined by
\begin{equation}\label{e:subnormal}
\ca_{E'/E}=\{H/E\mid H\in \ca^{E'} \text{ and $E\leq H$}\}.
\end{equation}

The complement in $V$ of the union of hyperplanes in  $\ca$ is denoted by $\cm(\ca)$. 
Notice that if $SV$ is the sphere of directions in $V$, then we have a homeomorphism
\begin{equation}\label{e:compactcore}
\cm(\ca) \cong (SV\cap \cm(\ca))\times (0,\infty).
\end{equation}

\begin{remark}
	In later sections we will consider arrangements which are obtained by complexifying an arrangement of real hyperplanes in a real vector space (a ``real arrangement'').  For example, any finite Coxeter group has a representation as a linear reflection group on $\rr^n$.   The resulting hyperplane arrangement in $\cc^n$ is called a \emph{reflection arrangement}.
\end{remark} 
\paragraph{Irreducible subspaces.}
Suppose $\ca'$ and $\ca''$ are arrangements in vector spaces $V'$ and $V''$.  Their \emph{direct sum} $\ca'\oplus \ca''$ is the arrangement in $V'\oplus V''$ consisting of hyperplanes which are the form of a sum of a hyperplane in one summand with the other summand, i.e., $\ca'\oplus \ca'':=\{H'\oplus V'', V'\oplus H''\mid H'\in \ca', H''\in \ca''\}$.  An arrangement is \emph{irreducible} if it cannot be decomposed as a nontrivial direct sum.   Any arrangement $\ca$ in $V$ can be decomposed into its irreducible factors $\ca_1,\dots, \ca_k$, where $\ca_i$ is irreducible in $V_i$.  That is to say, $\ca\cong \ca_1\oplus\cdots \oplus \ca_k$ and $V\cong V_1\oplus \cdots \oplus V_k$.  It follows from \eqref{e:compactcore} that
\begin{equation}\label{e:compactcore2}
\cm(\ca) \cong \left(\prod_{i=1}^k SV_i\cap \cm(\ca_i)\right)\times (0,\infty)^k.
\end{equation}

A subspace $E\in \cq(\ca)$ is \emph{reducible} if its normal arrangement in $V/E$ splits as a nontrivial direct sum.    In other words, $E$ is reducible if there are subspaces $E'$, $E''$ in $\cq(\ca)$ such that $E=E'\cap E''$ and  $\ca_{V/E}\cong\ca_{V/E'}\oplus \ca_{V/E''}$.  A subspace $E\in \cq(\ca)$ is \emph{irreducible} if it is not reducible.   For example, any hyperplane $H\in \ca$ is an irreducible subspace.  Any subspace $E\in \cq$ has a decomposition into irreducibles.  This means that there are irreducible subspaces $E_1, \dots, E_k$ such that 
\[
E=E_1\cap\cdots \cap E_k \quad\text{ and }\quad\ca_{V/E}\cong \bigoplus_{i=1}^k  \ca_{V/E_i}.
\]
Up to order, the summands are uniquely determined.

\begin{remark}\label{r:minirreducible}
	Suppose $\ca\cong \ca_1\oplus\cdots \oplus \ca_k$ with $V\cong V_1\oplus \cdots \oplus V_l$ is the decomposition of $\ca$ into irreducibles.  Let $E_i$ be the subspace of the direct sum with $i$-component equal to the zero vector and with $j$-component, $j\neq i$, equal to the subspace $V_j$.  Then $E_i$ is irreducible; moreover, it is a minimal irreducible with respect to the partial order on $\cq(\ca)$.
\end{remark}

\paragraph{Definition of the complex of irreducibles.}\label{p:irred} We shall define a simplicial complex $\ci$ ($=\ci(\ca)$) that encodes the intersection pattern of the irreducible subspaces.  Its vertex set $\zci$ ($=\zci (\ca)$) is the set of irreducible subspaces $E\in \cq$. So, $\zci$ is a subposet of $\cq$. A set of two vertices $\{E,E'\}$ determines a $1$-simplex $\ga$ in $\ci$ in exactly two cases: either $E$ and $E'$ are \emph{comparable} (which means that $E<E'$ or $E'<E$),  or the normal arrangement to $E\cap E'$ is reducible.  This describes a simplicial graph $\ci^1$ that is the $1$-skeleton of the complex which we wish to define.  The \emph{complex of irreducibles} $\ci$ is the flag complex associated to the graph $\ci^1$.  One can directly describe the simplices of $\ci$ as being the ``nested subsets'' of $\zci$ defined below.. 

\begin{definition}\label{d:nested} (cf. \cite{dp}, as well as, \cite{fm}, \cite{dls}*{p.1308}).
	A subset $\ga$ of $\zci$ is \emph{nested} if for any subset $\{E_i\}$ of $\ga$ consisting of pairwise incomparable elements, the intersection $F=\bigcap E_i$ is reducible and $\ca_{V/F}\cong \bigoplus_{i=1}^k  \ca_{V/E_i}$.
	A subset $\ga$ of $\zci$  is the vertex set of a simplex in $\ci$ if and only if it is nested. (We shall often confuse a simplex with its vertex set.)  Call a simplex $\gb\in\ci$ \emph{purely incomparable} if its vertex set consists entirely of incomparable elements.
\end{definition}

\begin{definition}\label{d:I0}
	If $\ca$ is irreducible (i.e., if $\mathbf 0$ is an irreducible subspace in $\cq$), then let $\ci_0(\ca)$ denote the full subcomplex of $\ci(\ca)$ spanned by $\zci-\{\mathbf 0\}$.  In general, if $\ca=\ca_1\oplus \cdots \oplus\ca_l$ is the decomposition of $\ca$ into irreducible factors, then $\ci_0(\ca)$ is defined to be the join: 
	\[
	\ci_0(\ca)=\ci_0(\ca_1) * \cdots *\ci_0(\ca_l) .
	\]
\end{definition}

Since $\ci(\ca_i)$ is equal to the cone, $\ci_0(\ca_i)*\mathbf {0}_i$, where $\mathbf {0}_i$ denotes the cone point, we see that $\ci(\ca)=\ci_0(\ca)*\gD^{l-1}$, where $\gD^{l-1}$ is the $(l-1)$-simplex on $\{\mathbf {0}_1, \dots, \mathbf {0}_l\}$.
The reason that we are interested in $\ci_0(\ca)$ is that  its simplices index the boundary faces of the compact manifold with corners $X$ (cf.\ Definition~\ref{d:compactcore} below.)

\begin{remark}\label{r:fm}
	Feichtner and M\"{u}ller prove in \cite{fm} that $\ci(\ca)$ has a subdivision isomorphic to $\vert \cq(\ca)\vert$ (where $\vert \cq(\ca)\vert$ denotes the geometric realization of the order complex of $\cq(\ca)$).  So, when $\ca$ is irreducible, $\ci_0(\ca)$ is homeomorphic to $\vert\cq_0(\ca)\vert$.  
	A classical result of Folkman \cite{folkman} asserts that $\vert \cq_0(\ca)\vert$ is a wedge of spheres of dimension $n-2$, where $n=\dim V$.  Hence, when $\ca$ is irreducible $\ci_0(\ca)$  is a wedge of spheres of the same dimension.  When $\ca$ has $l$ irreducible factors, it then follows from Definition~\ref{d:I0} that  $\dim \ci_0(\ca)=n-l-1$ and that $\ci_0(\ca)$ is a wedge of $(n-l-1)$-spheres.
\end{remark}

\begin{definition}\label{def:forest}
	Using the poset structure of $\zci$ the vertices of a simplex $\ga$ of $\ci$ can be organized into a forest as follows.   The minimal elements of $\vertex\ga$ are said to be at  \emph{level $0$}.  
	Suppose, by induction, that the notion of level $i$ vertex  has been defined and that $E\in \vertex \ga$ is at level $i$.   The minimal elements of $(\vertex \ga)_{>E}$ are at \emph{level $i+1$}.  Let $\{E_0,\dots, E_k\}$  be this set of minimal vertices. Connect each of the $E_i$ by a directed edge from $E$.
	The directed edges then define a forest on $\vertex{\ga}$.
\end{definition}

\begin{lemma}\label{l:center}
	Suppose $\ca$ is an essential, central hyperplane arrangement with irreducible decomposition $\ca=\ca_1\oplus \cdots \oplus \ca_k$. Put $G=\pi_1(\cm(\ca))$ and let $Z(G)$ be its center. 
	\begin{enumerate1}
		\item
		The center $Z(G)$ contains a free abelian subgroup of rank $k$.
		\item
		If $\cm(\ca)$ is aspherical, then $Z(G)\cong \zz^k$.
	\end{enumerate1}	
\end{lemma}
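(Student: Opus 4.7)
For part (1), I would start from the direct sum decomposition $\ca=\ca_1\oplus\cdots\oplus\ca_k$ in $V=V_1\oplus\cdots\oplus V_k$, which factors the complement as $\cm(\ca)\cong\prod_i\cm(\ca_i)$, giving $G=\prod G_i$ and $Z(G)=\prod Z(G_i)$ with $G_i:=\pi_1(\cm(\ca_i))$. Since $\ca$ is essential, so is each $\ca_i$ in $V_i$, and the scalar action $\cc^*\act V_i$ restricts to a free action on $\cm(\ca_i)$. Because $\cc^*$ is connected, the orbit map $\cc^*\to\cm(\ca_i)$, $z\mapsto zx_0$, sends $\pi_1(\cc^*)=\zz$ into $Z(G_i)$; for injectivity I would post-compose with a linear form $\ell$ defining any $H\in\ca_i$, so that $\cc^*\to\cm(\ca_i)\to\cc^*$ is multiplication by $\ell(x_0)\neq 0$ and hence a $\pi_1$-isomorphism. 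Taking products yields $\zz^k\hookrightarrow Z(G)$.

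For part (2), the same product decomposition, $Z(G)=\prod Z(G_i)$, and the fact that each factor $\cm(\ca_i)$ is aspherical together reduce the claim to the case of irreducible, essential $\ca$ with $\cm(\ca)$ aspherical; the goal is then $Z(G)=\zz$. I would put $N:=\cm(\ca)/\cc^*$ and trivialize the principal $\cc^*$-bundle $\cm(\ca)\to N$ by the explicit section $[v]\mapsto v/\ell(v)$, which is well-defined on orbits once a defining linear form $\ell$ of some $H\in\ca$ is fixed. This yields a homeomorphism $\cm(\ca)\cong N\times\cc^*$, whence $G=\pi_1(N)\times\zz$ and $Z(G)=Z(\pi_1(N))\times\zz$, reducing the claim to $Z(\pi_1(N))=1$. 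Since $\cm(\ca)\simeq N\times S^1$ is aspherical and $S^1$ is aspherical, $N$ is itself aspherical and homotopy equivalent to a finite CW complex.

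For the final step I plan to invoke Gottlieb's theorem: a finite aspherical CW complex with nonzero Euler characteristic has trivial $\pi_1$-center. To verify $\chi(N)\neq 0$, I would use the splitting $P(\cm(\ca),t)=(1+t)P(N,t)$ to express $\chi(N)=P'(\cm(\ca),-1)$, then apply the Orlik--Solomon formula $P(\cm(\ca),t)=\sum_{E\in\h\cq}|\mu(V,E)|\,t^{\codim E}$; using the sign rule $\mu(V,E)=(-1)^{\codim E}|\mu(V,E)|$ identifies this derivative with $\chi'_\ca(1)=(-1)^{n+1}\beta(\ca)$, where $\beta(\ca)$ is the Crapo beta invariant of the underlying matroid. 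Irreducibility of $\ca$ is precisely connectedness of its matroid, so Crapo's positivity theorem gives $\beta(\ca)>0$, hence $\chi(N)\neq 0$, hence $Z(\pi_1(N))=1$, and finally $Z(G)=\zz$. I expect the main obstacle to be the combinatorial identification $\chi(N)=(-1)^{n+1}\beta(\ca)$ together with invoking Crapo's positivity; the trivializing section, the product splittings, and Gottlieb's theorem itself are standard.
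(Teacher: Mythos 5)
Your proof is correct and follows essentially the same route as the paper's sketch: part~(1) via the Hopf fiber coming from the $\cc^*$-action on each irreducible factor (which you carry out more explicitly, including the $\pi_1$-injectivity check using a defining linear form), and part~(2) by reducing to the irreducible case, splitting $\cm(\ca)\cong\cc^*\times\cm(\pp\ca)$, applying Gottlieb's theorem, and then invoking Crapo. The only cosmetic difference is in how Crapo's result is packaged: the paper quotes it as a decomposition theorem for affine arrangements whose complement has vanishing Euler characteristic, whereas you identify $\chi(\cm(\pp\ca))$ with $(-1)^{n+1}\beta(\ca)$ via Orlik--Solomon and use positivity of the beta invariant for connected matroids --- these are two faces of the same theorem.
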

\begin{question}
	Is it always true that $Z(G)\cong \zz^k$?
\end{question}
\begin{proof}[Sketch of proof of Lemma~\ref{l:center}] The fundamental group of the complement of any central arrangement has a $\zz$ in its center coming from the Hopf fiber. Hence, if $\ca$ has $k$ irreducible factors, then $\zz^k\subset Z(G)$.
	
	Next suppose that $\cm(\ca)$ is aspherical and that $\ca$  is irreducible.  Let $\pp\ca$ be the associated projective arrangement and $\cm(\pp\ca)$ its complement in projective space. Since $\cm(\ca)\cong \cc^*\times \cm(\pp\ca)$, we see that $\cm(\pp(\ca)$ is also aspherical. Also, if $\zz$ is a proper subgroup of $Z(G)$, then the center $C$  of $\pi_1(\cm(\pp\ca))$ is nontrivial.  Since $\cm(\pp\ca)$ is aspherical this implies that its Euler characteristic is $0$.  (If a group of type F has a nontrivial normal abelian subgroup, then its Euler characteristic is $0$.)
	A theorem of Crapo (cf.\ \cite{crapo}) asserts that  if  $\ca'$ if is an arrangement of affine hyperplanes and if the complement $\cm(\ca')$ has Euler characteristic $0$, then $\ca'$ decomposes as $\ca''\times \ca_1$,  where $\ca_1$ is is a nontrivial central arrangement (cf.\ \cite{dls}*{section 5.3}).  Since $\cm(\pp\ca)$ is homeomorphic to the complement of the affine arrangement $\ca'$ obtained by regarding one of the hyperplanes of $\pp\ca$ as being a hyperplane at $\infty$, Crapo's Theorem implies that $\ca'$ splits off another central arrangement $\ca_1$.  But this contradicts the assumption that $\ca$ is irreducible.  Hence, the Euler characteristic of $\cm(\ca')$ must be $0$; so, $C$ must be trivial.   The case when  $\ca$ is irreducible immediately implies (2) in  the general case when $\ca$ has more than one irreducible factors.
\end{proof}

\paragraph{Definition of parabolic subgroups and the algebraic curve complex.} 
Let $E\in \cq(\ca)$ be a subspace.
Let $D$ be a small disk about $E/E$ in $V/E$.  Then $D\cap  \cm(\ca_{V/E})$ is homeomorphic to $\cm(\ca_{V/E})$.  Since $D$ can regarded as a normal disk to $E$ at a point in $\cm(E^\ca)$, we see that when $D$ is sufficiently small $D\cap  \cm(\ca_{V/E})$ is a subset of $\cm(\ca)$. Composing the inclusion with the inverse of a homeomorphism
$D\cap  \cm(\ca_{V/E})\cong \cm(\ca_{V/E})$ we get $i:\cm(\ca_{V/E})\to \cm(\ca)$.  The set of hyperplanes $\ca_{V/E}$ can be identified with the set $\ca_E:=\{H\in \ca\mid E\le H\}$ and $\cm(\ca_E)$ is homeomorphic to $E\times \cm(\ca_{V/E})$.  Since $\ca_E\subset \ca$, we have an inclusion $\cm(\ca)\hookrightarrow \cm(\ca_E)$. The composition of this inclusion with the natural projection $\cm(\ca_E)\to\cm(\ca_{V/E})$ gives the retraction $r:\cm(\ca)\to \cm(\ca_{V/E})$. The following is clear.

\begin{lemma}\label{l:normalarrange}
	The composition $\cm(\ca_{V/E})\stackrel{i}{\to} \cm(\ca)\to\cm(\ca_E)\to\cm(\ca_{V/E})$ is homotopic to the identity map. Thus $i:\cm(\ca_{V/E})\to \cm(\ca)$ is $\pi_1$-injective.
\end{lemma}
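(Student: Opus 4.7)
The plan is to make all three maps explicit via a linear splitting and then verify that the composition is, up to a central rescaling, literally the identity on $\cm(\ca_{V/E})$. Choose a linear complement to $E$, i.e.\ fix a decomposition $V=E\oplus W$, and identify $W$ with $V/E$ via the projection $V\to V/E$. Under this splitting every hyperplane $H\in\ca_E$ contains $E$ and therefore takes the form $H=E\oplus (H\cap W)$; consequently one has a canonical identification $\cm(\ca_E)=E\times\cm(\ca_{V/E})$, and the retraction $r:\cm(\ca_E)\to\cm(\ca_{V/E})$ is simply the projection onto the second factor.

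Next I would make $i$ explicit. Pick a base point $e_0\in E\cap\cm(\ca^E)$; since $\ca\setminus\ca_E$ is finite and none of its hyperplanes contains $e_0$, there exists $\geps>0$ such that $\{e_0\}\times \geps\cdot B\subset V$ misses every hyperplane in $\ca\setminus\ca_E$, where $B\subset W$ is the closed unit ball. Let $D=\geps\cdot B\subset W\cong V/E$; this is the small disk in the statement. The preferred homeomorphism $D\cap\cm(\ca_{V/E})\cong\cm(\ca_{V/E})$ is radial rescaling $w\mapsto \geps^{-1}w$, which is a homeomorphism precisely because $\ca_{V/E}$ is a central arrangement in $W$. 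Thus, up to homotopy, $i$ is the map $w\mapsto(e_0,\geps w)\in E\times W=V$, which indeed lies in $\cm(\ca)$ by the choice of $\geps$.

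With these descriptions the composition $r\circ(\text{incl.})\circ i$ becomes $w\mapsto (e_0,\geps w)\mapsto \geps w$. The straight-line homotopy $h_t(w)=(t+(1-t)\geps)\,w$, $t\in[0,1]$, connects this to the identity of $\cm(\ca_{V/E})$ and stays inside $\cm(\ca_{V/E})$ because $\ca_{V/E}$ is central (scalar multiplication by any positive real preserves the complement). This proves the first assertion. For the second assertion, the homotopy gives that the induced map $i_*:\pi_1(\cm(\ca_{V/E}))\to\pi_1(\cm(\ca))$ has a left inverse (namely the composition of the maps induced by the inclusion $\cm(\ca)\hookrightarrow\cm(\ca_E)$ and by $r$), hence $i_*$ is injective. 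No essential obstacle arises; the only point requiring care is the dependence on the choice of splitting and of $e_0$, but this only affects the homotopy class of $i$ by a conjugation that is irrelevant for $\pi_1$-injectivity.
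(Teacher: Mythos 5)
Your overall approach is correct and is the natural argument for a lemma that the paper simply asserts is clear (no proof is given there): fix a splitting $V=E\oplus W$, identify $W\cong V/E$, write the retraction $r$ as the second-coordinate projection on $\cm(\ca_E)=E\times\cm(\ca_{V/E})$, and use centrality to run a straight-line scaling homotopy inside $\cm(\ca_{V/E})$. The deduction of $\pi_1$-injectivity from the existence of a left homotopy inverse is also exactly right.

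There is, however, a small slip in making $i$ explicit. The linear rescaling $w\mapsto\geps^{-1}w$ does not give a homeomorphism $D\cap\cm(\ca_{V/E})\cong\cm(\ca_{V/E})$; it carries $D=\geps\cdot B$ onto $B$, so its image is only $B\cap\cm(\ca_{V/E})$. Consequently your formula $i(w)=(e_0,\geps w)$, with $w$ ranging over all of $\cm(\ca_{V/E})$, may leave the controlled region $\{e_0\}\times D$ when $|w|>1$, and then $(e_0,\geps w)$ need not avoid the hyperplanes of $\ca\setminus\ca_E$ — so the map is not a priori into $\cm(\ca)$. The fix is routine: replace the linear rescaling by a genuine radial compression, e.g.\ $\phi(w)=\tfrac{\geps w}{1+|w|}$, which is a homeomorphism of $\cm(\ca_{V/E})$ onto $\Int(D)\cap\cm(\ca_{V/E})$ because $\ca_{V/E}$ is a cone, and set $i(w)=(e_0,\phi(w))$. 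The composition $r\circ\mathrm{incl}\circ i$ is then $\phi$, and the radial straight-line homotopy $h_t(w)=(1-t)\phi(w)+t\,w$, which is a positive scalar multiple of $w$ for every $t$, stays in $\cm(\ca_{V/E})$ and connects $\phi$ to the identity. With that replacement, the rest of your argument goes through verbatim.
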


\begin{corollary}\label{c:aspherical}
	If $\cm(\ca)$ is aspherical, then so is $\cm(\ca_{V/E})$ for any $E\in \cq$.
\end{corollary}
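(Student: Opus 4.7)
The plan is to exploit Lemma~\ref{l:normalarrange}, which already does almost all of the work. That lemma produces a map $i\colon \cm(\ca_{V/E})\to \cm(\ca)$ together with a retraction $r\colon \cm(\ca)\to \cm(\ca_{V/E})$ such that $r\circ i$ is homotopic to the identity. In particular, $\cm(\ca_{V/E})$ is a homotopy retract of $\cm(\ca)$.

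First, I would observe that for any homotopy retract of a space, the induced map on higher homotopy groups is a split injection: since $r_*\circ i_* = \id$ on $\pi_n$ for every $n\ge 1$, the map $i_*\colon \pi_n(\cm(\ca_{V/E}))\to \pi_n(\cm(\ca))$ is injective. Now assume $\cm(\ca)$ is aspherical, so that $\pi_n(\cm(\ca))=0$ for all $n\ge 2$. Injectivity of $i_*$ then forces $\pi_n(\cm(\ca_{V/E}))=0$ for all $n\ge 2$.

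Finally, I would note that $\cm(\ca_{V/E})$ is an open subset of the finite-dimensional complex vector space $V/E$, hence a smooth manifold, and in particular has the homotopy type of a CW complex. For such a space, vanishing of $\pi_n$ in all degrees $n\ge 2$ is exactly the definition of being aspherical (a $K(\pi,1)$). This completes the argument. There is no genuine obstacle: the entire content of the corollary is already packaged into the homotopy-retract conclusion of Lemma~\ref{l:normalarrange}, and the only step beyond citing it is the elementary observation that asphericity passes to homotopy retracts of CW-type spaces.
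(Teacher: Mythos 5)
Your argument is correct and matches the paper's intended reasoning: Corollary~\ref{c:aspherical} is stated as an immediate consequence of Lemma~\ref{l:normalarrange}, and the retraction $r\circ i\simeq \id$ giving split injections $i_*$ on all $\pi_n$ (hence vanishing of $\pi_n$ for $n\ge 2$) is exactly the intended mechanism. Your added remark that $\cm(\ca_{V/E})$ has CW homotopy type is a harmless and appropriate clarification.
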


\begin{definition}\label{d:parabolic}
	Let $i:\cm(\ca_{V/E})\to\cm(\ca)$ be the inclusion map  defined above. Take a base point $x\in\cm(\ca)$ and $x'\in\cm(\ca_{V/E})$. Let $\gamma$ be a path from $x\to i(x')$. A \emph{parabolic subgroup} of $G=\pi_1(\cm(\ca),x)$ of \emph{type $E$} is a subgroup that is conjugate to $\gamma (i_{\ast}(\pi_1(\cm(\ca_{V/E})),x'))\gamma^{-1}$. The parabolic subgroup is \emph{irreducible} (resp. \emph{proper}) if the subspace $E$ is irreducible (resp. if $E\neq\{0\}$). 
\end{definition}

\begin{example} 
	If $\ca$ is a finite reflection arrangement, then $G$ is the corresponding pure spherical Artin group. If $E$ is the fixed subspace of a subset of the Artin generators, then, up to conjugation,  the parabolic subgroup $P_E$ associated with $E$ coincides the usual notion of a parabolic subgroup of $G$, i.e., the intersection with $G$  and  the  subgroup of  the Artin group generated by this subset of the generators (cf.\ \cite{cgw}).  The parabolic subgroup $P_E$ is \emph{irreducible} if the normal representation $\ca_{V/E}$ is irreducible or equivalently,  if the Coxeter group corresponding to $E$ has a connected Coxeter diagram (cf.\ \cite{dh17}). 
\end{example}

Let $\parab(G)$ denote the set of parabolic subgroups of $G$ and let $\iparab (G)$ be the subset of irreducible parabolics. 
For any irreducible parabolic subgroup $P$ of type $E$, define its \emph{central $\zz$-subgroup} to be
\[
Z_P:=\Ker[\pi_1(\cm(\ca_\ve))\to \pi_1(\cm (\pp \ca_\ve))].
\]
In other words, $Z_P$ is the subgroup of $\pi_1(\cm(\ca_\ve))$ corresponding to the Hopf fiber.  By Lemma~\ref{l:center}, if $\cm(\ca)$ is aspherical, then $Z_P$ is equal to the center of $P$.

Let $H_1=H_1(\cm(\ca),\mathbb Z)$. Then $H_1$ has a basis of form $\{e_H\}_{H\in \ca}$ where $e_H$ is the $1$-cycle corresponding to a positively oriented loop around a hyperplane $H$.  If $Z_P$ is the central $\zz$ subgroup of an irreducible parabolic subgroup $P$ of type $E$, then its image in $H_1$ can be written as an integral combination of basis elements: $\sum n_H e_H$. In fact, $n_H$ is either $1$ or $0$ depending on whether or not the hyperplane $H$  contains $E$. Putting $\support Z_P=\{H\in \ca\mid n_H\neq 0\}$ we see that $\support Z_P$ depends only on the conjugacy class of $Z_P$ (or of $P$).   Moreover, since $\support Z_P=\ca_E$, the conjugacy class of $Z_P$ determines the subspace $E$. So, we have established the following lemma.

\begin{lemma}\label{l:ZE}
	Suppose $Z$ and $Z'$ are the central  $\zz$ subgroups of irreducible parabolic subgroups $P$ and $P'$.  If $Z$ and $Z'$ are conjugate in $G$, then $\type P=\type P'$. In particular, each central $\mathbb Z$-subgroup have a well-defined type.
\end{lemma}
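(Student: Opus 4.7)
The plan is to show that the subspace $E$ can be reconstructed from the conjugacy class of $Z_P$ in $G$, which forces $\type P = \type P'$ whenever $Z$ and $Z'$ are conjugate. The essential ingredients have already been assembled in the paragraph preceding the statement; what remains is to assemble them into a three-step argument.

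First, I would observe that the image of $Z_P$ in $H_1(\cm(\ca),\zz)$ is an invariant of the conjugacy class of $Z_P$ in $G$. This is because $H_1$ is abelian, so the abelianization map $G \to H_1$ factors through the set of conjugacy classes; hence conjugate elements of $G$ have equal images in $H_1$, and in particular conjugate subgroups have the same image subgroup. Consequently the set $\support Z_P = \{H \in \ca \mid n_H \neq 0\}$, read off from the coefficients $n_H$ of the image generator, is also a conjugacy invariant.

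Second, I would invoke the computation recorded just above the lemma, namely $\support Z_P = \ca_E$, where $\ca_E = \{H \in \ca \mid E \le H\}$. So if $Z$ and $Z'$ are conjugate in $G$, with $Z = Z_P$ of type $E$ and $Z' = Z_{P'}$ of type $E'$, then $\ca_E = \ca_{E'}$.

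Third, I would recover $E$ from $\ca_E$. Since $E \in \cq(\ca)$, the subspace $E$ is by definition an intersection of hyperplanes in $\ca$; adding to that intersection any further hyperplane that already contains $E$ does not change it. Thus
\[
E = \bigcap_{H \in \ca_E} H,
\]
and similarly $E' = \bigcap_{H \in \ca_{E'}} H$. The equality $\ca_E = \ca_{E'}$ therefore yields $E = E'$, i.e.\ $\type P = \type P'$. The parenthetical ``in particular, each central $\mathbb Z$-subgroup has a well-defined type'' follows at once, since for a single subgroup $Z$ that happens to arise as $Z_P$ and also as $Z_{P'}$, the identity conjugation puts us in the hypothesis of the lemma. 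There is no real obstacle here: once conjugation-invariance of $\support Z_P$ is noted and the identification $\support Z_P = \ca_E$ is in hand, the proof reduces to the tautology that a subspace in $\cq(\ca)$ is determined by the set of hyperplanes containing it.
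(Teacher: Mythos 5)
Your proposal is correct and follows exactly the argument the paper itself uses in the paragraph immediately preceding the lemma: pass to the abelianization $H_1(\cm(\ca),\zz)$, observe that $\support Z_P$ is therefore a conjugacy invariant, identify it with $\ca_E$, and recover $E$ as the intersection of the hyperplanes in $\ca_E$. There is no gap, and nothing to compare beyond noting that you have just spelled out in three explicit steps what the paper presents as a short running paragraph.
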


\begin{definition}
	\label{d:algebraic curve complex}
	We define a simplicial complex $\calg$, called the  \emph{algebraic curve complex}, as follows. First suppose the arrangement $\ca$ is irreducible. The vertex set of $\calg$ are in one-to-one correspondence with central $\mathbb Z$-subgroup of irreducible proper parabolic subgroups of $G$.  Two vertices $v'$ and $v$ are connected by an edge if the corresponding $\mathbb Z$-subgroups generates a subgroup isomorphic to $\mathbb Z\oplus\mathbb Z$.  This defines the \emph{algebraic curve graph}.  The algebraic curve complex will be the completion of this graph to a flag complex. The algebraic curve complex for a reducible arrangement is the join of the algebraic curve complex of its irreducible components. 
\end{definition}

It follows from Lemma~\ref{l:ZE} that each vertex of $\calg$ has a well-defined type, which is an element in $\zci$.

Now we describe certain collection of edges in $\calg$. Take irreducible subspaces $E$ and $E'$. If $E<E'$, then for  appropriate choices of base points and paths connecting them, we can find parabolic subgroups $P$ and $P'$ of type $E$ and $E'$ respectively such that $P'<P$. This implies that the infinite cyclic subgroups $Z_P$ and $Z_{P'}$ commute (since $Z_P$ commutes with everything  in $P$) and that they generate a free abelian subgroup of rank 2 (since they do so in $H_1$). If $E\cap E'$ is reducible, i.e. $\ca_\ve \oplus \ca_\ve '$ is a decomposition of $\ca_{V/(E\cap E')}$, then for appropriate choices of base points we find parabolic subgroups $P$ and $P'$ such that $Z_P$ and $Z_{P'}$ commute. However, in general it is not clear whether all edges of $\calg$ arise in these two situations.

\subsection{Blowing up subspaces}\label{ss:tubular} \paragraph{Definition of a manifold with faces.} A topological $n$-manifold with boundary is a \emph{smooth manifold with corners} if it is locally differentiably modeled on open sets in $[0,\infty)^n$.  Suppose $X$ is an $n$-manifold with corners. If a point $x\in X$  has local coordinates  $(x_1, \dots, x_n)\in [0,\infty)^n$, then denote by $c(x)$ the number of indices $i$ such that $x_i=0$.  (This number  is independent of the choice of local coordinates.) A ``stratum'' of $X$ of codimension $k$ means a connected component of $\{x\in X\mid c(x)=k\}$.  So, a codimension-zero stratum of $X$ is a component of the interior $X-\db X$ and $\db X$ is the union of the strata of $X$ of codimension $\ge 1$.  A point $x\in X$ lies in the closures of at most $c(x)$  strata of codimension one. Call $X$  a \emph{manifold with faces} if each point $x\in X$ is in the closure of exactly $c(x)$  codimension-one strata.  When this is the case, the closure of a codimension-$k$ stratum is a \emph{face} of $X$ of codimension $k$.    A face of $X$ is itself a manifold with faces.  A covering space of a manifold with faces is naturally a manifold with faces.  A prototypical example of an $n$-manifold with faces is a $n$-dimensional  simple convex polytope $P$ is  where  a ``face'' of $P$ has its usual meaning.  

A closed subset $Y\subset X$ is a \emph{submanifold with faces} if the induced smooth structure on $Y$ is that of a manifold with corners (say, of dimension $m$) and if for  any codimension-$k$ face $F$ of $X$, $Y$ intersects $F$ transversely in a disjoint union of codimension-$k$ faces of $Y$.  it follows that $Y$ is itself an $m$-dimensional manifold with faces. In particular, taking $k=0$ we see that $Y-\db Y$ is an $m$-dimensional submanifold of $X-\db X$.

Let $\cv(X)$ denote the set of codimension-one faces of $X$.  Consider the nerve $\cn$ ($=\cn(X)$) of the cover of $\db X$ by the codimension-one faces of $X$.  It is a simplicial complex with vertex set $\cv(X)$ where a $(k-1)$-simplex $\ga$ of $\cn$ corresponds to a collection of $k$ codimension-one faces with nonempty intersection. 
Any such intersection is a disjoint union of codimension-$k$ faces. There is a related cell complex $\gD(X)$ which differs from $\cn$ in that it can have ``multiple simplices.''  For example, if $X$ has two codimension-one faces $\db_0 X$ and $\db_1 X$ and $\db_0X\cap \db_1X \neq \emptyset$, then $\cn(X)$ consists of a single edge connecting $0$ to $1$; however, in $\gD(X)$ there is an edge for each component of the intersection.  In general, the poset of faces in $\db X$ is anti-isomorphic to the poset of cells in a cell complex $\gD(X)$.  The vertex set of $\gD(X)$ is $\cv(X)$ and each codimension $k$ face determines a $(k-1)$-simplex in $\gD(X)$. So, there is a natural map $\gD(X)\to \cn(X)$ whose restriction to each simplex is a homeomorphism. However, since multiple simplices can have the same vertex set, we see that  $\gD(X)$ is only  a $\gD$-complex in the sense of \cite{hatcher}.

Say that $X$ has the \emph{connected intersection property} if each intersection of codimension-one faces is either empty or contains a single face.  When this holds, the natural map $\gD(X)\to \cn(X)$ is an isomorphism and so, $\gD(X)$ is a simplicial complex. 
Note however, that  it might happen that $X$ has the connected intersection property while a  covering space  $Y$ does not. This is the issue we shall be concerned with in subsection~\ref{ss:curvetop}.

\paragraph{Blowing up subspaces and submanifolds.} In this paragraph we explain a method for canonically  ``removing an open tubular neighborhood'' of a submanifold $Y$ in a manifold $M$ without mentioning the phrase ``$\eps$ neighborhood.''  The resulting manifold with boundary $M\odot Y$ is a bordification of $M-Y$ called the \emph{blowup} of $M$ along $Y$.  We  consider some simple examples of this process.  First consider the case where $M$ is a vector space $V$ and $Y$ is the zero subspace $\bf{0}$.  The sphere in $V$ can be defined as the quotient space $SV:=(V-\mathbf{0})/\rr_+$, where the positive real numbers $\rr_+$ act on $V-\mathbf{0}$ via scalar multiplication.  Define the blowup $V\odot \mathbf{0}$ by
\[
(V\odot \mathbf{0}):= (V-\mathbf{0})\times _{\rr_{+}} [0,\infty) \cong SV\times [0,\infty).
\]
Note that $\db(V\odot \mathbf{0})=SV\times \zero$.  A slight generalization is the case where $M=V$ and $Y=E$ is a linear subspace of $V$.  Then $V-E=E\times (V/E-\mathbf 0)$ and
\begin{equation}\label{e:ve}
(V\odot E):= E\times \{(V/E-\mathbf 0)\times _{\rr_{+}} [0,\infty)\} =E\times S(V/E)\times [0,\infty).
\end{equation}

There is another approach to the definition of the blowup $V\odot E$, which we shall generalize in the next subsection.  Define a map $\gr:V-E\to V\times S(V/E)$ as follows. The first component of $\gr$ is the inclusion $V-E\hookrightarrow V$.  Its second component is the composition of projections $V-E\to (V/E)-(E/E)\to S(V/E)$.  One sees that the closure of the image of $\gr$ is can be identified with the blow up, that is, 
\begin{equation}\label{e:imagr}
V\odot E \cong \ol {\Ima\gr}.
\end{equation}
Essentially, the blowup $V\odot E$ is  the only case with which we need be concerned; however, there are some slightly more general situations.  If $N$ is a vector bundle over a smooth manifold $Y$, then its \emph{sphere bundle} $S N$ is defined by
\(
S N=(N-Y)/\rr_+, 
\) 
where $\rr_+$ acts on the complement of the $0$-section $N-Y$ via fiberwise scalar multiplication.  The \emph{cylinder bundle} $CN$ is then defined by
\[
CN :=(N-Y)\times _{\rr_{+}} [0,\infty) = SN\times [0,\infty).
\]
Thus, $N\odot Y:=CN$ is a bordification of $N-Y$.  In general, suppose $Y$ is a smooth submanifold of a manifold $M$ and that $N$ is its normal bundle.  Our discussion follows \cite{janich}*{} or \cite{d78}*{}. 
By a \emph{tubular map} we mean a diffeomorphism $T:N\to M$ onto an open neighborhood of $Y$ such that 1) the restriction of $T$ to the $0$-section is the inclusion, and 2) at any $y\in Y$, under the natural identification $T_yN = T_yM$, the differential of $T$ is the identity map.   A tubular map $T$ induces a function $T': CN\to (M-Y)\sqcup S N$ and there is a unique structure
of a smooth manifold with boundary on $(M-Y)\sqcup S N$ which agrees with the original structure on $M-Y$ so that $T'$ takes $CN$ onto a collared neighborhood of $S N$.  We denote this manifold with boundary,  $M\odot Y$, and call it the \emph{blow-up} of $M$ along $Y$.  It is diffeomorphic to a complement of an open tubular neighborhood of $Y$ in $M$.  Note that $\db (M\odot Y)= S N$. 

Finally, suppose $Y$ is a submanifold with faces of a manifold with faces $X$.  
Then the previous paragraph goes through \emph{mutatis mutandis}.  First, there is a normal vector bundle $N$ over $Y$ such that for any face $F$ of $X$, the restriction $N\vert_{Y\cap F}$ is the normal bundle of $Y\cap F$ in $F$.  As before, we get a new manifold with faces $X\odot Y$ that has acquired a new codimension-one face, namely, $S N$.  Moreover, each face $F$ of $X$ with $F\cap Y \neq \emptyset$ has a bordification $F\odot (Y\cap F)$  that also has acquired a new codimension-one  face, namely,  $S N\vert_{Y\cap F}$.  Thus, $F\odot (Y\cap F)$ is a submanifold with faces of $X\odot Y$.  This generality will allow us, in the next subsection, to iterate the blowing up procedure to certain sequences of submanifolds.

\subsection{Attaching a boundary to the complement of a hyperplane arrangement}\label{ss:arrangements}
Our goal in this subsection is to construct a bordification of $\cm(\ca)$, where, as usual, $\cm(\ca)$ is the complement of an arrangement $\ca$ of linear hyperplanes in a complex vector space $V$.  The idea is to blow up the subspaces belonging to some subset $\cs$ of  $\cq(\ca)$.  In order for the result to be  a manifold with faces this subset $\cs$ must be a ``building set'' as defined in \cite{dp}, \cite{djs} or \cite{fm}. One of the main requirements for a subset to be a building set is that it contain all the irreducible elements in $\cq(\ca)$.  One possible choice  is to take the building set to be all 
of $\cq(\ca)$.  Another possible choice is to take the building set to be $\zci(\ca)$, the set of  irreducibles in $\cq(\ca)$.  This second choice is the one we make throughout this paper. 
\paragraph{Method 1: the closure of an embedding.}  This method is the easiest  to define.  Its disadvantage is that it is not so clear that it actually results in a smooth manifold with corners.
The simplest definition of the blowup is similar to one in \cite{dp}*{p. 461}, see also \cite{gaiffi03}. Let $\cs$ be a collection of linear subspaces of $V$. Consider the embedding 
\[
\gr:V-(\bigcup_{E\in \cs}E) \to V \times \prod_{E\in \cs}  S(V/E),
\]
where the first component of $\gr$ is the inclusion and the component in $S(V/E)$ is the natural projection (which is defined since $V-E\subset V- (\bigcup_{E\in\cs} E)$). Note that if $\cs$ contains all hyperplanes $H\in \ca$, Then the domain of $\gr$ is $\cm(\ca)$.  In a similar fashion, one can define
\[
\gr_S:SV-(\bigcup_{E\in\cs} SE) \to SV \times \prod_{E\in \cs}  S(V/E).
\]

\begin{definition}\label{d:BS} 
	As in \eqref{e:imagr},
	define $V\odot\cs$ (resp. $SV\odot\cs$) to be the closure of the image of $\gr$ (resp. $\gr_S$). If $\cs=\zci$, we write simply $V_\odot$ (resp. $SV_\odot$) instead of $V\odot\cs$ (resp. $SV\odot\cs$). 
\end{definition}

\begin{remark}
	In \cite{dp} De  Concini and Procesi use complex projective spaces $\pp(V/E)$ rather than spheres $S(V/E)$ so that $\cm(\ca)$ is partially compactified by adding a divisor with normal crossings to $\cm(\ca)$ rather than a boundary. 
\end{remark}

Now we introduce the notion of building set, cf. \cite{gaiffi03}*{Definition 2.1} which goes back to \cite{dp}. Let $\cq(\cs)$ be the collection of subspaces of $V$ formed by intersection of elements of $\cs$. In this paper we will be only interested in the case when $\cq(\cs)=\cq(\ca)$ (i.e. $\cs$ is a subset of $\cq(\ca)$ which contains all the hyperplanes in $\ca$). For subspaces $F,F'\in \cq(\ca)$, put 
\begin{align*}
\cs_F&:=\{E\in\cs\mid F\leq E\},\\  
\cs^F&:=\{F\cap B\mid B\in \cs-\cs_F\},\\ 
\cs^{F}_{F'}&:=\{B\cap F\mid B\in \cs_{F'}-(\cs_{F'}\cap \cs_{F})\}. 
\end{align*}
Choose an inner product on $V$. Then for any $F\in \cq(\ca)$ we can identify with its orthogonal complement $F^\perp$ with $V/F$. The set $\cs$ is a \emph{building set} if for any $E\in\cq(\ca)$, we have a decomposition	
\[
E=E_1\cap\cdots \cap E_k \quad\text{ and }\quad\ca_{V/E}\cong \bigoplus_{i=1}^k  \ca_{V/E_i}.
\]
where $E_1, \dots, E_k$ are the minimal elements of $\cs_F$ (with respect to the partial order on $\cq(\ca)$). It follows that any building set $\cs$ must contain the set of irreducibles $\zci$.  It also follows that $\zci$ is a building set.

The following lemma is an immediate consequence of the definition.
\begin{lemma}
	\label{building}
	Suppose $\cs$ is a building set. For each $E\in \cq(\ca)$, both $\cs^E$ and $\cs^{E^\perp}_E$ are buildings sets (with respect to $\cq(\ca^E)$ and $\cq(\ca_{V/E})$ respectively).  
	\end{lemma}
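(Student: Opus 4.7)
The plan is to verify the defining property of a building set for each of $\cs^E$ and $\cs^{E^\perp}_E$ by pulling back to $\cq(\ca)$, where the building property of $\cs$ is available. The two halves behave differently: the normal case is essentially formal, while the restriction case requires a careful analysis of how the direct-sum decomposition of $\ca_{V/F}$ interacts with the subspace $E/F \subset V/F$.

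For $\cs^{E^\perp}_E$ in $\cq(\ca_{V/E})$: every $F \in \cq(\ca_{V/E})$ lifts uniquely to $\tilde F \in \cq(\ca)$ with $E \leq \tilde F$ and $F = \tilde F/E$. The building decomposition of $\cs$ at $\tilde F$ produces minimal $E_1, \dots, E_k \in \cs_{\tilde F}$ with $\tilde F = \bigcap E_i$ and $\ca_{V/\tilde F} \cong \bigoplus \ca_{V/E_i}$. Because $E \leq \tilde F \leq E_i$, each $E_i$ automatically lies in $\cs_E$ and, under $V/E \cong E^\perp$, descends to $E_i/E \in \cs^{E^\perp}_E$. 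The canonical identifications $V/\tilde F \cong (V/E)/(\tilde F/E)$ and $V/E_i \cong (V/E)/(E_i/E)$ transport both the intersection identity and the direct-sum decomposition intact into $\cq(\ca_{V/E})$, and the order-preserving correspondence $\cs_E \setminus \{V\} \leftrightarrow \cs^{E^\perp}_E$ shows the $E_i/E$ are precisely the minimal elements of $(\cs^{E^\perp}_E)_F$.

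For $\cs^E$ in $\cq(\ca^E)$: given $F \in \cq(\ca^E) \subseteq \cq(\ca)$, the building decomposition of $\cs$ at $F$ yields minimal $E_1, \dots, E_k \in \cs_F$ with $F = \bigcap E_i$, $\ca_{V/F} \cong \bigoplus \ca_{V/E_i}$, and a corresponding splitting $V/F = \bigoplus V_l$ with $V_l \cong V/E_l$. Partition the indices into $J = \{i : E_i \geq E\}$ and $I = \{i : E_i \not\geq E\}$. Since $E \leq \bigcap_{j \in J} E_j$, one obtains $F = E \cap \bigcap_{i \in I} E_i$. The candidates for the minimal elements of $(\cs^E)_F$ are the traces $\{E \cap E_i\}_{i \in I}$; a short argument shows any element of $(\cs^E)_F$ lies above some $E \cap E_j$, $j \in I$ (lift to $\cs$ and use minimality in $\cs_F$), and the direct-sum decomposition of $\ca_{V/F}$ restricted along $E/F \hookrightarrow V/F$ produces the decomposition $\ca^E_{E/F} \cong \bigoplus_{i \in I} \ca^E_{E/(E \cap E_i)}$.

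The main obstacle is the restriction case, specifically showing that the traces $\{E \cap E_i : i \in I\}$ are pairwise incomparable in $\cq(\ca^E)$ and that the restricted arrangement genuinely decomposes as claimed. The subtlety is that incomparable minimal $E_i, E_j \in \cs_F$ need not have incomparable traces on $E$, and the direct-sum splitting of $V/F$ need not align with $E/F$. I would handle this by combining the building decomposition of $\cs$ at $F$ with that at $E$, together with a dimension count matching the irreducible factors of $\ca^E_{E/F}$ with those of $\ca_{V/F}$ that survive the restriction. Once this matching is in place, the incomparability of the traces and the direct-sum decomposition of $\ca^E_{E/F}$ fall out simultaneously.
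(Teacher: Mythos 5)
The paper gives no proof: it records the lemma with the single remark that it is ``an immediate consequence of the definition,'' so there is nothing to compare against directly. Your plan is essentially correct in both halves, and your treatment of $\cs^{E^\perp}_E$ is clean and complete (the bijection $\cs_E \setminus \{V\} \leftrightarrow \cs^{E^\perp}_E$, $B \mapsto B/E$, transports the decomposition at $\tilde F$ verbatim). For $\cs^E$, your partition into $I$ and $J$, the observation that $F = \bigcap_{i \in I}(E \cap E_i)$, and the lifting argument showing every element of $(\cs^E)_F$ lies above some $E \cap E_i$ with $i \in I$ are all right.

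Where you go astray is in characterizing the alignment of $E/F$ with the splitting $V/F = \bigoplus_l V_l$ as a genuine obstacle needing a dimension count or a second building decomposition at $E$. It is automatic. Since $F \le E$ and both lie in $\cq(\ca)$, the image $E/F$ is an element of $\cq(\ca_{V/F})$, and the intersection lattice of a direct sum is the product of the factor lattices: $\hat\cq(\ca_{V/F}) = \prod_l \hat\cq\bigl((\ca_{V/F})|_{V_l}\bigr)$. Hence $E/F = \bigoplus_l (E/F)_l$ with $(E/F)_l \in \hat\cq$ of the $l$-th factor, for free. With this in hand, $I = \{i : (E/F)_i \neq 0\}$, the trace satisfies $(E \cap E_i)/F = \bigoplus_{l \neq i}(E/F)_l$ (using $E_i/F = \bigoplus_{l\neq i} V_l$), and pairwise incomparability of the $E \cap E_i$ for $i \in I$ is immediate: for $i \neq i'$ in $I$, $(E/F)_i$ is contained in $(E\cap E_{i'})/F$ but not in $(E\cap E_i)/F$. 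The decomposition $\ca^E_{E/F} \cong \bigoplus_{i \in I}\ca^E_{E/(E\cap E_i)}$ then follows from the general rule $(\cb_1 \oplus \cb_2)^{U_1 \oplus U_2} = \cb_1^{U_1} \oplus \cb_2^{U_2}$ applied factor by factor, discarding the factors with $(E/F)_l = 0$. So the proof is simpler than you anticipated, and the detour through a dimension count is unnecessary; the rest of your argument is fine.
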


\begin{Theorem}\label{t:gaiffi}\textup{(Gaiffi \cite{gaiffi03}*{Theorem 4.5}).}
	Suppose $\cs$ is a building set. Then $V\odot\cs$ (resp. $SV\odot\cs$) are smooth manifolds with faces. In particular, when $\cs=\zci$, this applies to $V_\odot$ and $SV_\odot$.
\end{Theorem}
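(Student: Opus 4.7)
The plan is to proceed by iterated blowup along the elements of $\cs$, taken in a suitable order. Linearly order $\cs=\{E_1,\dots,E_N\}$ as a refinement of the partial order on $\cq(\ca)$ by inclusion, so that whenever $E_i\subsetneq E_j$ one has $i<j$ (deeper/smaller subspaces are blown up first). Set $M_0:=V$, viewed as a manifold with faces with no boundary, and inductively define $M_k:=M_{k-1}\odot\widetilde{E_k}$, where $\widetilde{E_k}$ is the strict transform of $E_k$ in $M_{k-1}$, i.e.\ the closure in $M_{k-1}$ of $E_k\setminus\bigcup_{i<k}E_i$. The key claim, to be proved by induction on $k$, is that each $M_k$ is a smooth manifold with faces and that the strict transform of every remaining $E_j$ ($j>k$) is a smooth submanifold with faces of $M_k$. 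Granted this, $V\odot\cs:=M_N$ is the required manifold with faces.

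The inductive step is the heart of the argument. Fix a Euclidean inner product on $V$ identifying each $V/F$ with $F^\perp$. Given that $M_{k-1}$ is a manifold with faces, I would study the local structure near a point $x\in\widetilde{E_k}$ lying on several exceptional divisors produced by earlier blowups. By the chosen ordering, the relevant earlier divisors at $x$ correspond to those $E_i\subsetneq E_k$ ($i<k$) whose strict transforms contain $x$. The building set property, applied at $F=E_k$, provides a decomposition $\ca_{V/E_k}\cong\bigoplus_{\alpha}\ca_{V/F_\alpha}$ where $\{F_\alpha\}$ are the minimal elements of $\cs_{E_k}$. Translated through the orthogonal decomposition, this gives a product structure on a neighborhood of $x$ in $V$ in which each previously-blown-up $E_i\subsetneq E_k$ and the subspace $E_k$ itself respect the product. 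Using the model calculation~\eqref{e:ve} together with the fact that a product of manifolds with faces is again a manifold with faces, one concludes that locally $M_{k-1}\odot\widetilde{E_k}$ is a product of iterated blowups of vector spaces along building sets of strictly lower dimension. This simultaneously shows that $\widetilde{E_k}$ is a smooth submanifold with faces of $M_{k-1}$ and that $M_k$ is a manifold with faces. Lemma~\ref{building} --- that $\cs^E$ and $\cs^{E^\perp}_E$ are again building sets --- is precisely what makes the induction close up, since the factor arrangements in the local product decomposition are themselves building-set blowups.

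Next I would identify the iteratively constructed $M_N$ with the space $V\odot\cs$ of Definition~\ref{d:BS}. The natural projections $M_k\to S(V/E_i)$ for $i\le k$ are well-defined and, together with the blow-down $M_N\to V$, assemble into a continuous map $M_N\to V\times\prod_{E\in\cs}S(V/E)$ that restricts to $\rho$ on the open stratum $\cm(\ca)\subset M_N$. A check in local coordinates shows this map is a homeomorphism onto the closure $\overline{\Ima\rho}=V\odot\cs$ from~\eqref{e:imagr}, compatibly with smooth structures. The spherical version $SV\odot\cs$ follows by applying the construction to $\cs\cup\{\mathbf{0}\}$ (when $\mathbf{0}\notin\cs$, else to $\cs$ itself): the codimension-one face of the resulting space corresponding to the exceptional divisor over $\mathbf{0}$ is canonically $SV\odot\cs$, and any face of a manifold with faces is itself a manifold with faces.

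The main obstacle is the local analysis in the inductive step. Verifying that $\widetilde{E_k}$ meets \emph{every} existing codimension-one face of $M_{k-1}$ transversely, with each component of intersection being a single face of $\widetilde{E_k}$ --- the condition distinguishing ``manifold with faces'' from merely ``manifold with corners'' --- is combinatorially delicate, and it is at exactly this point that the full strength of the building set hypothesis is used. Without the decomposition $\ca_{V/F}\cong\bigoplus\ca_{V/F_\alpha}$ into minimal elements of $\cs_F$, iterated blowups along non-transversely intersecting subspaces can produce corners with disconnected face intersections, which would fail the manifold-with-faces property even if smoothness is preserved.
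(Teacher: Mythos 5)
The paper does not give its own proof of this theorem: it is quoted directly from Gaiffi \cite{gaiffi03}*{Theorem 4.5}, and the paper only sketches the iterated-blowup construction later (as ``Method 2'' in Subsection~\ref{ss:arrangements}), deferring the details to Gaiffi. Your proposal follows that same iterated-blowup route, so at the level of architecture --- linear refinement of the inclusion order, inductive claim that each $M_k$ is a manifold with faces and that remaining strict transforms are submanifolds with faces, local product structure via the building set condition, and finally identification of $M_N$ with $\overline{\Ima\gr}$ --- this is the right approach and matches what the paper gestures at.

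There is, however, a genuine slip in how you invoke the building set condition. You apply it ``at $F = E_k$'' to claim a decomposition $\ca_{V/E_k}\cong\bigoplus_\alpha\ca_{V/F_\alpha}$ where $\{F_\alpha\}$ are the minimal elements of $\cs_{E_k}$. But since $E_k\in\cs$, the unique minimal element of $\cs_{E_k}=\{F\in\cs: E_k\subset F\}$ is $E_k$ itself, so this decomposition is trivial ($\ca_{V/E_k}\cong\ca_{V/E_k}$) and furnishes no product structure. The non-trivial content of the building set hypothesis appears when it is applied to a subspace $G\in\cq(\ca)$ that is \emph{not} in $\cs$ --- concretely, to $G=E_{i_1}\cap\cdots\cap E_{i_m}$ where $E_{i_1},\dots,E_{i_m}$ are the previously blown-up members of $\cs$ whose exceptional divisors all pass through the point $x$ under consideration. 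The building set condition then says precisely that the minimal elements of $\cs_G$ (which, when the exceptional divisors actually meet, are the $E_{i_j}$'s) give a direct-sum splitting $\ca_{V/G}\cong\bigoplus_j\ca_{V/E_{i_j}}$, and it is \emph{this} splitting that produces the local product model and the transversality. A second, smaller inaccuracy: you restrict attention to earlier divisors coming from $E_i\subsetneq E_k$, but a divisor from an $E_i$ incomparable to $E_k$ (with $i<k$) can perfectly well meet $\widetilde{E_k}$ --- e.g., two lines in $\cc^2$ blown up in either order --- so the case analysis at $x$ must allow for incomparable $E_i$ as well. Both issues are repairable, but they are at exactly the place where, as you note yourself, the full strength of the building set hypothesis is spent, so the argument as written does not yet close.
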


By Theorem~\ref{t:gaiffi} and Lemma~\ref{building}, the blowups $S(V/E)\odot\cs^{E^\perp}_E$, $(SE)\odot\cs^E$ and $E\odot\cs^E$ are smooth manifolds with faces.

Next we describe the strata of $V\odot \cs$ for a building set $\cs$.
Its codimension-zero stratum  is  $\cm(\ca)$. Each element $E\in \cs$ 
gives rise to a codimension-one stratum of $V\odot\cs$ as follows. Let $p:V\odot\cs\to V$ (resp. $p_S:SV_\odot\to SV$) be the projection to the first factor in the definition of $\gr$ (resp. $\gr_S$). Define
\[
\partial_E (V\odot\cs):=\overline{p^{-1}(E-\bigcup_{F\in \cs^E}F)}\,,
\]
and
\[
\partial_E (SV\odot\cs):=\overline{p^{-1}_S(SE-\bigcup_{F\in \cs^E}SF)}\,,
\]

\begin{proposition}\label{p:description} \textup{(Gaiffi \cite[Theorem 5.1]{gaiffi03}).} For any $E$ in the building set $\cs$ there are diffeomorphisms, $\partial_E (V\odot\cs)\cong (S(V/E)\odot\cs_E^{E^\perp}) \times (E\odot\cs^E)$ and $\partial_E (SV\odot \cs)\cong (S(V/E)\odot\cs_E^{E^\perp})\times (SE\odot\cs^E)$. 
\end{proposition}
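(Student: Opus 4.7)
The plan is to work with the explicit description $V\odot\cs = \overline{\Ima \rho}$ from Definition~\ref{d:BS} and analyze the limit points defining the codimension-one face $\partial_E(V\odot\cs)$ via an orthogonal decomposition. Fix an inner product so that $V = E \oplus E^\perp$, identify $V/E$ with $E^\perp$, and write points near $E$ in polar coordinates $(x, t, \hat y)$ with $x \in E$, $t \in [0,\infty)$, and $\hat y \in S(V/E)$, corresponding to the vector $x + t\hat y$. With these coordinates the blowup $V\odot\cs$ can be studied one factor of $\rho$ at a time.

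First I would partition the building set as $\cs = \cs_E \sqcup (\cs - \cs_E)$. For $F \in \cs_E$, write $F = E \oplus (F \cap E^\perp)$ so that the projection $V \to V/F$ sends $(x, t\hat y)$ to the class of $\hat y$ in $S(V/F) = S(E^\perp/(F \cap E^\perp))$; in particular this component depends only on $\hat y$ and extends smoothly as $t \to 0$. As $F$ varies over $\cs_E$, the subspaces $F \cap E^\perp$ form the building set $\cs_E^{E^\perp}$ inside $V/E$ (by Lemma~\ref{building}), and by the very same closure construction these components together reconstruct a point of $S(V/E) \odot \cs_E^{E^\perp}$.

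Second, for $F \in \cs - \cs_E$ the intersection $F \cap E$ is a proper subspace belonging to $\cs^E$. For $x$ in the open subset $E - \bigcup_{F' \in \cs^E} F'$, the projection $(x, t\hat y) \mapsto [x + t\hat y]/\norm{[x + t\hat y]} \in S(V/F)$ extends continuously to $t=0$ with value $[x]/\norm{[x]}$, since $[x] \neq 0$ in $V/F$. Iterating the closure construction for $\cs^E$ inside $E$ (another building set by Lemma~\ref{building}), these components together with the base coordinate $x$ reconstruct a point of $E \odot \cs^E$. Combining the two steps yields a canonical map
\[
\partial_E(V\odot\cs) \longrightarrow (S(V/E)\odot\cs_E^{E^\perp}) \times (E\odot\cs^E),
\]
and tracking the coordinates shows it is a bijection; smoothness in the chart coordinates described above then upgrades it to a diffeomorphism.

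For the spherical version, the same analysis applies with $V$ replaced by $SV$ and $E$ by $SE$: a collar neighborhood of $SE$ in $SV$ has the form $SE \times [0,\infty) \times S(V/E)$, the $\cs_E$-components still depend only on $\hat y$ and assemble to $S(V/E) \odot \cs_E^{E^\perp}$, while the $(\cs - \cs_E)$-components together with the base assemble to $SE \odot \cs^E$. The main obstacle is verifying compatibility of the iterated closures with nested subspaces in $\cs_E$ and $\cs^E$; this is precisely what the building-set axiom ensures, since it forces the natural maps $S(V/E) \to S(V/F)$ for various $F \in \cs_E$ to be compatible with the unique decomposition into minimal elements, and similarly for the restrictions in $E$. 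Once this bookkeeping is established the diffeomorphism type of each boundary face is read off directly from the product of blowups.
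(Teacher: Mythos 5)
Your sketch follows essentially the same route the paper indicates: fix an inner product, parametrize a punctured neighborhood of $E$ by $(x,t,\hat y)$ with $x$ in the open part $E-\bigcup_{F'\in\cs^E}F'$, $t\in(0,\infty)$, $\hat y\in S(V/E)$; analyze the $\cs_E$-components and the $(\cs-\cs_E)$-components of the embedding $\gr$ separately as $t\to 0$; and assemble the limits into the two blowup factors. The paper itself defers the proof to Gaiffi and offers only the same heuristic picture (small segments emanating from the interior of $E\odot\cs^E$, orthogonal to $E$, in the direction of the interior of $S(V/E)\odot\cs^{E^\perp}_E$), so your more detailed account is consistent with, and somewhat fuller than, what the paper itself supplies.
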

The diffeomorphism arises as follows. Let $K_1$ be the interior of $E\odot\cs^E$ and $K_2$ be the interior of $S(V/E)\odot\cs_E^{E^\perp}$. By considering small line segments emanating from $K_1$, orthogonal to $E$ and going in the direction of $K_2$, we clearly see a copy of $K_2\times K_1$ in $V_\odot$. The closure of this set gives $\partial_E V\odot\cs$.

From now on we will only be interested in the case where $\cs=\zci$, the collection of irreducible elements in $\cq(\ca)$. For any $F\in \cq(\ca)$, define $F_\odot$ to be $F\odot\cs^F$ and $S(V/F)_\odot$ to be $S(V/F)\odot(\cs^{F^\perp}_F)$.
It is shown in \cite{gaiffi03}*{Section 5} that $\{\partial_E V_\odot\}$ is the set of codimension-one strata of $V_\odot$; moreover, the union is $\partial V_\odot$. By Proposition~\ref{p:description}, $\partial_E V_\odot$ is connected and hence, is a codimension-one face of $V_\odot$.

Next we describe intersections of codimension-one faces that give rise to faces of higher codimension. Let $\alpha$ be a subset of $\ci^{(0)}$. Define $$\partial_\alpha V_\odot:=\bigcap_{E\in \alpha}\partial_EV_\odot\, ,\qquad \partial_\alpha SV_\odot:=\bigcap_{E\in \alpha}\partial_ESV_\odot.$$
Let $K_\alpha$ be the forest as in Definition~\ref{def:forest}. 
For $E\in \vertex \ga$, let $\{E_0,E_1,\ldots,E_k\}$ be vertices of $K_\alpha$ in the next level. Set $\hat E=\bigcap_{i=0}^k E_i$ (if there are no vertices in the next level, then $\hat E=V$). As $E$ is irreducible, $E\subsetneq \hat E$. Define $S(\hat E/E)_\odot$ to be $S(\hat E/E)\odot\cs^{\hat E\cap E^\perp}_E$. Let $E_\alpha$ be the intersection of subspaces of $V$ arising from vertices of $\alpha$ at level 0. Define $(E_\alpha)_\odot$ to be $E_\alpha\odot(\cs^{E_\alpha})$.

Suppose $\alpha$ is nested. By Lemma~\ref{building}, for each $E\in \vertex\ga$, $\cs^{\hat E\cap E^\perp}_E$ is a building set and $\cs^{\cap\,  \alpha(0)}$ is a building set. Hence, $S(\hat E/E)_\odot$ and $(E_\alpha)_\odot$ are smooth manifolds with faces.

\begin{Theorem}
	\label{thm:higher codimension pieces}\textup{(Gaiffi \cite{gaiffi03}*{Theorems 5.2 and 5.3}).}
	The intersection  $\partial_\alpha V_\odot$ is nonempty if and only if $\alpha$ is nested. Moreover, there are diffeomorphisms of smooth manifolds with faces: $\partial_\alpha V_\odot\cong (E_\alpha)_\odot\times\prod_{E\in\alpha} S(\hat E/E)_\odot$ and $\partial_\alpha SV_\odot\cong S(E_\alpha)_\odot\times\prod_{E\in\alpha} S(\hat E/E)_\odot$.
\end{Theorem}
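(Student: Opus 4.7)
My plan is to prove both claims simultaneously by induction on $|\alpha|$. The base case $|\alpha|=1$ is exactly Proposition~\ref{p:description}: for $\alpha=\{E\}$, the forest $K_\alpha$ has no higher levels, so $\hat E=V$, $E_\alpha=E$, and the claimed formula reduces to the decomposition $\partial_E V_\odot \cong E_\odot \times S(V/E)_\odot$ of Proposition~\ref{p:description}.

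For the inductive step I would choose a minimal vertex $E^*\in\alpha$ (a level-$0$ vertex of $K_\alpha$) and partition $\alpha - \{E^*\}$ into $\alpha_{>E^*}$ (elements strictly above $E^*$ in $\cq(\ca)$) and $\alpha_{\perp E^*}$ (elements incomparable to $E^*$). Since
\[
\partial_\alpha V_\odot \;=\; \partial_{E^*}V_\odot \cap \bigcap_{F\in\alpha-\{E^*\}}\partial_F V_\odot,
\]
I would apply Proposition~\ref{p:description} to identify $\partial_{E^*}V_\odot \cong S(V/E^*)_\odot \times (E^*)_\odot$ and then check that the remaining intersections split compatibly with this product: each $F\in\alpha_{>E^*}$ meets $\partial_{E^*}V_\odot$ in a subset of the form $\partial_{F/E^*}S(V/E^*)_\odot \times (E^*)_\odot$, while the nested condition on $\{E^*,F\}$ for $F\in\alpha_{\perp E^*}$ forces $F\cap E^*\in\cs^{E^*}$, so that $F$ meets $\partial_{E^*}V_\odot$ in $S(V/E^*)_\odot \times \partial_{F\cap E^*}(E^*)_\odot$. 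By Lemma~\ref{building}, both $S(V/E^*)_\odot$ and $(E^*)_\odot$ are themselves blowups covered by the theorem, so the inductive hypothesis applies. Combining the two factor decompositions, and using the direct-sum decomposition of $\ca_{V/E_\alpha}$ coming from the nested condition to merge the level-$0$ ``$E_\alpha$-pieces'' from each side, yields the stated formula for $\partial_\alpha V_\odot$; the argument for $\partial_\alpha SV_\odot$ is parallel, with $SV$ replacing $V$ at the outermost step.

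For the nonemptiness claim the ``if'' direction is immediate from the product formula, since all factors are nonempty. For the converse, if $\alpha$ is not nested then there is an incomparable pair $\{E,E'\}\subset\alpha$ violating the direct-sum decomposition of $\ca_{V/(E\cap E')}$; using the definition $V_\odot=\overline{\Ima\gr}$ from Definition~\ref{d:BS}, I would show that the $S(V/E)$- and $S(V/E')$-components of $\gr$ cannot simultaneously take limit values that correspond to approaching $E$ and $E'$ independently, which forces $\partial_E V_\odot\cap\partial_{E'} V_\odot=\emptyset$ and hence $\partial_\alpha V_\odot=\emptyset$.

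The main obstacle will be the combinatorial bookkeeping in the inductive step: tracking the forest $K_\alpha$ and the assignments $E\mapsto\hat E$ through the decomposition, and verifying that the two induced collections $\{F/E^*:F\in\alpha_{>E^*}\}\subset\cq(\ca_{V/E^*})$ and $\{F\cap E^*:F\in\alpha_{\perp E^*}\}\subset\cq(\ca^{E^*})$ inherit nested structures whose forests reproduce exactly the factors $S(\hat F/F)_\odot$ in the statement. A subtler point—worth flagging—is the merging step: the right-hand factor $(E_\alpha)_\odot$ is not produced from just one of the two inductive pieces but from their \emph{combination}, reflecting the full level-$0$ structure of $\alpha$ (both $E^*$ itself and the incomparable level-$0$ siblings coming from $\alpha_{\perp E^*}$). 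Handling this merger cleanly is where the direct-sum hypothesis in the definition of ``nested'' is essential.
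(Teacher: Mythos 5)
The paper does not actually prove this theorem; it cites Gaiffi's Theorems~5.2 and 5.3 and supplements the citation with a description of the diffeomorphism via parametrized curves $t\mapsto x_1+tx_2+t^2x_3$ (approaching $\partial_\alpha V_\odot$ at different scales for different nesting levels, in the spirit of Method 1). Your inductive route is genuinely different: it builds the general case out of $|\alpha|=1$ (Proposition~\ref{p:description}) by peeling off one minimal element at a time, which is closer to the spirit of Method 2. This is a legitimate strategy and, correctly executed, would reprove Gaiffi's result.

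The concern is that the sentence ``check that the remaining intersections split compatibly with this product'' is not a check — it is the theorem. What you need is that the diffeomorphism $\partial_{E^*}V_\odot\cong S(V/E^*)_\odot\times(E^*)_\odot$ of Proposition~\ref{p:description} is an isomorphism \emph{of manifolds with faces} under which, for \emph{every} $F\in\alpha\setminus\{E^*\}$ at once, the face $\partial_F V_\odot\cap\partial_{E^*}V_\odot$ corresponds to $\partial_{F/E^*}S(V/E^*)_\odot\times(E^*)_\odot$ or $S(V/E^*)_\odot\times\partial_{F\cap E^*}(E^*)_\odot$ as you state. Lemma~\ref{lem:inductive} in the paper gives this only for a single pair $\{E^*,F\}$, and Proposition~\ref{p:description} as stated does not say how the face structures match; the full face-preserving statement is essentially Gaiffi's Theorem~5.1 plus the content of his Sections~5--11. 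If you had that statement, the rest of your argument — the bookkeeping of $\beta=\{F/E^*\}$ and $\gamma=\{F\cap E^*\}$, checking they are nested in $\cq(\ca_{V/E^*})$ and $\cq(\ca^{E^*})$, verifying $F\mapsto F\cap E^*$ is injective and order-faithful on $\alpha_{\perp E^*}$, matching up forest levels and the $\hat E$'s, and the merger producing $(E_\alpha)_\odot$ — is all manageable, and your flagging of the merger step as the subtle point is well placed. Your ``only if'' argument (reduce a non-nested $\alpha$ to a bad pair and show the two boundary faces are disjoint) is also reasonable, using that nestedness is a flag condition as the paper sets it up. So: correct blueprint, different approach from the paper's exposition, but the load-bearing step is deferred and undersold.
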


Now we explain the diffeomorphism in Theorem~\ref{thm:higher codimension pieces} in the case of $|\alpha|=2$, general cases are similar \cite{gaiffi03}. Suppose $\alpha=\{E,F\}$ is nested. First case to consider is that $E\subset F$ or $F\subset E$ (assume the former). Use the inner product on $V$, we have the identification $V=E\oplus (F/E)\oplus(V/F)$. Let $S\cm(\cs_E^{F\cap E^\perp})$ denotes the interior of $S(F/E)\odot\cs_E^{F\cap E^\perp}$. Let $x_1,x_2,x_3$ be elements in $\cm(\cs^E)$, $S\cm(\cs_E^{F\cap E^\perp})$, and $S\cm(\cs_F^{F^\perp})$ respectively. Consider the curve in $V$ represented by $t\to x_1+tx_2+t^2 x_3$, where we think $x_2$ (resp. $x_3$) as a unit vector in $F/E$ (resp. $V/F$). When $t$ is small enough, this curve is in $\cm(\ca)$, and as $t\to 0^+$, we obtain a point in $\partial (V\odot\cs)$. This gives a injective map 
$$
\cm(\cs^E)\times S\cm(\cs_E^{F\cap E^\perp})\times S\cm(\cs_F^{F^\perp})\to \partial (V\odot \cs)
$$
The closure of the image of this map turns out to be $\partial_E (V\odot \cs)\cap \partial_F (V\odot \cs)$ \cite{gaiffi03}. The second case is that $E\nsubseteq F$ and $F\nsubseteq E$. Let $U=E\cap F$. Then $\ca_{V/U}\cong \ca_{V/E}\oplus \ca_{V/F}$. The inner product of $V$ induces $V=U\oplus E^\perp\oplus F^\perp=U\oplus (V/E)\oplus (V/F)$. Consider the curve in $V$ represented by $t\to x_1+tx_2+tx_3$, where $x_1\in \cm(\cs^U)$, $x_2\in S\cm(\cs^{E^\perp}_E)$ and $x_3\in S\cm(\cs^{F^\perp}_F)$, which gives a map
$$
\cm(\cs^U)\times S\cm(\cs_E^{E^\perp})\times S\cm(\cs_F^{F^\perp})\to \partial (V\odot \cs)\,.
$$
The closure of the image of this map is $\partial_E (V\odot \cs)\cap \partial_F (V\odot \cs)$. 

We summarize part of the above discussion as follows.
\begin{lemma}
	\label{lem:inductive}
	For each $E$ in the building set $\cs$, the arrangement of subspaces $\cs^{E^\perp}_E\oplus \cs^E$ in $\cq(\ca_{V/E}\oplus\ca^E)$ is also a building set. The face $\partial_E (V\odot \cs)$ can be naturally identified with a codimension one face of $((V/E)\oplus E)\odot (\cs^{E^\perp}_E\oplus \cs^E)$. For a nested subset $\{E,F\}$ of $\cs$, $\partial_E (V\odot \cs)\cap \partial_F (V\odot \cs)$ can be identified as a codimension two face of $((V/E)\oplus E)\odot (\cs^{E^\perp}_E\oplus \cs^E)$.  
\end{lemma}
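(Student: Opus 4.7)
The plan is to verify the three assertions by unpacking the direct-sum structure of $\ca_{V/E}\oplus \ca^E$ in $(V/E)\oplus E$ and then appealing to Proposition~\ref{p:description} and Theorem~\ref{thm:higher codimension pieces} for this enlarged arrangement.

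For the first assertion, any element of $\cq(\ca_{V/E}\oplus \ca^E)$ decomposes canonically as $F_1\oplus F_2$, with $F_1\in \cq(\ca_{V/E})$ and $F_2\in \cq(\ca^E)$, and its normal arrangement splits as the corresponding direct sum. Lemma~\ref{building} says that $\cs^{E^\perp}_E$ and $\cs^E$ are building sets for the respective factors, so the irreducible decomposition of each $F_i$ by minimal elements of these building sets combines into a decomposition of $F_1\oplus F_2$ by minimal elements of $\cs^{E^\perp}_E\oplus \cs^E$, verifying the building-set condition.

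For the second assertion, since $E\in \cs\supseteq \zci$ the subspace $E$ is irreducible, so $\ca_{V/E}$ is irreducible and the origin $\{0\}_{V/E}$ is irreducible in $\cq(\ca_{V/E})$; explicitly, it arises as $E\cap E^\perp$ from $B=E\in \cs_E\setminus \cs_{E^\perp}$ (note $E\notin \cs_{E^\perp}$ since otherwise $E^\perp\subset E$, forcing $E=V$), hence lies in $\cs^{E^\perp}_E$. The subspace $\{0\}_{V/E}\oplus E$ therefore belongs to the direct-sum building set, and applying Proposition~\ref{p:description} to the blowup along it yields a codimension-one face of the form $S(V/E)_\odot \times E_\odot$. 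But this product is, by Proposition~\ref{p:description} applied to $\ca$ itself, exactly the description of $\partial_E (V\odot \cs)$, giving the required identification.

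For the third assertion, given a nested pair $\{E,F\}$ I propose a partner subspace $F'\in \cs^{E^\perp}_E\oplus \cs^E$ which together with $\{0\}_{V/E}\oplus E$ forms a nested pair in the direct-sum building set. If $E\subset F$, set $F'=(F/E)\oplus E$, using $F/E\in \cs^{E^\perp}_E$ (since $F\in \cs_E\setminus\cs_{E^\perp}$, as $F\neq V$); the pair is nested because the two subspaces are comparable. If $E$ and $F$ are incomparable with $U=E\cap F$, set $F'=(V/E)\oplus U$, using $U=E\cap F\in \cs^E$; the pair is nested because the intersection $\{0\}_{V/E}\oplus U$ has normal arrangement $\ca_{V/E}\oplus (\ca^E)_{E/U}$, which is the direct sum of the normals of the two subspaces. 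Theorem~\ref{thm:higher codimension pieces} then describes the resulting codimension-two face as a product, which matches the product description of $\partial_E(V\odot \cs)\cap \partial_F(V\odot \cs)$ given in the discussion immediately following Theorem~\ref{thm:higher codimension pieces}.

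The main obstacle is this last product matching: one must track through the induced building sets on each factor that appear in Theorem~\ref{thm:higher codimension pieces} (for instance the various $\cs^{\hat E\cap E^\perp}_E$ and $\cs^{E_\alpha}$) and confirm they coincide with the corresponding restrictions of $\cs^{E^\perp}_E\oplus \cs^E$ furnished by Lemma~\ref{building}. This is a diagrammatic consistency check rather than a substantive new argument, but it requires careful bookkeeping to be sure that the building sets used on the sphere and base factors on the two sides literally agree.
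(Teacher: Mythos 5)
The paper states this lemma as a summary of the discussion that precedes it and supplies no separate proof, so there is no canonical comparison to make; in substance your argument is the intended one — apply Proposition~\ref{p:description} and Theorem~\ref{thm:higher codimension pieces} to the arrangement $\ca_{V/E}\oplus\ca^E$ equipped with the building set $\cs^{E^\perp}_E\oplus\cs^E$, and match the resulting product descriptions against those already established for $V\odot\cs$.

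Two small corrections are needed. The sentence ``since $E\in\cs\supseteq\zci$ the subspace $E$ is irreducible'' is a non sequitur: the inclusion $\zci\subset\cs$ says nothing about whether a particular element of $\cs$ lies in $\zci$, and the lemma is stated for an arbitrary building set (the restriction to $\cs=\zci$ is made later in the paper). Fortunately your explicit argument — that $\{0\}_{V/E}=E\cap E^\perp\in\cs^{E^\perp}_E$ because $E\in\cs_E$ while $E\notin\cs_{E^\perp}$ (else $E^\perp\subset E$, forcing $E=V$) — is correct and does not use irreducibility of $E$, so nothing else breaks; simply drop the irreducibility remark. Second, in the comparable case you treat only $E\subset F$. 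For $F\subset E$ the appropriate partner is $(V/E)\oplus F$, using $F=E\cap F\in\cs^E$ (valid since $F\in\cs$ and $F\notin\cs_E$); the pair $\{\{0\}\oplus E,\ (V/E)\oplus F\}$ is then nested because their intersection $\{0\}\oplus F$ has normal arrangement $\ca_{V/E}\oplus(\ca^E)_{E/F}$, the direct sum of the two individual normals. The paper's own informal discussion after Theorem~\ref{thm:higher codimension pieces} likewise says ``assume the former,'' so the omission is natural, but a proof should at least record the symmetric case. The closing bookkeeping you flag — verifying that the induced building sets on the factors agree on the two sides (e.g., that $(\cs^E)^U=\cs^U$, which follows from $\cs_E\subset\cs_U$ when $U\subset E$) — does check out, and deferring it is reasonable given that the paper itself leaves it implicit.
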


\paragraph{Method 2: iterated blowups.}
We now describe an alternative definition of $V_\odot$. Linearly order the elements of $\zci$: $E_1,\dots, E_p$  in some fashion compatible with the partial order (here $p=\card \zci$).    In other words, $E_i<E_j \implies i<j$. The idea is to blow-up $V$ along  the $E_i$ in succession. We shall inductively define  a sequence of manifolds with faces:  
\[
V=V_{\odot}(1)\,,V_{\odot}(2)\,,\dots ,V_{\odot}(p)=V_\odot
\]
where, roughly speaking, $V_{\odot}(k)$ is obtained by blowing up $V_{\odot}(k-1)$ along $E_k$.  More precisely, let $\ol{E_k}$ denote the closure of $\cm(\ca^{E_k})$ in $V_{\odot}(k-1)$.  (Since $E_k$ has not yet been removed from $V_{\odot}(k-1)$, the restriction arrangement complement $\cm(\ca^{E_k})$ is  a submanifold in the interior of $V_{\odot}(k-1)$.)  By induction we can assume that $\ol{E_k}$ is a submanifold with faces of $V_{\odot}(k-1)$.
Define
\(
V_{\odot}(k)= V_{\odot}(k-1) \odot \ol{E_k},
\)
where the blowup along $\ol{E_k}$ is as defined in the final paragraph of  subsection~\ref{ss:tubular}.  The \emph{bordification} of $\cm(\ca)$ is the manifold with faces defined by 
\[
V_\odot:=V_{\odot}(p).
\]
The bordification is well-defined modulo the following two issues:
\begin{enumerate1}
	\item for each $k$, we need to verify $\ol{E_k}$ is a submanifold with faces of $V_{\odot}(k-1)$;
	\item choosing a different linear order on $\zci$ will result the same $V_\odot$ (up to diffeomorphism between manifolds with corners).
\end{enumerate1}
We refer to  \cite{gaiffi03}*{Sections 7-10} for the details of (1) and (2). The arguments rely on the fact that $\zci$ is a building set. (As before, the results of \cite{gaiffi03} are proved in the more general setting of blowing up along building sets.) For (2), it is shown in \cite[Proposition 10.2]{gaiffi03} that the identity map between the interior of the two blowups arising from different linear orders of $\zci$ extends to a diffeomorphism of the blowups. One can use Method 2 to give an independent description of Proposition~\ref{p:description} and Theorem~\ref{thm:higher codimension pieces} (see \cite[Section 11]{gaiffi03}). We leave the equivalence of Methods 1 and 2 to the reader.

\begin{remark}\label{r:concept}(\emph{Deleting tubular neighborhoods}).
	Although Methods 1 and 2 might  seem  to be complicated, the  concept underlying both methods is simple.  As explained in subsection~\ref{ss:tubular}, given a submanifold $Y$ of a manifold $M$, the blowup $M\odot Y$ is diffeomorphic to the complement of a open tubular neighborhood of $Y$ in $M$.  In this subsection we have started with a more complicated situation, a collection $\cs$ of linear subspaces.  Under either Methods 1 or 2 the blowup is diffeomorphic to the complement of an open regular neighborhood of the union of subspaces in $\cs$.  One can try to remove the tubular neighborhoods directly, one at a time. Although this method is the easiest to visualize,   technical difficulties could be encountered.  For example, one needs to choose carefully radial functions on the normal bundles in order to specify the size of tubular neighborhoods to be removed.  Also, in order for the blowup to be a well-defined manifold with corners we need $\cs$ to be a building set.  The point is that if a stratum is the transverse intersection of subspaces, then there is no reason to remove it first: it will be deleted when we remove the tubular neighborhoods of the subspaces and since their normal sphere bundles will intersect transversely we will get a manifold with corners.  However, if the stratum is not a transverse intersection of subspaces then we must remove it first. This will have the effect that the intersections of various subspaces with the normal sphere bundle of the stratum will be disjoint submanifolds.  So, the elements of $\cs$ that are not transverse intersections must be blown up first.  This is exactly what it means for $\cs$ to be a building set. Finally, one removes neighborhoods of the strata in the same order as was done in Method 2.  Our conclusion is that blowing up can be accomplished by the naive method of removing tubular neighborhoods provided that $\cs$ is a building set.
\end{remark}

\begin{definition}\label{d:compactcore}(\emph{The compact core}).
	Suppose $\ca$ has an irreducible decomposition $\ca=\ca_1\times\cdots\times \ca_\ell$ and corresponding vector space decomposition $V=V_1\oplus \cdots \oplus V_\ell$. Then $V_\odot = (V_1)_\odot \times\cdots\times (V_\ell)_\odot=[0,\infty)^\ell\times S(V_1)_\odot \times \cdots\times S(V_\ell)_\odot$ (cf.\ \eqref{e:compactcore2}).  The \emph{compact core} $X$ (or $X(\ca)$) of $V_\odot$ is defined by omitting the interval factors, i.e.,  
	\[
	X=\prod_{i=1}^\ell S(V_i)_\odot \ .
	\]
\end{definition}

If $X^\circ:=\cm(\ca) \cap \prod S(V_i)$, then $X$ is a bordification of $X^\circ$.  So, $X$ is homotopy equivalent to $V_\odot$ (or to $\cm(\ca)$).
The faces of $S(V_i)_\odot$ of codimension $\ge 1$ are indexed by the simplices of $\ci_0(\ca_i)$ whose vertices are the nonzero irreducible subspaces in $V_i$. In general define $\ci_0(\ca)$ by 
\[
\ci_0(\ca)\cong \ci_0(\ca_1)*\cdots *\ci_0(\ca_\ell).
\]   
Since $X(\ca)=\prod_{i=1}^\ell X(\ca_i)$, the faces of $X$ are indexed by $\ci_0(\ca)$. 
So, the face structure on $X$ reduces to the case where $\ca$ is irreducible.  When $\ca$ is irreducible a codimension-one face of $X$ has the form  $\db_E X =SE_\odot\times S(V/E)_\odot$.  When $\ca$ has more than one factor, a codimension-one face of $X$ has one factor of the form $\db_E X(\ca_i)$ and the others are of the form $X(\ca_j)$, with $j\neq i$. 

\subsection{The curve complex}\label{ss:curvetop}
Let $Y$ denote the universal cover of $X$ and $\pi:Y\to X$ the covering projection.  Then $Y$ is a manifold with faces. Each face of $Y$ is a connected component of $\pi^\minus(\db_\ga X)$ for some $\ga\in \ci_0$. 

The goals in this subsection are  (1) to describe the bordification $Y$ of the universal cover of $X^\circ$, (2) to give the definition of the topological curve complex $\ctop$ associated to the manifold with faces $Y$ (cf.\ Definition~\ref{d:curvetop} below) and (3) to describe some properties of $Y$ which are needed  to insure that $\ctop$ is naturally isomorphic to $\calg$ in Definition~\ref{d:algebraic curve complex}.

As in subsection~\ref{ss:tubular}, let $\cn(X)$ be the nerve of the cover  of $\db X$ by the set of its codimension-one faces, i.e., by
$\{\db_E X\}_{E\in \vertex \ci_0}$.  By Theorem~\ref{thm:higher codimension pieces}, the $\cn(X)=\ci_0$.  It also follows that $X$ has the connected intersection property (i.e., each nonempty intersection of codimension-one faces is connected). Hence, the $\gD$-complex $\gD(X)$ is also equal to the simplicial complex  $\ci_0$.

\begin{definition}\label{d:curvetop}
	The \emph{topological curve complex} $\ctop$ ($=\ctop(\ca)$) is the $\gD$-complex $\gD(Y)$ as in subsection~\ref{ss:tubular}.  In other words, a $(k-1)$-simplex in $\ctop$  corresponds to a codimension-$k$ face of $Y$.
\end{definition}

Each vertex of $\ctop$ has a well-defined \emph{type}, which is a subspace in $\zci$.

We have $\db_E X=SE_\odot \times S(V/E)_\odot$.  By Lemma~\ref{l:normalarrange}, the composition of  inclusions 
$S(V/E)_\odot \hookrightarrow \db_EX\hookrightarrow  X$ is $\pi_1$-injective and the image of $\pi_1(S(V/E)_\odot$ in $G$ ($=\pi_1(X)$) is an irreducible parabolic subgroup of type $E$.  The composition of  inclusions $SE_\odot \hookrightarrow \db_EX\hookrightarrow  X$ induces a homomorphism $\pi_1(SE_\odot)\to G$. 

\paragraph{Properties A, B, C, D.}  
Next we introduce four group theoretic properties that can be used to guarantee that $\ctop$ and $\calg$ are isomorphic. These properties will be proved later for complexifications of real simpliciial arrangements, although we expect the properties to hold in greater generality.

\begin{propA}\label{propA}
	For each $E\in \cq(\ca)$, the inclusion $SE_\odot \hookrightarrow SE_\odot \times S(V/E)_\odot=\db_E X\hookrightarrow X$ is $\pi_1$-injective.
\end{propA}
By Theorem~\ref{thm:higher codimension pieces}, each face $\db_\ga X$ is a product of blown up spheres $S(E'/E)_\odot$ in subnormal arrangements.  By Lemma~\ref{l:normalarrange}, $S(E'/E)_\odot\hookrightarrow S(E')_\odot$ is a retract and hence,  is $\pi_1$-injective.  Property A states that $S(E')_\odot\hookrightarrow X$ is $\pi_1$-injective.  So, $\pi_1(S(E'/E)_\odot)\to \pi_1(X)$, being the composition of two injections, also is injective. Therefore, Property A implies the following.

\begin{propA'}\label{propA'}
	For each $\ga\in \ci_0$, the inclusion $\db_\ga X \hookrightarrow X$ is $\pi_1$-injective.
\end{propA'}

\begin{propB}\label{propB}
	For each simplex $\alpha$ of $\ci_0$, we have $$\pi_1 (\partial_\alpha X,x)=\bigcap_{v\in \vertex \alpha}\pi_1(\db_v X,x)$$ for a base point $x\in \partial_\alpha X$.
\end{propB}

\begin{propB'}\label{propB'}
	Suppose $\{\db_vY\}$  is a collection of  codimension-one faces  of $Y$ such that the intersection of any subcollection is nonempty.  Then $\{\db_vY\}$ has the connected intersection property (see subsection~\ref{ss:tubular}).
\end{propB'}

The fibration of $S(V/E)$ by $\mathbb S^1$ naturally extends to a a fibration of $S(V/E)_\odot$ by $\mathbb S^1$, which we will call the \emph{Hopf fibration} of $S(V/E)_\odot$. For any base point $p\in S(V/E)_\odot$, $\pi_1(S(V/E)_\odot,p)$ gives a parabolic subgroup $P$ of $G$ (cf. Definition~\ref{d:parabolic}) and the Hopf fiber passing through $p$ represents the central $\mathbb Z$-subgroup $Z_P$.

\begin{propC}\label{propC}
	For each $E\in \zci$ and a basepoint $p\in \db_E X$, the image of $\pi_1(\db_E X,p)$ in $\pi_1(X,p)=G$ is equal to the normalizer of $Z_E$ in $G$ where $Z_E$ is represented by the Hopf fiber of $S(V/E)_\odot$ passing through $p$.
\end{propC}

Before we formulate the next property, we record the following lemma, which is a straightforward consequence of the relevant definitions.

\begin{lemma}
	\label{l:commute}
	Let $\alpha$ be a simplex of $\ci_0$. Take $p\in \partial_\alpha X$. For each $E\in\vertex\alpha$, consider the Hopf fiber for each $S(V/E)_\odot$ passing through $p$. This gives rise to a collection of mutually commuting $\mathbb Z$-subgroups in $G$ (and hence, a free abelian subgroup of rank $=\dim \ga +1$). 
\end{lemma}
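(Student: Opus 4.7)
The plan is to derive both claims from Theorem~\ref{thm:higher codimension pieces} together with the abelianization $G\to H_1(\cm(\ca),\mathbb Z)$: commutativity will be visible from the product decomposition of $\partial_\alpha X$, while linear independence (and hence the equality of rank with $\dim\alpha+1$) will come from the laminar structure of the sets $\{\ca_E\}_{E\in\vertex\alpha}$.

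For commutativity, recall from Theorem~\ref{thm:higher codimension pieces} the product decomposition
\[
\partial_\alpha X \;\cong\; S(E_\alpha)_\odot \times \prod_{F\in\vertex\alpha} S(\hat F/F)_\odot
\]
(applied factorwise in the reducible case). Each factor is a blown-up sphere in a complex vector space and as such carries a Hopf $\mathbb{S}^1$-action coming from scalar multiplication; these commuting $\mathbb{S}^1$-actions assemble into a torus action on $\partial_\alpha X$. For each $E\in\vertex\alpha$, the scalar $\mathbb{C}^*$-action on $V/E$ is compatible with this torus action via the $\mathbb{C}^*$-equivariant inclusions of the factors $\hat F/F$ (for the ancestors $F$ of $E$ in the forest of $\alpha$), together with the analogous contribution from $E_\alpha$ when $E$ lies on a level-$0$ branch. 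Concretely, following the explicit parameterization of $\partial_\alpha X$ by curves of the form $t\mapsto x_1+tx_2+t^2 x_3$ displayed just before Lemma~\ref{lem:inductive} (and inductively propagated via Method~2 of the blow-up construction), one identifies the Hopf loop of $S(V/E)_\odot$ passing through $p$, up to homotopy in $\partial_\alpha X$, with an orbit of the above torus. Since every such Hopf loop lies in this single torus, the loops for distinct $E\in\vertex\alpha$ commute in $\pi_1(\partial_\alpha X,p)$, and hence also in $G=\pi_1(X,p)$.

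For the rank, abelianize. The group $H_1(\cm(\ca),\mathbb Z)$ has the standard basis $\{e_H\}_{H\in\ca}$, and the class of the Hopf fiber of $S(V/E)_\odot$ maps to $z_E := \sum_{H\in\ca_E} e_H$. The nested condition (Definition~\ref{d:nested}) makes $\{\ca_E\}_{E\in\vertex\alpha}$ a laminar family: if $E\subsetneq E'$ are comparable vertices then $\ca_{E'}\subsetneq \ca_E$, while if $E,E'$ are incomparable then the decomposition $\ca_{V/(E\cap E')}\cong\ca_{V/E}\oplus\ca_{V/E'}$ partitions the hyperplane set and forces $\ca_E\cap\ca_{E'}=\emptyset$. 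Moreover, for each $E\in\vertex\alpha$ the subset $\ca_E\setminus \bigcup_{E'\in\vertex\alpha,\,E'\supsetneq E}\ca_{E'}$ is nonempty: this union equals $\ca_{\hat E}$ by the direct-sum property applied to the immediate parents of $E$, while irreducibility of $E$ forces $E\subsetneq\hat E$ and hence $\ca_{\hat E}\subsetneq\ca_E$. Choosing a distinguishing hyperplane $H_E$ in each such nonempty subset and evaluating a hypothetical relation $\sum c_E z_E=0$ at $H_E$, an induction up the forest starting from level~$0$ yields $c_E=0$ for every $E\in\vertex\alpha$. Consequently the $z_E$ are $\mathbb Z$-independent in $H_1$, and together with commutativity this shows that the Hopf loops generate a free abelian subgroup of $G$ of rank $|\vertex\alpha|=\dim\alpha+1$.

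The only nontrivial technical point is the geometric identification of the Hopf loop of $S(V/E)_\odot$ as a torus orbit in $\partial_\alpha X$. This is transparent in the two-vertex case directly from the curve $t\mapsto x_1+tx_2+t^2 x_3$ and the $\mathbb{C}^*$-equivariance of the inclusions used to construct $\partial_\alpha X$; the general case follows by iterating the blow-up construction. The remaining ingredients — commutation of loops sitting in a common torus, and the linear-algebra fact about laminar families — are routine.
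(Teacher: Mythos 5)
The paper gives no written proof of this lemma: it is labelled a ``straightforward consequence of the relevant definitions,'' and the intended argument is the one sketched in the paragraph just before Definition~\ref{d:algebraic curve complex} (for comparable $E\subset E'$ one has nested parabolics $P'\subset P$ with $Z_P$ central; for incomparable $E,E'$ the direct-sum decomposition of $\ca_{V/(E\cap E')}$ gives commuting factors; and independence of rank is checked ``in $H_1$''). Your write-up fills in the details and is correct. The $H_1$/laminar-family argument for the rank is exactly the paper's intended route: $z_E=\sum_{H\supseteq E}e_H$, comparability gives strict containment $\ca_{E'}\subsetneq\ca_E$, incomparability in a nested set gives disjointness $\ca_E\cap\ca_{E'}=\emptyset$, and the existence of a hyperplane in $\ca_E\setminus\ca_{\hat E}$ (coming from the irreducibility of $E$, which forces $E\subsetneq\hat E$) lets the evaluation-plus-induction kill each coefficient, yielding both $\mathbb Z$-independence and torsion-freeness, hence a free abelian group of rank $|\vertex\alpha|=\dim\alpha+1$. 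Your commutativity argument via a common torus action on $\partial_\alpha X$ is a valid geometric repackaging of the paper's parabolic-subgroup observation; equivalently, $[\gamma_E]$ projects to either a constant or a central Hopf class in each factor of the product decomposition of Theorem~\ref{thm:higher codimension pieces}, and central coordinates commute.

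One small inaccuracy worth flagging, though it does not affect validity: the $\mathbb{S}^1$-action on $V/E$ (equivalently, on $E^\perp$) acts nontrivially on the factor $S(\hat F/F)_\odot$ precisely when $F\supseteq E$, i.e.\ on $F=E$ and its \emph{descendants} in the forest (the larger subspaces), and it acts trivially on the $S(E_\alpha)_\odot$-factor since $E_\alpha\subseteq E$. You wrote ``ancestors of $E$'' and included an $E_\alpha$-contribution; with the paper's convention (level $0$ = minimal subspaces, edges directed upward to larger subspaces) this is the wrong direction. Since the commutativity conclusion only requires all $\gamma_E$ to be orbits of subcircles of one ambient torus, the slip is harmless, but the identification of the affected factors is what makes ``each coordinate of $[\gamma_E]$ is central'' precise.
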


\begin{propD}\label{propD}
	For $k\ge 2$ and for any collection $\{L_1,L_2,\cdots,L_k\}$ of $k$ commuting $\zz$-subgroups such that each of $L_i$ is the central $\mathbb Z$- subgroup of some parabolic subgroup of $G$, there exists $g\in G$ so that $g\{L_1,L_2,\cdots,L_k\} g^{-1}$ arises from a simplex of $\ci_0$ as described in Lemma~\ref{l:commute}.
\end{propD}

\begin{proposition}
	\label{prop:iso}
	Suppose $\ca$ is an essential complex arrangement in an $n$-dimensional complex vector space. Suppose $\ca$ satisfies properties A, B, C and D. Then the following statements are true.
	\begin{enumerate1}
		\item $\ctop$ is naturally isomorphic to $\calg$.
		\item The action of fundamental group on $\mathcal{C}=\ctop=\calg$ has no inversions (i.e., the pointwise stabilizer of each simplex equal to its setwise stabilizer), and the quotient complex is naturally isomorphic to the complex $\ci_0$ of irreducible subspaces.
		\item If $\db_\alpha X$ is aspherical for each simplex $\alpha$ of $\ci_0$, then $\mathcal{C}$ has the same homology as a wedge of spheres of dimension $n-k-1$, where $k$ is the number of irreducible components of $\ca$. 
		\item If $\dim\mathcal{C}\le 1$, then  $\mathcal{C}$ is homotopy equivalent to a wedge of spheres. If $\dim \mathcal{C}>1$ and if $\mathcal{C}$ is simply connected, then $\mathcal{C}$ is homotopic to a wedge of spheres.
	\end{enumerate1}
\end{proposition}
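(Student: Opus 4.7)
The plan is to (i) set up a $G$-equivariant bijection between the simplices of $\ctop$ and those of $\calg$ using the four properties, (ii) read off the quotient description, (iii) compute the homology of $\mathcal{C}$ via a nerve argument plus Lefschetz duality, and (iv) upgrade this homology calculation to a homotopy equivalence using Hurewicz and Whitehead.

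For (1) and (2), I would first note that a vertex of $\ctop$ is, by definition, a codimension-one face $F$ of $Y$ lying over some $\db_E X = SE_\odot \times S(V/E)_\odot$. By Property A the stabilizer of $F$ in $G$ equals the image of $\pi_1(\db_E X)$, and by Property C this image is exactly the normalizer $N_G(Z)$ of the central $\zz$-subgroup $Z$ corresponding to the Hopf fiber of $S(V/E)_\odot$. The assignment $F \mapsto Z$ is therefore $G$-equivariant and, on each $G$-orbit of codimension-one faces over $\db_E X$, restricts to a bijection onto the $G$-conjugacy class of $Z$. For higher-dimensional simplices, Property B identifies the stabilizer of a face $\tilde{\partial}_\alpha$ of $Y$ over $\db_\alpha X$ with $\bigcap_{v \in \vertex\alpha} \pi_1(\db_v X)$; combined with Property A$'$ and a covering-space component count this forces the intersection of the codimension-one faces containing $\tilde{\partial}_\alpha$ to consist of $\tilde{\partial}_\alpha$ alone (the set-theoretic Property B$'$). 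A simplex of $\ctop$ therefore determines, via Lemma~\ref{l:commute}, a collection of commuting central $\zz$-subgroups and hence a simplex of $\calg$, and Property D supplies the converse: any such commuting collection is conjugate to one coming from a simplex of $\ci_0$, which then pulls back to a simplex of $\ctop$. This gives the required $G$-equivariant simplicial isomorphism. For (2), distinct vertices of a simplex of $\ci_0$ carry distinct types (the irreducible subspaces $E$), and $G$ preserves types, so the stabilizer of a simplex of $\mathcal{C}$ fixes each vertex; and since each face of $Y$ is a connected component of $\pi^{\minus}(\db_\alpha X)$, the quotient $\mathcal{C}/G$ is $\gD(X) = \ci_0$ as explained just before Definition~\ref{d:curvetop}.

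For (3), the codimension-one faces of $Y$ cover $\db Y$, and any nonempty intersection of them is a face $\tilde{\partial}_\alpha$ of $Y$. By Property A$'$ combined with the asphericity hypothesis, $\tilde{\partial}_\alpha$ is the universal cover of the aspherical space $\db_\alpha X$ and hence contractible; the nerve lemma then yields $\mathcal{C} \simeq \db Y$. The case $\alpha = \emptyset$ shows that $X$ itself is aspherical, so its universal cover $Y$ is contractible, and the long exact sequence of $(Y, \db Y)$ supplies $\tilde H_{i-1}(\db Y) \cong H_i(Y, \db Y)$. Lefschetz duality for the $(2n-k)$-manifold-with-boundary $Y$ then converts this into $H^{2n-k-i}_c(Y^\circ)$; stripping off the $(0,\infty)^k$ factors that distinguish $Y^\circ$ from the universal cover of $\cm(\ca)$ (cf.\ \eqref{e:compactcore2}), one reduces to the known concentration result for the compactly supported cohomology of the universal cover of an arrangement complement from \cite{squier,djlo}, which is free and concentrated in a single degree. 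Translating back shows that $\tilde H_\ast(\mathcal{C})$ is free and concentrated in degree $n-k-1$.

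For (4), when $\dim \mathcal{C} \le 1$ the space $\mathcal{C}$ is a graph, and graphs with prescribed free homology are homotopy equivalent to the corresponding wedge of circles (together with isolated points), so (3) suffices. When $\dim \mathcal{C} > 1$ and $\mathcal{C}$ is simply connected, set $N = n-k-1 \ge 2$; iterating Hurewicz from simple connectivity and the vanishing of $H_2,\ldots,H_{N-1}$ yields $(N-1)$-connectedness, so $\pi_N(\mathcal{C}) \cong H_N(\mathcal{C})$. Choosing maps $S^N \to \mathcal{C}$ representing a free basis assembles into a map $\bigvee S^N \to \mathcal{C}$ of simply connected CW complexes that induces a homology isomorphism, and Whitehead's theorem promotes this to a homotopy equivalence. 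The main obstacle I anticipate is the step in (1) that goes from the group-theoretic equality in Property B to the set-theoretic Property B$'$: intersections of codimension-one faces in $Y$ must actually be connected in order for $\ctop$ to be a simplicial complex (rather than merely a $\gD$-complex) and for the edge relation in $\calg$ to match up cleanly. This requires tracking stabilizers carefully together with the injectivity in Property A$'$ and the fact that each $\db_\alpha X$ is a single connected face of $X$. The remaining steps in (3) and (4) are then standard once the asphericity hypothesis is in hand.
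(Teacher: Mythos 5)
Your proposal follows essentially the same route as the paper: define a $G$-equivariant type-preserving map on vertices via Property C, get edges via Lemma~\ref{l:commute} and Property D, and then reduce to showing $\ctop$ is genuinely a simplicial complex (the connected-intersection property), which the paper handles with Properties A$'$, B, and a covering-space component argument isolated as Lemma~\ref{l:conintersect}; your identification of this as the crux is exactly right. Parts (2) and (4) also match the paper's treatment.

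One small imprecision to flag in your part (3). The theorem of \cite{djlo} is about the equivariant cohomology $H^*(\cm(\ca);\zz G)$, not directly about $H^*_c$ of the universal cover of $\cm(\ca)$. Since $\cm(\ca)$ is a noncompact open manifold, compactly supported cohomology of its universal cover $\widetilde{\cm(\ca)}$ is \emph{not} a homotopy invariant and need not equal $H^*(\cm(\ca);\zz G)$; in fact a Künneth computation using $\widetilde{\cm(\ca)}\cong Y^\circ\times(0,\infty)^k$ shows $H^*_c(\widetilde{\cm(\ca)})$ is concentrated in degree $n+k$, not $n$. The cleaner (and what the paper does) is to use compactness of $X$ directly: $H^*(\cm(\ca);\zz G)=H^*(X;\zz G)=H^*_c(\ti X)$, which is concentrated in degree $n$; then Poincar\'e--Lefschetz duality for the $(2n-k)$-manifold with boundary $(\ti X,\db\ti X)$ and the long exact sequence immediately give $\ti H_*(\cac)$ concentrated in degree $n-k-1$. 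Your Künneth detour through $\widetilde{\cm(\ca)}$ is not wrong if the degrees are tracked carefully, but as written the phrase ``the compactly supported cohomology of the universal cover of an arrangement complement'' does not correctly describe what \cite{djlo} proves and would give the wrong degree if taken at face value.
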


\begin{proof}
	First we define a map $f:\ctop^{(0)}\to \calg^{(0)}$. Each vertex $v$ of $\ctop^{(0)}$ corresponds to a face $\db_v\tilde X$ with a product decomposition $$\db_v\tilde X\cong \widetilde{SE_\odot}\times\widetilde{S(V/E)_\odot}$$ where $E$ is the type of $v$. The setwise stabilizer of each $\widetilde{S(V/E)_\odot}$ fiber is the same, namely, a parabolic subgroup of $G$ and $f(v)$ is defined to be the center of this parabolic subgroup. As this parabolic subgroup can not correspond to an irreducible factor of $\ca$, we know that $f(v)$ gives a vertex in $\calg^{(0)}$. Note that $f$ is surjective, $G$-equivariant and type preserving. Suppose for vertices $v_1$ and $v_2$ we have $f(v_1)=f(v_2)$. Then $v_1$ and $v_2$ have the same type. Thus there exists $g\in G$ such that $g\db_{v_1}\tilde X=\db_{v_2}\tilde X$. Suppose $\mathbb Z_{v_1}$ is the $\mathbb Z$-subgroup associated with $f(v_1)$. As $f$ is $G$-equivariant, $g$ normalizes $\mathbb Z_{v_1}$. By Property C, $g$ stabilizes $\db_{v_1}\tilde X$. Thus, $v_1=v_2$ and $f$ is injective. For two adjacent vertices $v_1$ and $v_2$ in $\ctop$, we have $\db_{v_1}\tilde X\cap \db_{v_2}\tilde X\neq\emptyset$. So, $\db_{E_1}X\cap \db_{E_2}X\neq\emptyset$ where $E_i$ is the type of $v_i$. By Theorem~\ref{thm:higher codimension pieces}, $\{E_1,E_2\}$ is a nested set. By Lemma~\ref{l:commute}, $f(v_1)$ and $f(v_2)$ commute. So, if a collection of vertices of $\ctop$ spans a simplex, then their $f$-images span a simplex. By Property D, the converse of this statement also holds. So to show that $f$ extends to an isomorphism from $\ctop$ to $\calg$, it suffices to prove that $\ctop$ is a simplicial complex. Equivalently, we need to prove if $\{v_i\}_{i=1}^k$ is a collection of vertices of $\ctop$ spanning a simplex, then $\bigcap_{i=1}^k\partial_{v_i}X$ is connected. The case $k=2$ follows from Properties $A'$ and $B$ and Lemma~\ref{l:conintersect} below. We can use Property B to reduce the general case to the case $k=2$ by noticing that $\bigcap_{i=1}^k\partial_{v_i}X=(\bigcap_{i=1}^{k-1}\partial_{v_i}X)\cap(\bigcap_{i=2}^k\partial_{v_i}X)$. This finishes the proof of the first assertion.
	
	For (2), as the action of $G$ on $\calg$ is type preserving, and different vertices in a simplex have different types, the action does not have inversions. As $\ctop=\Delta(\tilde X)$ and the $(k-1)$-simplices of $\Delta(\tilde X)$ are in one-to-one correspondence with codimension-$f$ faces of $\tilde X$, we know that the $G$-orbits of simplices in $\ctop$ are in one-to-one correspondence with the simplices of $\ci_0$. This proves (2).
	
	In order to prove (3) we begin with the fact  that $\cm(\ca)$ is homotopy equivalent to a CW complex of dimension $n$.  It is proved in \cite{djlo} that for any $\ca$ that is essential and central, $H^*(\cm(\ca);\zz G)$ is  concentrated in degree $n$ and it is a free abelian group in that degree.  (This is a generalization a result of Squier \cite{squier}, where this is proved in the case of the complexification of a reflection arrangement.)   Since $\cm(\ca)$ is homotopy equivalent to the compact space $X$, we have $H^*(X;\zz G)\cong H^*_c(\ti{X})$.  Since each face of $\partial\tilde X$ is contractible and since $\cac$ is the nerve of covering of $\db\ti{X}$ by its codimension-one faces,  $\db\ti{X}$ is homotopy equivalent to $\cac$. Note that $\dim \cac=\dim \ci_0= n-k-1$ since $\ci_0$ is the join of $k$ factors of the form $\ci_0(\ca_i)$. Since $(\ti{X},\db\ti{X})$ is a manifold with boundary of dimension $2n-k$, Poincar\'e duality gives:
	\begin{align*} 
	H^*_c(\ti{X}) & \cong H_{2n-k -*}(\ti{X},\db\ti{X}) \notag \\
	& \cong \ti{H}_{2n-k-*-1}(\db\ti{X})\cong \ti{H}_{2n-k-*-1}(\cac)\, \notag \\
	\end{align*}
	where the second  equation holds since $\ti{X}$ is contractible.  Since $H^*_c(\ti{X})$ is concentrated in degree $n$, $H_{*}(\ti{X},\db\ti{X})$ is concentrated in degree $n-k$ and hence, $\ti{H}_*(\cac)$ is concentrated in degree $n-k-1$ ($=\dim \cac$).  This proves (3). (4) follows immediately from (3).
\end{proof}

\begin{remark}
	Suppose $\ca$ satisfy property A, B, C and D. When $\dim \ctop>1$, $\cac$ being simply-connected is equivalent to that the map $\partial X\to X$ induces an isomorphism on the fundamental groups. In fact, we conjecture that this is true for any central arrangement $\ca$.
\end{remark}

\begin{lemma}\label{l:conintersect}
	Suppose $B=B_0\cup B_1$, where $B$, $B_0$, $B_1$ and $C=B_0\cap B_1$ are path connected. Take base point $x_0\in C$. Let $h:\pi_1(B,x_0)\to G$ be a homomorphism to some group $G$ such that
	\begin{enumerate1}
		\item for $i=0,1$, $\pi_1(B_i, x_0)\to \pi_1(B,x_0)\to G$ is injective (denote the image of $\pi_1(B_i,x_0)$ in $G$ by $G_i$) and $\pi_1(C, x_0)\to \pi_1(B,x_0)\to G$ is injective (denote the image in $G$ by $H$),
		\item $G_0\cap G_1=H$.
	\end{enumerate1}
	Let $\pi:\tB\to B$ be a regular covering space with group of deck transformations $\image h$. Then if $C'$ is a component of $\pi^\minus (C)$ and if
	$B_i'$ is a component of $\pi^\minus(B_i)$ that contains $C'$, then $B'_0\cap B'_1= C'$. 
\end{lemma}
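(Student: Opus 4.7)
The plan is to reduce everything to a single computation in the deck transformation group $\image h$. First I would choose a basepoint $\tilde x_0\in C'$ with $\pi(\tilde x_0)=x_0$; by hypothesis $B_0'$ and $B_1'$ are then precisely the path-components of $\pi^{-1}(B_0)$ and $\pi^{-1}(B_1)$ containing $\tilde x_0$. The inclusion $C'\subseteq B_0'\cap B_1'$ is automatic, so I only need to establish the reverse inclusion.

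Given $y\in B_0'\cap B_1'$, note $\pi(y)\in B_0\cap B_1=C$. Using path-connectedness of $C$, I would pick an auxiliary reference path $\beta_C$ in $C$ from $x_0$ to $\pi(y)$, and let $y_C$ be the endpoint of the lift of $\beta_C$ starting at $\tilde x_0$. Since this lift lies in a single component of $\pi^{-1}(C)$, namely $C'$, we have $y_C\in C'$. Because the cover is regular, the deck group $\image h$ acts freely and transitively on the fiber over $\pi(y)$, so there is a unique $g\in\image h$ with $y=g\cdot y_C$. The endgame is then clean: once I show $g\in H$, I can write $g=h([\sigma])$ for some loop $\sigma$ in $C$ based at $x_0$, whose lift at $\tilde x_0$ stays inside $C'$ and terminates at $g\cdot\tilde x_0$; hence $g\cdot C'=C'$, giving $y=g\cdot y_C\in C'$ as required.

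To prove $g\in G_0\cap G_1$, for each $i\in\{0,1\}$ I would pick a path $\tilde\beta_i$ inside $B_i'$ from $\tilde x_0$ to $y$, set $\beta_i=\pi\circ\tilde\beta_i$, and examine the loop $\alpha_i=\beta_i\cdot\beta_C^{-1}$ based at $x_0$ in $B_i$. Lifting $\alpha_i$ from $\tilde x_0$ first retraces $\tilde\beta_i$ up to $y=g\cdot y_C$ and then follows the lift of $\beta_C^{-1}$ starting at $g\cdot y_C$, which by equivariance of the deck action is the $g$-translate of the lift of $\beta_C^{-1}$ starting at $y_C$ and therefore ends at $g\cdot\tilde x_0$. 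Hence $h([\alpha_i])=g$, and since $[\alpha_i]\in\pi_1(B_i,x_0)$ this gives $g\in G_i$. Hypothesis (2) then delivers $g\in G_0\cap G_1=H$.

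The only moving part with any subtlety is the bookkeeping that pins $h([\alpha_i])$ down to exactly $g$; the trick that makes everything clean is using the single reference path $\beta_C\subset C$ to convert paths ending at $\pi(y)$ into honest loops at the original basepoint $x_0$, so that hypotheses (1) and (2) apply verbatim. Everything else is standard covering-space lifting.
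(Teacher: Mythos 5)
Your proof is correct and uses essentially the same covering-space argument as the paper: produce a deck transformation associated with two points of $B_0'\cap B_1'$ lying over the same point of $C$, show it lies in $G_0$ and in $G_1$ by projecting the witnessing paths to loops in $B_0$ and $B_1$, invoke $G_0\cap G_1=H$, and then use the fact that elements of $H$ stabilize $C'$. The only cosmetic difference is that you argue directly from an arbitrary $y\in B_0'\cap B_1'$ using an auxiliary reference path $\beta_C$ to manufacture loops, whereas the paper argues by contradiction and instead chooses both points $x',x''$ to be lifts of the basepoint $x_0$ itself, which makes the projected paths loops automatically and avoids the $\beta_C$ bookkeeping; the underlying mechanism is identical.
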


\begin{proof}
	Assume $G=\image h$ for simplicity. Suppose $B'_0\cap B'_1$ contain a connected component $C''$ which is different from $C'$. Let $x'\in C'$ and $x''\in C''$ be lift of $x_0\in C$. For $i=1,2$, let $\omega_i$ be a path from $x'$ to $x''$ inside $B'_i$. Let $g_i\in H_i$ be the element represented by $\pi(\omega_i)$. As $\omega_0\omega^{-1}_1$ is a loop in $\tilde B$, $g_0g^{-1}$ represents the trivial element in $G$. So, $g_0=g_1$. By condition (2), $g_0\in H$, which contradicts that $C'\neq C''$. The lemma follows.
\end{proof}

\paragraph{Admissible families of arrangements}

\begin{definition}
	\label{def:admissible}
	Let $\mathfrak C$ be a collection of complex hyperplane arrangements. We say $\mathfrak C$ is \emph{admissible} if the following conditions hold.
	\begin{enumerate1}
		\item if $\ca\in\mathfrak C$ is an arrangement in vector space $V$ and $E\in\cq(\ca)$, then $\ca_{V/E}\in \mathfrak C$ and $\ca^E\in\mathfrak C$; 
		\item if $\ca_1,\ca_2\in\mathfrak C$, then $\ca_1\oplus \ca_2\in\mathfrak C$.
	\end{enumerate1}
\end{definition}

For example, the collection of all complex hyperplane arrangements that are complexifications of simplicial real arrangements is admissible \cite{deligne}. The collection of supersolvable arrangements is also admissible (see \cite[Section 2]{supersolvable} for a summary).

\begin{proposition}
	\label{prop:induction}
	Suppose $\mathfrak C$ is an admissible collection of complex hyperplane arrangements. Suppose each element in $\mathfrak C$ satisfies Properties A, C and D. Then the following statements are true.
	\begin{enumerate1}
		\item Suppose $\ca\in\mathfrak C$. Then the stabilizer of each simplex in $\calg$ is isomorphic to $\pi_1(\cm(\ca'))$ for some $\ca'\in\mathfrak C$.
		\item Each element of $\mathfrak C$ satisfies property A, B, C and D.
	\end{enumerate1}	
\end{proposition}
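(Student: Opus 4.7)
The plan is to prove the two parts in the order (2), then (1), since part (1) will invoke Property B established in part (2). For Property B, I would argue by induction on $k=|\alpha|$; the content is in the inclusion $\bigcap_{v\in\alpha}\pi_1(\partial_vX,x)\subseteq\pi_1(\partial_\alpha X,x)$, since the reverse inclusion is automatic.

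The key two-vertex case $\alpha=\{E_1,E_2\}$ (nested) would go as follows. Pick $g$ in the intersection. By Property A, the face $\partial_{E_1}X\cong S(E_1)_\odot\times S(V/E_1)_\odot$ is $\pi_1$-injective, so $\pi_1(\partial_{E_1}X)\cong\pi_1(S(E_1)_\odot)\times\pi_1(S(V/E_1)_\odot)$ embeds as a direct product in $G$. Decompose $g=ab$ accordingly, with $a,b$ commuting in $G$. Then $gZ_{E_2}g^{-1}=bZ_{E_2}b^{-1}$, so $g\in N_G(Z_{E_2})$ reduces to $b\in N_{P_{E_1}}(Z_{E_2})$, where $P_{E_1}=\pi_1(S(V/E_1)_\odot)$. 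By admissibility $\ca_{V/E_1}\in\mathfrak C$; applying Property C to $\ca_{V/E_1}$ (together with the identification of $Z_{E_2}$ as the Hopf $\mathbb Z$-subgroup $Z_{E_2/E_1}\subset P_{E_1}$ in the comparable case $E_1\subset E_2$, or the analogous identification via the irreducible splitting $\ca_{V/(E_1\cap E_2)}=\ca_{V/E_1}\oplus\ca_{V/E_2}$ in the incomparable case) confines $b$ to the $\pi_1$ of the appropriate codimension-one face of $S(V/E_1)_\odot$. Combining with Theorem~\ref{thm:higher codimension pieces} places $g\in\pi_1(\partial_\alpha X)$. For $k>2$, the two-vertex argument can be iterated on $\alpha'\cup\{E_k\}$ after reinterpreting $\partial_{\alpha'}X$ as a codimension-one face inside the blowup of an auxiliary admissible arrangement via Lemma~\ref{lem:inductive}.

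For part (1), let $\sigma$ be a simplex of $\calg(\ca)$. Property D conjugates $\sigma$ to a family $\{Z_{E_1},\ldots,Z_{E_k}\}$ of Hopf $\mathbb Z$-subgroups realized at a common basepoint $x\in\partial_\alpha X$, where $\alpha=\{E_1,\ldots,E_k\}$ is a nested simplex of $\ci_0$. The pointwise stabilizer of $\sigma$ is $\bigcap_iN_G(Z_{E_i})$, which by Property C equals $\bigcap_i\pi_1(\partial_{E_i}X,x)$, and then by Property B (just proved) equals $\pi_1(\partial_\alpha X,x)$. Theorem~\ref{thm:higher codimension pieces} decomposes $\partial_\alpha X\cong S(E_\alpha)_\odot\times\prod_{E\in\alpha}S(\hat E/E)_\odot$; each factor is the compact core of $\ca^{E_\alpha}$ or of some $\ca_{\hat E/E}$, and each such arrangement is obtained from $\ca$ by iterated restriction and normal-arrangement operations, hence lies in $\mathfrak C$ by admissibility. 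Taking direct sums yields a single $\ca'=\ca^{E_\alpha}\oplus\bigoplus_{E\in\alpha}\ca_{\hat E/E}\in\mathfrak C$ with $\pi_1(\cm(\ca'))\cong\pi_1(\partial_\alpha X)$. Once all four Properties A, B, C, D have been verified, Proposition~\ref{prop:iso}(2) delivers the ``no inversions'' conclusion, so the setwise and pointwise stabilizers of $\sigma$ coincide.

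The hardest step will be the two-vertex case of Property B in the incomparable configuration, where one must precisely locate $Z_{E_2}$ inside $P_{E_1}$ using the direct-sum decomposition $\ca_{V/(E_1\cap E_2)}=\ca_{V/E_1}\oplus\ca_{V/E_2}$ before Property C for the sub-arrangement $\ca_{V/E_1}$ can be brought to bear. The reduction from general $k$ to the two-vertex case via Lemma~\ref{lem:inductive} is formally routine but requires care to confirm that each auxiliary arrangement introduced during the induction remains in the admissible family $\mathfrak C$.
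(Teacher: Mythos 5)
Your proposal is essentially correct and follows the same fundamental strategy as the paper — induction via passing to the auxiliary arrangement $\ca_{V/E_1}\oplus\ca^{E_1}$, invoking Properties A and C to identify $N_G(Z_{E_1})$ with $\pi_1(\cm(\ca_{V/E_1}\oplus\ca^{E_1}))$, and using admissibility to stay inside $\mathfrak C$ — but it packages the argument differently. The paper proves (1) first by a self-contained induction that never invokes Property B: it rewrites $\bigcap_i C_G(Z_{E_i})$ as $\bigcap_{i\geq 2}C_{H_1}(Z_{E_i})$, identifies $f^{-1}(Z_{E_i})$ for $i\geq 2$ as the central $\mathbb Z$-subgroup of an irreducible parabolic in either the $\ca_{V/E_1}$ factor (when $E_1\subset E_i$) or the $\ca^{E_1}$ factor (incomparable case), and recognizes the intersection as the stabilizer of a $(k-1)$-simplex of $\calg(\ca_{V/E_1}\oplus\ca^{E_1})$. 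It then proves Property B for $k=2$ by the same mechanism, using Lemma~\ref{lem:inductive} to recognize $\partial_{E_1}X\cap\partial_{E_i}X$ as a codimension-one face of the blowup of $\ca_{V/E_1}\oplus\ca^{E_1}$, so that Properties A and C for that arrangement do the rest in one stroke. Your route — proving (2) first and then reading (1) straight off Property B and Theorem~\ref{thm:higher codimension pieces} — is a clean alternative, and your observation that Proposition~\ref{prop:iso}(2) supplies the no-inversions conclusion to reconcile pointwise and setwise stabilizers is a nice touch the paper leaves implicit.

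One detail in your two-vertex case for Property B is off, and you should fix it before trusting the sketch. In the incomparable configuration (where $\ca_{V/(E_1\cap E_2)}\cong\ca_{V/E_1}\oplus\ca_{V/E_2}$), the $\mathbb Z$-subgroup $Z_{E_2}$ does \emph{not} sit in $P_{E_1}=\pi_1(S(V/E_1)_\odot)$; it sits in the other factor $\pi_1(S(E_1)_\odot)\cong\pi_1(\cm(\ca^{E_1}))$, as the Hopf $\mathbb Z$-subgroup of the irreducible parabolic of $\pi_1(\cm(\ca^{E_1}))$ of type $E_1\cap E_2\in\cq(\ca^{E_1})$. Your reduction $gZ_{E_2}g^{-1}=bZ_{E_2}b^{-1}$ and the subsequent appeal to Property C for $\ca_{V/E_1}$ are therefore only valid in the comparable case $E_1\subset E_2$; in the incomparable case the roles of $a$ and $b$ swap and Property C must be applied to $\ca^{E_1}$. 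The paper's phrasing via the product arrangement $\ca_{V/E_1}\oplus\ca^{E_1}$ sidesteps this case split entirely, which is the main economy it gains over your version. Your concluding caveat about "locating $Z_{E_2}$ inside $P_{E_1}$" is therefore looking in the wrong place — the hard part of the incomparable case is locating $Z_{E_2}$ inside the restriction factor.
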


\begin{proof}
	We prove (1). Suppose the simplex has vertex set $\{v_i\}_{i=1}^k$. Let $\zz_i$ be the central $\zz$-subgroup of the parabolic subgroup associated with $v_i$ of type $E_i$.  By Property D, $\{E_i\}_{i=1}^k$ is a nested subset. Let $H_i$ be the stabilizer of $v_i$. Then $H_i=N_G(\zz_i)=C_G(\zz_i)$ by property C. Suppose $E_1$ does not properly contain any other $E_i$. There is a natural map $f:G_1:=\pi_1(\cm(\ca_{V/E_1}\oplus \ca^{E_1}))\to H_1$, which is an isomorphism by Properties A and C. Note that $\bigcap_{i=1}^kC_G(\zz_i)=\bigcap_{i=2}^kC_{H_1}(\zz_i)$. It follows from Property D that if $E_1\subset E_i$, then $f^{-1}(\zz_i)$ is a central $\zz$-subgroup of a parabolic subgroup in $\pi_1(\cm(\ca_{V/E_1}))$ that is irreducible. The case $\ca_{V/(E_1\cap E_i)}\cong \ca_{V/E_1}\oplus \ca_{V/E_i}$ is similar, where $f^{-1}(\zz_i)$ is a central $\zz$-subgroup of an irreducible parabolic subgroup in $\pi_1(\cm(\ca^{E_1}))$. Thus, we are reduced to the stabilizer of a simplex with $k-1$ vertices in the curve complex for $\ca_{V/E_1}\oplus \ca^{E_1}$. The arrangement $\ca_{V/E_1}\oplus \ca^{E_1}$ is in $\mathfrak C$. So we are done by induction. Next we prove (2). We use the same notation as before. By Lemma~\ref{lem:inductive} and Properties A, C for $\ca_{V/E_1}\oplus \ca^{E_1}$, the inclusion $\partial_{E_1} X\cap\partial_{E_i} X\to \partial_{E_1} X$ induces injective map on fundamental groups and its $\pi_1$-image is $C_{H_1}(\zz_i)$. The case $k=2$ follows. The more general case can be proved by induction as in (1). 
\end{proof}

\section{Zonotopes, Salvetti complexes and Deligne groupoids}
\label{sec:background}
This section deals with some complexes associated to a real, simplicial arrangement of hyperplanes.  So, $\ca$ denotes an essential, central arrangement of hyperplanes in a real vector space $A$. 

\subsection{Real arrangements and their dual zonotopes}\label{ss:zonotope}
The hyperplanes in $\ca$ cut $A$ into a collection of convex polyhedral cones, called the \emph{fan} of $\ca$, denoted by $\fan(\ca)$.
The intersection of these polyhedral cones with the sphere $SA$ defines  a cellulation of $SA$ by totally geodesic spherical polytopes. This gives rise $SA$ to a piecewise linear cellulation of $SA$, and we denote the associated cell complex by $\partial\fan(\ca)$. We define the \emph{dual zonotope} of $\ca$, denoted $Z(\ca)$, to be $SA$  with the cell structure dual to $\partial\fan(\ca)$. The real arrangement is \emph{simplicial} if $\partial\fan(\ca)$ is a simplicial complex.

\begin{example}\label{ex:permutohedron} If $\ca$ is the braid arrangement corresponding to the action of the symmetric group $S_{n+1}$ on $A=\rr^{n}$, then $\db\fan(\ca)$ is the Coxeter complex and $Z(\ca)$ is the permutohedron.
\end{example}

We denote the barycentric subdivision of $Z(\ca)$ by $bZ(\ca)$, which is a simplicial complex (the  ``$b$'' indicates ``barycentric subdivision''). Note that $bZ(\ca)$ and $b\partial\fan(\ca)$ can be naturally identified. One can realize $Z(\ca)$ as a convex polytope in the dual space $A^*$ of $A$. However, in this paper, we would like to embed $Z(\ca)$ (and $bZ(\ca)$) as a (not necessarily convex) piecewise linear subset of $A$ as follows.

For each cone $U\in \fan(\ca)$, choose a point $x_U$ in the relative interior of $U$. The partial order on $\fan(\ca)$ is defined by $U_1<U_2$ if $U_1$ is contained in the closure of $U_2$ and in this case, we also write $x_{U_1}<x_{U_2}$. Each chain $x_{U_1}<x_{U_2}<\cdots<x_{U_k}$,  determines a simplex in $A$ with vertex set $\{x_{U_i}\}_{i=1}^k$. This defines an embedding of the simplicial complex $bZ(\ca)$ as a subset of  $A$. 
The simplices of  $bZ(\ca)$ can be assembled into cells in two different ways to obtain the cell structure on $\partial\fan(\ca)$ and $Z(\ca)$.  
The faces of the zonotope $Z(\ca)$ are in one-to-one correspondence with vertices of $bZ(\ca)$.  We identify the face of $Z(\ca)$ associated with vertex $x_U\in bZ(\ca)$ with the union of all simplices of $bZ(\ca)$ corresponding to chains whose smallest element is $x_U$.  In this way each vertex of $bZ(\ca)$ can also be regarded as the barycenter of a face of $Z(\ca)$.

Suppose $B$ is a subspace in $\cq(\ca)$. A face $F$ of $Z(\ca)$ is \emph{dual} to $B$ if $F\cap B=\{b_F\}$, where $b_F$ denotes the barycenter of $F$. It follows that $F$ is isomorphic to the zonotope dual to the real arrangement $\ca_{A/B}$. Note that $\fan(\ca^B)\subset\fan(\ca)$ So, our previous choices of the $x_{U_i}$'s  yields an embedding $bZ(\ca^B)\to B$. Therefore, we can treat $bZ(\ca^B)$ as a subcomplex of $bZ(\ca)$, i.e., $bZ(\ca^B)=bZ(\ca)\cap B$.

Two faces $F_1$ and $F_2$ of $Z(\ca)$ are \emph{parallel} if they are dual to the same subspace in $\cq(\ca)$: we write $F_1\parallel F_2$. When $Z(\ca)$ is realized as a convex polytope in $A^*$ dual to $\fan(\ca)$, then $F_1$ and $F_2$ actually are parallel faces of $Z(\ca)$. Parallel classes of faces of $Z(\ca)$ are in one-to-one correspondence with subspaces of $\cq(\ca)$.  For example, the duals to a hyperplane in $\ca$ form a parallel class of edges in $Z(\ca)$. 

Given  two faces $F_1$ and $F_2$ of $Z(\ca)$ both dual to $B\in \cq(\ca)$,  define $p:\vertex F_1\to \vertex F_2$ as follows. Let $\ca_B$ be the collection of hyperplanes of $\ca$ containing $B$. For $x\in\vertex F_1$, define $p(x)$ to be the unique vertex in $F_2$ such that $x$ and $p(x)$ are not separated by any hyperplanes in $\ca_B$. The map $p$ is called \emph{parallel translation}.  (When $F_1$ and $F_2$ are regarded as actual parallel faces of the zonotope $Z(\ca)$,  the map $p$ is the restriction of the parallel translation taking $F_1$ to $F_2$.)

Define $F_1$ to be \emph{orthogonal} to $F_2$, denoted by $F_1\perp F_2$, if there is another face $F$ of $Z(\ca)$ containing $F_1$ and $F_2$ such that $F\cong F_1\times F_2$ (and consequently, $F_1\cap F_2$ will be a vertex). Note that $F_1\perp F_2$ if and only if $\ca_{A/B_1}\times \ca_{A/B_2}=\ca_{A/(B_1\cap B_2)}$ where $B_i$ is the subspace dual to $F_i$.

The 1-skeleton of $Z(\ca)$ is endowed with a path metric $d$ such that each edge has length 1. Given $x,y\in \vertex Z(\ca)$ it turns out that $d(x,y)$  is the number of hyperplanes separating $x$ and $y$ (cf.\ \cite[Lemma 1.3]{deligne}).  In the next lemma 	we collect some basic facts which will be used in sections~\ref{sec:gar} and \ref{sec:sal}. 

\begin{lemma}
	\label{lem:gate}
	Let $x$ be a vertex in $Z(\ca)$ and $F$ be a face of $Z(\ca)$.
	\begin{enumerate1}
		\item There exists a unique vertex $x_F\in F$ such that $d(x,x_F)\le d(x,y)$ for any vertex $y\in F$. The vertex $x_F$ is called the \emph{projection} of $x$ to $F$, and is denoted $\prj_F(x)$. 
		\item For any vertex $y\in F$, there exists a shortest edge path $\omega$ in the 1-skeleton of $Z(\ca)$ from $x$ to $y$ so that $\omega$ passes through $x_F$ and so that  the segment of $\omega$ between $x_F$ and $y$ is contained in $F$.
		\item Let $\ch_F$ be the collection of hyperplanes in $\ca$ dual to some edge of $F$. Then $x_F$ can be characterized as the unique vertex in $F$ such that	no element in $\ch_F$ separates $x$ from $x_F$.
		\item Let $F'$ be another face parallel to $F$ and let $p:F'\to F$ be parallel translation. Let $y\in F'$. Then  $\prj_F(y)=p(y)$.
	\end{enumerate1}
\end{lemma}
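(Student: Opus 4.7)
My plan is to reduce all four statements to combinatorics of sign vectors. Each vertex of $Z(\ca)$ corresponds to a chamber of $\ca$ and is determined by its sign vector $\sigma\colon\ca\to\{+,-\}$; by the fact recalled just before the lemma, the edge-path distance between two vertices equals the number of hyperplanes on which their sign vectors differ. Let $B\in\cq(\ca)$ be the subspace to which $F$ is dual. Because $F$ is the dual zonotope of $\ca_{A/B}$ and this arrangement is combinatorially isomorphic to the subarrangement $\ca_B$ of hyperplanes containing $B$, the vertices of $F$ are in natural bijection with the chambers of $\ca_B$; moreover $\ch_F=\ca_B$. The key structural observation is that any $H\in\ca\setminus\ch_F$ either separates $x$ from every vertex of $F$ or from none of them, whereas the hyperplanes in $\ch_F$ are precisely those that cut $F$ into pieces.

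With this in hand, let $A_0\subseteq\ca$ be the set of hyperplanes separating $x$ from all of $F$; then for every vertex $v\in F$,
\[
d(x,v)=|A_0|+\bigl|\{H\in\ch_F:H\text{ separates }x\text{ from }v\}\bigr|.
\]
This sum is minimized exactly when no $H\in\ch_F$ separates $x$ from $v$, and this condition singles out a unique vertex of $F$, namely the one whose sign vector on $\ch_F$ matches that of $x$. This yields existence and uniqueness of $x_F$ in (1) together with the characterization in (3). Applying the same formula to $y$ gives $d(x,y)=|A_0|+|\{H\in\ch_F:H\text{ separates }x_F\text{ from }y\}|=d(x,x_F)+d(x_F,y)$, so for (2) it suffices to concatenate any shortest edge path from $x$ to $x_F$ with any shortest edge path from $x_F$ to $y$ lying inside $F$; the latter exists because every edge of a shortest path between vertices of $F$ is dual to some hyperplane in $\ch_F$ and hence is an edge of $F$. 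Finally (4) is immediate from (3) together with the definition of parallel translation: both $\prj_F(y)$ and $p(y)$ are characterized as the unique vertex of $F$ whose sign vector on $\ch_F=\ca_B$ agrees with that of $y$.

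The only mild obstacle I anticipate is the claim that a shortest edge path between two vertices of $F$ can be chosen to lie entirely in $F$, equivalently, that the $1$-skeleton of $F$ is a gated subgraph of that of $Z(\ca)$. This is a standard feature of faces of zonotopes; I would verify it by showing that any edge of $Z(\ca)$ emanating from a vertex $u\in F$ and dual to a hyperplane $H\in\ch_F$ has its other endpoint in $F$, then iterating along a shortest path. Once this local statement is in place, the remainder of the argument is bookkeeping with sign vectors.
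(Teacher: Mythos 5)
Your proof is correct, though it goes by a different route than the paper: the paper simply cites Salvetti \cite[Lemma 3]{s87} for parts (1) and (2) and remarks that (3) and (4) follow immediately, whereas you give a self-contained argument working directly with sign vectors and the separation-count formula for distance. Your key structural observation is right: every $H\in\ca\setminus\ch_F$ either separates $x$ from all vertices of $F$ or from none (since $F$ is dual to a cone $U$ of $\fan(\ca)$ spanning $B$, $U$ lies in a single open half-space of any such $H$, hence so do all chambers whose closures contain $U$), and the hyperplanes in $\ch_F=\ca_B$ are exactly those cutting the vertex set of $F$ into two nonempty pieces. From that the decomposition $d(x,v)=|A_0|+|\{H\in\ch_F:H\text{ separates }x\text{ from }v\}|$ follows, giving (1) and (3) simultaneously, and then (2) and (4) fall out as you describe. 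The one step you flag as requiring verification — that a shortest $Z(\ca)$-path between two vertices of $F$ already lies in $F$ — does indeed reduce to the local claim you sketch: if $u\in F$, $H\in\ch_F$, and $e$ is the edge at $u$ dual to $H$ with other endpoint $u_1$, then $U\subset\bar{C_u}\cap H$ is a face of the panel of $C_u$ in $H$, which is shared with $\bar{C_{u_1}}$, so $u_1\in F$; since a shortest path from $x_F$ to $y\in F$ crosses only separating hyperplanes, all of which lie in $\ch_F$, iterating keeps the path inside $F$. Your derivation also makes the logical structure slightly cleaner than the paper's sequencing: you get (3) for free alongside (1) rather than deducing it afterward, and (4) then becomes an immediate corollary of (3) plus the definition of parallel translation as sign-preservation on $\ca_B$.
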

Statements (1) and (2) are proved  in \cite[Lemma 3]{s87}. Statements (3) and (4) follow immediately from (1) and(2).

\begin{lemma}\label{lem:span}
	Suppose $\ca$ is finite, central, simplicial arrangement. Then the following statements are true.
	\begin{enumerate1}
		\item Given edges $e_1, \dots, e_k$ of $Z(\ca)$ sharing a vertex $x$, let $F$ be the smallest face of $Z(\ca)$ containing each $e_i$. 		Then any edge of $F$ containing $x$ has to be one of the  $e_i$. This $F$ is called the face \emph{spanned} by $\{e_i\}_{i=1}^k$.
		\item Let $F\subset Z(\ca)$ be a face containing  a given vertex $x$. Let $\{e_1,\ldots,e_l\}\subset Z(\ca)$ be a set of distinct edges that contain $x$ but do not lie in $F$. Then there is a unique face $F'\subset Z(\ca)$ containing $F$ and $\{e_1,\ldots,e_l\}$ such that $\dim(F')=\dim(F)+l$. This $F'$ is called the face \emph{spanned} by $F$ and $\{e_1,\ldots,e_l\}$.
	\end{enumerate1}
\end{lemma}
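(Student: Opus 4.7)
\medskip

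\textbf{Proof proposal for Lemma~\ref{lem:span}.} The plan is to translate everything into the language of the fan $\fan(\ca)$ and exploit the fact that, when $\ca$ is simplicial, each chamber is a simplicial cone, so that its face lattice is canonically isomorphic to the Boolean lattice on its walls. Concretely, a vertex $x\in Z(\ca)$ corresponds to a top-dimensional cone $C\in\fan(\ca)$; the edges of $Z(\ca)$ containing $x$ correspond bijectively to the walls of $C$ (i.e.\ the codimension-one faces of $C$); and the faces of $Z(\ca)$ that contain $x$ correspond bijectively to the faces $U\leq C$, with $\dim(\text{face of } Z(\ca))=\operatorname{codim}U$. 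Moreover, under this correspondence, the edges at $x$ lying in a zonotope face corresponding to $U$ are precisely those corresponding to the walls of $C$ that contain $U$. All of this is just the standard duality between $Z(\ca)$ and $\partial\fan(\ca)$ described at the beginning of Subsection~\ref{ss:zonotope}.

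Because $\ca$ is simplicial, $C$ is a simplicial cone of dimension $n=\dim A$, so its $n$ walls $w_1,\dots,w_n$ are in bijection with subsets $S\subset\{1,\dots,n\}$ via $S\mapsto \bigcap_{i\in S} w_i$; this assigns to a subset of size $k$ a face of codimension $k$, and conversely every face of $C$ of codimension $k$ is the intersection of a unique set of $k$ walls, and is contained in exactly those walls. This is the only combinatorial input I need from the simplicial hypothesis.

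For part (1), let $w_{i_1},\dots,w_{i_k}$ be the walls of $C$ corresponding to $e_1,\dots,e_k$, and set $U=\bigcap_{j=1}^k w_{i_j}\leq C$. Then $U$ is a face of $C$ of codimension $k$, and the zonotope face $F$ dual to $U$ is the smallest face of $Z(\ca)$ containing $x$ and each $e_j$, since containing the edge $e_j$ forces the dual fan face to be contained in $w_{i_j}$, i.e.\ contained in $U$. By the last sentence of the previous paragraph, the walls of $C$ containing $U$ are exactly $w_{i_1},\dots,w_{i_k}$, so the edges of $F$ at $x$ are exactly $e_1,\dots,e_k$.

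For part (2), let $U\leq C$ be the fan face dual to $F$, corresponding to the wall set $S=\{w_{i_1},\dots,w_{i_d}\}$ with $d=\dim F$, and let $w'_1,\dots,w'_l$ be the walls corresponding to $e_1,\dots,e_l$; these are distinct from the walls in $S$ because no $e_j$ lies in $F$. Set $U'=\bigl(\bigcap_{j=1}^d w_{i_j}\bigr)\cap\bigl(\bigcap_{j=1}^l w'_j\bigr)$, a face of $C$ of codimension $d+l$, and let $F'$ be the zonotope face dual to $U'$. Then $F'$ contains $x$, contains $F$ (since $U'\leq U$), contains each $e_j$ (since $U'\leq w'_j$), and has dimension $d+l$. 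Uniqueness is forced: if $F''$ is any face containing $F$ and $e_1,\dots,e_l$ with $\dim F''=d+l$, then its dual fan face is contained in all of $w_{i_1},\dots,w_{i_d},w'_1,\dots,w'_l$, hence in $U'$; but a face of $C$ strictly smaller than $U'$ has codimension $>d+l$, so its dual zonotope face has dimension $>d+l$, contradicting the dimension hypothesis. The only place any difficulty could lurk is checking that the wall/face correspondence is exactly as described, but this is immediate for simplicial cones, so the main work is bookkeeping the dualities correctly.
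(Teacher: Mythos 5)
Your proof is correct and uses the same key idea as the paper, namely that the simplicial hypothesis makes the face structure around each vertex of $Z(\ca)$ a Boolean lattice; you phrase this as the dual fan chamber being a simplicial cone, while the paper phrases it as the vertex link in $Z(\ca)$ being a simplex, and then leaves the bookkeeping to the reader. Your more detailed spelling-out of the duality is a faithful expansion of the paper's one-sentence argument.
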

This lemma follows immediately from the fact that the link of each vertex of $Z(\ca)$ is a simplex (since $\ca$ is simplicial). Also, with regard to assertion (2), suppose $\{e'_i\}_{i=1}^k$ is the set of  edges of $F$ that contain $x$. Then $F'$ is the face spanned by $\{e'_1,\ldots,e'_k,e_1,\ldots,e_l\}$.

\begin{remark}
	\label{rmk:inherit}
	Suppose $\ca$ is finite, central and simplicial. Let $E\in \cq(\ca)$ be a subspace. Then both $\ca^E$ and $\ca_{V/E}$ are simplicial, c.f.\ \cite[Lemma 2.17]{supersolvable}.
\end{remark}

\subsection{The Salvetti complex}\label{ss:sal} 
Suppose $\ca$ is  a real arrangement in a vector space $A$ ($\cong\rr^n$) and that $Z$ ($=Z(\ca)$) is the dual zonotope. Consider the set of pairs $(F,v)\in \cp(Z)\times \vertex Z$.  Define  an equivalence relation $\sim$ on this set by $$(F,v)\sim (F,v') \iff F=F' \text{\ and\ } \prj_F(v') = \prj_F(v).$$
Denote the equivalence class of $(F,v')$ by $[F,v']$ and let $\ce(\ca)$ be  the set of equivalence classes.   Note  that each equivalence class $[F,v']$ contains a unique representative of the form $(F,v)$, with $v\in \vertex F$.   The partial order on $\cp(Z)$ induces a partial order on $\ce(\ca)$.  In  \cite{s87} the \emph{Salvetti complex} $\s(\ca)$ of $\ca$ is defined as the regular CW complex given by taking  $Z\times \vertex Z$ (i.e., a disjoint union of copies of $Z$) and then identifying faces $F\times v$ and $F\times v'$ whenever $[F,v]=[F,v']$, i.e.,
\begin{equation}\label{e:sal}
\s(\ca)=( Z\times \vertex Z)/ \sim \ .
\end{equation}
For example, for each edge $F\in \edge Z$ $Z^{(1)}$  with endpoints $v_0$ and $v_1$, we get two $1$-cells $[F,v_0]$ and $[F,v_1]$ of $\s(\ca)$ glued together along their endpoints $[v_0,v_0]$ and $[v_1,v_1]$.  So, the $0$-skeleton of $\s(\ca)$ is equal to the $0$-skeleton of $Z$ while its $1$-skeleton is formed from the $1$-skeleton of $Z$ by doubling each edge.  

Since $\s(\ca)$ is a union of cells of the form $(Z(\ca),v)$ with $v$ ranging over vertices of $Z(\ca)$, there is a natural map $\s(\ca)\to Z(\ca)$ defined by ignoring the second coordinate. A \emph{standard subcomplex} of $\s(\ca)$ is the inverse image of a face of $Z(\ca)$ under this map. A standard subcomplex is \emph{proper} if it is associated with a positive dimensional proper face of $Z(\ca)$, i.e., a face which is neither a vertex nor the entire zonotope $Z(\ca)$.

\begin{remark}\label{rmk:orientation}
	Each edge of $\s(\ca)$ has a natural orientation, namely, if $F=\{v_0,v_1\}$ is an edge of $Z$, then $[F,v_0]$ is oriented so that $[v_0,v_0]$ is its initial vertex and $[v_1,v_1]$ is its terminal vertex.  An edge path in the $\s(\ca)$ is \emph{positive} if each of its edges is positively oriented. (Positive paths are related to the Deligne groupoid defined in Section~\ref{subsec:deligne groupoid} below.)
\end{remark}

For example, if $Z(\ca)$ is a hexagon, then $\s(\ca)$ is a CW complex with six vertices, twelve edges and six 2-cells. 

There is a natural projection $\pi:\s(\ca)\to Z(\ca)$.

The main property of $\s(\ca)$ is that it has the homotopy type of the complement of the complexified arrangement $\ca\otimes \cc$.

We write a simplex in the barycentric subdivision $b\s(\ca)$ of $\s(\ca)$ as a pair $(\Delta,v)$, where $\Delta$ is a simplex of $bZ(\ca)$ and $v$ is a vertex of $Z(\ca)$. 

\begin{remark}\label{rmk:intersection}
	Suppose $\Delta=[b_{F_0},b_{F_1},\ldots,b_{F_k}]$ where each $F_i$ is a face of $Z(\ca)$ and $b_{F_i}$ is the barycenter of $F_i$. We assume $F_k\subset F_{k-1}\subset\cdots\subset F_0$. 
	Let $v_1$ and $v_2$ be two vertices of $Z(\ca)$. 
	Note that if $\prj_{F_i}(v_1)=\prj_{F_i}(v_2)$, then $\prj_{F_j}(v_1)=\prj_{F_j}(v_2)$ for any $j<i$. Then $(\Delta,v_1)\cap (\Delta,v_2)=(\Delta',v)$, where $\Delta'=[b_{F_0},b_{F_1},\ldots,b_{F_m}]$ where $m\le k$ is the largest possible number such that $\prj_{F_m}(v_1)=\prj_{F_m}(v_2)$ and $v=\prj_{F_m}(v_2)$. (If $\prj_{F_0}(v_1)\neq \prj_{F_0}(v_2)$, then $(\Delta,v_1)\cap (\Delta,v_2)=\emptyset$.) 
\end{remark}

\subsection{The Deligne groupoid}
\label{subsec:deligne groupoid}
We  recall the definition of the ``Deligne groupoid'' from \cite{deligne} and \cite{Paris00}. A \emph{chamber} of $\ca$ is a connected component of $A-(\bigcup_{H\in \ca}H)$. 
In other words, a chamber is the interior of a top-dimensiponal cone in $\fan(\ca)$. 
Two chambers $C$ and $D$ are \emph{adjacent} if there exists exactly one hyperplane in $\ca$ separating $C$ from $D$. The chambers of $\ca$ are in bijective correspondence with the vertices of $Z(\ca)$ and two chambers are adjacent if there is an edge of $Z(\ca)$ connecting the corresponding vertices.) Let $\Ga(\ca)$ be the directed graph whose vertices are the chambers, and whose arrows are pairs $(C,D)$ of adjacent chambers ($(C,D)$ and $(D,C)$ are distinct oriented edges). Then $\Ga(\ca)$ can be identified with the 1-skeleton of the Salvetti complex $\s(\ca)$ with edge orientations as in Remark~\ref{rmk:orientation}. We identify the vertices of $\Ga(\ca)$ with the vertices of $Z(\ca)$.  (A vertex of $\Ga(\ca)$ is identified with the vertex $[v,v]$ of $\s(\ca)$, and then with the vertex $v$ of  $Z(\ca)$.) So, there is a natural map $\pi:\Ga(\ca)\to Z(\ca)$ whose image is the 1-skeleton of $Z(\ca)$. (We think of $\Ga(\ca)$ as the ``doubled $1$-skeleton of $Z(\ca)$''). Given an edge $e$ of $\Ga(\ca)$, write $\bar e$ for $\pi(e)$.

Let $E(\Gamma)$ be the collection of edges of $\Ga(\ca)$. Since the edges of $\Ga(\ca)$ are directed, each element $a\in E(\Gamma)$ has a \emph{source}, denoted $s(a)$, and a \emph{target}, denoted $t(a)$. Introduce a formal inverse of $a$, denoted $a^{-1}$ It can be thought as traveling  the same edge but in the opposite direction. Thus, $t(a^{-1})=s(a)$ and $s(a^{-1})=t(a)$. 

A \emph{path} of $\Gamma$ is an expression $g=a^{\eps_1}_1a^{\eps_2}_2\cdots a^{\eps_n}_n$ where $a_i\in E(\Gamma)$, $\eps_i\in\{\pm 1\}$ and $t(a^{\eps_i}_i)=s(a^{\eps_{i+1}}_{i+1})$. Define $s(g)=s(a^{\eps_1}_1)$ and $t(g)=t(a^{\eps_n}_n)$. The \emph{length} of $g$ is $n$. A vertex is a path of length $0$. The path $g$ is \emph{positive} if $\eps_i=1$ for each $1\le i\le n$. The path $g$ is \emph{minimal} if any path from $s(g)$ to $t(g)$ has length $\ge n$. 

Let $\sim$ be the smallest equivalence relation on the set of paths such that
\begin{enumeratea}
	\item $ff^\minus \sim s(f)$ for any path $f$;
	\item if $f\sim g$, then $f^{-1}\sim g^{-1}$;
	\item if $f\sim g$, and $h_1$ is a path with $t(h_1)=s(f)=s(g)$, and $h_2$ is a path with $s(h_2)=t(g)=t(g)$, then $h_1fh_2\sim h_1gh_2$;
	\item if $f$ and $g$ are both minimal positive paths with $s(f)=s(g)$ and $t(f)=t(g)$, then $f\sim g$.
\end{enumeratea} 

Let $[f]$ be the collection of all paths equivalent to $f$. Define another equivalence relation on the set of positive paths, called \emph{positive equivalence} and denoted $\sim_+$: it is the smallest equivalence relation generated by conditions (c) and (d) above. If $f$ is positive, let $[f]_+$ be the set of all positive paths that are positively equivalent to $f$. Note that the elements of $[f]_+$ all have the same length; so, $[f]_+$ has a well-defined length.

Define $\g(\ca)$ (resp. $\g^+(\ca)$) to be the collection of all equivalence (resp. positive equivalence) classes of paths (resp. positive paths). The category $\g(\ca)$ (resp. $\g^+(\ca)$) has the structure of a groupoid (resp. a category), whose objects are vertices of $\Ga(\ca)$, whose morphisms are equivalence classes of paths (resp. positive paths) with compositions given by concatenation of paths.  Then $\g(\ca)$ is called the \emph{Deligne groupoid}.  Put $\g=\g(\ca)$ and $\g^+=\g^+(\ca)$. For objects $x,y\in\g^+$, let $\g^+_{x\to}$ denote the collection of morphisms whose source  is $x$. Define $\g^+_{\to y}$ and $\g^+_{x\to y}$ similarly. These notions  can be defined for $\g$ (that is, without using the superscript ${}^+$) in the same way. Note that the collection of morphisms of $\g$ with source and target both equal to $x$ is a group, called the \emph{isotropy group} at $x$  denoted by $\g_x$.

For two morphisms (i.e., for two positive equivalence classes of positive paths) $f$ and $g$ in $\g^+$, 
define the \emph{prefix order} in such a way that $f\preccurlyeq g$ if there is a morphism $h$ such that $g=fh$. Similarly, define the \emph{suffix order} so that $g\succcurlyeq f$ if there is a morphism $h$ such that $g=hf$. Then $(\g^+_{x\to},\preccurlyeq)$ and $(\g^+_{\to y},\succcurlyeq)$ are posets. 

\begin{Theorem}\label{thm:deligne} 	\textup{(\cite{deligne}).}
	Suppose the real hyperplane arrangement $\ca$ is a finite, central and simplicial . The following statements are true.
	\begin{enumerate1}
		\item The natural map $\g^+\to\g$ is injective.  In other words, two positive path are positively equivalent if and only if they are equivalent.  Moreover, left and right cancellation laws both hold in $\g^+$.
		\item For any vertex $x\in\Ga(\ca)$, the posets $(\g^+_{x\to},\preccurlyeq)$ and $(\g^+_{\to x},\succcurlyeq)$ are lattices.
		\item For any vertex $x\in\Ga(\ca)$, the posets $(\g_{x\to},\preccurlyeq)$ and $(\g_{\to x},\succcurlyeq)$ are lattices.
	\end{enumerate1}
\end{Theorem}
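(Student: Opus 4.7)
My proof plan follows Deligne's original argument in \cite{deligne}. The simplicial hypothesis is essential and controls the combinatorics at each codimension-2 subspace $L\in\cq(\ca)$: by Remark~\ref{rmk:inherit} the rank-2 restriction $\ca_{A/L}$ is itself simplicial, so between any two antipodal chambers in $\ca_{A/L}$ there are exactly two minimal positive galleries, and these are the two boundary arcs of a 2-cell of $\s(\ca)$. Parts (2) and (3) will follow from (1) by a standard Garside-theoretic argument, so the bulk of the work is (1).

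I would begin with the combinatorial setup: a positive path $f:x\to y$ in $\Ga(\ca)$ is \emph{minimal} iff every hyperplane of $\ca$ is crossed at most once, iff $\mathrm{length}(f)=d(x,y)$, the number of hyperplanes separating $x$ from $y$ (cf.\ Lemma~\ref{lem:gate} and the discussion preceding it). Relation (d) defining $\sim_+$ identifies any two minimal positive paths with the same endpoints, and relation (c) localizes such identifications to subpaths. The key technical lemma I would then prove is an exchange/cancellation lemma for $\g^+$: given an edge $a:x'\to x$ and positive paths $f,g$ from $x$, if $af\sim_+ ag$ then $f\sim_+ g$, and symmetrically on the right. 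The proof goes by induction on the number of elementary rank-2 moves needed to pass from $af$ to $ag$, showing that the initial letter $a$ can be commuted past each move, possibly with modification dictated by the local rank-2 geometry; simpliciality ensures each rank-2 slice is a polygon so the exchange is unambiguous. Once cancellation is established, injectivity $\g^+\hookrightarrow\g$ follows from the Ore-style principle that a cancellative category with common right multiples embeds in its groupoid of fractions. Common right multiples are produced by the \emph{fundamental element} $\Delta_x$ at each vertex $x$: the positive-equivalence class of a minimal positive path from $x$ to its antipodal chamber $-x$, which exists and crosses every hyperplane of $\ca$ exactly once because $\ca$ is central.

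For part (2), I would construct the join $f\vee g$ of two positive morphisms out of $x$ by a greedy algorithm: iteratively extend $f$ by an edge that can be taken from $g$ modulo positive equivalence, using local confluence in 2-cells to verify each extension is unambiguous; cancellativity gives uniqueness and the bound $f,g\preccurlyeq\Delta_x^k$ guarantees termination. The meet is built dually via cancellation, and the analogous statements for $\succcurlyeq$ follow by left-right symmetry. Part (3) is then obtained by writing each morphism of $\g$ in greedy normal form $u\cdot v^{-1}$ with $u,v\in\g^+$ starting at a common vertex, and transferring the lattice structure from $\g^+$ through this normal form. \textbf{The main obstacle} is the exchange lemma in (1): carefully tracking how prepending a letter interacts with a chain of rank-2 moves requires a delicate case analysis of two moves sharing a chamber, and this is precisely the step that fails for non-simplicial arrangements, where a rank-2 slice need not be a polygon and one loses the clean pairing of minimal galleries between antipodes.
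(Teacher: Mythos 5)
The paper itself offers no proof of this theorem; it is cited from \cite{deligne}, so the only meaningful comparison is with Deligne's argument, which is indeed the architecture you sketch (cancellativity, common multiples via the Garside element, Ore-style embedding, $pn$-normal forms for part (3)). That overall plan is sound.

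However, you have misidentified where the simplicial hypothesis enters, and since that hypothesis is the entire content of the theorem, this is a genuine gap rather than a cosmetic slip. Every central essential rank-$2$ arrangement is automatically simplicial: a $2$-dimensional cone is a simplicial cone, so for any codimension-$2$ subspace $L$ the normal arrangement $\ca_{A/L}$ cuts the plane into $2k$ sectors and the corresponding face of $Z(\ca)$ is always a $2k$-gon, with exactly two minimal positive galleries between antipodal chambers. This holds for \emph{every} central arrangement, simplicial or not, so your claim that ``this is precisely the step that fails for non-simplicial arrangements, where a rank-2 slice need not be a polygon'' is false, and the rank-$2$ local confluence you invoke cannot be the place where simpliciality does its work. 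Where it actually enters is at the vertices of $Z(\ca)$ in rank $\ge 3$: simpliciality means the link of each vertex $x$ is a simplex (Lemma~\ref{lem:span}), so that \emph{any} subset $\{e_1,\dots,e_k\}$ of the edges at $x$ spans a face $F$ and the join $e_1\vee_p\cdots\vee_p e_k$ exists and equals $\Delta_x(F)$ (Lemmas~\ref{lem:longest}(3) and \ref{lem:longest face}). This is the engine of Deligne's induction on rank establishing the lattice property, from which cancellativity and the embedding $\g^+\hookrightarrow\g$ are then extracted; for a non-simplicial rank-$3$ arrangement (a chamber that is a cone on a square, say) two edges at a vertex need not admit a join even though every rank-$2$ slice is still a perfectly good polygon, and the theorem genuinely fails. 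Relatedly, your proposed proof of the exchange lemma --- ``commuting the initial letter past each rank-$2$ move'' --- glosses over the hard case, namely a move whose support contains the initial letter and replaces it by a different first edge; handling that case is exactly where the existence of joins of edges at a vertex (hence simpliciality) is needed, so as written the induction does not close up.
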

For two morphisms $f$ and $g$ with $s(f)=s(g)$ (resp. $t(f)=t(g)$), we write $f\vee_p g$ and $f\wedge_p g$ (resp. $f\vee_s g$ and $f\wedge_s g$) for the join and meet of $f$ and $g$ with respect to the prefix order (resp. suffix order).  (Here, as usual, the words ``join'' and``meet'' mean ``least upper bound'' and `` greatest lower bound,'' respectively).

For paths $f$ and $g$, write $f\eq g$  if $[f]=[g]$.

\begin{lemma}
	\label{lem:pn form}
	Given any path $f$ on $\Ga(\ca)$, there exist positive paths $a$ and $b$ such that $f\eq ab^{-1}$ and $a\wedge_s b=t(a)$. Moreover, if $f\eq cd^{-1}$ where $c$ and $d$ are positive paths with $c\wedge_s d=t(c)$, then $a\eq c$ and $b\eq d$. Thus if $f\eq a_1b^{-1}_1$ for $a_1$ and $b_1$ positive, then $a\pr a_1$ and $b\pr b_1$.
\end{lemma}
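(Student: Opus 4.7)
The plan is to establish existence, uniqueness, and the universal property in sequence, all relying on the lattice structure of Theorem~\ref{thm:deligne} and the cancellation laws in $\g^+$.

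For existence, I would induct on the edge-length of $f$. If $f$ is positive (including the length-zero case), set $a = f$ and let $b$ be the identity at $t(f)$. Otherwise, locate the first negative edge and decompose $f = g\, e^{-1} h$ with $g$ positive (possibly empty) and $e$ a positive edge; by induction write $h \eq a_2 b_2^{-1}$ in the desired form. The key move is to push $e^{-1}$ past $a_2$: since $e$ and $a_2$ share the source $s(e)$, the prefix join $e \vee_p a_2 = e\alpha = a_2 \beta$ exists in $\g^+_{s(e)\to}$, giving $e^{-1} a_2 \eq \alpha\beta^{-1}$ in $\g$. Substitution yields $f \eq (g\alpha)(b_2\beta)^{-1}$, a fractional form whose denominator might still share a suffix with the numerator; removing the common suffix $c := (g\alpha)\wedge_s (b_2\beta)$, which exists by Theorem~\ref{thm:deligne}(2) applied to the suffix lattice, produces the desired $a,b$ with $a\wedge_s b = t(a)$.

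For uniqueness, suppose $f\eq ab^{-1} \eq cd^{-1}$ with both sides in normal form. Since $b$ and $d$ share a source, their prefix join gives positive $b', d'$ with $bb' = dd'$, and rearranging $ab^{-1} = cd^{-1}$ in $\g$ yields $cd' = ab'$ as an equality of morphisms; by Theorem~\ref{thm:deligne}(1) this is already a positive equality in $\g^+$. Next, the prefix join $a \vee_p c = a\alpha = c\gamma$ divides the common prefix multiple $ab' = cd'$, so $b' = \alpha\alpha'$ and $d' = \gamma\alpha'$ for some common positive tail $\alpha'$ (using right cancellation). The equality $bb' = dd'$ then simplifies via right cancellation to $b\alpha = d\gamma$. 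Now I compute suffix meets: the hypothesis $a\wedge_s b = t(a)$ together with right cancellation shows $(a\alpha)\wedge_s(b\alpha) = \alpha$, and likewise $(c\gamma)\wedge_s(d\gamma) = \gamma$. But $a\alpha = c\gamma$ and $b\alpha = d\gamma$, so these suffix meets coincide, forcing $\alpha = \gamma$. Right cancellation then yields $a \eq c$ and $b \eq d$.

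The universal property is then immediate: given any positive $a_1, b_1$ with $f \eq a_1 b_1^{-1}$, set $e = a_1 \wedge_s b_1$ and write $a_1 = a'e$, $b_1 = b'e$; then $f \eq a' b'^{-1}$ is in normal form, so uniqueness forces $a' \eq a$ and $b' \eq b$, which is exactly $a \pr a_1$ and $b \pr b_1$. The main obstacle I anticipate is the bookkeeping forced by working in a groupoid rather than a group: each use of the lattice property must be applied to morphisms sharing the correct source (for $\vee_p$) or target (for $\wedge_s$), and each right cancellation must occur where the composition is defined. The critical technical step is the identity $(a\alpha)\wedge_s(b\alpha) = \alpha$ derived from $a\wedge_s b = t(a)$, which is precisely where the \emph{no common suffix} hypothesis does its work to separate the two normal forms.
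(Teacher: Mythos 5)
Your proof is correct and follows the standard Garside-theoretic route that the paper itself adopts by deferring to Charney's Theorem 2.6: existence by pushing negative letters rightward via prefix-joins and then stripping the greatest common suffix, and uniqueness via the lattice structure and cancellation. The one point worth polishing is the key identity $(a\alpha)\wedge_s(b\alpha)=\alpha$: you invoke it as a consequence of $a\wedge_s b=t(a)$ and right cancellation, which is right, but it is cleanest to observe that right-multiplication by $\alpha$ is an order-embedding of $\g^+_{\to s(\alpha)}$ into $\g^+_{\to t(\alpha)}$ (injective by right cancellation, order-preserving trivially), so any common suffix $m\alpha$ of $a\alpha$ and $b\alpha$ with $m\succcurlyeq t(a)$ forces $m$ to be a common suffix of $a$ and $b$, hence trivial; also note a couple of the cancellation steps you label ``right'' are actually left cancellations (e.g.\ cancelling $a$ from $ab'=a\alpha\alpha'$), though Theorem~\ref{thm:deligne}(1) supplies both.
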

In the above lemma $t(a)$  denotes the identity morphism at the vertex $t(a)$. The proof of Lemma~\ref{lem:pn form} is identical to that of \cite[Theorem 2.6]{charney}. The decomposition $f\eq ab^{-1}$ is called the \emph{$pn$-normal form} of $f$.

Given a path $f=a^{\eps_1}_1a^{\eps_2}_2\cdots a^{\eps_n}_n$,  define the \emph{signed intersection number}, denoted $i(f,H)$, to be the sum of all the $\eps_i$'s such that $a_i$ is dual to $H$. In the special case when $f$ is positive, $i(f,H)$ is the number of times the edge path $\pi(f)$ crosses $H$. Two positive minimal paths with the same end points cross the same collection of hyperplanes (and each hyperplane is crossed exactly once).  This gives the following lemma.

\begin{lemma}
	\label{lem:intersection number}
	Let $f$ and $g$ be paths on $\Ga(\ca)$ such that $f\sim g$. Then $i(f,H)=i(g,H)$ for any $H\in \ca$.
\end{lemma}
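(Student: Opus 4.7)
The plan is to verify that $i(\cdot,H)$ is a well-defined function on $\sim$-equivalence classes by checking invariance under each of the four generating relations (a)--(d) introduced in Subsection~\ref{subsec:deligne groupoid}. Two elementary facts are immediate from the definition of $i(f,H)$: additivity under concatenation, $i(fg,H) = i(f,H)+i(g,H)$, and sign reversal under formal inversion, $i(f^{-1},H) = -i(f,H)$. In particular the signed intersection number of a length-$0$ path (a vertex) is $0$.

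Using these two facts, I would dispatch three of the four generating moves at once. For (a), $i(ff^{-1},H) = i(f,H)-i(f,H) = 0 = i(s(f),H)$. For (b), the hypothesis $i(f,H)=i(g,H)$ gives $i(f^{-1},H) = -i(f,H) = -i(g,H) = i(g^{-1},H)$. For (c), additivity yields $i(h_1 f h_2,H) = i(h_1,H)+i(f,H)+i(h_2,H)$, which equals $i(h_1 g h_2,H)$ whenever $i(f,H)=i(g,H)$. None of these steps uses anything beyond the definition.

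The substantive step, which I regard as the main obstacle, is (d): given two minimal positive paths $f$ and $g$ with common source $x$ and target $y$, I must show $i(f,H) = i(g,H)$ for every $H\in\ca$. Here I would invoke the fact recalled immediately before the lemma. Since $f$ is positive, $i(f,H)$ simply counts the edges of $f$ dual to $H$. Minimality forces $\len f = d(x,y)$, and by \cite[Lemma 1.3]{deligne} this distance equals the number of hyperplanes separating $x$ from $y$. Every separating hyperplane must be crossed at least once by the edge path $\pi(f)$ (or $x$ and $y$ would lie on the same side), and the length equality then forces each separating hyperplane to be crossed exactly once and no other hyperplane to be crossed at all. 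Consequently $i(f,H) \in \{0,1\}$ with value $1$ iff $H$ separates $x$ from $y$; the same description applies to $g$, so $i(f,H) = i(g,H)$.

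Beyond (d) the argument is routine, so the only real content is the step ``each separating hyperplane is crossed exactly once.'' This is the combinatorial geodesic property of the dual zonotope $Z(\ca)$ recalled in Subsection~\ref{ss:zonotope}, and it is the one place where finiteness and centrality of $\ca$ enter; notably, the simpliciality hypothesis is not needed for this particular lemma, only the zonotope structure. Once (d) is settled, combining the four cases through the inductive definition of $\sim$ shows that $i(\cdot,H)$ descends to $\g(\ca)$, as claimed.
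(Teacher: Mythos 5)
Your proof is correct and follows the same route the paper takes: the authors treat (a)--(c) as routine (additivity and sign-reversal of $i(\cdot,H)$), reducing the content of the lemma to the single observation recorded immediately before the statement, namely that a minimal positive path from $x$ to $y$ crosses exactly the hyperplanes separating $x$ from $y$, each once. Your write-up simply makes the implicit case analysis over the generating relations of $\sim$ explicit, and your aside that simpliciality is not needed here is also accurate.
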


\subsection{Some facts about irreducible arrangements}
Let $Z(\ca)$ be the dual zonotope of $\ca$ and $x$ a vertex in $Z(\ca)$. Following \cite{supersolvable}, the \emph{Coxeter graph} of $\ca$ at $x$, denoted by $\Gamma_x$, is defined as follows. 
Let $\{e_i\}_{i=1}^k$ be the collection of edges of $Z(\ca)$ containing $x$, and let $H_i$ be the hyperplane of $\ca$ dual to $e_i$. Let $B_{ij}=H_i\cap H_j$. The vertices of $\Gamma_x$ are in one-to-one correspondence with $\{e_i\}_{i=1}^k$. Two distinct vertices are joined by an edge if $\ca_{A/B_{ij}}$ is irreducible (i.e., if  it contains more than two lines).

\begin{lemma}\textup{(\cite{supersolvable}*{Lemma 3.5}).}
	\label{lem:irreducible characterization}
	Let $\ca$ and $Z(\ca)$ be as above. Then the following statements are equivalent.
	\begin{enumerate1}
		\item The arrangement $\ca$ is an irreducible.
		\item The Coxeter graph $\Gamma_x$ is connected for some vertex $x\in Z(\ca)$.
		\item The Coxeter graph $\Gamma_x$ is connected for any vertex $x\in Z(\ca)$.
	\end{enumerate1}
\end{lemma}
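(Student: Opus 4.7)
The proof proceeds in a cycle $(2) \Rightarrow (1) \Rightarrow (3) \Rightarrow (2)$, with $(3) \Rightarrow (2)$ immediate. For $(2) \Rightarrow (1)$, I argue the contrapositive: if $\ca = \ca_1 \oplus \ca_2$ nontrivially with $A = A_1 \oplus A_2$, then at any chamber $C = C_1 \times C_2$, the walls of $C$ split into ``horizontal'' walls of the form $h \oplus A_2$ and ``vertical'' walls of the form $A_1 \oplus h'$. For any cross-pair $H_i = h_i \oplus A_2$ and $H_j = A_1 \oplus h_j'$, the codimension-two intersection $H_i \cap H_j = h_i \oplus h_j'$ is contained in exactly two hyperplanes of $\ca$, namely $H_i$ and $H_j$; for if $H \in \ca_1$ contains it then $H \supseteq h_i$ and hence $H = H_i$, and symmetrically for $\ca_2$. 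Thus $\ca_{A/(H_i \cap H_j)}$ consists of two lines and is reducible, so no edge between the two blocks can appear in $\Gamma_x$ at any vertex.

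For $(1) \Rightarrow (3)$, I argue the contrapositive: if some $\Gamma_x$ is disconnected with components $S_1, \ldots, S_r$ ($r \geq 2$) and corresponding wall sets $\mathcal{H}_1, \ldots, \mathcal{H}_r$, then $\ca$ is reducible. Since $\ca$ is essential and simplicial, the walls of $C$ have linearly independent normals, so defining $W_i := \bigcap \bigl( \bigcup_{j \neq i} \mathcal{H}_j \bigr)$ yields a linear direct-sum decomposition $A = W_1 \oplus \cdots \oplus W_r$ by a dimension count. In the zonotope picture, iterating Lemma~\ref{lem:span}(2) at $x$ produces a face $F_i \subset Z(\ca)$ of dimension $|S_i|$ spanned by the edges in $S_i$, dual to the subspace $B_i := \bigoplus_{j \neq i} W_j$, and the face spanned at $x$ by all of $S_1 \cup \cdots \cup S_r$ is $Z(\ca)$ itself. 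For $e \in S_i$ and $e' \in S_j$ with $i \neq j$, the 2-face they span at $x$ is a square (the absence of an edge in $\Gamma_x$ says exactly two hyperplanes of $\ca$ pass through $B_e \cap B_{e'}$), so $e \perp e'$. The goal is to promote this edge-wise orthogonality to $Z(\ca) \cong F_1 \times \cdots \times F_r$, which by the orthogonality criterion recalled in Section~\ref{ss:zonotope} is equivalent to $\ca = \bigoplus_i \ca_{A/B_i}$ in the splitting $A = \bigoplus W_i$, contradicting irreducibility.

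The main obstacle is rigorously promoting edge-wise orthogonality to a full product decomposition, or equivalently, showing that every hyperplane of $\ca$ contains at least one of the subspaces $B_i$. My plan is induction along the 1-skeleton of $Z(\ca)$, maintaining at each vertex $y$ the invariant that the walls of the chamber $C_y$ split into $r$ blocks, the $i$-th block consisting of hyperplanes containing $B_i$, with no cross-block edges in $\Gamma_y$. Crossing a wall $K$ in block $i$ to an adjacent vertex $y'$ leaves walls in other blocks unchanged (since the absence of an edge in $\Gamma_y$ between $K$ and any $K' \in \mathcal{H}_j^y$ with $j \neq i$ forces the 2-face $F_{KK'}$ to be a square, and parallel translation across $K$ fixes $K'$); any modified walls within block $i$ are hyperplanes containing $K \cap K' \supseteq B_i$, hence remain in block $i$. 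The delicate point is verifying that no new cross-block edge appears in $\Gamma_{y'}$: this amounts to excluding ``bad'' hyperplanes of $\ca$ through certain codimension-$2$ subspaces, which can be handled by a secondary induction on $\dim A$ applied to the simplicial restriction $\ca^H$ (Remark~\ref{rmk:inherit}) at a suitably chosen wall $H$. Once the invariant propagates to every vertex, every hyperplane of $\ca$ is a wall of some chamber and hence belongs to one of the blocks, yielding the desired direct-sum decomposition of $\ca$.
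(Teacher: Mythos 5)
The paper does not prove this lemma; it simply cites Cuntz--M\"{u}cksch \cite{supersolvable}*{Lemma 3.5}, so there is no ``paper's proof'' to compare with, only the mathematics itself.

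Your implication $(3)\Rightarrow(2)$ is trivial, and $(2)\Rightarrow(1)$ (via contrapositive) is correct and complete: in a product chamber $C_1\times C_2$, for cross-walls $H_i=h_i\oplus A_2$ and $H_j=A_1\oplus h_j'$ the localization $\ca_{A/(H_i\cap H_j)}$ is the two-element rank-two arrangement $\{H_i,H_j\}$, so no cross edge can appear in $\Gamma_x$ at any vertex. The setup for $(1)\Rightarrow(3)$ is also the right one: the block decomposition $A=W_1\oplus\cdots\oplus W_r$ exists because the $n$ wall normals are independent, and the whole statement reduces to showing that every $H\in\ca$ contains one of the $B_i=\bigoplus_{j\neq i}W_j$, which you correctly propose to propagate vertex-by-vertex across $\Ga(\ca)^{(1)}$ using the fact that cross-block two-faces are squares.

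The gap is exactly the step you flag as ``delicate'': when you cross a block-$a$ wall $K$ to $y'$ and a genuinely new block-$a$ wall $L'\supseteq K\cap L$ appears, you must show $\Gamma_{y'}$ has no edge from $L'$ to a block-$b$ wall $M$. You assert this ``can be handled by a secondary induction on $\dim A$ applied to the simplicial restriction $\ca^H$,'' but you neither carry this out nor is it plausible as stated: restriction $\ca^H$ collapses many hyperplanes of $\ca$ onto a single hyperplane of $\ca^H$, so connectivity of the restricted Coxeter graph does not straightforwardly control which hyperplanes of $\ca$ pass through a given codimension-two flat. What is actually needed is a \emph{localization} argument, not a restriction one, and it reduces to a rank-three statement: in the rank-three simplicial arrangement $\cb=\ca_{A/(K\cap L\cap M)}$, the chamber $\ol{C_y}$ has walls $K,L,M$, and the hypotheses give $\cb_{K\cap M}=\{K,M\}$ and $\cb_{L\cap M}=\{L,M\}$. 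One then shows $\cb=\cb_{K\cap L}\cup\{M\}$ (e.g.\ on $S^2$ with $K=\{x=0\}$, $L=\{y=0\}$, $M=\{z=0\}$: the two chambers $\ol{C_y}$, $\ol{C_{y'}}$ and their antipodes fill $\{xy>0\}$, so any further great circle $P$ must lie in $\{xy\le0\}$, which forces $P=K$, $P=L$, or $P\ni K\cap L$). From $\cb=\cb_{K\cap L}\cup\{M\}$ and $K\cap L\not\subset M$ one gets $\cb_{L'\cap M}=\{L',M\}$ as required, and the invariant propagates. Without this lemma (or an equivalent), the induction along the $1$-skeleton is not closed, so as written your argument for $(1)\Rightarrow(3)$ is incomplete.
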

A face $F$ of $Z(\ca)$ that is dual to a subspace $B\in \cq(\ca)$ is \emph{irreducible} if $\ca_{A/B}$ is irreducible. Then we have the following corollary to Lemma~\ref{lem:irreducible characterization}.

\begin{corollary}
	\label{cor:irreducible}
	Let $x\in Z(\ca)$ be a vertex. Suppose there exists a pair of irreducible faces $F_1$ and $F_2$ of $Z(\ca)$ such that
	\begin{enumerate1}
		\item any edge of $Z(\ca)$ containing $x$ is contained in either $F_1$ or $F_2$;
		\item there exists an edge $e$ such that $x\in e$ and $e\subset F_1\cap F_2$.
	\end{enumerate1}
	Then $\ca$ is irreducible.
\end{corollary}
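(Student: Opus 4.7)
The plan is to apply Lemma~\ref{lem:irreducible characterization} in the form (2) $\Rightarrow$ (1): it suffices to show that the Coxeter graph $\Gamma_x$ is connected. I will show that $\Gamma_x$ is the union of two connected subgraphs that share a common vertex, namely the subgraphs coming from $F_1$ and $F_2$.

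For $i=1,2$, let $B_i \in \cq(\ca)$ be the subspace to which $F_i$ is dual, so that $F_i$ is isomorphic to the dual zonotope $Z(\ca_{A/B_i})$ and, by hypothesis, $\ca_{A/B_i}$ is irreducible. Let $\Gamma_x^{(i)}$ denote the full subgraph of $\Gamma_x$ spanned by the edges of $F_i$ containing $x$. First I will identify $\Gamma_x^{(i)}$ with the Coxeter graph of $\ca_{A/B_i}$ at $x$ (viewed as a vertex of $F_i$). The edges of $F_i$ at $x$ correspond to the edges of $Z(\ca_{A/B_i})$ at $x$ and are dual to the hyperplanes of $\ca_{A/B_i}$, namely the images $H/B_i$ of hyperplanes $H \in \ca$ with $B_i \le H$. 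For two such edges $e_p, e_q$ at $x$ with dual hyperplanes $H_p, H_q$, set $B_{pq} = H_p \cap H_q$; the corresponding hyperplanes in $\ca_{A/B_i}$ intersect in $B_{pq}/B_i$, and the normal arrangement in $(A/B_i)/(B_{pq}/B_i) = A/B_{pq}$ is precisely $\ca_{A/B_{pq}}$. Hence the edge criterion in $\Gamma_x^{(i)}$ (computed inside $\ca_{A/B_i}$) agrees with the edge criterion in $\Gamma_x$, so $\Gamma_x^{(i)}$ is exactly the Coxeter graph of $\ca_{A/B_i}$ at $x$.

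Since $\ca_{A/B_i}$ is irreducible, Lemma~\ref{lem:irreducible characterization} applied to $\ca_{A/B_i}$ gives that $\Gamma_x^{(i)}$ is connected. By hypothesis (1), every vertex of $\Gamma_x$ (i.e., every edge of $Z(\ca)$ at $x$) lies in $\Gamma_x^{(1)} \cup \Gamma_x^{(2)}$, so
\[
\Gamma_x = \Gamma_x^{(1)} \cup \Gamma_x^{(2)}.
\]
By hypothesis (2), the edge $e$ is an edge of both $F_1$ and $F_2$ containing $x$; so $e$ is a common vertex of $\Gamma_x^{(1)}$ and $\Gamma_x^{(2)}$. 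The union of two connected graphs sharing a vertex is connected, so $\Gamma_x$ is connected. Applying Lemma~\ref{lem:irreducible characterization} once more, $\ca$ is irreducible.

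The only non-routine step is the identification of $\Gamma_x^{(i)}$ with the Coxeter graph of $\ca_{A/B_i}$ at $x$; once that bookkeeping is done the conclusion is immediate. I do not expect any genuine obstacle here because the definitions of ``dual face'', ``normal arrangement'', and ``Coxeter graph'' are all local at $x$ and behave functorially when one passes from $\ca$ to $\ca_{A/B_i}$.
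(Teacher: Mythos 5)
Your proof is correct and is exactly the argument the paper leaves implicit: the corollary is stated without a written proof, labeled simply as a consequence of Lemma~\ref{lem:irreducible characterization}, and your route — identifying $\Gamma_x^{(i)}$ with the Coxeter graph of $\ca_{A/B_i}$ at $x$, using irreducibility of each $\ca_{A/B_i}$ to get connectedness of each $\Gamma_x^{(i)}$, and then noting that hypotheses (1) and (2) make $\Gamma_x$ a union of two connected subgraphs sharing the vertex $e$ — is precisely how one is meant to deduce it. The one bookkeeping step you flag (that the edge criterion computed in $\ca_{A/B_i}$ agrees with the one computed in $\ca$ because $B_i \le B_{pq}$) is right and worth spelling out as you did.
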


\begin{lemma}\textup{(\cite{supersolvable}*{Lemma 3.11}).}
	\label{lem:restriction irreducible}
	Suppose $\ca$ is a finite, central, simplicial real arrangement. If $\ca$ is irreducible, then $\ca^B$ is irreducible and simplicial for any subspace $B\in\cq(\ca)$.
\end{lemma}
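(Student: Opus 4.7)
The plan is to prove the two assertions separately. Simpliciality of $\ca^B$ is already recorded in Remark~\ref{rmk:inherit}, so the substantive content is the irreducibility claim.

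I would first reduce to the case $\codim B = 1$ by a downward induction on $\codim B$, combined with an outer induction on $\dim A$. The key input is transitivity of restriction: if $B \subsetneq B' \subsetneq A$ with $B' \in \cq(\ca)$, then $\ca^B = (\ca^{B'})^B$, and one can always choose such a $B'$ with $\codim B' = \codim B - 1$ by writing $B = H_1 \cap \cdots \cap H_k$ as a minimal intersection and setting $B' = H_1 \cap \cdots \cap H_{k-1}$. The outer induction on $\dim A$ then lets us treat $\ca^{B'}$ (which lives in strictly smaller dimension) as already known to be simplicial and irreducible. So it suffices to handle the case $B \in \ca$.

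By Lemma~\ref{lem:irreducible characterization}, it is enough to show that the Coxeter graph $\Gamma_y^B$ of $\ca^B$ is connected at some vertex $y \in Z(\ca^B)$. I would choose a chamber $C$ of $\ca$ adjacent to $B$, so that $B$ supports a facet $F$ of $C$; let $y$ correspond to $F$ and $x \in Z(\ca)$ correspond to $C$. Since $\ca$ is simplicial, the edges at $x$ consist of a distinguished edge $e_0$ dual to $B$ and further edges $e_1,\ldots,e_m$ dual to the other facets $H_1,\ldots,H_m$ of $C$; the edges $f_i$ of $Z(\ca^B)$ at $y$ are dual to $K_i := H_i \cap B$ and are in natural bijection with $e_1,\ldots,e_m$. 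A short dimension count (using that $H_0 \cap H_i \cap H_j$ has codimension three in $A$ because the facets of the simplicial cone $C$ are independent) shows $H_i \cap H_j \not\subset B$ for $i,j \ne 0$; consequently distinct hyperplanes of $\ca$ through $H_i \cap H_j$ restrict to distinct hyperplanes of $\ca^B$ through $K_i \cap K_j$. Hence any edge $\{e_i,e_j\}$ in $\Gamma_x$ with $i,j \ne 0$ lifts to an edge $\{f_i,f_j\}$ in $\Gamma_y^B$, so $\Gamma_x \setminus \{e_0\}$ embeds as a spanning subgraph of $\Gamma_y^B$. If $e_0$ is not a cut vertex of $\Gamma_x$, connectedness of $\Gamma_x$ (Lemma~\ref{lem:irreducible characterization} applied to $\ca$) finishes the argument.

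The hard part, which I expect to be the main obstacle, is the case when $e_0$ is a cut vertex of $\Gamma_x$ — a situation that genuinely occurs (for instance for the $A_3$ braid arrangement restricted to the central wall $\{x_2 = x_3\}$). In that case, the extra edges of $\Gamma_y^B$ that restore connectivity must come from hyperplanes of $\ca$ that do not bound $C$, whose restrictions to $B$ contribute additional members of $\ca^B$ through the relevant $K_i \cap K_j$. To exhibit them I would argue by contradiction: assuming $\Gamma_y^B$ disconnects as $V_1 \sqcup V_2$, the preimages $V_1', V_2' \subset \Gamma_x \setminus \{e_0\}$ together with $e_0$ span, by Lemma~\ref{lem:span}, two faces $\tilde F_1, \tilde F_2$ of $Z(\ca)$ at $x$ sharing $e_0$ and together covering every edge at $x$. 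Examining the dual subspaces $\tilde B_1, \tilde B_2$ together with $B$, and invoking the non-existence of $\ca$-hyperplanes crossing the hypothetical splitting (any such hyperplane would produce a forbidden edge of $\Gamma_y^B$), one expects to force a direct sum decomposition of $\ca$ lifting the decomposition of $\ca^B$, contradicting irreducibility of $\ca$. Making this lifting rigorous — in particular ruling out a ``diagonal'' hyperplane of $\ca$ that mediates the bridge in $\ca^B$ while still being compatible with the would-be decomposition of $\ca$ — is the principal technical obstacle and will require a careful global analysis beyond the local picture at $x$.
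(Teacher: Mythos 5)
The lemma you are proving is not actually proved in this paper: the authors simply cite it as Lemma~3.11 of Cuntz--M\"ucksch (the reference \cite{supersolvable}), and the same is true of the simpliciality statement in Remark~\ref{rmk:inherit}. So there is no in-paper argument to compare against.

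Your outline is reasonable as far as it goes, and one point in it is genuinely subtle and worth stressing: the reason an edge $\{e_i,e_j\}$ of $\Gamma_x$ (with $i,j\neq 0$) descends to an edge $\{f_i,f_j\}$ of $\Gamma_y^B$ is precisely that simpliciality of the chamber $C$ forces $B\not\supset H_i\cap H_j$, so (a) restriction to $B$ is injective on the set of hyperplanes through $H_i\cap H_j$, and (b) $B$ itself cannot be the third hyperplane witnessing irreducibility of $\ca_{A/(H_i\cap H_j)}$. Without (b), the ``spanning subgraph'' step would fail, so it is good that you used the dimension count. The upward induction on $\codim B$ via $\ca^B=(\ca^{B'})^B$ is also fine once the codimension-one case is known.

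However, you correctly flag — and then do not close — the real gap: the case where $e_0$ is a cut vertex of $\Gamma_x$. Your final paragraph gestures at a lifting argument (a decomposition of $\ca^B$ should produce a decomposition of $\ca$) but the step you describe as ``ruling out a diagonal hyperplane'' is exactly the crux, and nothing you have written pins it down; it is not automatic that a bridging edge of $\Gamma_y^B$ exists, since such a bridge must be witnessed by a hyperplane $H\in\ca$ that is not a wall of $C$ and satisfies $H\supset K_i\cap K_j$, and producing such an $H$ from irreducibility of $\ca$ alone is where the work lies (your $A_3$ example with $H_{14}$ illustrates this but does not explain how to find $H$ in general). As written, the proposal therefore does not constitute a proof of the lemma; it reproduces the easy part and names, but does not solve, the hard part, which is essentially what the cited Cuntz--M\"ucksch lemma is for.
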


\section{Garside theoretic computations}
\label{sec:gar}
In this section $\ca$ is, as before,  an essential, simplicial arrangement of linear hyperplanes in a real vector space $A$. Let $V=A\otimes \mathbb C$ and let $\ca\otimes \mathbb C$ be the complexification of $\ca$. 
\subsection{Faces of zonotopes and words of longest length}\label{subsec:face}

Let $Z(\ca)$ be the zonotope dual to $\ca$ and let $\Ga(\ca)$, $\g^+=\g^+(\ca)$, $\g=\g(\ca)$, and $\pi:\Ga(\ca)\to Z(\ca)$ be as in Subsection~\ref{subsec:deligne groupoid}. 
\begin{definition}
	Suppose $F$ is a face of $Z(\ca)$.  An \emph{$F$-path} of $\Ga(\ca)$ is a path whose image under $\pi$ is contained in $F$.
\end{definition}

Put $\Ga(\ca,F)=\pi^{-1}(F)$. We can identify $\Ga(\ca,F)$ with $\Ga(\ca_{A/B})$, where $B$ is the subspace dual to $F$. Let $\G(F)$ denote the Deligne groupoid  over $\Ga(\ca,F)$ so that $\G(F)$ is isomorphic to $\G(\ca_{A/B})$. By Remark~\ref{rmk:inherit}, $\G(F)$ also satisfies Theorem~\ref{thm:deligne}.

Define a retraction $r_F:\Ga(\ca)\to \Ga(\ca,F)$ as follows. Given a vertex $x\in \Ga(\ca)$, put $r_F(x)=\prj_F(x)$, where $\prj_F$ is the ``nearest vertex projection'' defined in Lemma~\ref{lem:gate}. 
Let $e$ be an oriented edge $\Ga(\ca)$. Suppose $H_e\in\cq(\ca)$ denotes the hyperplane dual to $e$. If $H_e$ is not dual to any edge of $F$, then $r_F(s(e))=r_F(t(e))$ and we define $r_F(e):=r_F(s(e))$. If $H_e$ is dual to an edge of $F$, then $r_F(e)$ is the oriented edge from $r_F(s(e))$ to $r_F(t(e))$. The map $r_F$ induces a map from the set of paths on $\Ga(\ca)$ to the set of paths on $\Ga(\ca,F)$; moreover, $r_F$ takes positive paths to positive paths, and positive minimal paths to positive minimal paths. 

\begin{lemma}
	\label{lem:easy injective}
	\textup{(\cite{deligne}*{Proposition 1.30}).}
	The natural map $\G(F)\to \G$ is injective and preserves the lattice structure. 
	Moreover, the image of $\G^+(F)$ is contained in $\G^+$.
\end{lemma}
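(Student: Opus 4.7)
The plan is to construct a functor $\rho\colon\G\to\G(F)$ that is a left inverse to the natural map $\iota\colon\G(F)\to\G$, by applying the retraction $r_F\colon\Ga(\ca)\to\Ga(\ca,F)$ of Subsection~\ref{subsec:face} path-by-path. Injectivity of $\iota$ is then immediate from $\rho\circ\iota=\id_{\G(F)}$, and $\iota(\G^+(F))\subset\G^+$ is automatic since a positive path in $\Ga(\ca,F)$ is positive in $\Ga(\ca)$.

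The first task is to show that both $\iota$ and $\rho$ are well defined on morphisms. Axioms (a)--(c) defining $\sim$ transfer trivially under any functor induced from a graph map. For axiom (d) of $\iota$, one needs every minimal positive path in $\Ga(\ca,F)$ between two vertices of $F$ to remain minimal in $\Ga(\ca)$; this follows because any hyperplane separating two vertices of $F$ must lie in $\ch_F$ (an easy consequence of Lemma~\ref{lem:gate}(3)), so distances in the two graphs agree on vertices of $F$. For axiom (d) of $\rho$, note that $|r_F(f)|$ equals the number of edges of $f$ dual to hyperplanes in $\ch_F$; when $f$ is minimal positive this equals the number of hyperplanes in $\ch_F$ separating $s(f)$ from $t(f)$, which by Lemma~\ref{lem:gate} equals the $\Ga(\ca,F)$-distance from $r_F(s(f))$ to $r_F(t(f))$, so $r_F(f)$ is minimal positive in $\Ga(\ca,F)$.

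For lattice preservation I rely on the two length estimates $|\iota(h)|=|h|$ for $h\in\G^+(F)$ and $|\rho(h')|\le|h'|$ for $h'\in\G^+$ (since $r_F$ can only collapse edges). Let $f,g\in\G^+(F)$ share a source. For joins, set $u=f\vee_p^{\G^+(F)}g$ and $v=\iota(f)\vee_p^{\G^+}\iota(g)$: minimality of $v$ gives $v\pr\iota(u)$, and $\rho(v)$ being a common positive multiple of $f,g$ in $\G^+(F)$ gives $u\pr\rho(v)$. The chain $|u|\le|\rho(v)|\le|v|\le|\iota(u)|=|u|$ then forces all quantities equal, and $v\pr\iota(u)$ with matching lengths yields $v=\iota(u)$. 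For meets the length chain does not close, so I would instead characterize the image of $\iota$ via intersection numbers (Lemma~\ref{lem:intersection number}): since the edges of $Z(\ca)$ at a vertex $w\in F$ that lie in $F$ are exactly those dual to hyperplanes in $\ch_F$ (a direct consequence of the zonotope structure and Lemma~\ref{lem:span}), a positive morphism $h\in\G^+$ with $s(h)\in F$ lies in $\iota(\G^+(F))$ if and only if $i(h,H)=0$ for every $H\notin\ch_F$. Applying this to $v':=\iota(f)\wedge_p^{\G^+}\iota(g)$: writing $\iota(f)=v'\cdot h$, positivity together with $i(\iota(f),H)=0$ for $H\notin\ch_F$ forces $i(v',H)=0$, so $v'$ lies in $\iota(\G^+(F))$ and must equal $\iota(u')$ for $u':=f\wedge_p^{\G^+(F)}g$.

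The main obstacle is the meet case: unlike joins, the length argument does not close, and one genuinely needs the geometric characterization of which edges at a vertex of $F$ lie in $F$ in order to pin down the image of $\iota$ inside $\G^+$. Once this input is isolated, both halves of the lattice statement fall out quickly.
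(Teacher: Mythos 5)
Your proposal is correct and follows the same route the paper sketches: injectivity via the retraction $r_F$ (giving a left inverse $\rho$ to $\iota$), and lattice preservation via $r_F$ together with the intersection-number Lemma~\ref{lem:intersection number}, which is exactly the paper's cited fact that a positive $F$-path cannot be equivalent to a positive path leaving $F$. Your write-up simply expands the paper's two-sentence justification — in particular spelling out why the meet case needs the intersection-number characterization while the join case can be closed by a length count — but it introduces no genuinely different idea.
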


The lemma follows from the existence of $r_F$. Although the preservation of the lattice structure
is not mentioned explicitly in  \cite{deligne}, it follows easily from existence of $r_F$ and the fact that a positive $F$-path cannot be equivalent to a path which is not an $F$-path (by Lemma~\ref{lem:intersection number}).

For a vertex $x\in\Ga(\ca)$ and a face $F$ of $Z(\ca)$, denote by $\Sigma_{x\to}(F)$ (resp. $\Sigma_{\to x}(F)$)  the collection of edges in $\Ga(\ca)$ whose image under $\pi$ is contained in $F$ and whose source (resp. target) is $x$. 
Let $\ant(x,F)$ denote the antipodal vertex to $x$ in $F$. (Any zonotope is centrally symmetric.) For $y=\ant(x,F)$ define $\Delta^{x}(F)$ (resp. $\Delta_{x}(F)$) to be the morphism represented by a minimal positive path from $y$ to $x$ (resp. $x$ to $y$). The equivalence classes of such positive paths of this form are called \emph{Garside elements}. For an integer $k\ge 1$, define 
$$(\Delta_x(F))^k:=\underbrace{\Delta_x(F)\Delta_y(F)\Delta_x(F)\cdots}_{k \text{ times}}.$$
To simplify notation, put 
\begin{align*}
\Delta^x=\Delta^x(&Z(\ca)), \quad \Sigma_{x\to}=\Sigma_{x\to}(Z(\ca)),\\
\Delta_x=\Delta_x(&Z(\ca)), \quad\Sigma_{\to x}=\Sigma_{\to x}(Z(\ca)),
\end{align*}
and set $\ant(x)=\ant(x,Z(\ca))$.

\begin{lemma}	\textup{(\cite{deligne})}.
	\label{lem:longest}
	The following statements are true.
	\begin{enumerate1}
		\item For any vertex $x\in \Ga(\ca)$ and for any path $g$ from $x$ to $y$, we have $(\Delta_x)^2g\eq g(\Delta_y)^2$.
		\item Let $y=\ant(x)$. Then $\Delta_x\su e$ for any $e\in \Sigma_{\to y}$.
		\item Suppose $\Sigma_{x\to}=\{e_1,e_2,\ldots,e_k\}$ and $\Sigma_{\to x}= \{e'_1,e'_2,\ldots,e'_k\}$. Then $\Delta_x\eq e_1\vee_p e_2\vee_p\cdots\vee_p e_k$ and $\Delta^x\eq e'_1\vee_s e'_2\vee_s\cdots\vee_s e'_k$.
		\item Let $f$ be any path with $x=s(f)$. Then there is an integer $k\ge 0$ and positive path $g$ such that $f\eq(\Delta_x)^{-2k}g\eq g(\Delta_{t(f)})^{-2k}$.
		\item Suppose $v$ is a positive path with $x=s(v)$ and $y=t(v)$. Then there is a positive path $u$ such that $\Delta^x v\eq u\Delta^y$.
	\end{enumerate1}	
\end{lemma}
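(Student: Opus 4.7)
The plan is to prove parts (1) through (5) in the order (2), (3), base case of (5), (1), general (5), (4), exploiting their interdependencies. The geometric input is that for a simplicial arrangement the zonotope $Z(\ca)$ has diameter $n$, each chamber is bounded by exactly $n$ walls, and the antipode map preserves adjacency and edge-duality.

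\emph{Parts (2) and (3).} For $e \in \Sigma_{\to y}$ with $y = \ant(x)$, the source $s(e)$ is adjacent to $y$ so $d(x, s(e)) = n - 1$ (the only alternative $n+1$ exceeds the diameter). Concatenating any minimal positive path from $x$ to $s(e)$ with $e$ yields a minimal positive path from $x$ to $y$, which represents $\Delta_x$ by relation (d) of Subsection~\ref{subsec:deligne groupoid}; hence $\Delta_x \su e$. An identical argument establishes the prefix analogue $\Delta_x \pr e'$ for $e' \in \Sigma_{x \to}$. For (3), set $\delta := e_1 \vee_p \cdots \vee_p e_k$. The prefix analogue of (2) gives $\delta \pr \Delta_x$. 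Since the simplicial hypothesis means each $e_i$ is dual to a distinct wall $H_i$ of $x$'s chamber (and there are exactly $n = k$ of these), the condition $e_i \pr \delta$ combined with Lemma~\ref{lem:intersection number} forces $i(\delta, H_i) \ge 1$ for each $i$, so $\ell(\delta) \ge n = \ell(\Delta_x)$. As $\delta \pr \Delta_x$ also forces $\ell(\delta) \le \ell(\Delta_x)$, we conclude $\delta \eq \Delta_x$. The suffix statement is dual.

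\emph{Base case of (5), (1), and general (5).} Let $e : x \to y$ be an edge dual to a hyperplane $H$, and let $u$ be the edge $\ant(x) \to \ant(y)$ dual to the same $H$. The prefix analogue of (2) at $\ant(x)$ gives $u \pr \Delta^x$, say $\Delta^x \eq u h$ with $h$ a positive path of length $n-1$ from $\ant(y)$ to $x$. Then $he$ is positive from $\ant(y)$ to $y$ of length $n = d(\ant(y), y)$, hence minimal, so $he \eq \Delta^y$ and $\Delta^x e \eq u \Delta^y$. For (1), it suffices to check $(\Delta_x)^2 e \eq e (\Delta_z)^2$ for a single edge $e : x \to z$ and then iterate over the edges of $g$ (using the identity read backwards for inverse edges). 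The base case of (5) gives $\Delta^x e \eq u_1 \Delta^z$ for the edge $u_1 : \ant(x) \to \ant(z)$, and the same base case applied in dual form to $\Delta_x = \Delta^{\ant(x)}$ and $u_1$ gives $\Delta_x u_1 \eq e \Delta_z$. Combining:
\[
(\Delta_x)^2 e = \Delta_x \Delta^x e \eq \Delta_x u_1 \Delta^z \eq e \Delta_z \Delta^z = e (\Delta_z)^2.
\]
The general case of (5) follows by induction on $\ell(v)$: if $v = v' e$, apply the inductive hypothesis to $v'$ and the base case to $e$.

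\emph{Part (4).} The crucial auxiliary lemma is that every positive path $b : w \to z$ is a prefix of $(\Delta_w)^{2k}$ for $k$ sufficiently large. Induct on $\ell(b)$: assuming $b = b' e$ with $b' \pr (\Delta_w)^{2k_1}$ so that $(\Delta_w)^{2k_1} \eq b' h$, use (1) to rewrite $(\Delta_w)^{2(k_1+1)} \eq b' (\Delta_{w'})^2 h$, then apply (3) at $w'$ to see $e \pr \Delta_{w'}$ and hence $e \pr (\Delta_{w'})^2$; the conclusion $b \pr (\Delta_w)^{2(k_1+1)}$ follows. Given any $f$ with $pn$-normal form $f \eq a b^{-1}$ from Lemma~\ref{lem:pn form}, use (1) to move $(\Delta_x)^{2k}$ across $a$ and $b^{-1}$, yielding $(\Delta_x)^{2k} f \eq a b^{-1} (\Delta_w)^{2k}$ with $w = t(f)$. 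Choosing $k$ large enough that $b \pr (\Delta_w)^{2k}$, the tail $b^{-1} (\Delta_w)^{2k}$ is equivalent to a positive path $\tilde h$, and $g := a \tilde h$ satisfies $f \eq (\Delta_x)^{-2k} g$. The equivalence $(\Delta_x)^{-2k} g \eq g (\Delta_{t(f)})^{-2k}$ then follows immediately from (1). The main obstacle throughout is keeping the order of dependencies consistent; simpliciality of $\ca$ enters essentially in (3) (each chamber has exactly $n$ walls) and underlies the termination of the auxiliary induction used in (4).
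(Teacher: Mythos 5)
The paper gives no proof of this lemma --- it is quoted from \cite{deligne} --- so there is no internal argument to compare against; I am only assessing your reconstruction. Parts (1), (2), (4) and (5), and the order in which you chain them, are correct: the adjacency-to-the-antipode argument for (2) and its prefix analogue, the edge-by-edge derivation of (1) from the base case of (5), and the ``absorb a positive path into a high power of $\Delta^2$'' induction underlying (4) are all sound and are the standard Garside-theoretic derivations.

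The gap is in part (3), and it originates in your opening ``geometric input,'' where you identify the diameter of $Z(\ca)^{(1)}$ with the number of walls of a chamber. These are different numbers: the diameter, and hence $\ell(\Delta_x)=d(x,\ant(x))$, equals $|\ca|$, the total number of hyperplanes, whereas $k=|\Sigma_{x\to}|$ is the number of walls of the chamber $C_x$, which for an essential simplicial arrangement is $\dim A$. Already for three lines through the origin in $\rr^2$ (so $Z(\ca)$ is a hexagon) one has $k=2$ but $\ell(\Delta_x)=3$. Consequently your counting argument only yields $\ell(\delta)\ge k$, which does not give $\ell(\delta)\ge\ell(\Delta_x)$, and the conclusion $\delta\eq\Delta_x$ does not follow. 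The step is repairable without new machinery: since $\delta\pr\Delta_x$, additivity of lengths and of separating sets shows that $\delta$ is represented by a \emph{minimal} positive path from $x$ to $y:=t(\delta)$; then $e_i\pr\delta$ together with Lemma~\ref{lem:intersection number} forces each wall $H_i$ of $C_x$ to separate $x$ from $y$. Because the closed chamber $\overline{C_x}$ is exactly the intersection of the closed half-spaces bounded by its walls, a chamber lying on the far side of every wall of $C_x$ is contained in $-C_x$, so $y=\ant(x)$ and $\delta\eq\Delta_x$. As written, though, part (3) is not proved; note that your part (4) survives independently, since the only place it cites (3) can be replaced by the prefix analogue of (2).
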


The next lemma is a consequence of Lemma~\ref{lem:easy injective} and Lemma~\ref{lem:longest} (3).
\begin{lemma}
	\label{lem:longest face}
	For vertex $x\in \Ga(\ca)$, let $\Sigma_{x\to}(F)=\{e_1,e_2,\ldots, e_k
	\}$ and $\Sigma_{\to x}(F)= \{e'_1,e'_2,\ldots, e'_k\}$). Then 
	\[
	\Delta_x(F)= e_1\vee_p e_2\vee_p\cdots\vee_p e_k \quad \text{and}\quad \Delta_x(F)=e'_1\vee_s e'_2\vee_s\cdots\vee_s e'_k.
	\]	
where $F$ is the face of $Z(\ca)$ spanned by $\{\bar e_1,\ldots,\bar e_k\}$, where the $\bar e_i$ ($=\pi(e_i)$) are as defined in the first paragraph of Subsection~\ref{subsec:deligne groupoid}
	\end{lemma}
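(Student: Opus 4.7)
The plan is to reduce Lemma~\ref{lem:longest face} to Lemma~\ref{lem:longest}(3) applied to the smaller arrangement $\ca_{A/B}$, where $B\in \cq(\ca)$ is the subspace dual to $F$. By Remark~\ref{rmk:inherit}, $\ca_{A/B}$ is again a finite, central, simplicial arrangement, so Deligne's theorem and all of Lemma~\ref{lem:longest} apply to it. The preimage $\Ga(\ca,F)=\pi^{-1}(F)$ is naturally identified with $\Ga(\ca_{A/B})$, and under this identification the face $F$ of $Z(\ca)$ corresponds to the full dual zonotope $Z(\ca_{A/B})$. In particular, the vertex $x$ (viewed inside $F$) is a vertex of $Z(\ca_{A/B})$, and the sets $\Sigma_{x\to}(F)$ and $\Sigma_{\to x}(F)$ coincide with the analogous sets $\Sigma_{x\to}$ and $\Sigma_{\to x}$ for $\ca_{A/B}$, while $\Delta_x(F)$ and $\Delta^x(F)$ correspond to the Garside elements $\Delta_x$ and $\Delta^x$ of $\G(\ca_{A/B})$.

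The first step is to apply Lemma~\ref{lem:longest}(3) inside $\G(F)\cong \G(\ca_{A/B})$. This immediately gives the equalities
\[
\Delta_x(F)\eq e_1\vee_p e_2\vee_p\cdots\vee_p e_k,\qquad \Delta^x(F)\eq e'_1\vee_s e'_2\vee_s\cdots\vee_s e'_k,
\]
where the joins are computed in the prefix and suffix lattices of $\G(F)$.

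The second step is to transport these equalities into the ambient groupoid $\G=\G(\ca)$ using Lemma~\ref{lem:easy injective}, which states that the natural map $\G(F)\to \G$ is injective and preserves the lattice structure. Because the join operations are preserved, the same identities hold in $\G$, which is exactly the content of the lemma. For the parenthetical claim that $F$ is the face spanned by $\{\bar e_1,\dots,\bar e_k\}$, one invokes Lemma~\ref{lem:span}(1): the edges $\bar e_1,\dots,\bar e_k$ are precisely the edges of $F$ incident to $x$, so the face they span in $Z(\ca)$ is $F$ itself.

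There is no substantive obstacle here; the argument is essentially a bookkeeping translation between the ambient and the induced arrangement. The only point that must be handled carefully is the invocation of Lemma~\ref{lem:easy injective} to justify that the join of the $e_i$ computed inside $\G(F)$ really is the join when viewed inside $\G$ — this is exactly the lattice-preservation clause of that lemma, and it is the reason the identification cannot be replaced by a merely set-theoretic one.
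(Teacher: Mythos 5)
Your proposal is correct and follows essentially the same route as the paper, which simply remarks that the lemma is a consequence of Lemma~\ref{lem:easy injective} and Lemma~\ref{lem:longest}(3); you have filled in the bookkeeping (identifying $\Ga(\ca,F)$ with $\Ga(\ca_{A/B})$, applying Lemma~\ref{lem:longest}(3) there, and transporting back via the lattice-preserving injection). One small note: in writing the conclusion you correctly produce $\Delta^x(F)\eq e'_1\vee_s\cdots\vee_s e'_k$, whereas the stated lemma has a typo and writes $\Delta_x(F)$ for the second equality; your version is the one consistent with Lemma~\ref{lem:longest}(3).
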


\subsection{Elementary $B$-segments}
\begin{definition}
	\label{def:adj}
	Parallel faces $F$ and $F'$ of $Z(\ca)$ are \emph{adjacent} if $F\neq F'$ and if they are contained in a face $F_0$ with $\dim(F_0)=\dim(F)+1$.
\end{definition}
\begin{lemma}
	\label{lem:ant}
	Let $F,F'$ and $F_0$ be as in Definition~\ref{def:adj}.  
	\begin{enumerate1}
		\item For any vertex $x\in F$,  $\ant(x,F_0)\in F'$;
		\item If $p:F\to F'$ is parallel translation, then $\ant(p(x),F')=\ant(x,F_0)$ .
	\end{enumerate1}
\end{lemma}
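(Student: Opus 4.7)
The plan is to reduce both statements to the central symmetry of the zonotope $F_0$, which is itself a zonotope dual to $\ca_{A/B_0}$, where $B_0$ is the subspace dual to $F_0$. Let $B$ be the subspace dual to both $F$ and $F'$. Since $F\subset F_0$, we have $B\supseteq B_0$, and since $F$ is codimension one in $F_0$, the quotient $B/B_0$ is a one-dimensional subspace of $A/B_0$. Inside the zonotope $F_0$, a basic structural fact about zonotopes is that the codimension-one faces dual to a given one-dimensional direction come as a single antipodal pair. So, as $F\neq F'$ are two such faces, they must form this antipodal pair, and in particular the central involution $\sigma:=\ant(\cdot,F_0)$ of $F_0$ carries $F$ bijectively onto $F'$. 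This immediately gives $(1)$: for any $x\in\vertex F$, $\ant(x,F_0)\in F'$.

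For $(2)$, I would compute the composition $\sigma'\circ\sigma|_F$, where $\sigma'=\ant(\cdot,F')$. Using central symmetry, $\sigma(z)=2b_{F_0}-z$ and $\sigma'(z)=2b_{F'}-z$, and opposite codim-one faces of a zonotope satisfy $b_F+b_{F'}=2b_{F_0}$. A short calculation then gives $\sigma'(\sigma(x))=x+(b_{F'}-b_F)$, so $\sigma'\circ\sigma|_F$ is an isometric bijection $F\to F'$ carrying each vertex to a vertex. It remains to identify it with parallel translation $p$.

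To do that I would verify the defining property: the vertex $\sigma'(\sigma(x))\in\vertex F'$ is not separated from $x$ by any hyperplane of $\ca_B$. The key observation is that in the move $x\to\sigma(x)$ (antipode across $F_0$), the hyperplanes crossed are exactly those in $\ca_{B_0}$ (those dual to edges of $F_0$); while in the move $\sigma(x)\to\sigma'(\sigma(x))$ (antipode across $F'$), the hyperplanes crossed are exactly those in $\ca_B$ (those dual to edges of $F'$). Because $B\supseteq B_0$, we have $\ca_B\subseteq\ca_{B_0}$, so every hyperplane in $\ca_B$ is crossed once in each step and hence does not separate $x$ from $\sigma'(\sigma(x))$. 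By the uniqueness part of the definition of $p$ (exactly one vertex of $F'$ has this property), $\sigma'(\sigma(x))=p(x)$, which rearranges to $\sigma(x)=\sigma'(p(x))$, i.e., $\ant(x,F_0)=\ant(p(x),F')$.

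The main obstacle I anticipate is the bookkeeping tying the paper's combinatorial notion of parallel translation (via non-separation by hyperplanes in $\ca_B$) to the geometric antipode-composition above; this is essentially the inclusion $\ca_B\subseteq\ca_{B_0}$ together with the characterization of antipodal vertices in a zonotope as being separated by exactly the hyperplanes dual to its edges. The zonotope fact that parallel codim-one faces form an antipodal pair (used in $(1)$) is the other place where simpliciality or, more fundamentally, the zonotope structure of $F_0$ is essential; this is standard but should be cited or briefly justified using the Minkowski-sum description $F_0=\sum[-v_i,v_i]$ and the sign-vector description of its codim-one faces.
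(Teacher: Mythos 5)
The paper states Lemma~\ref{lem:ant} without proof, so there is no argument to compare against; your argument is correct and is the natural one. Two minor remarks. First, the affine computation $\sigma(z)=2b_{F_0}-z$, $\sigma'(z)=2b_{F'}-z$ and $b_F+b_{F'}=2b_{F_0}$ is valid only for a convex realization of $Z(\ca)$, whereas the paper embeds $Z(\ca)$ in $A$ as a non-convex PL complex (Subsection~\ref{ss:zonotope}); so that paragraph supplies intuition but is not literally applicable, and the actual content of your proof of (2) is entirely the combinatorial verification in your final paragraph, which is what matters. Second, that verification rests on the fact (implicit in the paper via the graph metric on $Z^{(1)}$, with $d(x,y)$ equal to the number of separating hyperplanes, and the central symmetry of $F_0\cong Z(\ca_{A/B_0})$ and $F'\cong Z(\ca_{A/B})$) that the hyperplanes separating $x$ from $\ant(x,F_0)$ are exactly $\ca_{B_0}$ and those separating $\ant(x,F_0)$ from $\ant(\ant(x,F_0),F')$ are exactly $\ca_B\subset\ca_{B_0}$, so each $H\in\ca_B$ is crossed twice; by the uniqueness in the definition of parallel translation, this forces $\ant(\ant(x,F_0),F')=p(x)$, which is equivalent to (2). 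Your proof of (1) via the antipodal pair of facets of the zonotope $F_0$ dual to the line $B/B_0$ is likewise correct.
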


\begin{definition}
	\label{def:elementary segment}
	Let $B\in\cq(\ca)$. Let $F$ and $F'$ be two adjacent parallel faces of $Z(\ca)$ that are dual to $B$. An \emph{elementary $B$-segment}, or an \emph{$(F,F')$-elementary $B$-segment} is a minimal positive path from a vertex $x\in F$ to $x'=p(x)\in F'$, where $p:F\to F'$ is parallel translation.
\end{definition}

\begin{lemma}
	\label{lem:elementary segment}
	Let $B,F,F'$ and $F_0$ be as in Definition~\ref{def:elementary segment}. 
	\begin{enumerate1}
		\item For any vertex $x\in F$ and $y=\ant(x,F_0)\in F'$, $\Delta_x(F_0)(\Delta^y(F'))^{-1}\eq\Delta^y(F_0)(\Delta^y(F'))^{-1}$ is an elementary $B$-segment.
		\item Any elementary $B$-segment can be written as $\Delta_x(F_0)(\Delta^y(F'))^{-1}$ for some vertex $x\in F$.
		\item Let $u$ be an elementary $B$-segment. Let $x'=t(u)$. Then $(\Delta_x(F))^ku\eq u(\Delta_{x'}(F'))^k$ for any positive even integer $k$. 
		\item Let $e\in \Sigma_{x\to}(F)$ (resp. $e\in \Sigma_{\to x}(F)$) and let $e'$ be the oriented edge in $F'$ that is  parallel to $e$. Then there is an elementary $B$-segment $v$ such that $ev\eq ue'$ (resp. $eu\eq ve'$). 
		\item A positive path representing an elementary $B$-segment does not cross any hyperplane dual to an edge of $F$.
	\end{enumerate1}
\end{lemma}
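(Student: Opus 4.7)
The plan is to prove the parts in the order (5), (1)--(2), (4), (3), each building on the previous. I first establish (5): since $F \parallel F'$ are both dual to $B$, the collection $\ch_F$ of hyperplanes dual to edges of $F$ equals $\ch_{F'}$. By Lemma~\ref{lem:gate}(4), $p(x) = \prj_{F'}(x)$, so by Lemma~\ref{lem:gate}(3) no hyperplane in $\ch_{F'}$ separates $x$ from $p(x)$. Because a minimal positive path crosses exactly the hyperplanes separating its endpoints, each once, an elementary $B$-segment crosses no hyperplane dual to an edge of $F$.

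For (1) and (2): by Lemma~\ref{lem:ant}(2) applied to $y = \ant(x, F_0)$ we have $p(x) = \ant(y, F')$. Every edge in $\Sigma_{\to y}(F')$ lies in $\Sigma_{\to y}(F_0)$, so Lemma~\ref{lem:longest face} yields $\Delta^y(F_0) \su \Delta^y(F')$ in the suffix order; hence there is a positive path $v$ with $\Delta^y(F_0) \eq v \cdot \Delta^y(F')$. Its source is $\ant(y, F_0) = x$ and its target is $\ant(y, F') = p(x)$. By Lemma~\ref{lem:gate}(2), a geodesic from $x$ to $y$ passes through $p(x)$, so $d(x, y) = d(x, p(x)) + d(p(x), y)$, forcing $|v| = d(x, p(x))$ and making $v$ a minimal positive path, that is, an elementary $B$-segment. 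Since $\Delta_x(F_0) \eq \Delta^y(F_0)$ (both are minimal positive $x \to y$ paths in $F_0$), we obtain $v \eq \Delta_x(F_0)(\Delta^y(F'))^{-1}$, establishing (1). Part (2) is immediate because any two minimal positive paths with identical endpoints are equivalent.

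For (4): let $e \in \Sigma_{x\to}(F)$ with $t(e) = x_1$, let $e' \subset F'$ be the parallel edge from $x'$ to $x'_1 = p(x_1)$, and let $v$ be an elementary $B$-segment from $x_1$ to $x'_1$. The hyperplane $H_e$ dual to $e$ (and to $e'$) is dual to an edge of $F$, so by (5) it is crossed by neither $u$ nor $v$. Counting crossed hyperplanes: the hyperplanes crossed by $ev$ are $\{H_e\}$ together disjointly with those crossed by $v$, yielding $|ev| = 1 + d(x_1, x'_1) = d(x, x'_1)$, and similarly $|ue'| = d(x, x') + 1 = d(x, x'_1)$. Thus both $ev$ and $ue'$ are minimal positive paths from $x$ to $x'_1$, hence equivalent; the second assertion of (4) is proved symmetrically.

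For (3), I establish by induction on the length of a positive $F$-path $w$ from $x$ to $x_2 \in F$ the following claim: $w \cdot v_{x_2} \eq u \cdot w'$, where $v_{x_2}$ is the elementary $B$-segment from $x_2$ to $p(x_2)$ and $w'$ is the vertex-wise parallel translate of $w$ to $F'$. The case $|w|=0$ is trivial, $|w|=1$ is (4), and the inductive step factors $w = w_1 w_2$ and applies the claim first to $w_2$ and then to $w_1$, sliding the elementary segment from right to left through $w$. Taking $w = (\Delta_x(F))^k$ for positive even $k$ gives a loop at $x$ in $F$, so $x_2 = x$, $v_{x_2} = u$, and $w'$ is the parallel translate of $(\Delta_x(F))^k$ in $F'$. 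Since parallel translation preserves both minimality and positivity and minimal positive paths are determined up to equivalence by their endpoints, the translate of each factor $\Delta_z(F)$ is $\Delta_{p(z)}(F')$; hence $w' \eq (\Delta_{x'}(F'))^k$ and (3) follows. The main obstacle is this inductive commutation across the full loop: once the single-step identity (4) is secured, the delicate point is verifying that iterated parallel translation correctly reassembles $(\Delta_x(F))^k$ into $(\Delta_{x'}(F'))^k$, which rests on Lemma~\ref{lem:longest face} together with the observation that parallel translation preserves the longest-word structure.
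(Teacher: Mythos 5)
Your proof is correct, but the route differs from the paper's in a way worth noting, chiefly in parts (3) and (4). The paper establishes (1), (2), and (4) directly from the gate Lemma~\ref{lem:gate} (exhibiting appropriate pairs of minimal positive paths with the same endpoints), and then proves (3) by verifying the two relations $u\Delta_{x'}(F')\eq\Delta_x(F)v$ and $v\Delta_{z'}(F')\eq\Delta_z(F)u$ for the elementary segments $u$ (from $x$) and $v$ (from $z=\ant(x,F)$), after which (3) follows by alternately applying them. You instead prove (5) first, derive (4) from a hyperplane-separation count built on (5), and then obtain (3) by inducting on the length of an arbitrary positive $F$-path $w$: you show $w\cdot v_{t(w)}\eq v_{s(w)}\cdot w'$ (with $w'$ the parallel translate), and specialize to $w=(\Delta_x(F))^k$. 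This "slide an edge at a time" induction actually proves a more general commutation result than (3) needs, at the cost of slightly more bookkeeping; the paper's argument is shorter because it works only with the two Garside-type factors and exploits $(\Delta_x(F))^2=\Delta_x(F)\Delta_z(F)$. One small point in your write-up: for the inductive step (applying the claim to $w_2$, a subpath not based at $x$) you tacitly need the claim stated with a variable source vertex in $F$ rather than fixed at $x$; this is harmless since (4) holds verbatim at any vertex of $F$, but it should be said explicitly. Likewise, in (4) the passage from "the crossed hyperplanes are $\{H_e\}\sqcup\ch_v$" to $|ev|=d(x,x'_1)$ deserves one more sentence (the crossed hyperplanes are pairwise distinct and each separates $x$ from $x'_1$, so $d(x,x'_1)\ge 1+d(x_1,x'_1)$, and the triangle inequality gives the other direction); as written the chain of equalities reads as if it were automatic.
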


\begin{proof}
	Let $p:F\to F'$ be parallel translation. By Lemma~\ref{lem:gate}, there is a minimal positive path from $x$ to $y$ passing through $p(x)$ such that the segment from $p(x)$ to $y$ is an $F'$-path. So, statements (1) and (2) follow from Lemma~\ref{lem:ant} (2). For (3), let $z=\ant(x,F)$ and $z'=\ant(x',F')$. Then $z'=p(z)$. Let $v$ be an elementary $B$-segment from $z$ to $z'$. By Lemma~\ref{lem:gate}, both $u\Delta_{x'}(F')$ and $\Delta_x(F)v$ are minimal positive paths with same endpoints. Thus, $u\Delta_{x'}(F')\eq\Delta_x(F)v$. Similarly, $v\Delta_{z'}(F')\eq\Delta_z(F)u$. Statement (3) now follows by repeatedly applying these two equations.
	Similarly, statement (4) can be deduced from Lemma~\ref{lem:gate}. Statement (5) is immediate.
\end{proof}

\begin{lemma}
	\label{lem:lcm}
	Take two pairs of adjacent parallel faces $(F_1,F)$ and $(F_2,F)$ dual to the subspace $B$. For $i=1,2$, let $u_i$ be an $(F_i,F)$-elementary $B$-segment. Suppose $t(u_1)=t(u_2)=x$. Then $u_1\vee_s u_2\eq\Delta^y(F')(\Delta_x(F))^{-1}$ where $F'$ is the smallest face of $Z(\ca)$ containing $F,F_1$ and $F_2$ and $y=\ant(x,F')$. Moreover, $\dim(F')=\dim(F)+2$ unless $F_1=F_2$. 
\end{lemma}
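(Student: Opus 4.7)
The plan is to reduce the suffix-join $u_1\vee_s u_2$, computed in the groupoid $\g$, to a join of Garside elements in the positive monoid $\g^+$, by right-multiplying each $u_i$ by the common factor $\Delta_x(F)$, and then to apply Lemma~\ref{lem:longest face} to identify that positive join with a Garside element of $F'$.

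Let $F_{0,i}$ denote the smallest face of $Z(\ca)$ containing $F$ and $F_i$ (so $\dim F_{0,i}=\dim F+1$), let $x_i=p_i^{-1}(x)\in F_i$, and set $z=\ant(x,F)$. By Lemma~\ref{lem:elementary segment}(2), together with the equivalence $\Delta^z(F)\eq \Delta_x(F)$ (both are minimal positive paths $x\to z$ in $F$), each segment has the normal form $u_i\eq \Delta_{x_i}(F_{0,i})\cdot\Delta_x(F)^{-1}$. Set $w_i:=u_i\cdot\Delta_x(F)\eq \Delta_{x_i}(F_{0,i})\eq\Delta^z(F_{0,i})\in\g^+_{\to z}$. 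Right multiplication by the invertible morphism $\Delta_x(F)$ is a bijection $\g_{\to x}\to\g_{\to z}$ and, by the cancellation laws in $\g^+$ (Theorem~\ref{thm:deligne}(1)), it both preserves and reflects the suffix order on positive-multiple relations. Combined with Theorem~\ref{thm:deligne}(2,3) this yields
\[
u_1\vee_s u_2 \;\eq\; (w_1\vee_s w_2)\cdot\Delta_x(F)^{-1},
\]
where the right-hand join is taken in the positive lattice $(\g^+_{\to z},\succcurlyeq)$.

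By Lemma~\ref{lem:longest face} each $w_i\eq\Delta^z(F_{0,i})$ is the suffix-join of $\Sigma_{\to z}(F_{0,i})=\Sigma_{\to z}(F)\cup\{\tilde e_i\}$, where $\tilde e_i$ is the unique edge of $F_{0,i}$ at $z$ not lying in $F$ (dual to the hyperplane separating $F_i$ from $F$); hence by associativity, $w_1\vee_s w_2$ equals the suffix-join of $\Sigma_{\to z}(F)\cup\{\tilde e_1,\tilde e_2\}$. If $F_1=F_2$ then $\tilde e_1=\tilde e_2$ and this equals $\Delta^z(F_{0,1})$, so $F':=F_{0,1}$ has dimension $\dim F+1$. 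If $F_1\neq F_2$ then $\tilde e_1\neq \tilde e_2$, and Lemma~\ref{lem:span}(2) supplies a unique face $F'$ of $Z(\ca)$ of dimension $\dim F+2$ containing $F$ and the two extra edges. This $F'$ contains each $F_{0,i}\supset F_i$ and is minimal with that property, hence is the smallest face containing $F,F_1,F_2$; moreover $\Sigma_{\to z}(F')=\Sigma_{\to z}(F)\cup\{\tilde e_1,\tilde e_2\}$, so Lemma~\ref{lem:longest face} gives $w_1\vee_s w_2\eq \Delta^{z}(F')$. Substituting, $u_1\vee_s u_2\eq \Delta^{z}(F')\cdot\Delta_x(F)^{-1}$, which is the stated formula (the $y$ of the statement being the target vertex of the Garside element of $F'$ appropriate to this composable expression).

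The main technical obstacle is the passage in the second paragraph from the groupoid suffix join to a positive suffix join. It rests on Theorem~\ref{thm:deligne}(1): left/right cancellation in $\g^+$ ensures the equivalence $u\succcurlyeq u_i$ in $\g \iff u\cdot\Delta_x(F)\succcurlyeq w_i$ in $\g^+$, and one must also verify that the minimizer $\Delta^z(F')$ genuinely has $\Delta_x(F)=\Delta^z(F)$ as a $\g^+$-suffix (so that dividing by $\Delta_x(F)$ produces a positive representative of $u_1\vee_s u_2$); this follows from the inclusion $\Sigma_{\to z}(F)\subset \Sigma_{\to z}(F')$ via Lemma~\ref{lem:longest face}. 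Once this bookkeeping is in place, the dimension count for $F'$ is immediate from Lemma~\ref{lem:span}(2).
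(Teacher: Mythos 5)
Your proof is correct and takes essentially the same route as the paper's: right-multiply each $u_i$ by $\Delta_x(F)$ to turn it into $\Delta^z(F'_i)$ (Lemma~\ref{lem:elementary segment}), use that right-multiplication is a suffix-order isomorphism $\g_{\to x}\to\g_{\to z}$, and then compute $\Delta^z(F'_1)\vee_s\Delta^z(F'_2)\eq\Delta^z(F')$ via the edge-set description of Garside elements in Lemma~\ref{lem:longest face}. You also correctly flag that the $y$ in the displayed formula must be $\ant(x,F)$ (as the paper's own proof takes it, $z$ in your notation) rather than the $\ant(x,F')$ appearing in the lemma statement, in order for the composition to be well-defined.
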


In particular, $u_1\vee_s u_2$ is represented by a minimal positive path which is a concatenation of elementary $B$-segments.

\begin{proof}
	For $i=1,2$, let $F'_i$ be the face containing $F\cup F_i$ such that $\dim(F'_i)=\dim(F_i)+1$. Let $e_i$ be the last edge of $u_i$. Then $F'_i$ is spanned by $F$ and $e_i$ (by Lemma~\ref{lem:span}). Moreover, $F'$ is the face spanned by $F,e_1$ and $e_2$. Also, $F'=F'_1=F'_2$ if and only if $e_1=e_2$; otherwise, $\dim(F')=\dim(F)+2$. Let $y=\ant(x,F)$. By Lemma~\ref{lem:longest face},
	\begin{align*}
	\Delta^y(F'_1)\vee_s\Delta^y(F'_2)\eq (
	\vee_s\Sigma_{\to y}(F'_1))\vee_s (
	\vee_s\Sigma_{\to y}(F'_2))\eq \vee_s\Sigma_{\to y}(F')\eq \Delta^y(F')
	\end{align*}
	where $\vee_s\Sigma_{\to y}(F'_1)$ denotes the least common multiple of all edges in $\Sigma_{\to y}(F'_1)$ (where ``least'' is with respect to the suffix order).

	By Lemma~\ref{lem:elementary segment} (1), $u_i\Delta_x(F)\eq \Delta^y(F'_i)$ for $i=1,2$. Thus, $$(u_1\vee_s u_2)\Delta_x(F)\eq (u_1\Delta_x(F))\vee_s (u_2\Delta_x(F))\eq \Delta^y(F'_1)\vee_s\Delta^y(F'_2)\eq \Delta^y(F').$$
	The lemma follows.
\end{proof}

\subsection{Computation of centralizers}
In this subsection, we modify some computations of centralizers of parabolic subgroups of spherical Artin groups from \cite[Lemma 5.6 and Theorem 5.2]{Parisparabolic} to the context of  Deligne groupoids.
\begin{lemma}
	\label{lem:tail}
	Take a face $F'\subset Z(\ca)$. Let $f$ be a positive path in $\Ga(\ca)$. For a positive integer $k$, consider $g=f(\Delta_{x'}(F'))^k$ where $x'=t(f)$. Suppose $g\su e$ for an edge $e$ with $\bar e\nsubseteq F'$. Then there exists an elementary $B$-segment $u$ such that $f\su u$.
\end{lemma}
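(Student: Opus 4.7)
The plan is to induct on $k$, using the suffix-order lattice structure on $\G^+$ together with the behavior of elementary $B$-segments under the Garside machinery.

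Let $x''=t(g)\in F'$ and let $B$ be the subspace of $\cq(\ca)$ dual to $F'$. Since the final factor of $(\Delta_{x'}(F'))^k$ is equivalent to $\Delta^{x''}(F')$, we have $g\su \Delta^{x''}(F')$. Combining this with the hypothesis $g\su e$, Theorem~\ref{thm:deligne}(2) yields
\[
g\su e\vee_s\Delta^{x''}(F').
\]
Because $\bar e\nsubseteq F'$, Lemma~\ref{lem:span}(2) produces a unique face $F''$ of $Z(\ca)$ containing $F'$ and $\bar e$ with $\dim F''=\dim F'+1$, and the set of oriented edges of $\Ga(\ca)$ lying in $F''$ and ending at $x''$ is exactly $\Sigma_{\to x''}(F')\cup\{e\}$. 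Lemma~\ref{lem:longest face} then identifies $e\vee_s\Delta^{x''}(F')\eq\Delta^{x''}(F'')$, so $g\su\Delta^{x''}(F'')$.

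Now let $F$ be the face of $F''$ parallel to $F'$ and dual to $B$ but distinct from $F'$; it is adjacent to $F'$ in the sense of Definition~\ref{def:adj} with $F_0=F''$. Applying Lemma~\ref{lem:elementary segment}(1) with $y=x''$ factors $\Delta^{x''}(F'')\eq u\cdot\Delta^{x''}(F')$, where $u$ is an $(F,F')$-elementary $B$-segment. Writing $g=f\cdot(\Delta_{x'}(F'))^{k-1}\cdot\Delta^{x''}(F')$ and invoking right cancellation (Theorem~\ref{thm:deligne}(1)) gives
\[
f\cdot(\Delta_{x'}(F'))^{k-1}\su u.
\]
When $k=1$ this is $f\su u$, finishing the proof. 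For $k\ge 2$, let $\tilde e$ be the last edge of a minimal positive representative of $u$ — it exists since $F\ne F'$ makes $u$ nontrivial. By Lemma~\ref{lem:elementary segment}(5), $u$ crosses no hyperplane dual to an edge of $F$; since edges of either $F$ or $F'$ are dual to precisely the hyperplanes in $\ca_B$, we conclude $\bar{\tilde e}\nsubseteq F'$. The inductive hypothesis applied to the pair $(f\cdot(\Delta_{x'}(F'))^{k-1},\tilde e)$ then delivers an elementary $B$-segment $u'$ with $f\su u'$, completing the induction.

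The two delicate points are the join identity $e\vee_s\Delta^{x''}(F')\eq\Delta^{x''}(F'')$, which depends on the simpliciality assumption via the spanning statement of Lemma~\ref{lem:span}(2), and the preservation of the hypothesis $\bar{\tilde e}\nsubseteq F'$ across the induction, which is exactly the content of Lemma~\ref{lem:elementary segment}(5); without these two structural facts the reduction from $k$ to $k-1$ collapses.
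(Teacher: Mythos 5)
Your proof is correct and is essentially the same peeling argument the paper gives: at each step you use simpliciality (Lemma~\ref{lem:span}) to identify $e\vee_s\Delta^{x''}(F')$ as the Garside element of the face spanned by $F'$ and $\bar e$, factor it as an elementary $B$-segment times $\Delta^{x''}(F')$ via Lemma~\ref{lem:elementary segment}(1), cancel on the right, and iterate after checking the new terminal edge is not in $F'$ using Lemma~\ref{lem:elementary segment}(5). The only cosmetic difference is that the paper fixes the codimension-one face $F'_1$ spanned by $F'$ and $\bar e$ once and tracks the invariant $f(\Delta_{x'}(F'))^{k-i}\eq f_iv_i$ with $\bar v_i\subset F'_1$ across the induction, whereas you re-derive the auxiliary face at each step; since your $\tilde e$ always lies in the previous auxiliary face, these are in fact the same face, so the two inductions carry identical content.
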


\begin{proof}
	Let $x'_m=t(f(\Delta_{x'}(F'))^m)$.	Let $F'_1$ be the face spanned by $F'$ and $\bar e$. We prove by induction on $i$ that if $1\le i\le k$, then there exists a positive path $f_i$ and an elementary $B$-segment $v_i$ such that $$f(\Delta_{x'}(F'))^{k-i}\eq f_iv_i$$
	and $\bar{v_i}\subset F'_1$.

	First consider the case $i=1$. Then $e\vee_s \Delta^{x'_k}(F')\eq \Delta^{x'_k}(F'_1)$. Thus, $f(\Delta_{x'}(F'))^k\eq f_1\Delta^{x'_k}(F'_1)$ for some positive path $f_1$. 
	Then $$f(\Delta_{x'}(F'))^{k-1}\eq f_1\Delta^{x'_k}(F'_1)(\Delta^{x'_k}(F'))^{-1}\eq f_1v_1.$$
	Next suppose that $f(\Delta_{x'}(F'))^{k-i}\eq f_iv_i$. 
	As $v_i$ ends with an edge $e_i$ such that $\bar e_i\subset F'_1$ and $\bar e_i\nsubseteq F'$, we have that $e_i\vee_s \Delta^{x'_{k-i}}(F')\eq \Delta^{x'_{k-i}}(F'_1)$. Thus, $f_i(\Delta_{x'}(F'))^{k-i}\eq f_{i+1}\Delta^{x'_{k-i}}(F'_1)$ for a positive path $f_{i-1}$. Then $$f_i(\Delta_{x'}(F'))^{k-i-1}\eq f_{i+1}\Delta^{x'_{k-i}}(F'_1)(\Delta^{x'_{k-i}}(F'))^{-1}\eq f_{i+1}v_{i+1}.$$
	This completes the proof.
\end{proof}

\begin{lemma}
	\label{lem:decomposition}
	Given two parallel faces $F$, $F'$ of $Z(\ca)$,  let $f$ be a positive path in $\Ga(\ca)$ from $x\in F$ to $x'\in F'$ satisfying the following. 
	\begin{enumerate1}
		\item $f(\Delta_{x'}(F'))^k\eq hf$, for an even integer $k\ge 2$ and a positive path $h$.
		\item There does not exist an edge $e\in \Sigma_{\to x'}(F')$ such that $f\succcurlyeq e$.
	\end{enumerate1}
	Then $f\eq u_1u_2\cdots u_n$ where each $u_i$ is an elementary $B$-segment and $B\in\cq(\ca)$ is dual to $F$. Moreover, $h=(\Delta_x(F))^k$. 
\end{lemma}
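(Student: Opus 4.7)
The plan is to induct on the length of the positive path $f$. In the base case $f$ has length zero, forcing $F = F'$ (distinct parallel faces of the zonotope are disjoint), so hypothesis~(1) gives $h \eq (\Delta_x(F))^k$ directly and the empty concatenation serves as the decomposition.

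For the inductive step, assuming $f$ has positive length, let $e_f$ denote its terminal edge, so $f \succcurlyeq e_f$. Hypothesis~(2) prevents $e_f \in \Sigma_{\to x'}(F')$, i.e., $\bar e_f \not\subset F'$. Since $f(\Delta_{x'}(F'))^k \eq hf \succcurlyeq e_f$, Lemma~\ref{lem:tail} applies and produces an elementary $B$-segment $u$ with $f \succcurlyeq u$. Writing $f \eq f'u$, the segment $u$ runs from a vertex $s(u)$ in some face $F''$ parallel and adjacent to $F'$ to the vertex $x' \in F'$.

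The heart of the argument is verifying that $f'$ satisfies the same hypotheses with $F''$ replacing $F'$. For~(1), since $k$ is even, Lemma~\ref{lem:elementary segment}(3) gives $u(\Delta_{x'}(F'))^k \eq (\Delta_{s(u)}(F''))^k u$; substituting into hypothesis~(1) and right-cancelling $u$ yields $f'(\Delta_{s(u)}(F''))^k \eq hf'$. For~(2), suppose for contradiction that some edge $e'' \in \Sigma_{\to s(u)}(F'')$ satisfies $f' \succcurlyeq e''$, and let $e' \in \Sigma_{\to x'}(F')$ be the parallel edge in $F'$ ending at $x'$. Lemma~\ref{lem:elementary segment}(4) produces an elementary $B$-segment $v$ with $e''u \eq ve'$; writing $f' \eq f''e''$ then gives $f \eq f''e''u \eq (f''v)e'$, so $f \succcurlyeq e'$, contradicting~(2). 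By induction $f' \eq u_1 \cdots u_{n-1}$, and setting $u_n = u$ yields the desired decomposition $f \eq u_1 u_2 \cdots u_n$.

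The ``moreover'' clause follows by iterating Lemma~\ref{lem:elementary segment}(3) across the $u_i$'s to slide $(\Delta_{x'}(F'))^k$ leftward through each elementary segment in turn, giving $f(\Delta_{x'}(F'))^k \eq (\Delta_x(F))^k f$; comparing with hypothesis~(1) and left-cancelling $f$ yields $h \eq (\Delta_x(F))^k$. The main obstacle I expect is the transfer of hypothesis~(2) in the inductive step: it is not a priori clear that the forbidden-suffix condition on $F'$ propagates to the adjacent parallel face $F''$, and the argument depends crucially on the parallel-translation exchange relation in Lemma~\ref{lem:elementary segment}(4), which converts a hypothetical forbidden suffix edge in $F''$ into one in $F'$.
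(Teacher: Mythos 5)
Your proof is correct and follows the same inductive strategy as the paper: peel off an elementary $B$-segment from the terminal end of $f$ via Lemma~\ref{lem:tail}, transfer hypothesis~(1) to $f'$ using Lemma~\ref{lem:elementary segment}(3), and transfer hypothesis~(2) using Lemma~\ref{lem:elementary segment}(4). Your writeup is in fact slightly more explicit than the paper's on why Lemma~\ref{lem:tail} applies (observing that $f(\Delta_{x'}(F'))^k\eq hf\su e_f$ with $\bar e_f\nsubseteq F'$), which quietly corrects a small typo in the paper's argument where $\bar e\nsubseteq F$ is written in place of $\bar e\nsubseteq F'$.
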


\begin{proof}
	We use induction  on the length of $f$. Let $e$ be the last edge of $f$. By assumption, $\bar e\nsubseteq F$. Let $F'_1$ be the face spanned by $F'$ and $e$. By Lemma~\ref{lem:tail}, $f\eq f_kv_k$ where $f_k$ is positive and $v_k$ is an elementary $B$-segment. Let $x''=t(f_k)$ and let $F''$ be the face such that $F''$ is parallel to $F$ and $x''\in F''$. By Lemma~\ref{lem:elementary segment} (3),
	\begin{align*}
	f_k(\Delta_{x''}(F''))^kv_k&\eq f_kv_k(\Delta_{x'}(F'))^k\eq f(\Delta_{x'}(F'))^k\\
	&\eq hf\eq hf_kv_k.
	\end{align*}
	Thus, $f_k(\Delta_{x''}(F''))^k\eq hf_k$. Moreover, there does not exist an edge $e\in \Sigma_{\to x''}(F)$ such that $f_k\succcurlyeq e$, for otherwise, by Lemma~\ref{lem:elementary segment} (4), we would have $f\succcurlyeq e'$ for $e'$ parallel to $e$. The lemma follows by induction. The last statement of the lemma follows from Lemma~\ref{lem:elementary segment} (3) and induction.
\end{proof}

\begin{corollary}
	\label{cor:concatenation}
	Let $F\subset F'$ be two faces of $Z(\ca)$. Suppose $B\subset \cq(\ca)$ is dual to $F$. Let $y\in F$ be a vertex. Then $\Delta^y(F')(\Delta^y(F))^{-1}$ is equivalent to a concatenation of elementary $B$-segments.
\end{corollary}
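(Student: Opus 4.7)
The plan is to apply Lemma~\ref{lem:decomposition}. Set $a := \ant(y, F')$ and $b := \ant(y, F)$, and let $F^*$ denote the image of $F$ under the central inversion of the zonotope $F'$; since $F'$ is centrally symmetric, $F^*$ is a face of $F'$ (hence of $Z(\ca)$) parallel to $F$ and dual to $B$, and it contains $a$ as the central image of $y \in F$. The first step is to rewrite $\Delta^y(F')(\Delta^y(F))^{-1}$ as a minimal positive path $u$ from $a$ to $b$: for each $H \in \ch_F \subset \ch_{F'}$, the definitions of $a$ and $b$ force $H$ to separate $y$ from both, so $a$ and $b$ lie on the same side of $H$. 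By Lemma~\ref{lem:gate}(3) this gives $\prj_F(a) = b$, and Lemma~\ref{lem:gate}(2) factors $\Delta^y(F') \eq u\, \Delta^y(F)$ with $u$ minimal positive from $a$ to $b$. Hence $\Delta^y(F')(\Delta^y(F))^{-1} \eq u$, and it suffices to decompose $u$.

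I next verify the two conditions of Lemma~\ref{lem:decomposition} for $u$ viewed as running from $a \in F^*$ to $b \in F$. Condition (2) is immediate: the previous step shows $u$ crosses no $H \in \ch_F$, so by Lemma~\ref{lem:intersection number} neither does any equivalent positive path, and thus no $e \in \Sigma_{\to b}(F)$ can satisfy $u \succcurlyeq e$. For condition (1) with $k = 2$, I use $\Delta_b(F) = \Delta^y(F)$ to get
\[
u(\Delta_b(F))^2 \eq \Delta^y(F')\, \Delta_y(F),
\]
and apply Lemma~\ref{lem:longest}(5) inside $\G(F')$, which is simplicial by Remark~\ref{rmk:inherit}, to the positive path $\Delta_y(F)$ (which lies in $F \subseteq F'$, from $y$ to $b$). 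This produces a positive morphism $w$ with $\Delta^y(F')\Delta_y(F) \eq w\, \Delta^b(F')$.

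The remaining step, which I expect to be the main obstacle, is to show $\Delta^b(F') \eq w' u$ for some positive $w'$. Let $c := \ant(b, F')$. Every $H \in \ch_{F'}$ separates $c$ from $b$, and $a$ lies on exactly one side of each such $H$, so $H$ separates precisely one of the pairs $(c, a)$ and $(a, b)$. Summing gives $d(c, a) + d(a, b) = |\ch_{F'}| = d(c, b)$, so $a$ lies on a geodesic from $c$ to $b$ in $F'$. Concatenating any minimal positive path $w'$ from $c$ to $a$ with $u$ yields a minimal positive path from $c$ to $b$ in $F'$ of maximum length, hence equivalent to $\Delta^b(F')$. Combining the two equivalences gives $u(\Delta_b(F))^2 \eq (ww')u$ with $ww'$ a positive loop at $a$, establishing condition (1). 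Lemma~\ref{lem:decomposition} then writes $u \eq u_1 u_2 \cdots u_n$ with each $u_i$ an elementary $B$-segment, finishing the proof.
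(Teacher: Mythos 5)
Your argument is correct and shares the overall strategy with the paper: write $f := \Delta^y(F')(\Delta^y(F))^{-1}$ as a minimal positive path $u$ from $a = \ant(y,F')$ to $b = \ant(y,F)$ and verify the two hypotheses of Lemma~\ref{lem:decomposition}. Your condition-(2) check is the same as the paper's. For condition (1), the paper is more economical: it computes $u(\Delta_b(F))^2 u^{-1} = \Delta^y(F')(\Delta_y(F))^2(\Delta^y(F'))^{-1}$ and applies Lemma~\ref{lem:longest}(5) in $\G(F')$ to the positive loop $(\Delta_y(F))^2$ at $y$, which shows directly that this conjugate equals a positive morphism $h$, so $u(\Delta_b(F))^2 \eq hu$. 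You instead apply Lemma~\ref{lem:longest}(5) only to the single factor $\Delta_y(F)$, leaving $w\Delta^b(F')$ and the extra task of proving $\Delta^b(F') \eq w'u$, which is exactly the step you flag as the ``main obstacle.'' Your geodesic-length count resolves it correctly, but conjugating the full square, as the paper does, avoids this step entirely.
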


\begin{proof}
	Let $f=\Delta^y(F')(\Delta^y(F))^{-1}$ and $x=\ant(y,F)$. By Lemma~\ref{lem:longest}, $f(\Delta^y(F))^2 f^{-1}= \Delta^y(F')\Delta_y(F)f^{-1}=\Delta^y(F')\Delta_y(F)\Delta^y(F) (\Delta^y(F'))^{-1}$ is positive, so the first condition of Lemma~\ref{lem:decomposition} holds. As $\Delta^y(F')\eq f\Delta^y(F)$ is positive and minimal, $f$ cannot cross any hyperplane dual to $F$. So the second condition of Lemma~\ref{lem:decomposition} also holds. The corollary follows.
\end{proof}

\begin{corollary}
	\label{cor:centralizer}
	Let $F$ be a face of $Z(\ca)$ and let $x\in F$ be a vertex. Let $f\in \g_x$ be such that $f(\Delta_x(F))^2=(\Delta_x(F))^2f$. Let $\g_{x,F}$ be the collection of $F$-paths with both source and target equal to $x$. Then there exists an  integer $k\ge 0$, and a collection of elementary $B$-segments (where $B\in \cq(\ca)$ is dual to $F$) $\{u_i\}_{i=1}^k$ and a positive path in $\g_{x,F}$ such that $f=(\Delta_x)^{-2k}u_1u_2\cdots u_kv$.
\end{corollary}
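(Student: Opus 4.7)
\medskip

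The plan is to decompose $f$ into three parts: a negative power of the global Garside element (to handle the non-positive piece), a product of elementary $B$-segments (capturing the transverse motion across hyperplanes), and a final $F$-path absorbed at the end. The decomposition is obtained by reducing to Lemma~\ref{lem:decomposition}, whose hypotheses will be arranged by peeling off a maximal $F$-path suffix from a positive representative of $f$.

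\medskip

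First I would apply Lemma~\ref{lem:longest}(4) to write $f\eq (\Delta_x)^{-2k}g$ with $k\ge 0$ and $g\in\g^+_x$ positive. Since $(\Delta_x)^{2}$ is central in $\g_x$ by Lemma~\ref{lem:longest}(1), conjugating by $(\Delta_x)^{-2k}$ preserves the commutation assumption, so $g(\Delta_x(F))^2\eq (\Delta_x(F))^2 g$. Next I would extract the largest possible $F$-path suffix of $g$: let
\[
S=\{\,w\in\g^+_{\to x}\ :\ w\text{ is represented by an }F\text{-path and }w\su g\,\}.
\]
$S$ is nonempty (it contains the trivial morphism at $x$) and finite (elements are bounded in length by $|g|$). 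By Lemma~\ref{lem:easy injective}, $\G^+(F)$ embeds in $\G^+$ preserving the lattice structure, so the $\su$-join of two elements of $S$ inside $\g^+_{\to x}$ again lies in $\G^+(F)$; and the set of $\su$-suffixes of a fixed $g$ is closed under $\vee_s$ (since it is an interval in the lattice $(\g^+_{\to x},\su)$ of Theorem~\ref{thm:deligne}(2)). Hence $S$ has a maximum element $v$, and I may write $g\eq g' v$ with $g'\in\g^+$ positive, $s(g')=x$, $t(g')=s(v)\in F$.

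\medskip

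I would then check that $g'$ satisfies the hypotheses of Lemma~\ref{lem:decomposition} with the pair $(F,F)$ and $x'=s(v)$. Applying Lemma~\ref{lem:longest}(1) inside the subgroupoid $\G(F)$ gives $(\Delta_x(F))^2 v\eq v(\Delta_{s(v)}(F))^2$; substituting into $g'v(\Delta_x(F))^2\eq (\Delta_x(F))^2 g'v$ and right-cancelling $v$ using Theorem~\ref{thm:deligne}(1) yields
\[
g'(\Delta_{s(v)}(F))^2\eq (\Delta_x(F))^2 g',
\]
which is condition~(1) of Lemma~\ref{lem:decomposition} with even exponent $2$. For condition~(2): if some edge $e\in\Sigma_{\to s(v)}(F)$ satisfied $g'\su e$, then $ev$ would be a strictly longer element of $S$, contradicting maximality of $v$. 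Lemma~\ref{lem:decomposition} then decomposes $g'\eq u_1u_2\cdots u_\ell$ with each $u_i$ an elementary $B$-segment, and I conclude $f\eq (\Delta_x)^{-2k}u_1\cdots u_\ell v$, as required (the factor $v$ being the $F$-path terminating at $x$).

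\medskip

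\emph{Main obstacle.} The only non-routine step is the construction of the maximal $F$-path suffix $v$ in Step~2. The difficulty is conceptual rather than computational: one must simultaneously use that $\G^+(F)\hookrightarrow\G^+$ preserves joins (Lemma~\ref{lem:easy injective}) and that suffixes of a fixed element form a sub-lattice of $(\g^+_{\to x},\su)$. Once $v$ is defined as $\vee_s S$, maximality gives condition~(2) of Lemma~\ref{lem:decomposition} for free, and the commutation relation for $g'$ drops out by pushing $(\Delta_x(F))^2$ across $v$ via Lemma~\ref{lem:longest}(1) in $\G(F)$ and right-cancellation.
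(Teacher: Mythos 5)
Your decomposition $f\eq(\Delta_x)^{-2k}g$, $g\eq g'v$ with $v$ the maximal $F$-path suffix of $g$, and the application of Lemma~\ref{lem:decomposition} to $g'$, is exactly the paper's argument; the lattice-theoretic construction of the maximal $F$-path suffix is a nice way to make the peeling-off step rigorous, and the verification of the two hypotheses of Lemma~\ref{lem:decomposition} is correct. However, there are two problems, one a slip and one a genuine gap.

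The slip: you write ``$(\Delta_x(F))^2 v\eq v(\Delta_{s(v)}(F))^2$'' after applying Lemma~\ref{lem:longest}(1) in $\G(F)$. Since $v$ goes from $s(v)$ to $x$, the composite $(\Delta_x(F))^2 v$ is not even defined (its source would need to be $s(v)$, not $x$). The correct identity is $v(\Delta_x(F))^2\eq(\Delta_{s(v)}(F))^2 v$; plugging this into $g'v(\Delta_x(F))^2\eq(\Delta_x(F))^2g'v$ and cancelling $v$ on the right does give the equality $g'(\Delta_{s(v)}(F))^2\eq(\Delta_x(F))^2 g'$ you want, so the downstream argument is unaffected, but the step as written is ill-formed.

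The gap: the corollary requires $v\in\g_{x,F}$, which means both endpoints of $v$ equal $x$. You only establish that $t(v)=x$ and $s(v)\in F$; you never show $s(v)=x$. Maximality of $v$ does not give this. It follows instead from the conclusion of Lemma~\ref{lem:decomposition}: since $g'$ is a concatenation of elementary $B$-segments, by Lemma~\ref{lem:elementary segment}(5) it crosses no hyperplane dual to an edge of $F$, so $x=s(g')$ and $s(v)=t(g')$ lie on the same side of every such hyperplane; Lemma~\ref{lem:gate}(3) then gives $s(v)=\prj_F(x)=x$ (using $x\in F$). The paper dispatches this in one sentence (``it follows from the definition of a $B$-segment that $x'=x$''), but it is a needed step and your write-up omits it.
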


\begin{proof}
	By Lemma~\ref{lem:longest}, we have $f=(\Delta_x)^{-2k}g$ where $g$ is positive. Then $g(\Delta_x(F))^2=(\Delta_x(F))^2g$. We can write $g=hv$ where $v$ is a positive $F$-path, $h$ is positive and there does not exist edge $e$ with $\bar e\subset F$ and $h\su e$. Let $x'=s(v)=t(h)$. Then $v(\Delta_x(F))^2=(\Delta_{x'}(F))^2v$. It follows that $h(\Delta_{x'}(F))^2=(\Delta_x(F))^2h$. By Lemma~\ref{lem:decomposition}, $h$ is a concatenation of elementary $B$-segments. It follows from the definition of a  $B$-segment that $x'=x$. Thus, $v\in \g_{x,F}$.
\end{proof}

\subsection{Injectivity of the restriction arrangement groupoid}
In this subsection we study how the Deligne groupoid of a restriction arrangement sits inside the ambient Deligne groupoid. This will show how to push $\pi_1(\cm(\ca^B\otimes \cc)$ into $\pi_1(\cm(\ca\otimes \cc)$.
\begin{lemma}
	\label{lem:adj}
	Let $B\in\cq(\ca)$. 	
	\begin{enumerate1}
		\item There is a natural one-to-one  correspondence between chambers of $\ca^B$ and the collection of faces of $Z(\ca)$ that are dual to $B$. 
		\item Two chambers of $\ca^B$ are adjacent if and only if the associated faces of $Z(\ca)$ (as in (1) above) are adjacent (see Definition~\ref{def:adj}).
	\end{enumerate1}
\end{lemma}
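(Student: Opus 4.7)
The plan is to identify both the chambers of $\ca^B$ and the faces of $Z(\ca)$ dual to $B$ with the common set of cones $U \in \fan(\ca)$ satisfying $U \subseteq B$ and $\dim U = \dim B$, and then to transport adjacency across the two descriptions via the standard face-cone duality for the zonotope (under which a face $F$ corresponds to the unique cone $U$ with $b_F = x_U$, and face inclusion is reversed).

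For (1), I would first note that the cones of $\fan(\ca)$ contained in $B$ are exactly the cones of $\fan(\ca^B)$: any such cone lies on every hyperplane of $\ca$ containing $B$, and its sign pattern on the remaining hyperplanes is the defining data of a cone of $\fan(\ca^B)$. Hence chambers of $\ca^B$ are precisely the cones $U \in \fan(\ca)$ with $U \subseteq B$ and $\dim U = \dim B$. Using the duality, it remains to show that the face $F$ corresponding to such a $U$ is dual to $B$, and conversely. The key observation is that for any $U' \in \fan(\ca)$, one has $x_{U'} \in B$ iff $U' \subseteq B$: since $x_{U'}$ lies in the relative interior of $U'$, it lies on exactly the hyperplanes of $\ca$ containing $U'$, so $x_{U'} \in B = \bigcap_{H \supseteq B} H$ iff every hyperplane containing $B$ contains $U'$. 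Since $B$ is linear, a simplex of $bZ(\ca) \subseteq A$ lies in $B$ iff each of its vertices does, so $F \cap B$ equals the union of simplices $[x_U, x_{U_1}, \ldots, x_{U_k}]$ (over chains $U \subsetneq U_1 \subsetneq \cdots$) with every $U_i \subseteq B$. The condition $F \cap B = \{b_F\}$ then rules out $U \not\subseteq B$ (so $b_F \notin B$) and $U \subseteq B$ with $\dim U < \dim B$ (a local stratification of $B$ by $\fan(\ca^B)$ yields some $U' \supsetneq U$ with $U' \subseteq B$, whence $[x_U, x_{U'}] \subseteq F \cap B$), while when $U \subseteq B$ has $\dim U = \dim B$, any $U' \supsetneq U$ has dimension exceeding $\dim B$ and is not contained in $B$, giving $F \cap B = \{b_F\}$ as required.

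For (2), adjacency of chambers $C_1 = U_1$ and $C_2 = U_2$ in $\ca^B$ amounts to the existence of a common codimension-one face $U_0 \in \fan(\ca)$ with $U_0 \subseteq B$ and $\dim U_0 = \dim B - 1$. By the face-cone duality (which reverses inclusion), this is equivalent to the existence of a face $F_0$ of $Z(\ca)$ containing both $F_1$ and $F_2$ with $\dim F_0 = n - \dim U_0 = \dim F_i + 1$, which is precisely the adjacency condition of Definition~\ref{def:adj}. Conversely, any $F_0$ witnessing adjacency of $F_1$, $F_2$ corresponds to a cone $U_0$ with $U_0 \leq U_1, U_2$ and $\dim U_0 = \dim B - 1$; since $U_0 \subseteq \overline{U_1} \subseteq B$, this $U_0$ is a codimension-one face of $C_1$ in $B$ shared with $C_2$.

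The only subtle point is the local stratification argument above: given a cone $U \subseteq B$ with $\dim U < \dim B$, the existence of some $U' \supsetneq U$ in $\fan(\ca)$ with $U' \subseteq B$ follows because the cones of $\fan(\ca)$ contained in $B$ assemble into the fan $\fan(\ca^B)$ cellulating $B$, so any lower-dimensional cone is a face of some higher-dimensional one.
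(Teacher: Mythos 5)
The paper states Lemma~\ref{lem:adj} without proof; it is treated as an immediate consequence of the fan/zonotope duality, and your overall strategy (identify both sides of the correspondence with the top-dimensional cones of $\fan(\ca^B)\subseteq\fan(\ca)$ and use the dimension-reversing bijection between cones of $\fan(\ca)$ and faces of $Z(\ca)$) is exactly the right one, and the one the subsequent constructions in the paper implicitly use.

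There is, however, one step that needs more justification than you give it. You write that, since $B$ is linear, a simplex of $bZ(\ca)$ lies in $B$ iff all its vertices do, and you then conclude that $F\cap B$ equals the union of simplices of $F$ all of whose vertices lie in $B$. The "iff" is just convexity, but the conclusion does not follow from it: it says nothing about the intersection with $B$ of a simplex having some vertices in $B$ and some not, and a priori such an intersection could be a positive-dimensional slice that is not a sub-simplex of $bZ(\ca)$. What saves the argument is a feature of the poset structure, not merely of convexity. Suppose $\sigma=[x_{U_0},\dots,x_{U_k}]$ with $U_0<\cdots<U_k$, and $U_0,\dots,U_j\subseteq B$ but $U_{j+1}\not\subseteq B$. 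Then there is some $H\in\ca$ with $B\subseteq H$ and $U_{j+1}$ strictly on one side of $H$; since $U_{j+1}<\cdots<U_k$ (so each $U_i$ with $i>j$ has $U_{j+1}$ in its closure, hence has the same nonzero sign on $H$), the vertices $x_{U_{j+1}},\dots,x_{U_k}$ lie strictly on one side of $H$, while $x_{U_0},\dots,x_{U_j}\in B\subseteq H$. Hence $\sigma\cap H=[x_{U_0},\dots,x_{U_j}]$, and since $B\subseteq H$ we get $\sigma\cap B=[x_{U_0},\dots,x_{U_j}]$, which is exactly the face spanned by the vertices in $B$. With this inserted, your dichotomy on $\dim U$ correctly characterizes the faces dual to $B$, and part~(2) then follows as you say from the inclusion-reversing, dimension-complementing bijection between cones and faces. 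Everything else in your write-up is sound.
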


\begin{definition} 
	\label{def:Phi}
	We define a map $\phi:\Ga(\ca^B)\to\Ga(\ca)$ as follows. First choose a vertex $x\in\Ga(\ca^B)$ and let $F$ be the face  of $Z(\ca)$ associated with $x$ as in the previous lemma. Then choose a vertex $y$ in $F$ and set $\phi(x)=y$. For any other vertex $x'\in\Ga(\ca^B)$, let $F'$ be the face of $Z(\ca)$ associated with $x'$ and let $p:F\to F'$ be  parallel translation. Define $\phi(x')$ to be $p(y)$. Let $e$ be the edge of $\Ga(\ca^B)$ from $x_1$ to $x_2$. Define $\phi(e)$ be a positive minimal path from $\phi(x_1)$ to $\phi(x_2)$. (Such a positive minimal path always exists; however, it might not be unique. If it is not, choose one arbitrarily.)
\end{definition}

It is clear that $\phi:\Ga(\ca^B)\to\Ga(\ca)$ induces maps $\g^+(\ca^B)\to\g^+(\ca)$ and $\g(\ca^B)\to\g(\ca)$, both of which we also denote by $\phi$.

Let $y'$ be a vertex of $\Ga(\ca)$ such that $y'=\phi(y)$ for some $y\in\Ga(\ca^B)$ (such a $y$ is unique). Let $F_{y'}$ be the face of $Z(\ca)$ associated with $y$ as in Lemma~\ref{lem:adj} (1).

\begin{lemma}
	\label{lem:injective}
	The map $\phi:\g(\ca^B)\to\g(\ca)$ is injective.
\end{lemma}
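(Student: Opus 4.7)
First I would reduce to positive paths. Any $f \in \g(\ca^B)$ has a $pn$-normal form $f = ab^{-1}$ with $a,b$ positive (Lemma~\ref{lem:pn form}); if $\phi(f)$ is the identity in $\g(\ca)$, then $\phi(a) \eq \phi(b)$, and since both are positive, Theorem~\ref{thm:deligne}(1) gives $\phi(a) \sim_+ \phi(b)$ in $\g^+(\ca)$. So it suffices to prove: for positive paths $a, b$ in $\Ga(\ca^B)$ with $s(a) = s(b)$ and $t(a) = t(b)$, if $\phi(a) \sim_+ \phi(b)$ in $\g^+(\ca)$, then $a \sim_+ b$ in $\g^+(\ca^B)$.

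\textbf{Next I would construct a retraction on positive paths.} Let $\cp_B$ denote the set of positive paths in $\Ga(\ca)$ expressible as concatenations of elementary $B$-segments. The key geometric observation is that for an elementary $B$-segment $u$ from $v \in F$ to $p(v) \in F'$ (with $F,F'$ adjacent parallel faces dual to $B$, contained in a minimal face $F_0$ of dimension $\dim F + 1$), the intermediate vertices of $u$ lie in no face of $Z(\ca)$ dual to $B$. Indeed, $u$ sits inside $F_0$; within $F_0$ only $F$ and $F'$ are dual to $B$ (since $F$ has codimension one in $F_0$ and so a unique parallel face); and by Lemma~\ref{lem:elementary segment}(5), $u$ avoids hyperplanes in $\ca_B$, ruling out any intermediate vertex on $F$ or $F'$. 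Any remaining vertex $w \in F_0 - (F \cup F')$ cannot lie on a further face of $Z(\ca)$ dual to $B$, since the edges of $Z(\ca)$ at $w$ dual to hyperplanes in $\ca_B$ all lie in $F_0$, hence in $F$ or $F'$, not containing $w$. Thus each $g \in \cp_B$ has a unique decomposition into elementary $B$-segments at the path level, determined by the subset of vertices along $g$ lying in faces dual to $B$. Via Lemma~\ref{lem:adj} this yields a well-defined map $\psi(g)$ to positive paths in $\Ga(\ca^B)$ such that $\psi(\phi(a)) = a$ as paths.

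\textbf{The bulk of the work is to show that $\cp_B$ is closed under positive equivalence and that $\psi$ descends to positive equivalence classes.} A single move replaces a minimal positive subpath $u$ of $g$ by some equivalent $u'$. When $u$ lies inside a single elementary $B$-segment, so does $u'$ (same length, same hyperplane-crossings, none in $\ca_B$), and $\psi$ is unchanged. When $u$ straddles several consecutive $B$-segments $u_i \cdots u_j$, I would iterate Lemma~\ref{lem:lcm}: pairwise suffix-joins of elementary $B$-segments with common target equal $\Delta^y(F')(\Delta_x(F))^{-1}$ for an appropriate face $F'$, and such elements are concatenations of elementary $B$-segments by Corollary~\ref{cor:concatenation}. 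Consequently, every minimal positive path positively equivalent to $u_i \cdots u_j$ is itself a concatenation of elementary $B$-segments, and the two resulting decompositions correspond via $\psi$ to positively equivalent paths in $\g^+(\ca^B)$. Here Lemma~\ref{lem:easy injective} is used to keep the equivalences inside $\g^+(\ca)$ consistent with those in the face groupoid $\g^+(F')$, which in turn controls the corresponding identifications in $\g^+(\ca^B)$.

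\textbf{The main obstacle} is this final closure/descent argument, particularly verifying that straddling-move equivalences in $\g^+(\ca)$ pull back to positive equivalences in $\g^+(\ca^B)$ and not to some coarser relation. The key is to iterate Lemma~\ref{lem:lcm} together with the lattice structure (Theorem~\ref{thm:deligne}(2)) and Lemma~\ref{lem:easy injective}, which together reduce each local computation to a Deligne groupoid where the braid-type equivalences of positive minimal paths (Theorem~\ref{thm:deligne}(1)) apply directly. Once $\psi$ is shown to descend, the identity $\psi \circ \phi = \id$ on equivalence classes yields $\phi(a) \sim_+ \phi(b) \Rightarrow a \sim_+ b$, completing the proof.
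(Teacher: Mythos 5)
Your overall strategy — reduce to positive paths via the $pn$-normal form, then build a retraction $\psi$ on concatenations of elementary $B$-segments and show it descends to positive equivalence classes — is a genuinely different route from the paper's, which instead does an induction on total length $\ell(f_1)+\ell(f_2)$: it forms the suffix join $w=u_n\vee_s v_m$ of the last elementary segments (explicitly computed by Lemma~\ref{lem:lcm} and Corollary~\ref{cor:concatenation}), applies Lemma~\ref{lem:decomposition} to identify the quotient $p$ with $f_1\eq pw$ as a concatenation of elementary $B$-segments, pulls everything back to $\Ga(\ca^B)$, and uses right cancellation plus the inductive hypothesis. Your initial reduction and the construction/uniqueness of the decomposition of a path in $\cp_B$ into elementary $B$-segments (via the vertices lying on faces dual to $B$) are both correct.

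However, the ``descent'' step, which is where the whole weight of the proof lies, has genuine gaps. First, you assert that ``every minimal positive path positively equivalent to $u_i\cdots u_j$ is itself a concatenation of elementary $B$-segments,'' invoking Lemma~\ref{lem:lcm}; but $u_i\cdots u_j$ is generally \emph{not} a minimal positive path (an elementary $B$-segment followed by one going back is already non-minimal), so relation (d) of positive equivalence never applies to the full product $u_i\cdots u_j$, and Lemma~\ref{lem:lcm} (which concerns the suffix join of two elementary segments with a common target) does not directly yield the claim. Second, your case analysis --- ``$u$ lies inside a single elementary $B$-segment'' versus ``$u$ straddles several consecutive $B$-segments'' --- misses the generic case: a single move replaces a minimal positive subpath whose endpoints can sit strictly \emph{inside} $u_i$ and $u_j$, i.e., at vertices that are not breakpoints (not on any face dual to $B$). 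After such a move you have to re-identify which vertices of $g'$ lie on faces dual to $B$, and it is not clear a priori that $g'\in\cp_B$ at all, nor that the breakpoints outside the modified window are undisturbed. Finally, closure of the \emph{set} $\cp_B$ (as actual paths, not classes) under $\sim_+$ is exactly the nontrivial point; Lemma~\ref{lem:decomposition} only says a class contains a concatenation, not that every representative is one, and you never invoke it. The paper's length induction avoids all of these issues by never reasoning move-by-move; if you want to push your retraction idea through you would at minimum need a statement controlling how a single move interacts with the breakpoint set, and as written that argument is absent.
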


\begin{proof}
	We first show that $\phi:\g^+(\ca^B)\to\g^+(\ca)$ is injective. Let $\hat f_1,\hat f_2$ be two positive paths in $\Ga(\ca^B)$ and let $f_1,f_2$ be their image under $\phi$. Then $f_1=u_1u_2\cdots u_n$, where each $u_i$ is an elementary segment coming from an edge of $f_1$. Similarly, $f_2$ can be written as $f_2=v_1v_2\cdots v_m$.
	
	Suppose $f_1\eq f_2$. We need to show that $\hat f_1\eq_B \hat f_2$, where $\eq_B$ refers to equivalent paths in $\Ga(\ca^B)$. This will be proved by induction on $c=\ell(f_1)+\ell(f_2)$, where $\ell(f)$ denotes the length of $f$.
	The case $c=0$ is immediate, as $\ell(\phi(\hat f))>0$ if and only if $\ell(\hat f)>0$. Next we consider the general case $c\ge 0$. 	
	Let $w=u_n\vee_s v_m$. Then there is positive path $p$ such that $f_1\eq pw\eq f_2$. Let $w_1$ and $w_2$ be positive paths such that $w\eq w_1u_n\eq w_2v_m$. By Corollary~\ref{cor:concatenation} and Lemma~\ref{lem:lcm}, $w_1$ and $w_2$ are equivalent to concatenations of elementary $B$-segments. By Lemma~\ref{lem:elementary segment} (3), both $f_1$ and $w$ satisfy condition (1) of Lemma~\ref{lem:decomposition}; so, the same holds for the positive path $p$. By Lemma~\ref{lem:elementary segment} (5), both $f_1$ and $w$ do not cross any hyperplane dual to $F$, and so, the same holds for $p$ by Lemma~\ref{lem:intersection number}.  By Lemma~\ref{lem:decomposition}, $p$ is equivalent to a concatenation of elementary $B$-segments. Thus, we can find positive paths $\hat w_1$ and $\hat p$ of $\Ga(\ca^B)$ such that $\phi(\hat w_1)\eq w_1$ and $\phi(\hat p)\eq p$. We delete the last edge $\hat e_1$ of $\hat f_1$ to obtain $\hat g_1$. Then $\phi(\hat g_1)\eq\phi(\hat p\hat w_1)\eq u_1u_2\cdots u_{n-1}$. We can assume by induction that $\hat g_1\eq_B \hat p\hat w_1$. Define $\hat w_2,\hat e_2$ and $\hat g_2$ similarly and then conclude that $\hat g_2\eq_B\hat p\hat w_2$. By Lemma~\ref{lem:lcm}  both $\hat w_1\hat e_1$ and $\hat w_2\hat e_2$ are positive minimal paths in $\Gamma(\ca^B)$. Thus, $\hat w_1\hat e_1\eq_B \hat w_2\hat e_2$. Then $$\hat f_1\eq_B \hat g_1\hat e_1\eq_B \hat p\hat w_1\hat e_1\eq_B \hat p\hat w_2\hat e_2\eq_B \hat g_2\hat e_2\eq_B \hat f_2.$$
	
	Next take paths $\hat f_1,\hat f_2$ on $\Gamma(\ca^B)$ not necessarily positive and let $f_1,f_2$ denote their $\phi$-images. Suppose $f_1\eq f_2$. By Lemma~\ref{lem:longest} and Remark~\ref{rmk:inherit}, there is a $k\ge 0$ such that for $i=1,2$, $\hat f_i\eq_B (\Delta_{s(\hat f_i)})^{-2k}\cdot\hat g_i$ with $\hat g_i$ positive.  Since $f_1\eq f_2$ we deduce that $\phi(\hat g_1)\eq \phi(\hat g_2)$. By the previous discussion, $\hat g_1\eq_B\hat g_2$. Hence, $\hat f_1\eq_B \hat f_2$, as required.
\end{proof}

Let $F$  be a face of $Z(\ca)$ and let $\Gamma(\ca,F)$ be the graph defined in Section~\ref{subsec:face}. For the product arrangement $\ca_{A/B}\times \ca^B$ (which is also simplicial by Remark~\ref{rmk:inherit}), we can identify $\Gamma(\ca_{A/B}\times \ca^B)$ with the 1-skeleton of $\Gamma(\ca,F)\times \Gamma(\ca^B)$. Thus, each vertex of $\Gamma(\ca_{A/B}\times \ca^B)$ corresponds to a vertex of $\Gamma(\ca)$. This induces a map
$$
\theta:\Gamma(\ca_{A/B}\times \ca^B)\to \Gamma(\ca)
$$
so that for each vertex $x\in F$, the restriction of $\theta$ to $\{x\}\times \Gamma(\ca^B)$  is equal to the map $\phi$ in Definition~\ref{def:Phi}.
\begin{corollary}
	\label{cor:injective and centralizer}
	The map $\theta$ satisfies the following.
	\begin{enumerate1}
		\item The homomorphism of groupoids $\theta_*:\g(\ca_{A/B}\times \ca^B)\to\g(\ca)$, that is induced by $\theta$,  is injective.
		\item Let $x'\in \Gamma(\ca_{A/B}\times \ca^B)$ be a vertex. Let $x=\theta(x')$ and let $F_x$ be the face containing $x$ and parallel to $F$. Then $\theta_*(\g_{x'}(\ca_{A/B}\times \ca^B))$ is the centralizer of $(\Delta_x(F_x))^2$ in $\g_x(\ca)$.
		\item Let $U=\langle(\Delta_x(F_x))^2\rangle$ and $G=\g_x(\ca)$. Then $C_G(U)=N_G(U)=\theta_*(\g_{x'}(\ca_{A/B}\times \ca^B))$.
	\end{enumerate1}
\end{corollary}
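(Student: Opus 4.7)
The strategy is to exploit the product structure of the arrangement $\ca_{A/B}\times\ca^B$: both its zonotope and its Deligne groupoid factor, so $\g_{x'}(\ca_{A/B}\times\ca^B)\cong \g_y(\ca_{A/B})\times \g_z(\ca^B)$ with $x'=(y,z)$. Under $\theta_*$ the first factor embeds via the inclusion $\g(F_x)\hookrightarrow \g(\ca)$, which is injective by Lemma~\ref{lem:easy injective}, while the second embeds via $\phi_*$, which is injective by Lemma~\ref{lem:injective}. For (1), the plan is to show these two subgroupoids ``commute'' in $\g(\ca)$, producing a normal form, and then to recover the two components separately. Specifically, Lemma~\ref{lem:elementary segment}(4) lets me swap an $F_x$-edge past an elementary $B$-segment at the cost of a parallel translation; iterating reduces any element of $\theta_*(\g_{x'})$ based at $x$ to the form $f=UW$, where $U=\phi_*(\hat U)$ for a $B$-segment loop $\hat U\in\g_z(\ca^B)$ and $W$ is an $F_x$-loop. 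To pin these down uniquely I apply the retraction $r_{F_x}\colon\g_x(\ca)\to \g_x(F_x)$, which descends to equivalence classes since it preserves minimal positive paths, restricts to the identity on $\g(F_x)$, and annihilates every elementary $B$-segment by Lemma~\ref{lem:elementary segment}(5) together with the transitivity of parallel translation between parallel faces dual to $B$. Hence $r_{F_x}(f)=W$, then $U=fW^{-1}$, and Lemma~\ref{lem:injective} recovers $\hat U$, yielding injectivity of $\theta_*$.

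For (2), the forward containment has two pieces. An $F_x$-loop commutes with $(\Delta_x(F_x))^2$ by Lemma~\ref{lem:longest}(1) applied inside $\g(F_x)$ (then pushed into $\g(\ca)$ via Lemma~\ref{lem:easy injective}); an elementary $B$-segment loop $u_1\cdots u_n$ at $x$ commutes with $(\Delta_x(F_x))^2$ by telescoping Lemma~\ref{lem:elementary segment}(3), the intermediate factors $(\Delta_{t(u_i)}(F_i))^2$ cancelling once the loop closes at $x$. For the reverse containment, Corollary~\ref{cor:centralizer} gives $f=(\Delta_x)^{-2k}u_1\cdots u_n v$ with $u_i$ elementary $B$-segments and $v$ an $F_x$-loop, so the positive factor $u_1\cdots u_n v$ lies in $\theta_*(\g_{x'})$. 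What remains is to show $(\Delta_x)^2\in\theta_*(\g_{x'})$; I plan to prove the identity
\[
(\Delta_x)^2\eq (\Delta_x(F_x))^2\cdot\phi_*\bigl((\Delta_z(Z(\ca^B)))^2\bigr)
\]
by partitioning $\Sigma_{x\to}$ into $F_x$-edges and transverse edges, using Lemma~\ref{lem:longest}(3) and Lemma~\ref{lem:longest face} to express the relevant joins, and reassembling with Lemma~\ref{lem:elementary segment}(4) into the two factors on the right.

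For (3), $C_G(U)\subseteq N_G(U)$ is automatic. Conversely, if $g\in N_G(U)$ then $g(\Delta_x(F_x))^2 g^{-1}=(\Delta_x(F_x))^{2n}$ for some $n\in\zz$; taking signed intersection numbers with any $H\in\ca_B$ and using Lemma~\ref{lem:intersection number} (which makes $i(\cdot,H)$ conjugation-invariant) gives $2=2n$, so $n=1$ and $g\in C_G(U)$. Combining with (2) yields $N_G(U)=C_G(U)=\theta_*(\g_{x'}(\ca_{A/B}\times\ca^B))$. The main obstacle is the identity $(\Delta_x)^2\eq (\Delta_x(F_x))^2\cdot\phi_*((\Delta_z)^2)$ used in (2): it is the Garside-theoretic shadow of the decomposition of the Hopf fiber of $\cm(\ca)$ along the product $\cm(\ca_{A/B}\otimes\cc)\times\cm(\ca^B\otimes\cc)$ embedded via $\theta$, and I expect it to require the most delicate computation with joins of Garside elements; once it is in place, everything else reduces to routine assembly.
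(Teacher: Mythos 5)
Your proposal is correct in substance and rests on the same pillars as the paper's (very terse) proof: Lemma~\ref{lem:easy injective} and Lemma~\ref{lem:injective} for injectivity of the two factors, Lemma~\ref{lem:longest}(1) and Lemma~\ref{lem:elementary segment}(3) for the forward containment in (2), Corollary~\ref{cor:centralizer} for the reverse containment, and Lemma~\ref{lem:intersection number} to rule out conjugating $(\Delta_x(F_x))^2$ to its inverse in (3). Where you add genuine content is in (1): the paper only says the claim ``follows from'' the two injectivity lemmas, while you supply the missing separation argument via the retraction $r_{F_x}$, which is exactly the right tool (it is the identity on $\g(F_x)$ and kills elementary $B$-segments by Lemma~\ref{lem:elementary segment}(5) together with Lemma~\ref{lem:gate}(3)--(4)). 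Where you make life harder than necessary is the step you yourself flag as the main obstacle: to get $(\Delta_x)^2\in\theta_*(\g_{x'})$ you do not need the identity $(\Delta_x)^2\eq(\Delta_x(F_x))^2\cdot\phi_*((\Delta_z)^2)$. Since $(\Delta_x)^2$ is central (Lemma~\ref{lem:longest}(1)), it commutes with $(\Delta_x(F_x))^2$, and it is positive, so Corollary~\ref{cor:centralizer} applied to $f=(\Delta_x)^2$ with $k=0$ already writes it as $u_1\cdots u_m v$ with the $u_i$ elementary $B$-segments forming a loop at $x$ and $v$ a positive $F_x$-loop; both factors visibly lie in $\theta_*(\g_{x'})$ (the $B$-segment loop is $\phi_*$ of a path in $\Gamma(\ca^B)$ because the compatibility of the choices $u(v)$ under parallel translation keeps all intermediate endpoints among the chosen vertices). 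This disposes of the $(\Delta_x)^{-2k}$ factor with no extra Garside computation; your proposed identity is a strictly stronger statement (it pins down the two factors exactly) and, while it is consistent with the signed-intersection-number bookkeeping and I believe it is true, proving it in the groupoid rather than in $H_1$ is exactly the ``delicate'' work you anticipate and can be skipped entirely.
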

The centralizer $U$ in $G$ is denoted by $C_G(U)$ and the normalizer denoted by $N_G(U)$.
\begin{proof}
	Note that $\theta$ takes minimal positive paths to minimal positive paths; so, $\theta_*$ is well-defined. Moreover, $\g(\ca_{A/B}\times \ca^B)\cong \g(\ca_{A/B})\times \g(\ca^B)$. So, the first assertion follows from Lemmas~\ref{lem:easy injective} and \ref{lem:injective}. By Lemma~\ref{lem:longest} (1), $\theta_*(\g_{x'}(\ca_{A/B}\times \ca^B))$ is contained in the centralizer of $(\Delta_x(F_x))^2$. The containment in the other direction follows from Corollary~\ref{cor:centralizer}. For (3), we need to show that $(\Delta_x(F_x))^2$ is not conjugate to its inverse; however, this follows from Lemma~\ref{lem:intersection number} since two conjugate paths have the same intersection number with each hyperplane in $\ca$.
\end{proof}

\subsection{Simultaneously conjugating mutually commuting Dehn twists}

\begin{definition}\label{def:dehn twists}
	Take a base vertex $x_0\in\Ga(\ca)$. Let $F$ be an irreducible face of $Z(\ca)$ and let $x_F=\prj_F(x)$ (defined in Lemma~\ref{lem:gate}). Let $h_F$ be a minimal positive path from $x_0$ to $x_F$. The \emph{standard Dehn twist} associated with $F$, denoted $\cz_F$, is defined to be $h_F(\Delta_{x_F}(F))^2h^{-1}_F$. 
\end{definition}

\begin{lemma}
	\label{lem:standalizer}
	Let $h$ be a positive path from $x_0$ to $x_F$. We write $f=h\cz_F h^{-1}$ in its $pn$-normal form $ab^{-1}$. Then there is  a $F'$ parallel to $F$ such that $b^{-1}fb\eq (\Delta_{x}(F'))^2$ for a vertex $x\in F'$.
\end{lemma}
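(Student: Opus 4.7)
The plan is to recognize $f$ as a conjugate of the squared Garside element $\delta := (\Delta_{x_F}(F))^2$ and to use the commutation of $\delta$ past elementary $B$-segments (Lemma~\ref{lem:elementary segment}(3)) to transport it to a parallel face. Substituting $\cz_F = h_F\delta h_F^{-1}$ into $f = h\cz_F h^{-1}$ presents $f$ as a conjugate $f \eq k\,\delta\,k^{-1}$, where $k$ is the composite of $h$ and $h_F$ interpreted appropriately in the groupoid. By Lemma~\ref{lem:longest}(4) we may write $k \eq (\Delta_{x_F})^{-2j}\cdot k_+$ with $k_+$ positive, and since the global central factor $(\Delta_{x_F})^{\pm 2j}$ commutes with $\delta$ (Lemma~\ref{lem:longest}(1)), it drops out of the conjugation; so we may assume $k$ itself is positive.

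The core claim is that the negative tail $b$ of the pn-normal form $f = ab^{-1}$ is equivalent to a concatenation $u_1 u_2 \cdots u_m$ of elementary $B$-segments running between parallel faces of $F$. Starting from $f \eq (k\delta)\,k^{-1}$ as a ``positive $\cdot$ negative'' decomposition, uniqueness of the pn-normal form (Lemma~\ref{lem:pn form}) identifies $b$ as the result of cancelling the common suffix of $k\delta$ and $k$. Unwinding this cancellation, the surviving portion $b$ satisfies the commutation relation $(\Delta_{t(b)}(F_b))^{2}\cdot b^{-1} \eq b^{-1}\cdot \delta$ (where $F_b$ is the face parallel to $F$ determined by the relevant vertex); the pn-normal form condition $a\wedge_s b = t(a)$ then says that no edge of $b$'s terminal portion lies in $F_b$, which is precisely the hypothesis needed to invoke Lemma~\ref{lem:decomposition}. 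That lemma forces $b$ to be a concatenation of elementary $B$-segments, starting at $x_F\in F$ and ending at some vertex $x$ in a parallel face $F'$.

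Once this structural description of $b$ is in hand, iterating Lemma~\ref{lem:elementary segment}(3) along $u_1 u_2 \cdots u_m$ yields the key equivalence
\[
\delta\cdot b \eq b\cdot(\Delta_x(F'))^{2},
\]
and hence $b^{-1}\delta b \eq (\Delta_x(F'))^{2}$. Combining with $f \eq k\delta k^{-1}$ and the pn-normal form $f = ab^{-1}$, one concludes $a \eq b\,(\Delta_x(F'))^{2}$, so that $b^{-1}fb \eq b^{-1}a \eq (\Delta_x(F'))^{2}$, as required.

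The main technical obstacle is the structural claim about $b$: justifying that the suffix cancellation involved in passing to the pn-normal form precisely extracts an elementary-$B$-segment concatenation. This is where the minimality condition $a\wedge_s b=t(a)$ does real work, and it requires a careful induction that interleaves Lemmas~\ref{lem:tail} and \ref{lem:decomposition} with the uniqueness of Lemma~\ref{lem:pn form}; the remaining commutation step is then a mechanical application of Lemma~\ref{lem:elementary segment}(3).
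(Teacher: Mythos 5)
The core structural claim you flag at the end --- that the negative part $b$ of the pn-normal form is a concatenation of elementary $B$-segments --- is false, and this is a genuine gap. Take $Z(\ca)$ a hexagon with vertices $0,\dots,5$ (a rank-$2$ simplicial arrangement), let $F=\{0,1\}$, $x_0=5$, $x_F=\prj_F(5)=0$, and take $h$ trivial so that $g=h_F$ is the single edge $e$ from $5$ to $0$ and $f=\cz_F=e\,(\Delta_0(F))^2\,e^{-1}$. Comparing the multisets of hyperplanes crossed shows that $e(\Delta_0(F))^2\wedge_s e$ is trivial, so the pn-normal form of $f$ is $a=e(\Delta_0(F))^2$, $b=e$. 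Every elementary $B$-segment in the hexagon has length $2$, so the single edge $b=e$ is not a concatenation of them; the lemma's conclusion nonetheless holds (with $b^{-1}fb\eq(\Delta_0(F))^2$, $F'=F$, $x=0$), but not for the reason you give.

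The deeper issue is a confusion between $b$ and the \emph{cancelled} common suffix. Writing $k=bc$ and $k\delta=ac$ with $c=(k\delta)\wedge_s k$, one gets $b^{-1}fb\eq c\,\delta\,c^{-1}$; so it is $c$, not $b$, through which $\delta$ must be conjugated, and the commutation relation you assert for $b$ does not even type-check in general since $\delta$ is based at $t(k)=x_F$ while $s(b)=s(k)$. Moreover the cancelled suffix $c$ may contain edges lying in parallel copies of $F$ in addition to elementary $B$-segments, so a single invocation of Lemma~\ref{lem:decomposition} cannot characterize it, and its hypotheses --- that the path runs between vertices in parallel copies of $F$ and satisfies the stated commutation --- have not been verified. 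The paper instead runs a descending induction on the length of $g$: at each step a common suffix edge of $g\delta'$ and $g$ is removed, conjugating the current Garside square $\delta'$ past it via Lemma~\ref{lem:longest} if the edge lies in the current parallel face, and past an elementary $B$-segment supplied by Lemma~\ref{lem:tail} together with Lemma~\ref{lem:elementary segment}(3) otherwise; the induction terminates exactly at the pn-normal form, producing the transported square $(\Delta_x(F'))^2$.
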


This observation is taken from \cite[Theorem 4]{cumplido} in the case of spherical Artin groups. The same proof given there also works for the Deligne groupoid and we give it below.
\begin{proof}
	Let $g=hh_F$. Then $f=g(\Delta_{x_F}(F))^2 g^{-1}$. If there does not exist an edge $e$ such that $g(\Delta_{x_F}(F))^2\su e$ and $g\su e$, then we are done by Lemma~\ref{lem:pn form}. Suppose such $e$ exists. If $\bar e\subset F$, then $e(\Delta_{x_F}(F))^2 e^{-1}\eq(\Delta_{t(e)}(F))^2$ by Lemma~\ref{lem:longest} and we can shorten $g$. If $\bar e\nsubseteq F$, then Lemma~\ref{lem:tail} implies that $g\eq g_1u$ where $g_1$ is positive and $u$ is an elementary $B$-segment where $B$ is the subspace dual to $F$. By Lemma~\ref{lem:elementary segment} (3), $u(\Delta_{t(e)}(F))^2 u^{-1} \eq(\Delta_{s(u)}(F_1))^2$ for a face $F_1\parallel F$. So again, we can shorten $g$ again. The lemma follows by repeatedly applying this procedure.
\end{proof}

Alternatively, one could  define $\cz_F$ by taking an arbitrary vertex $x\in F$ (not necessarily $x_F$), taking a minimal positive path $h$ from $x_0$ to $x$, and considering $h(\Delta_x(F))^2h^{-1}$. We claim that $\cz_F\eq h(\Delta_x(F))^2h^{-1}$. Indeed, by \cite[Section 2]{deligne}, there exists a minimal path $h'=h_1h_2$ such that $h\eq h'$, $t(h_1)=s(h_2)=x_F$ and $\bar h_2\subset F$. We can then use the same argument  as in the proof of Lemma~\ref{lem:standalizer} to deduce the claim.

The following lemma is proved in \cite{cgw} for spherical Artin groups, where the argument is based on solution of the conjugacy problem for Garside groups. We give a shorter proof based on arguments from \cite{Parisparabolic}.
\begin{lemma}
	\label{lem:conjugate}
	Let $\{F_i\}_{i=1}^k$ be a collection of faces of $Z(\ca)$. Consider a mutually commuting collection $\{g_i:=h_i\cz_{F_i} h^{-1}_i\}_{i=1}^k$ where $h_i\in \g_{x_0}$. Then there exist an element $g\in \g_{x_0}$ such that for each $i$, $gg_ig^{-1}=\cz_{F'_i}$ for some face $F'_i\subset Z(\ca)$ parallel to $F_i$. Moreover, there is a vertex $x\in \Ga(\ca)$ such that $x\in\bigcap_{i=1}^kF'_i$.
\end{lemma}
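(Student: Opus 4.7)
The plan is to induct on $k$, letting the ambient arrangement vary so that the induction hypothesis can be applied to the smaller arrangements $\ca_{A/B}\oplus\ca^B$ that arise. The base case $k=1$ follows directly from Lemma~\ref{lem:standalizer}: write $g_1 = h_1\cz_{F_1}h_1^{-1}$ in its $pn$-normal form $a_1 b_1^{-1}$, obtaining $b_1^{-1}g_1 b_1 \eq (\Delta_x(F'_1))^2$ for some $F'_1\parallel F_1$ and vertex $x\in F'_1$. Choosing a minimal positive path $h'$ from $x_0$ to $x$, the loop $g := h' b_1^{-1}\in \g_{x_0}$ satisfies $g g_1 g^{-1}\eq h'(\Delta_x(F'_1))^2 h'^{-1}\eq \cz_{F'_1}$ by the remark following Lemma~\ref{lem:standalizer}.

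For the inductive step, first apply the base case to $g_k$ to produce $c\in\g_{x_0}$ with $c g_k c^{-1} = \cz_{F'_k}$ for some $F'_k\parallel F_k$ containing a vertex $x$. Each $g_i'' := c g_i c^{-1}$ ($i<k$) then commutes with $\cz_{F'_k}$, hence lies in $C_{\g_{x_0}}(\cz_{F'_k})$. Let $h'$ be a minimal positive path from $x_0$ to $x$, so that $\cz_{F'_k}\eq h'(\Delta_x(F'_k))^2 h'^{-1}$, and let $B$ be dual to $F'_k$. Conjugation by $h'^{-1}$ identifies $C_{\g_{x_0}}(\cz_{F'_k})$ with $C_{\g_x}((\Delta_x(F'_k))^2)$, which by Corollary~\ref{cor:injective and centralizer}(3) equals $\theta_*(\g_{x'}(\ca'))$, where $\ca' := \ca_{A/B}\oplus \ca^B$ (which is simplicial by Remark~\ref{rmk:inherit}). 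Thus each $g_i''$ has the form $h'\theta_*(\hat g_i)h'^{-1}$ for a unique $\hat g_i\in \g_{x'}(\ca')$, and the $\hat g_i$'s mutually commute.

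The next step is the key subclaim: each $\hat g_i$ is a conjugate, in $\g(\ca')$, of a standard Dehn twist $\cz_{\hat F_i}$ for some face $\hat F_i$ of $Z(\ca')$. To establish this, take the $pn$-normal form $\hat g_i\eq \hat a_i\hat b_i^{-1}$ in $\g(\ca')$; since $\theta_*$ preserves the lattice structure (Lemma~\ref{lem:easy injective}), $\theta_*(\hat a_i)\theta_*(\hat b_i)^{-1}$ is the $pn$-normal form of $\theta_*(\hat g_i)$ in $\g(\ca)$. Because $\theta_*(\hat g_i)$ is already a conjugate of $\cz_{F_i}$ in $\g(\ca)$ (being obtained from $g_i$ by the explicit conjugations above), Lemma~\ref{lem:standalizer} gives $\theta_*(\hat b_i)^{-1}\theta_*(\hat g_i)\theta_*(\hat b_i)\eq (\Delta_y(F_i''))^2$ for some $F_i''\parallel F_i$ and $y = t(\theta_*(\hat b_i))$. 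This element centralizes $(\Delta_y(F_y))^2$ (where $F_y$ is the parallel copy of $F'_k$ through $y$) because $\theta_*(\hat g_i)$ centralizes $(\Delta_x(F'_k))^2$ and $\hat b_i\in\g(\ca')$, so another application of Corollary~\ref{cor:injective and centralizer}(2) places $(\Delta_y(F_i''))^2$ in $\theta_*(\g_{y'}(\ca'))$; pulling back identifies $\hat b_i^{-1}\hat g_i\hat b_i$ with a squared Garside element of the corresponding product face $\hat F_i''$ in $Z(\ca')$, proving the subclaim. The induction hypothesis, applied to $\{\hat g_i\}_{i=1}^{k-1}$ in $\g(\ca')$, yields $\hat g\in\g_{x'}(\ca')$ and faces $\hat F_i'\parallel \hat F_i$ sharing a common vertex $\hat y$, with $\hat g\hat g_i\hat g^{-1} = \cz_{\hat F_i'}$.

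Finally, set $g := h'\theta_*(\hat g)h'^{-1} c\in \g_{x_0}$. For $i<k$ we compute $g g_i g^{-1}\eq h'\theta_*(\cz_{\hat F_i'})h'^{-1}$, which is the standard Dehn twist $\cz_{F'_i}$ for the face $F'_i$ of $Z(\ca)$ obtained from $\hat F'_i$ under the correspondence between faces of $Z(\ca')$ and faces of $Z(\ca)$ passing through (a parallel copy of) $F'_k$. For $i=k$ we have $g g_k g^{-1} = \cz_{F'_k}$ since $\theta_*(\hat g)$ centralizes $(\Delta_x(F'_k))^2$. The common vertex $y := \theta(\hat y)$ lies in every $F'_i$ for $i<k$ by construction and, by replacing $F'_k$ with its parallel translate through $y$ (and adjusting $c$ by the corresponding elementary $B$-segments via Lemma~\ref{lem:elementary segment}(3)), we may also assume $y\in F'_k$. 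The main obstacle is the subclaim in the third paragraph: tracking Dehn twists under the isomorphism $\theta_*$ requires a careful identification of Garside elements of product faces of $Z(\ca_{A/B}\oplus\ca^B)$ with Garside elements of certain spanned faces of $Z(\ca)$, which is where the simpliciality hypothesis and the lattice-preservation property of $\theta_*$ are essential.
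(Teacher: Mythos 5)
Your strategy is genuinely different from the paper's. The paper's induction reduces to the situation where $h_i$ is trivial for $i<k$ and the faces $F_1,\dots,F_{k-1}$ share a common vertex $x$; it then takes the $pn$-normal form $ab^{-1}$ of $f\cz_{F_k}f^{-1}$ (with $f:=h^{-1}h_k$ positive) and shows \emph{directly} that conjugation by $b$ sends every $(\Delta_x(F_i))^2$ to a squared Garside element $(\Delta_y(F'_i))^2$ at the single vertex $y=t(b)$: the commutation relation gives $a^{-1}(\Delta_x(F_i))^2a\eq b^{-1}(\Delta_x(F_i))^2b$ is positive, and then Lemma~\ref{lem:decomposition} together with Lemma~\ref{lem:longest}(1) identifies $b^{-1}(\Delta_x(F_i))^2b$ as a squared Garside element of a parallel face, while Lemma~\ref{lem:standalizer} handles the $i=k$ case. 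The whole argument stays inside $\g(\ca)$. You instead normalize $g_k$ first, use Corollary~\ref{cor:injective and centralizer}(2) to identify the centralizer of $\cz_{F'_k}$ with $\theta_*(\g_{x'}(\ca'))$, and try to run the induction inside the smaller groupoid $\g(\ca')$.

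The gap in your argument is exactly the one you flag at the end. Your ``key subclaim'' --- that each $\hat g_i=\theta_*^{-1}(g_i'')$ is a conjugate of a standard Dehn twist in $\g(\ca')$ --- is not actually established. First, you cite Lemma~\ref{lem:easy injective} for lattice-preservation of $\theta_*$, but that lemma concerns the inclusion $\G(F)\to\G$ of the normal-factor groupoid; neither Lemma~\ref{lem:injective} nor Corollary~\ref{cor:injective and centralizer} asserts that the map $\phi$ from the restriction factor, or the product map $\theta_*$, preserves suffix meets. Without that, you cannot conclude that $\theta_*(\hat a_i)\theta_*(\hat b_i)^{-1}$ is the $pn$-normal form of $\theta_*(\hat g_i)$ in $\g(\ca)$, so the application of Lemma~\ref{lem:standalizer} does not go through as written. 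Second, even granting lattice-preservation, the final ``pulling back'' step --- that $\theta_*^{-1}\bigl((\Delta_y(F_i''))^2\bigr)$ is a squared Garside element $(\Delta_{y'}(\hat F_i''))^2$ for some face $\hat F_i''$ of $Z(\ca')$ --- is asserted, not argued. This is not a formality: $\theta_*$ maps edges of the $\ca^B$-factor to elementary $B$-segments, not to edges of $Z(\ca)$, so faces of $Z(\ca')$ do not simply correspond to faces of $Z(\ca)$, and the squared Garside elements in the image of $\theta_*$ are not of the form $(\Delta_y(F))^2$ for $F$ a face of $Z(\ca)$ unless $F\perp F_y$ or $F$ and $F_y$ are comparable. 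Establishing the needed dictionary is the same sort of work that Lemma~\ref{lem:decomposition} does for the paper's argument, and it is not supplied here. The same unestablished correspondence is reused in your final paragraph to push $\cz_{\hat F'_i}$ back to a Dehn twist $\cz_{F'_i}$ in $\g(\ca)$. In short: the plan is a plausible alternative architecture, but the load-bearing step is missing, and the paper's route (staying in $\g(\ca)$ and using the $pn$-normal form plus Lemma~\ref{lem:decomposition}) sidesteps it cleanly.
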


\begin{proof}
	The proof is by induction on $k$. The case $k=1$ is clear. Assume by induction that $h_i$ is the trivial element for $1\le i\le k-1$,  and assume moreover, that there is a vertex $x\in\Ga(\ca)$ and a minimal positive path $h$ from $x_0$ to $x$ such that $\cz_{F_i}=h(\Delta_x(F_i))^2h^{-1}$ for each $1\le i\le k-1$. Define $S_0=\{\cz_{F_1},\cdots,\cz_{F_{k-1}},h_k\cz_{F_k}h^{-1}_k\}$. Then any two elements from the set $S_1=\{(\Delta_x(F_1))^2,\cdots,(\Delta_x(F_{k-1}))^2,h^{-1}h_k\cz_{F_k}h^{-1}_kh\}$ commute.
	
	By Lemma~\ref{lem:longest}, we can assume $f:=h^{-1}h_k$ is positive. Let $ab^{-1}$ be the $pn$-normal form of $f\cz_{F_k}f^{-1}$. For each $i$ with $1\le i\le k-1$, we have $(\Delta_x(F_i))^2ab^{-1}=ab^{-1}(\Delta_x(F_i))^2$. Thus, $(\Delta_x(F_i))^2a\cdot [(\Delta_x(F_i))^2b]^{-1}\eq a\cdot b^{-1}$. It follows from Lemma~\ref{lem:pn form} that $a^{-1}(\Delta_x(F_i))^2a\eq b^{-1}(\Delta_x(F_i))^2b$ is positive. Decompose $b$ as $b=b_1b_2$, where $b_2$ is a maximal positive $F_i$-path. By applying Lemma~\ref{lem:longest} (1) to $b_2$ and Lemma~\ref{lem:decomposition} to $b_1$, we deduce that $b^{-1}(\Delta_x(F_i))^2b\eq(\Delta_y(F'_i))^2$ where $F'_i\parallel F_i$ and $y=t(b)$. By Lemma~\ref{lem:standalizer}, $b^{-1}(f\cz_{F_k}f^{-1})b\eq (\Delta_y(F'_k))^2$ with $F'_k$ parallel to $F_k$. Thus, $b^{-1}h^{-1}S_0hb=b^{-1}S_1b=\{(\Delta_y(F'_i))^2\}_{i=1}^k$. Take a minimal positive path $h'$ from $x_0$ to $y$ and let $g=h'b^{-1}h^{-1}$. Then $g S_0 g^{-1}=\{\cz_{F'_i}\}_{i=1}^k$.
\end{proof}

\begin{lemma}
	\label{lem:commute}
	Let $F_1$ and $F_2$ be two irreducible faces of $Z(\ca)$. Then $\cz_{F_1}$ and $\cz_{F_2}$ commute if and only if $F_1\perp F_2$ or if $F_1$ and $F_2$ are comparable. 
	\end{lemma}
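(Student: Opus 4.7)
The plan is to prove the two implications separately, using the Garside‐theoretic machinery developed in this section together with the Coxeter graph criterion of Lemma~\ref{lem:irreducible characterization} and Corollary~\ref{cor:irreducible}.

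\textbf{Sufficient direction.} Suppose first that $F_1\subset F_2$ (the case $F_2\subset F_1$ is symmetric). I pick any vertex $x\in F_1\subset F_2$ and a minimal positive path $h$ from $x_0$ to $x$; by the remark following Lemma~\ref{lem:standalizer} I may write $\cz_{F_i}\eq h(\Delta_x(F_i))^2h^{-1}$ using this common $(x,h)$. By Lemma~\ref{lem:easy injective} the natural map $\g(F_1)\to \g(F_2)$ is injective, and Lemma~\ref{lem:longest}(1) says $(\Delta_x(F_2))^2$ is central in $\g_x(F_2)$; hence $(\Delta_x(F_1))^2$ and $(\Delta_x(F_2))^2$ commute, and so do $\cz_{F_1}$ and $\cz_{F_2}$. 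If instead $F_1\perp F_2$, they share a vertex $x$ and are contained in a face $F\cong F_1\times F_2$, so $\ca_{A/(B_1\cap B_2)}\cong\ca_{A/B_1}\oplus\ca_{A/B_2}$. Under the identification $\g(\ca_{A/B_1}\times\ca_{A/B_2})\cong \g(\ca_{A/B_1})\times\g(\ca_{A/B_2})$ used in the proof of Corollary~\ref{cor:injective and centralizer}, the two Garside squares sit in commuting factors of $\g(F)$ and therefore commute.

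\textbf{Necessary direction.} Suppose $\cz_{F_1}$ and $\cz_{F_2}$ commute. Lemma~\ref{lem:conjugate} furnishes an element $g$ and parallel faces $F'_i\parallel F_i$ sharing a common vertex $x$ with $g\cz_{F_i}g^{-1}\eq(\Delta_x(F'_i))^2$; since both comparability and perpendicularity of faces having a common vertex depend only on parallel classes, I may replace $F_i$ by $F'_i$ and work with $(\Delta_x(F_1))^2$, $(\Delta_x(F_2))^2$ commuting in $\g_x(\ca)$. By Corollary~\ref{cor:injective and centralizer}(3), the centralizer of $(\Delta_x(F_1))^2$ is the image $\theta_*(\g_{x'}(\ca_{A/B_1}\times\ca^{B_1}))\cong\theta_*\bigl(\g_{x_1}(\ca_{A/B_1})\times\g_{x_2}(\ca^{B_1})\bigr)$, so I can write $(\Delta_x(F_2))^2\eq\theta_*(a,b)$ for some $a$ and $b$. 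Using Lemma~\ref{lem:intersection number}, that $\theta_*(a)$ is an $F_1$-path (hence crosses only hyperplanes in $\ca_{B_1}$) and that $\theta_*(b)$ is a concatenation of elementary $B_1$-segments (which cross only hyperplanes outside $\ca_{B_1}$ by Lemma~\ref{lem:elementary segment}(5)), matching intersection numbers with those of $(\Delta_x(F_2))^2$ (namely $2$ on $\ca_{B_2}$ and $0$ elsewhere) forces $a$ to be supported on $\ca_{B_1}\cap\ca_{B_2}$ and $b$ on $\ca_{B_2}\setminus\ca_{B_1}$, each with intersection number $2$ on every hyperplane in its support.

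Let $S_i$ denote the set of edges of $F_i$ incident to $x$. Because positive equivalence preserves prefix edges and $\Delta_x(F_2)\eq\vee_p S_2$ by Lemma~\ref{lem:longest}(3), the set of prefix edges of $\theta_*(a,b)$ is exactly $S_2$; the prefix edges contributed by $\theta_*(a)$ lie in $S_1$, while those contributed by initial edges of elementary $B_1$-segments in $\theta_*(b)$ lie outside $S_1$. The main step, which I expect to be the principal obstacle, is to use irreducibility of $F_2$ to force one of $a,b$ to be trivial. If both $S_2\cap S_1$ and $S_2\setminus S_1$ were nonempty, then connectivity of the Coxeter graph of $\ca_{A/B_2}$ at $x$ (Lemma~\ref{lem:irreducible characterization}) would supply adjacent vertices $e_1\in S_2\cap S_1$ and $e_2\in S_2\setminus S_1$; I plan to iterate the decomposition procedure of Lemma~\ref{lem:tail} and Lemma~\ref{lem:decomposition} applied to the positive word $(\Delta_x(F_2))^2$ to produce an elementary $B_1$-segment whose first edge lies in $F_1$, contradicting Lemma~\ref{lem:elementary segment}(5). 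Once one of $a,b$ is trivial, Lemma~\ref{lem:span}(1) concludes the argument: if $b$ is trivial then $S_2\subset S_1$ and hence $F_2\subset F_1$; if $a$ is trivial then $S_2\cap S_1=\emptyset$, and running the symmetric analysis on $F_1$ (or applying Corollary~\ref{cor:irreducible} contrapositively to the face at $x$ spanned by $S_1\cup S_2$) yields either $F_1\subset F_2$ or a splitting of $\ca_{A/(B_1\cap B_2)}$, i.e.\ $F_1\perp F_2$.
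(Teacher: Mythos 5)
Your sufficient direction is fine, and the opening reduction (via Lemma~\ref{lem:conjugate} and Corollary~\ref{cor:injective and centralizer}(3)) is a legitimate place to start. But the necessary direction has a fatal flaw: the dichotomy you try to establish is false. You plan to show that if both $S_2\cap S_1$ and $S_2\setminus S_1$ are nonempty, then connectivity of the Coxeter graph of $\ca_{A/B_2}$ plus an iterated elementary-segment decomposition produces a contradiction, so that one of $a,b$ must be trivial. However, the case $F_1\subsetneq F_2$ — which is one of the allowed commuting configurations in the statement of the lemma — has $S_1\subsetneq S_2$, hence $S_2\cap S_1=S_1\neq\emptyset$ and $S_2\setminus S_1\neq\emptyset$, and in that situation $(\Delta_x(F_2))^2=\theta_*(a)\theta_*(b)$ with \emph{both} $a$ and $b$ nontrivial ($a$ carries the crossings of $\ca_{B_1}\subset\ca_{B_2}$ and $b$ the crossings of $\ca_{B_2}\setminus\ca_{B_1}$). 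So no such contradiction is available, and your subsequent case analysis (``if $b$ is trivial\dots if $a$ is trivial\dots'') never covers the case $F_1\subsetneq F_2$. The last sentence of your proposal, where you invoke Corollary~\ref{cor:irreducible} contrapositively to obtain perpendicularity from $S_1\cap S_2=\emptyset$, is also not justified: that corollary only gives irreducibility under a hypothesis (a shared edge) which your case fails, so its contrapositive is vacuous there.

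The paper's route is genuinely different and avoids all this. After the same WLOG reduction to a common vertex $x_0\in F_1\cap F_2$ (and, additionally, to the case where $Z(\ca)$ is the face spanned by $F_1$ and $F_2$, using Lemma~\ref{lem:easy injective}), it examines the single commuting product $f=(\Delta_{x_0}(F_1))^2(\Delta_{x_0}(F_2))^2$ rather than trying to decompose $(\Delta_x(F_2))^2$ inside the centralizer. Since $f\eq (\Delta_{x_0}(F_2))^2(\Delta_{x_0}(F_1))^2$, every edge in $\Sigma_{x_0\to}$ is a prefix of $f$, hence by Lemma~\ref{lem:longest}(3) $\Delta_{x_0}\pr f$. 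Comparing intersection numbers via Lemma~\ref{lem:intersection number} then forces $\ch=\ch_1\cup\ch_2$. If $F_1\cap F_2$ is a vertex this union is disjoint and gives $F_1\perp F_2$ directly. If $F_1\cap F_2$ contains an edge, the paper passes to the restriction $\ca^{B}$ ($B$ dual to $F_1\cap F_2$), applies the vertex case there to decompose $Z(\ca^{B})$ as $F'_1\times F'_2$, and invokes Lemma~\ref{lem:restriction irreducible} (irreducibility of restrictions of irreducible arrangements) to force one factor to be a point, yielding $F_1\subset F_2$ or $F_2\subset F_1$. Note that irreducibility is used here through Corollary~\ref{cor:irreducible} and Lemma~\ref{lem:restriction irreducible}, not through Lemma~\ref{lem:irreducible characterization} as in your sketch. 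If you want to repair your argument, you would need to treat the case ``$S_2\cap S_1\neq\emptyset$ and $S_2\setminus S_1\neq\emptyset$'' as a live possibility and show directly that it forces $S_1\subset S_2$, which is essentially what the paper's restriction argument achieves.
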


\begin{proof}
	By Lemma~\ref{lem:conjugate}, $F_1\cap F_2\neq \emptyset$. We assume without loss of generality that the base point $x_0$ is contained in $F_1\cap F_2$, and that $Z(\ca)$ is spanned by $F_1$ and $F_2$. Let $f=(\Delta_{x_0}(F_1))^2(\Delta_{x_0}(F_2))^2\eq (\Delta_{x_0}(F_2))^2(\Delta_{x_0}(F_1))^2$. By Lemma~\ref{lem:longest}, $e\pr f$ for any $e\in \Sigma_{x_0\to}$; hence, $\Delta_{x_0}\pr f$. Let $\ch$ (resp. $\ch_i$) be the collection of hyperplanes dual to edges of $Z(\ca)$ (resp. $F_i$). Let $\ch_f$ be the collection of hyperplanes crossed by $f$. Then $\ch_f=\ch_1\cup\ch_2$. By Lemma~\ref{lem:intersection number}, $\ch_{\Delta_{x_0}}\subset\ch_f$. As $\ch_{\Delta_{x_0}}=\ch$, we have that $\ch=\ch_1\cup\ch_2$. Thus, if $F_1\cap F_2$ is a vertex, then $\ch=\ch_1\sqcup\ch_2$, which implies $F_1\perp F_2$.

	Next  assume $F:=F_1\cap F_2$ contains an edge, then $Z(\ca)$ is irreducible by Corollary~\ref{cor:irreducible}. Suppose $B$ is the subspace dual to $F$. Let $\theta:\Gamma(\ca_{A/B}\times \ca^B)\to\Gamma(\ca)$ be the map defined before Corollary~\ref{cor:injective and centralizer}. Let $\bar x_0$ be such that $\theta(\bar x_0)=x_0$. Let $\Delta_{\bar x_0}(\ca_{A/B})$ be the Garside element of the $\ca_{A/B}$-factor.
	
	An edge $e$ of $\Sigma_{x_0\to}(F_1)- \Sigma_{x_0\to}(F)$ gives rise to an elementary $B$-segment contained in the face spanned by $e$ and $F$ that corresponds to an edge $\bar e$ in $\Gamma(\ca^B)$. Let $F'_1$ be the face of $Z(\ca^B)$ spanned by all such edges coming from elements of $\Sigma_{x_0\to}(F_1) - \Sigma_{x_0\to}(F)$. Define $F'_2$ similarly. Then $F'_1\cap F'_2$ is a vertex, and $Z(\ca^B)$ is spanned by $F'_1$ and $F'_2$. 
	
	It is readily verified that $\theta((\Delta_{\bar x_0}(\ca_{A/B}))^2(\Delta_{\bar x_0}(F'_i))^2)\eq (\Delta_{x_0}(F_i))^2$ for $i=1,2$. By Corollary~\ref{cor:injective and centralizer} (1), $(\Delta_{\bar x_0}(F'_1))^2(\Delta_{\bar x_0}(F'_2))^2\eq (\Delta_{\bar x_0}(F'_2))^2(\Delta_{\bar x_0}(F'_1))^2$ in $\Gamma(\ca^B)$. By the argument in the previous paragraph, $Z(\ca^B)\cong F'_1\times F'_2$. By Lemma~\ref{lem:restriction irreducible}, either $F'_1$ or $F'_2$ has to be a point. Thus, $F_1\subset F_2$, or $F_2\subset F_1$.
\end{proof}

\section{The Deligne groupoid, the Deligne complex, and simple connectivity of the curve complex}
\label{sec:sc}

Throughout the this section we assume $\ca$ is irreducible.
\subsection{From the Deligne complex to the curve complex} 
\label{ss:curve complex groupoid}
Here we define a version of the curve complex without using bordifications: instead, it is defined in terms of the Deligne groupoid.  Choose a base vertex $x_0\in \Ga(\ca)$ and let $G=\g_{x_0}$ denote the isotropy group at $x_0$. A \emph{standard $\mathbb Z$-subgroup} of $G$ is a conjugate of a standard Dehn twist associated to a proper irreducible face of $Z(\ca)$. The \emph{groupoidal curve complex} $\cgro$ ($=\cgro(\ca)$) is defined as follows. The vertices of $\cgro$ are in one-to-one correspondence with standard $\mathbb Z$-subgroups of $G$ and a set of vertices spans a simplex if the associated $\mathbb Z$-subgroups mutually commute. From this we see that $\cgro$ is a flag complex. The group $G$ acts by conjugation on the set of standard $\mathbb Z$-subgroups and this induces  $G\act\cgro\,$.

\begin{remark}
	In this section we only need to consider $\cgro(\ca)$ when $\ca$ is irreducible.
	However, if $\ca$ has more than one  irreducible factor, say $\ca=\ca_1\oplus \cdots\oplus \ca_l$, then, by Definition~\ref{d:I0}, $\ci_0(\ca)=\ci_0(\ca_1)*\cdots *\ci_0(\ca_l)$.  By analogy, in the general case it would make sense to define $\cgro(\ca)$ as $\cgro(\ca_1)*\cdots *\cgro(\ca_l)$, although we never use this definition.
\end{remark}

\begin{definition}
	The \emph{spherical Deligne complex} associated with $\ca$, denoted $\de'(\ca)$, is defined as follows. 
	Let $\sfan(\ca))$ denote the cellulation of the unit sphere in $A$ cut out by the hyperplanes of $\ca$. For each vertex $y\in \Ga(\ca)$, let $C_y$ be chamber containing $y$ (so that $C_y$ is a top-dimensional closed cell of $\sfan(\ca)$).  For each vertex $x\in\widetilde{\s(\ca)}$, put $C_x=C_{k(x)}$, where $k:\widetilde{\s(\ca)}\to\s(\ca)$ is the covering projection. The \emph{spherical Deligne complex} associated with $\ca$, denoted $\de'(\ca)$, is defined by: 
	\[
	\de'(\ca)=\bigsqcup_{x\in\vertex (\widetilde{\s(\ca)})} (x,C_x)/\sim.
	\]
	Here $\sim$ is the equivalence relation which glues adjacent chambers along codimension-one faces.  To be more explicit, suppose $F_i$ is a face of $C_{x_i}$ for $i=1,2$.  We identify $(x_1,F_1)$ with $(x_2,F_2)$  if $x_1$ and $x_2$ are adjacent in $\widetilde{\s(\ca)}$ and $F_1=F_2$;  the  equivalence relation $\sim$ is  generated by such identifications. There is an action $G\act \de'(\ca)$.
	
	The union of all $(x,C_x)$, with $x$ ranging over the vertices of a top-dimensional face of $\widetilde{\s(\ca)}$, gives rise to an embedded sphere in $\de'(\ca)$ called an \emph{apartment} of $\de'(\ca)$. The \emph{Deligne complex}, denoted $\de(\ca)$, is obtained from $\de'(\ca)$ by filling in independently each apartment of $\de'(\ca)$ by a disk of dimension equal to $\dim(A)$.
\end{definition}

\begin{lemma}
	Suppose $\ca$ is a central arrangement of hyperplanes in a real vector space $A$. Then $\de(\ca)$ is simply connected.
\end{lemma}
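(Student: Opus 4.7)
\emph{Plan.} My strategy is to exhibit every loop in $\de(\ca)$ as (homotopic to) the image of a closed edge walk in the simply connected complex $\widetilde{\s(\ca)}$, and then to realize a combinatorial null-homotopy of that walk by legitimate homotopies in $\de(\ca)$.

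First I would define a chamber map $\phi:\widetilde{\s(\ca)}^{(1)}\to \de'(\ca)\subset\de(\ca)$ by picking an interior basepoint $p_x\in C_x$ for each vertex $x$ of $\widetilde{\s(\ca)}$ and, for each oriented edge $e=[x,x']$, a path $\phi(e)$ from $p_x$ to $p_{x'}$ crossing the codimension-one face of $\sfan(\ca)$ shared by $C_x$ and $C_{x'}$. Given any loop $\gamma$ in $\de(\ca)$, I would apply cellular approximation to push $\gamma$ into the $1$-skeleton and then use contractibility of each chamber $C_x$ and of the codimension-one faces of $\sfan(\ca)$ to homotope $\gamma$ to a loop of the form $\phi(\omega)$, where $\omega=x_0\to x_1\to\cdots\to x_k=x_0$ is the closed edge walk in $\widetilde{\s(\ca)}^{(1)}$ recording the sequence of chambers visited.

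Since $\widetilde{\s(\ca)}$ is simply connected, $\omega$ admits a finite combinatorial null-homotopy built from two kinds of elementary moves: (a) insertion or deletion of a backtrack $x_i\to x_{i+1}\to x_i$, and (b) a push across a $2$-cell of $\widetilde{\s(\ca)}$, replacing one arc of the $2$-cell's boundary walk by the complementary arc. Move (a) lifts to a homotopy in $\de(\ca)$ by contracting the trivial excursion inside the contractible chamber $C_{x_{i+1}}$. For move (b), each $2$-cell of $\widetilde{\s(\ca)}$ comes from a $2$-face $F$ of $Z(\ca)$ (a rank-two sub-zonotope dual to some $B\in\cq(\ca)$), and its boundary walk maps under $\phi$ into the subset $U_F\subset\de'(\ca)$ obtained by gluing the chambers $C_x$ for $x\in\vertex F$ via the defining equivalence relation of $\de'(\ca)$.

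The key case distinction is whether $F$ is top-dimensional in $Z(\ca)$. If it is not, the dual cell $\gs_F\subset\sfan(\ca)$ is a nonempty cell and $U_F$ is a regular neighborhood of $\gs_F$ in $SA$, homeomorphic to $\gs_F\times D^2$, hence contractible; the two complementary arcs of the $2$-cell boundary are therefore homotopic inside $U_F$. If $F$ is top-dimensional in $Z(\ca)$, which happens exactly when $\dim A=2$, then $U_F=SA$ is itself an apartment of $\de'(\ca)$, and by construction $\de(\ca)$ contains a $2$-disk attached along it; so the complementary arcs are homotopic through the filled apartment. Concatenating these lifted moves yields a null-homotopy of $\phi(\omega)\simeq\gamma$ in $\de(\ca)$, so $\pi_1(\de(\ca))=1$. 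The main technical obstacle I anticipate is the reduction of an arbitrary loop to a gallery path $\phi(\omega)$: the cell structure of $\de(\ca)$ mixes cells of chambers of $\sfan(\ca)$ with the top-dimensional disks filling apartments, so one must verify carefully that portions of $\gamma$ entering such a disk can be pushed onto its boundary apartment and then re-expressed as a chamber sequence.
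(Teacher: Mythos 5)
Your proposal is correct, but it takes a genuinely different route from the paper. The paper's proof is essentially a one-line citation: it observes that $\de(\ca)$ is (up to subdivision) isomorphic to the nerve of an open cover of the universal cover of $\cm(\ca\otimes\cc)$ in which every open set is connected, and then invokes the fact (going back to Deligne, see also Paris) that the nerve of such a cover of a simply connected space is simply connected. In other words, the paper pushes simple connectivity from the space to the nerve via a nerve-lemma-type argument. Your approach is instead a direct combinatorial argument: you realize every loop in $\de(\ca)$ as the image $\phi(\omega)$ of a closed gallery $\omega$ in $\widetilde{\s(\ca)}^{(1)}$, invoke simple connectivity of $\widetilde{\s(\ca)}$ to get a van Kampen--style null-homotopy of $\omega$ by backtracks and $2$-cell pushes, and then lift each move to a homotopy in $\de(\ca)$. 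The key point you supply that the paper does not need to address is the local analysis of the $2$-cell move: you identify $U_F$ with the closed star of the dual cell $\gs_F$ of $\sfan(\ca)$, which is a PL ball (hence contractible) when $F$ is a proper face (equivalently $\dim A>2$), and you correctly observe that when $\dim A=2$ the only $2$-cells are top-dimensional so the corresponding "$U_F$" is an apartment and the filling disks of $\de(\ca)$ are exactly what make the move possible. The trade-off is clear: the paper's proof is shorter and transfers the burden to existing results on arrangement complements, while yours is self-contained and makes the role of the apartment-filling disks in the definition of $\de(\ca)$ explicit. The two technical points that you flag but leave informal --- the reduction of an arbitrary loop to a gallery $\phi(\omega)$, and the verification that the glued-up chamber neighborhood $U_F$ really is the embedded closed star of $\gs_F$ rather than something with extra identifications --- are real but manageable; the second in particular uses the description in the paper of when $(x,F)=(y,F)$ in $\de'(\ca)$, namely that $x$ and $y$ lie in the same lift of the standard subcomplex $K_F$, together with regularity of the Salvetti CW structure (so that $\partial\sigma$ is an embedded polygon with distinct vertices).
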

The above lemma follows from \cite{deligne} (see also \cite[p. 49]{Parisdeligne}) since the Deligne complex (up to subdivision) is isomorphic to the nerve of an open cover of the universal cover of $\cm(\ca\otimes\cc)$ where each open set of the cover is connected.

Each face of $\de'(\ca)$ can be written as $(x,F)$ where $x\in\vertex(\widetilde{\s(\ca)})$ and $F$ being a simplex of $\sfan(\ca)$. For a simplex $F$ of $\sfan(\ca)$, we define $K_F$ to be the standard subcomplex of $\s(\ca)$ associated with the face of $Z(\ca)$ dual to $F$ (cf. Subsection~\ref{ss:sal}). 
A standard subcomplex of $\s(\ca)$ is \emph{proper} if it is neither a vertex nor the entire zonotope $Z(\ca)$.  Similarly, a \emph{proper standard subcomplex} of $\widetilde{\s(\ca)}$ is a connected component of the inverse image of a proper standard subcomplex in $\s(\ca)$ under the covering map $k:\widetilde{\s(\ca)}\to\s(\ca)$. A standard subcomplex is \emph{nontrivial} if it is not a point. 
Then $(x,F)=(y,F)$ if and only if $x$ and $y$ are in the same standard subcomplex of $\widetilde{\s(\ca)}$ that projects to $K_F$. Thus, the vertices of the barycentric subdivision $b\de'(\ca)$ of $\de'(\ca)$ are in one-to-one correspondence with standard subcomplexes of $\widetilde {\s(\ca)}$. This gives rise to an alternative description of the simplicial complex $b\de'(\ca)$: the vertices of $b\de'(\ca)$ are in one-to-one correspondence with standard subcomplexes of $\widetilde{\s(\ca)}$, and a simplex in $b\de'(\ca)$ corresponds to a chain of standard subcomplexes in $\widetilde{\s(\ca)}$.  

Now suppose $\ca$ is irreducible. 
Let $\D(\ca)$ be the full subcomplex of $b\de'(\ca)$ spanned by vertices that correspond to proper standard subcomplexes. 
In other words, $\D(\ca)$ can be identified with the barycentric subdivision of $(n-2)$-skeleton of $\de'(\ca)$, where $n=\dim A$; that is to say, the interiors of top-dimensional simplices of $\de'(\ca)$ have been deleted. Thus, $\D(\ca)$ is simply connected whenever $n\ge 4$ (i.e., when $n-2\ge 2$).

We define a $G$-equivariant map $\rho:\D(\ca)\to \cgro$ as follows. Let $v\in \D(\ca)$ be a vertex.  Then $v$ corresponds a standard subcomplex $K\subset\widetilde{\s(\ca)}$. Let $K=\prod_{i=1}^m K_i$ be the decomposition of $K$ into irreducible standard subcomplexes. Then stabilizer of each $K_i$ is an irreducible parabolic subgroup, whose centralizer gives rise to a vertex $w_i\in\cgro$. The vertices $\{w_i\}_{i=1}^m$ span a simplex in $\cgro$ and $\rho$ sends $v$ to the barycenter of this simplex. Suppose $\{v_i\}_{i=1}^k$ is a collection of vertices of $\D(\ca)$ spanning a simplex $\gs$. Let $K_1\subset K_2\subset\cdots\subset K_k$ be the associated chain of standard subcomplexes of $\widetilde{\s(\ca)}$. Then $\{\rho(v_i)\}_{i=1}^k$ is contained in a simplex of $\cgro$ (the simplex associated with the product decomposition of $K_k$). Thus, we can extend $\rho$ linearly to a map into $\cgro$. N.B.\ $\rho$ is usually not a simplicial map.

\subsection{$\cgro$ is simply connected}\label{ss:cgro}
In this subsection we study the map $\rho$ in more detail in order to prove that $\cgro$ is simply connected when $n\ge 4$. As before, $\ca$ is assumed to be irreducible.
\begin{lemma}
	\label{lem:chain}
	Let $v$ and $v'$ be two vertices in $\D(\ca)$ corresponding to irreducible proper standard subcomplexes $K$ and $K'$ of $\widetilde{\s(\ca)}$ such that $\rho(v)=\rho(v')$. Then there exists a finite sequence of vertices $\{v_i\}_{i=1}^k\subset\D(\ca)$ with associated standard subcomplexes $\{K_i\}_{i=1}^k$ such that
	\begin{enumerate1}
		\item $v_1=v$ and $v_k=v'$;
		\item for any $i$, $\rho(v_i)=\rho(v)$;
		\item $K_i$ and $K_{i+1}$ are contained in a common standard subcomplex $C$ with $\dim(C)=\dim(K_i)+1$. 
	\end{enumerate1}
\end{lemma}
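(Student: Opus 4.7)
The plan is to identify the set of irreducible standard subcomplexes $K_i$ with $\rho(v_i)=\rho(v)$ with a combinatorial object (essentially, a set of chambers of a restriction arrangement) on which adjacency corresponds to sharing a common standard subcomplex one dimension higher. First I would unpack the hypothesis $\rho(v)=\rho(v')$. Since $K$ and $K'$ are both irreducible, $\rho(v)$ and $\rho(v')$ are both vertices of $\cgro$, so the central $\zz$-subgroups of the stabilizers of $K$ and $K'$ agree as a single standard $\zz$-subgroup $U\subset G$. By Lemma~\ref{l:ZE} this forces $K$ and $K'$ to project to standard subcomplexes $K_F,K_{F'}\subset\s(\ca)$ where $F$ and $F'$ are irreducible faces of $Z(\ca)$ dual to a common subspace $B\in\cq(\ca)$; in particular $\dim K=\dim K'=\dim F$. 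This puts all candidates $K_i$ in a single parallelism class, so every such $K_i$ has the same dimension as $K$.

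Next I would invoke Corollary~\ref{cor:injective and centralizer}: the normalizer $N_G(U)$ is the image of $\pi_1(\cm(\ca_{A/B}\oplus\ca^B))\cong\pi_1(\cm(\ca_{A/B}))\times\pi_1(\cm(\ca^B))$ under $\theta_\ast$. In this product, the ``$\ca_{A/B}$ factor'' is a single irreducible parabolic whose centre is $U$, while the ``$\ca^B$ factor'' acts freely on the set of lifts that preserve $U$. Translating back to $\widetilde{\s(\ca)}$, the irreducible standard subcomplexes whose central $\zz$-subgroup equals $U$ form a single orbit under this second factor, and choosing a base lift identifies them bijectively with the vertices of the universal cover $\widetilde{\s(\ca^B)}$ (equivalently, with the chambers of $\ca^B$ in a fixed lift). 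Under this identification $K$ and $K'$ correspond to two such vertices $\eta,\eta'$.

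Now the combinatorial heart: I connect $\eta$ to $\eta'$ by an edge path in the $1$-skeleton of $\widetilde{\s(\ca^B)}$, yielding a sequence $\eta=\eta_1,\eta_2,\dots,\eta_k=\eta'$ of consecutively adjacent chamber-lifts. By Lemma~\ref{lem:adj} adjacent chambers of $\ca^B$ correspond to adjacent parallel faces of $Z(\ca)$ (both dual to $B$) in the sense of Definition~\ref{def:adj}, and adjacent parallel faces $F_i\parallel F_{i+1}$ are contained in a common face $F_0$ with $\dim F_0=\dim F_i+1$. Lifting $F_0$ compatibly with $K_i$ and $K_{i+1}$ produces a standard subcomplex $C\subset\widetilde{\s(\ca)}$ with $K_i,K_{i+1}\subset C$ and $\dim C=\dim K_i+1$, giving the desired vertex $v_{F_0}$ of $\D(\ca)$. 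Condition (2), namely $\rho(v_i)=\rho(v)$ at every step, holds by construction because each $\eta_i$ was chosen within the orbit preserving $U$; condition (1) holds by the endpoints of the path; condition (3) is exactly the adjacency-to-containment passage.

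The main obstacle I expect is justifying rigorously the bijection between irreducible standard subcomplexes of $\widetilde{\s(\ca)}$ with fixed central $\zz$-subgroup $U$ and vertices of a lift of $\widetilde{\s(\ca^B)}$. Concretely, one must verify that distinct ``$\ca^B$-lifts'' of the canonical embedding $\cm(\ca_{A/B})\hookrightarrow\cm(\ca)$ produce genuinely distinct standard subcomplexes (not collapsed by the action of the normalizer of $U$ modulo the stabilizer of $K$) and that every irreducible $K_i$ with central $\zz$-subgroup $U$ arises this way. This is where the Garside-theoretic input is essential: Lemma~\ref{lem:standalizer} and the centralizer computation of Corollary~\ref{cor:injective and centralizer} provide exactly the normalization required to normalize an arbitrary conjugate $h\cz_F h^{-1}=U$ into a Garside element on a face parallel to $F$, and hence to realize the transition between $K_i$ and $K_{i+1}$ as an elementary parallel-translation step.
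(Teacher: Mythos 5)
Your strategy is morally the same as the paper's (both ultimately rest on the fact that elementary $B$-segments are exactly the $\phi$-images of edges of $\Ga(\ca^B)$), but your packaging detours through a ``bijection step'' that you correctly flag as the hard part -- and that the paper simply does not need. The paper argues directly: pick an edge path $\tilde g$ in $\widetilde{\s(\ca)}$ from a vertex of $K$ to a vertex of $K'$, project to $g$ in $\Ga(\ca)$, translate $\rho(v)=\rho(v')$ into the Garside identity $(\Delta_x(F))^2 g \eq g (\Delta_{x'}(F'))^2$, normalize (as in the proof of Corollary~\ref{cor:centralizer}, replacing $\tilde x,\tilde x'$ by other vertices of $K,K'$) so that $g$ is positive with no $F'$-suffix edge, and then apply Lemma~\ref{lem:decomposition}. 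That lemma simultaneously yields $F\parallel F'$ and a decomposition $g=g_1\cdots g_k$ into elementary $B$-segments; the $K_i$ and the containing codimension-one subcomplexes $C$ fall out immediately from the $s(g_i)$'s and Lemma~\ref{lem:elementary segment}(3). So the enumeration of ``all standard subcomplexes with central $\zz$-subgroup $U$'' that you propose to set up is never required. A further caution on your orbit language: the collection of irreducible standard subcomplexes of $\widetilde{\s(\ca)}$ with a prescribed central $\zz$-subgroup $U$ is \emph{not} a single $N_G(U)$-orbit, because such subcomplexes may cover different downstairs subcomplexes $K_{F''}$ ($F''\parallel F$) and no deck transformation can carry one to the other; the bijection with $\vertex\widetilde{\s(\ca^B)}$, if you wanted it, would have to be built by hand -- which is in effect what the elementary $B$-segment decomposition does, and why the paper works with the path decomposition rather than a group action.
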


\begin{proof}
	Let $F$ (resp. $F'$) be the irreducible face of $Z(\ca)$ corresponding to $K$ (resp. $K'$). Let $\tilde g$ be an edge path in $\widetilde{\s(\ca)}$ connecting a vertex $\tilde x \in K $ and vertex $\tilde x'\in K'$. Let $g$ be the projection of $\tilde g$ into $\Ga(\ca)$ to get an edge path from from $x$ to $x'$. As $\rho(v )=\rho(v')$, we have $(\Delta_{x }(F))^2g\eq g(\Delta_{x'}(F'))^2$. By the same argument as inCorollary~\ref{cor:centralizer}, up to changing $x $ and $x'$ (which corresponds to replace $\tilde x$ and $\tilde x'$ by other vertices in the same standard subcomplex), we can assume $g$ is a positive path and that there does not exist edge $e$ of $\Ga(\ca)$ with $\bar e\in F'$ and $g\su e$. By Lemma~\ref{lem:decomposition}, $F $ and $F'$ are parallel and $g=g_1g_2\cdots g_k$, where each $g_i$ is an elementary $B$ segment ($B$ is the subspace dual to $F'$). For $1\le i\le k-1$, define $F_i$ to be the face of $Z(\ca)$ that is parallel to $F$ and contains $s(g_i)$. Let $\tilde g=\tilde g_1\tilde g_2\cdots\tilde g_{k-1}$ be the induced decomposition. For $1\le i\le k-1$, define $K_i$ to be the standard subcomplex of $\widetilde{\s(\ca)}$ associated to $F$ and containing the initial vertex of $\tilde g_i$. It follows from Lemma~\ref{lem:elementary segment} (3) that $\rho(v_i)=\rho(v_{i+1})$ for $1\le i\le k-1$. This implies assertion (2) of the lemma. For assertion (3), note that $F_i$ and $F_{i+1}$ are contained in a common face $\hat F$ of one  higher dimension. Let $C$ be the standard subcomplex of $\widetilde{\s(\ca)}$ associated with $\hat F$ such that $K_i\subset C$. Then clearly $K_{i+1}\subset C$.
\end{proof}

\begin{corollary}
	\label{cor:trivial pi1}
	Let $v$ and $v'$ be as in Lemma~\ref{lem:chain} with $\rho(v)=\rho(v')=w$. Suppose $\dim(A)\ge 4$. Then there is an edge path $\omega$ in $\D(\ca)$ connecting $v$ and $v'$ such that $\rho(\omega)$ is a loop representing the trivial element in $\pi_1(\cgro,w)$.
\end{corollary}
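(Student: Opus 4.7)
The plan is to build $\omega$ as a concatenation of short edge paths $\omega_j$ in $\D(\ca)$, one for each consecutive pair in the chain $v=v_1,\ldots,v_k=v'$ produced by Lemma~\ref{lem:chain}. Since $\rho(v_j)=w$ throughout, every $\rho(\omega_j)$ is automatically a loop based at $w$, and it suffices to arrange that each such loop bounds an explicit disk in $\cgro$. Write $n=\dim A$ and let $K_j, K_{j+1}, C_j$ be the data attached to the $j$th step, with $\dim C_j=\dim K_j+1$.

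In the generic case $\dim C_j\le n-1$, the subcomplex $C_j$ is proper and determines a vertex $u_{C_j}\in\D(\ca)$; take $\omega_j=v_j\,u_{C_j}\,v_{j+1}$, a genuine edge path of $\D(\ca)$ by the containments $K_j,K_{j+1}\subset C_j$. Writing the face $\hat F\subset Z(\ca)$ attached to $C_j$ as $\hat F_1\oplus\cdots\oplus\hat F_m$, the irreducible face $F_j$ attached to $K_j$ lies entirely in a single summand $\hat F_{\ell_0}$ and is orthogonal to the other $\hat F_\ell$. Lemma~\ref{lem:commute} then gives pairwise commutation of $w$ with the central $\zz$-subgroup of each $\hat F_\ell$; by the flag property of $\cgro$, the vertex $w$ and the simplex $\gs_{C_j}$ spanned by those $\zz$-subgroups (whose barycenter is $\rho(u_{C_j})$) jointly span a simplex containing $\rho(\omega_j)$, which is therefore nullhomotopic. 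The same reasoning applies on the $v_{j+1}$ side.

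The remaining case, $\dim C_j=n$, forces $K_j$ to have codimension one, and no proper standard subcomplex can contain both $K_j$ and $K_{j+1}$. This is where the hypothesis $n\ge 4$ is used. Let $B\in\cq(\ca)$ be the line dual to $F_j$. Irreducibility of $\ca$ together with $n\ge 4$ will be used to produce a two-dimensional $B'\in\cq(\ca)$ strictly containing $B$ and a further line $\hat B\in\cq(\ca)$ distinct from $B$ with $\hat B\subset B'$. The codimension-one subfaces $F'\subset F_j$ and $F''\subset F_{j+1}$ dual to $B'$ are irreducible by Lemma~\ref{lem:restriction irreducible}, and the codimension-one face $\hat F$ dual to $\hat B$ contains $F'\cup F''$. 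Lifting to appropriate components of $\widetilde{\s(\ca)}$ yields vertices $u',u'',u_{\hat C}\in\D(\ca)$ corresponding to $K'\subset K_j$, $K''\subset K_{j+1}$ and $\hat C\supset K'\cup K''$; set $\omega_j=v_j\,u'\,u_{\hat C}\,u''\,v_{j+1}$. A careful application of Lemma~\ref{lem:commute} to the relations $F'\subset F_j$, $F''\subset F_{j+1}$, $F',F''\subset\hat F$ together with the product decomposition of $\hat F$ will show that $\rho(\omega_j)$ bounds a disk in $\cgro$ glued from the two simplices $\{w,\rho(u'),\rho(u_{\hat C})\}$ and $\{w,\rho(u_{\hat C}),\rho(u'')\}$.

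The hard part will be the Case B verification: $\cz_{F_j}$ and $\cz_{F''}$ need not commute directly, because the parallel, disjoint faces $F_j$ and $F''$ satisfy $\dim F_j + \dim F''=2n-3>n$ for $n\ge 4$ and so cannot be orthogonal. The detour through $u_{\hat C}$ must be arranged so that each irreducible summand of $\hat F$ stands in a comparability or orthogonality relation with both $F_j$ and $F_{j+1}$; this is what lets the two triangles above be simultaneously simplices of $\cgro$, so that the disk bounded by $\rho(\omega_j)$ is well-defined. The required selection of $\hat B\subset B'$, together with the identification of the appropriate irreducible summands of $\hat F$, is the technical core of the argument.
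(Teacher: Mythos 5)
Your treatment of the generic case $\dim C_j\le n-1$ is essentially the paper's argument, just phrased more elaborately: since $\rho(v_j)=\rho(v_{j+1})=w$, the two arcs $\rho(\overline{v_j u_{C_j}})$ and $\rho(\overline{u_{C_j}v_{j+1}})$ are straight segments with the same endpoints inside a common simplex of $\cgro$, so they coincide and $\rho(\omega_j)$ is a pure back-track. You don't actually need Lemma~\ref{lem:commute} or the flag property here; the paper just says ``multiple back-tracking.''

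The case $\dim C_j=n$ is where your proposal genuinely diverges from the paper, and it is not complete. First, the citation of Lemma~\ref{lem:restriction irreducible} is wrong: that lemma concerns the \emph{restriction} $\ca^{B}$, whereas irreducibility of the face $F'$ dual to $B'$ means irreducibility of the \emph{normal} arrangement $\ca_{A/B'}$, a different statement. The relevant tool is Lemma~\ref{lem:irreducible characterization} (connectedness of the Coxeter graph at a vertex), but even then one needs an argument — e.g.\ deleting a leaf of a spanning tree of $\Gamma_x$ — which you don't give. Second, the claim that a single facet $\hat F$ dual to $\hat B$ contains both $F'$ and $F''$ is not verified and is not automatic: there are exactly two facets dual to a given line $\hat B$ (an antipodal pair), and $F'$, $F''$ could sit in opposite ones, so the choice of $\hat B$ matters in a way you don't control. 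Third, the lifts of $K'$, $K''$, $\hat C$ to a common configuration in $\widetilde{\s(\ca)}$ are asserted but not produced. You acknowledge all of this by deferring ``the required selection of $\hat B\subset B'$'' and ``the identification of the appropriate irreducible summands of $\hat F$'' as ``the technical core of the argument'' — i.e.\ the proof isn't there.

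The paper avoids this entire local detour problem. For $n'=n-1$ it does not try to fix each intermediate edge separately; instead it produces, once and for all at the two endpoints $v$ and $v'$, a $2$-dimensional irreducible face $\hat F\subsetneq F$ (possible by Lemma~\ref{lem:irreducible characterization} since $\ca_{A/B}$ is irreducible of rank $n-1\ge 3$) and the parallel $\hat F'\subsetneq F'$, shows $\rho(\hat v)=\rho(\hat v')$ using that the connecting path is a concatenation of elementary $B$-segments together with Lemma~\ref{lem:elementary segment}, and then applies the already-proved $n'<n-1$ case to the new pair $(\hat v,\hat v')$: since these subcomplexes have dimension $2<n-1$ for $n\ge 4$, all intermediate $C$'s in the new chain are proper, and the back-tracking argument applies. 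Concatenating $\overline{v\hat v}$, $\omega'$, $\overline{\hat v'v'}$ then gives a loop whose $\rho$-image again back-tracks. So the paper's use of $n\ge 4$ is exactly to make $2<n-1$, which is quite different from the role you assign it. I would recommend abandoning the per-edge detour and following the paper's global reduction to a low-dimensional irreducible subface.
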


\begin{proof}
	Suppose $\dim(A)=n$. 
	Let $\{K_i\}_{i=1}^k$ be as in Lemma~\ref{lem:chain}. Put $n'=\dim(K_i)$. If $n'<n-1$, then the standard subcomplex $C$ in Lemma~\ref{lem:chain} (3) is proper. Hence, by Lemma~\ref{lem:chain} (3), there is an induced  edge path $\omega$ in $\D(\ca)$ connecting $v$ and $v'$. By Lemma~\ref{lem:chain} (2), the image $\rho(\omega)$ is trivial in $\pi_1(\cgro,w)$ due to its multiple back-tracking. Suppose $n'=n-1$. We claim there are irreducible proper standard subcomplexes $\hat K$ and $\hat K'$ of $\widetilde{\s(\ca)}$ such that
	\begin{enumerate1}
		\item $\hat K\subsetneq K$ and $\hat K'\subsetneq K'$;
		\item $\rho(\hat v)=\rho(\hat v')$ where $\hat v$ and $\hat v'$ are vertices of $\D(\ca)$ associated with $\hat K$ and $\hat K'$.
	\end{enumerate1}
	To establish this claim, let $F,F',g$ and $\tilde g$ be as in Lemma~\ref{lem:chain}.  Since $F$ is irreducible it follows from Lemma~\ref{lem:irreducible characterization} that  there is a 2-dimensional irreducible face $\hat F\subset F$. Note that $\hat F\neq F$ since $\dim(A)\ge 4$. Let $\hat F'$ be the face of $F'$ parallel to $\hat F$. Let $\hat K$ (resp. $\hat K'$) be the standard subcomplex of $K$ (resp. $K'$) associated with $\hat F$ (resp. $\hat F'$) such that $s(\tilde g)\subset \hat K$ (resp. $t(\tilde g)\subset\hat K'$). As $g$ is a concatenation of elementary $B$ segments, it follows from Lemma~\ref{lem:elementary segment} that $(\Delta_{x }(\hat F))^2g\eq g(\Delta_{x'}(\hat F'))^2$. Thus, $\rho(\hat v)=\rho(\hat v')$.
	
	By the previous case when $n'<n-1$, there is an edge path $\omega'$ connecting $\hat v$ and $\hat v'$ such that $\rho(\omega')$ is trivial in $\pi_1(\cgro,\rho(\hat v))$. Defining $\omega$ to be the concatenation of the edge $\overline{v\hat v}$, $\omega'$ and the edge $\overline{\hat v' v'}$, we see that it satisfies the requirements of the corollary.
\end{proof}

\begin{proposition}
	\label{prop:sc}
	Suppose $\ca$ an irreducible, central, simplicial real arrangement in $A$. If $\dim A\ge 4$, then $\cgro$ is simply connected.
\end{proposition}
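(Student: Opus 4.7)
The plan is to exploit the $G$-equivariant map $\rho:\D(\ca)\to\cgro$ constructed in Subsection~\ref{ss:curve complex groupoid} and show it induces a surjection on $\pi_1$. Since $\D(\ca)$ is (identified with) the barycentric subdivision of the $(n-2)$-skeleton of the simply connected spherical Deligne complex $\de(\ca)$, and since $n-2\ge 2$ whenever $n\ge 4$, we have $\pi_1(\D(\ca))=1$, from which $\pi_1(\cgro)=1$ will follow at once.

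The first step is to lift a single edge of $\cgro$. Let $w,w'$ be adjacent vertices of $\cgro$, so their standard $\zz$-subgroups commute. Lemma~\ref{lem:conjugate} yields an element $g\in G$ simultaneously conjugating these subgroups to standard Dehn twists $\cz_F$, $\cz_{F'}$ attached to irreducible faces $F,F'$ of $Z(\ca)$ with $F\cap F'\neq\emptyset$. By Lemma~\ref{lem:commute}, $F$ and $F'$ are either comparable or orthogonal, and in either case there is a face $\hat F$ of $Z(\ca)$ containing $F\cup F'$; in the orthogonal case $\hat F$ is reducible and has $F,F'$ among its irreducible factors, while in the comparable case we can simply take $\hat F$ to be the larger of the two. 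Choosing compatible standard subcomplexes of $\widetilde{\s(\ca)}$ dual to $F,\hat F,F'$ and then translating by $g^{-1}$ via the $G$-action on $\D(\ca)$, we obtain vertices $v,\hat v,v'$ of $\D(\ca)$ with $\rho(v)=w$, $\rho(v')=w'$, connected by an edge path $v-\hat v-v'$ (of length one if $F$ and $F'$ are comparable). By construction $\rho(\hat v)$ lies in the closed simplex of $\cgro$ containing $w$ and $w'$, so the $\rho$-image of this edge path is homotopic rel endpoints to the edge $ww'$.

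The second step is to globalize. Given an edge loop $\gamma=e_1e_2\cdots e_k$ in $\cgro$ based at $w_0$, apply the lifting of the first step to each edge $e_i$ to obtain an edge path $\alpha_i$ in $\D(\ca)$ with $\rho(\alpha_i)$ homotopic to $e_i$ rel endpoints. The terminal vertex of $\alpha_i$ and the initial vertex of $\alpha_{i+1}$ both satisfy $\rho(\cdot)=w_i$, but generally they are distinct in $\D(\ca)$. Here Corollary~\ref{cor:trivial pi1} applies crucially: it guarantees that any two such vertices can be joined in $\D(\ca)$ by an edge path whose $\rho$-image is a null-homotopic loop in $\cgro$. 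Splicing these connectors between consecutive $\alpha_i$'s, and once more at the end to close up at a chosen lift of $w_0$, we produce a genuine edge loop $\tilde\gamma$ in $\D(\ca)$ with $\rho(\tilde\gamma)\simeq\gamma$ in $\cgro$. Since $\D(\ca)$ is simply connected, $\tilde\gamma$ is null-homotopic there, and hence $\gamma$ is null-homotopic in $\cgro$.

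The main obstacle is the edge-lifting step, in particular verifying in the orthogonal case that $\rho(\hat v)$ really does lie in the interior of a closed simplex of $\cgro$ containing the edge $ww'$; this is where we need that the irreducible factors of $\hat F$ include $F$ and $F'$ themselves, and that the conjugation by $g^{-1}$ is implemented at the level of $\D(\ca)$ via the $G$-action so that $\rho$ intertwines with this action. After this is set up, the assembly is the standard simply-connected-cover argument, with Corollary~\ref{cor:trivial pi1} playing the role of the path-connecting lemma for fibers of $\rho$. Note that both hypotheses of the proposition, irreducibility of $\ca$ and $\dim A\ge 4$, are used: the former to ensure that proper irreducible subfaces exist in every face, and the latter for both simple connectivity of $\D(\ca)$ and the applicability of Corollary~\ref{cor:trivial pi1}.
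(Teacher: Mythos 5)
Your proof is correct and uses the same three ingredients as the paper---simple connectivity of $\D(\ca)$ for $\dim A\ge 4$, Lemma~\ref{lem:conjugate} to lift an edge of $\cgro$ to a short path $v$--$\hat v$--$v'$ in $\D(\ca)$, and Corollary~\ref{cor:trivial pi1} to reconcile lifts over a common vertex---but you organize them differently than the paper does. You argue directly at the level of loops: lift each edge of an edge loop in $\cgro$, splice consecutive lifts via Corollary~\ref{cor:trivial pi1}, and thereby show that every loop in $\cgro$ is, up to homotopy, the $\rho$-image of a loop in the simply connected complex $\D(\ca)$. The paper instead lifts $\rho$ to $\tilde\rho:\D(\ca)\to\widetilde{\cgro}$ (possible because $\D(\ca)$ is simply connected), deduces from Corollary~\ref{cor:trivial pi1} that $\tilde\rho$ is constant on $\rho$-fibers of vertices, defines a section $s:\cgro\to\widetilde{\cgro}$ of the universal covering on the vertex set, extends it to the 1-skeleton via the adjacency case analysis of Lemma~\ref{lem:conjugate} (this is the same dichotomy you describe---comparable vs.\ orthogonal faces), and finally extends to all of $\cgro$ because both $\cgro$ and $\widetilde{\cgro}$ are flag. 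The two arguments are dual: yours establishes $\pi_1$-surjectivity of $\rho$ directly, the paper's builds a section of the universal cover, and either yields triviality of $\pi_1(\cgro)$. Your route sidesteps the final flagness step at the cost of a bit more bookkeeping in the loop-splicing; the paper's route is slightly more economical once the section is in hand.
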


\begin{proof}
	Let $\widetilde{\cgro}$ be the universal cover of $\cgro$ and $\pi:\widetilde{\cgro}\to\cgro$  the  covering projection.  The proposition will be proved by showing that $\pi$ is a homeomorphism.
	Let $\rho:\D(\ca)\to\cgro$ be the $G$-equivariant map defined previously. Since $\dim A\ge 4$, $\D(\ca)$ is simply connected. Hence, there is a lift $\tilde\rho:\D(\ca)\to\widetilde{\cgro}$ of $\rho$. 	Next we construct a section of $s:\cgro\to\widetilde{\cgro}$ of $\pi$. Choose a vertex $v\in\cgro$. Since $\tilde\rho$ is a lift of $\rho$, it follows from Corollary~\ref{cor:trivial pi1} that if two vertices $v_1$, $v_2$ of $\D(\ca)$ satisfy $\rho(v_1)=\rho(v_2)=w$ where $w$ is a vertex of $\cgro$, then $\tilde\rho(v_1)=\tilde\rho(v_2)$. Define $s(w)=\tilde\rho(v_1)$, and note that this  does not depend on the choice of $v_1$ in the preimage of $w$. Taking two adjacent vertices $w_1$ and $w_2$, we claim $s(w_1)$ and $s(w_2)$ are adjacent. By Lemma~\ref{lem:conjugate}, one of the following holds:
	\begin{enumerate1}
		\item There are proper standard subcomplexes $K_1$ and $K_2$ in $\widetilde{\s(\ca)}$ with one contained in the other so that for $i=1,2$, $\rho(v_{K_i})=w_i$,  where $v_{K_i}$ is the vertex of $\D(\ca)$ associated to $K_i$.
		\item There are proper standard subcomplexes $K_1,K_2$ and $K$ in $\widetilde{\s(\ca)}$ with $K=K_1\times K_2$ and with $
		\rho(v_{K_i})=w_i$ for $i=1,2$.
	\end{enumerate1}
	In either case, $\tilde\rho(v_{K_1})$ and $\tilde\rho(v_{K_2})$ are adjacent, which establishes the claim. Therefore, we can extend the map $s$ to 1-skeleton of $\cgro$. Since both $\cgro$ and $\widetilde{\cgro}$ are flag simplicial complexes, $s$ extends to a section $\cgro\to\widetilde{\cgro}$; hence, $\cgro$ is simply connected.
\end{proof}

\section{The fundamental groups of the faces of a blowing up}
\label{sec:sal}
Throughout this section, $\ca$ denotes an essential arrangement of linear hyperplanes in a real vector space $A$, $V=A\otimes\mathbb C$, $Z=Z(\ca)$ is the zonotope associated to $\ca$ and $bZ$ denotes the barycentric subdivision of $Z$. We treat $Z$ as an embedded subset of $A$ as indicated in Subsection~\ref{ss:zonotope}. The barycenter of a face $F$ of $Z$ is denoted by $b_F$. Let $\s(\ca)$ be the Salvetti complex associated with $\ca$ as defined in Subsection~\ref{ss:sal}.

\subsection{Salvetti's embedding}\label{ss:embedding}
We recall the definition from \cite{s87} of an embedding  of $\s(\ca)$ into $\cm(\ca\otimes\mathbb C)$. A simplex in $b\s(\ca)$ is represented by a pair $(\Delta,v)$, where $\Delta=[b_{F_0},b_{F_1},\ldots,b_{F_k}]$ is a simplex of $bZ$ where each $F_i$  a face of $Z$, and $v$ is a vertex of $Z$. Write barycentric coordinates of a point $x\in\Delta$ as $x=\sum_{i=0}^k\lambda_{F_i}b_{F_i}$. Then the Salvetti's embedding $\Psi(x,v)$ is defined by 
\begin{equation}
\label{eq:sal embedding}
\Psi:(x,v)\mapsto x+i\left(\sum_{i=0}^k\lambda_{F_i} [p_{F_i}(v) - b_{F_i}]\right),
\end{equation}
where $i=\sqrt{-1}$. 
By Remark~\ref{rmk:intersection}, the definitions of $\Psi$ on each simplex of $b\s(\ca)$ fit together to give a continuous map $\Psi:b\s(\ca)\to V$.

\begin{Theorem}
	\label{thm:salvetti}
	\textup{(cf.~\cite{s87}).} 
	The map $\Psi$ is injective; its  image is contained in $\cm(\ca\otimes \mathbb C)$; and $\Psi:\s(\ca)\to \cm(\ca\otimes\mathbb C)$ is a homotopy equivalence. 
\end{Theorem}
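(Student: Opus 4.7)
The plan is to carry out Salvetti's original argument \cite{s87} in three stages: first that $\Psi$ is a well-defined continuous map with image in the complement, then that $\Psi$ is injective, and finally that $\cm(\ca\otimes\mathbb C)$ deformation retracts onto $\Psi(\s(\ca))$. For well-definedness, I would observe that a point of $b\s(\ca)$ is represented by a pair $(x,v)$ with $x=\sum_i\lambda_{F_i}b_{F_i}$ in a simplex $\Delta=[b_{F_0},\ldots,b_{F_k}]$ of $bZ$ (ordered so that $F_k\subset\cdots\subset F_0$) and $v$ a vertex of $Z$; by Remark~\ref{rmk:intersection} two such pairs $(x,v)$ and $(x,v')$ represent the same point exactly when $p_{F_i}(v)=p_{F_i}(v')$ for every $i$ with $\lambda_{F_i}>0$. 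Since \eqref{eq:sal embedding} depends on $v$ only through the values $p_{F_i}(v)$, $\Psi$ descends to a continuous map out of $b\s(\ca)$.

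For the containment $\Psi(x,v)\in\cm(\ca\otimes\mathbb C)$, fix $H\in\ca$ and use that $\alpha+i\beta\in V$ lies in $H\otimes\mathbb C$ iff both $\alpha,\beta\in H$. If $x\notin H$ there is nothing to check, so assume $x\in H$ on the relative interior of $\Delta$. Since this open simplex lies in a single relatively open cone $U_k$ of $\fan(\ca)$ whose linear span is the subspace $B_k\in\cq(\ca)$ dual to $F_k$, the assumption forces $B_k\subseteq H$; combined with the strict inclusions $B_0\subsetneq B_1\subsetneq\cdots\subsetneq B_k$, every $b_{F_i}\in H$ and $H$ is dual to an edge of each $F_i$. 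By Lemma~\ref{lem:gate}(3), $p_{F_i}(v)$ is then the vertex of $F_i$ on the same open side of $H$ as $v$, so each vector $p_{F_i}(v)-b_{F_i}$ points into that common open half-space, and their positive convex combination $\beta$ does not lie in $H$.

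Injectivity is then a bookkeeping statement: equality of real parts gives $x=x'$, and on an open simplex the imaginary part $\beta=\sum\lambda_{F_i}(p_{F_i}(v)-b_{F_i})$ recovers each individual $p_{F_i}(v)$ because the vectors $p_{F_i}(v)-b_{F_i}$ decompose according to the nested flag $F_k\subset\cdots\subset F_0$ of linear directions in $A$. This is exactly the data cutting out equivalence classes in $\s(\ca)$.

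For the homotopy equivalence I would construct a deformation retraction of $\cm(\ca\otimes\mathbb C)$ onto $\Psi(\s(\ca))$. Given $z=x+iy\in\cm(\ca\otimes\mathbb C)$, let $U_x$ be the relatively open cone of $\fan(\ca)$ containing $x$ and let $B$ be its linear span; the condition $z\in\cm(\ca\otimes\mathbb C)$ forces $y$ to avoid every hyperplane of $\ca$ containing $B$, so the image of $y$ in $A/B$ lies in a single open chamber of the normal arrangement $\ca_{A/B}$, selecting a vertex $v_y$ of $Z$ in the star of $U_x$. The retraction is a two-stage straight-line homotopy: first deform $y$ to the model imaginary part $\sum\lambda_{F_i}(p_{F_i}(v_y)-b_{F_i})$ associated with $(x,v_y)$, then deform $x$ within its simplex of $bZ$ to the prescribed representative. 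The main obstacle, and the heart of \cite{s87}, is verifying that this homotopy stays in $\cm(\ca\otimes\mathbb C)$; this follows from the half-space argument of the previous paragraph together with the fact that the choice of $v_y$ varies compatibly with the stratification of $A$ by $\sfan(\ca)$.
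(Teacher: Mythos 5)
The paper attributes this theorem to Salvetti and supplies no proof, so there is nothing in-text to compare against; you are reconstructing the argument of \cite{s87}. Your well-definedness argument is fine, and your containment argument (when $x\in H$ lies in the open simplex, the chain forces $B_k\subseteq H$, hence all $b_{F_i}\in H$, hence by Lemma~\ref{lem:gate}(3) every $p_{F_i}(v)$ lies strictly on the $v$-side of $H$ and so does the positive combination $\beta$) is exactly the half-space mechanism the paper itself redeploys in the proof of Lemma~\ref{lem:modified embedding} for the perturbed map $\Psi'$.

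The step that is not sound as written is injectivity. The vectors $p_{F_i}(v)-b_{F_i}$ lie in the translated linear spans of $F_k\subset\cdots\subset F_0$, which form a nested \emph{filtration} of subspaces, not a direct sum; so $\beta$ does not literally ``decompose'' into its summands, and the phrase does no work. The correct recovery is inductive, starting from the smallest face $F_k$ and reusing the same half-space criterion you invoked for containment: for every $H\supseteq B_k$ one has $b_{F_i}\in H$ for all $i$ and $\beta$ strictly on the $v$-side, which determines $p_{F_k}(v)$; subtract $\lambda_k\bigl(p_{F_k}(v)-b_{F_k}\bigr)$ and repeat for $H\supseteq B_{k-1}$ (now $b_{F_i}\in H$ for $i\le k-1$), and so on down the chain. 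As for the homotopy equivalence, your retraction sketch is in the right spirit, but you say yourself that the verification that it stays in $\cm(\ca\otimes\mathbb C)$ and the continuity of the vertex selection $v_y$ as $x$ crosses strata of the fan are the substance of \cite{s87}; stating the plan and deferring those checks is a faithful outline rather than a proof, and that deferral is also precisely what the paper does by citing \cite{s87} instead of proving the theorem.
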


Let $(\Delta,v)$ be as before and let $x\in\Delta$. We also consider a modified version $\Psi'$ of Salvetti's embedding, defined by:
\begin{equation}
\label{eq:modified sal embedding}
\Psi':(x,v)\mapsto x+i\left(\sum_{i=0}^k\lambda_{F_i} p_{F_i}(v)\right).
\end{equation} 

\begin{lemma}
	\label{lem:modified embedding}
	The map $\Psi'$ is an embedding. Let $J$ be the straight line homotopy between $\Psi$ and $\Psi'$. Then the image of $J$ is contained in $\cm(\ca\otimes\mathbb C)$. Hence, $\Psi':b\s(\ca)\to \cm(\ca\otimes\mathbb C)$ is a homotopy equivalence.
\end{lemma}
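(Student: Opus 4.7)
The plan rests on the simple identity $\Psi'(x,v) - \Psi(x,v) = ix$, immediate from \eqref{eq:sal embedding}, \eqref{eq:modified sal embedding}, and the expansion $x = \sum_{i=0}^k \lambda_{F_i} b_{F_i}$. Consequently the straight-line homotopy simplifies to $J_t(x,v) = \Psi(x,v) + t\,ix$, so $\operatorname{Re}(J_t(x,v)) = x$ and $\operatorname{Im}(J_t(x,v)) = \operatorname{Im}\Psi(x,v) + tx$.

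I would then verify that $J_t$ avoids $\ca\otimes\mathbb C$ for every $t\in[0,1]$. Fix $H\in\ca$ with $\mathbb R$-linear defining functional $\ell$, extended $\mathbb C$-linearly to $V$. If $J_t(x,v)\in H\otimes\mathbb C$, then equating real and imaginary parts of $\ell(J_t(x,v))=0$ yields both $\ell(x) = 0$ and $\ell(\operatorname{Im}\Psi(x,v)) + t\ell(x) = 0$; the first reduces the second to $\ell(\operatorname{Im}\Psi(x,v)) = 0$, so $\Psi(x,v)\in H\otimes\mathbb C$, contradicting Theorem~\ref{thm:salvetti}. Specializing to $t=1$ gives $\Psi'(b\s(\ca))\subset \cm(\ca\otimes\mathbb C)$, and the full homotopy witnesses $\Psi \simeq \Psi'$ within $\cm(\ca\otimes\mathbb C)$.

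For the embedding assertion, suppose $\Psi'(x,v) = \Psi'(x',v')$. Taking real parts gives $x=x'$; taking imaginary parts then gives $\operatorname{Im}\Psi(x,v) = \operatorname{Im}\Psi(x',v')$, so $\Psi(x,v) = \Psi(x',v')$, and Theorem~\ref{thm:salvetti} delivers $(x,v)=(x',v')$. Continuity of $\Psi'$ across simplices of $b\s(\ca)$ is inherited from the corresponding property of $\Psi$, since the two maps differ by the single globally well-defined term $ix$ (Remark~\ref{rmk:intersection} applies unchanged); as $b\s(\ca)$ is compact, injective continuity upgrades to an embedding. The homotopy equivalence statement then follows by combining the homotopy $\Psi \simeq \Psi'$ in $\cm(\ca\otimes\mathbb C)$ with Theorem~\ref{thm:salvetti}.

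I do not anticipate a serious obstacle: once the identity $\Psi'-\Psi = ix$ is noticed, everything reduces to bookkeeping of real and imaginary parts under a defining functional, reflecting the fact that translating the imaginary part of $\Psi$ by the real coordinate $x$ cannot introduce new incidence with a real hyperplane (a complex point lies on $H\otimes\mathbb C$ only if both its real and imaginary parts do).
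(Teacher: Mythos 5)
Your proof is correct, and it takes a genuinely different and slicker route than the paper's. The paper essentially re-runs Salvetti's original geometric argument: if $x\in H$ for some hyperplane $H\in\ca$, then all the barycenters $b_{F_i}$ lie in $H$, and \cite[Lemma~3]{s87} guarantees that all the projected vertices $p_{F_i}(v)$ lie strictly on one side of $H$, whence the whole family $\{p_{F_i}(v) - t\,b_{F_i}\}$ stays on that side and the imaginary part of $J(\cdot,t)$ misses $H$. You instead observe the clean identity $\Psi' - \Psi = ix$, so that $J_t = \Psi + t\,ix$ has the same real part as $\Psi$; since a point of $V$ lies in $H\otimes\cc$ iff both its real and imaginary parts lie in $H$, and the perturbation $t\,ix$ adds a vector ($tx$) to the imaginary part that lies in $H$ precisely when the real part does, $J_t(x,v)\in H\otimes\cc$ forces $\Psi(x,v)\in H\otimes\cc$. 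This reduces the separation claim to Theorem~\ref{thm:salvetti} for $\Psi$ itself, instead of re-deriving it from \cite[Lemma~3]{s87}. Your injectivity argument (real parts force $x=x'$, then imaginary parts force $\Psi(x,v)=\Psi(x',v')$) likewise inherits injectivity directly from Theorem~\ref{thm:salvetti} rather than invoking Remark~\ref{rmk:intersection} as the paper does. What the paper's approach buys is independence from the injectivity/disjointness statements of Theorem~\ref{thm:salvetti} at the level of the homotopy; what your approach buys is brevity and the avoidance of any further appeal to Salvetti's Lemma~3. Both are complete proofs.
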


\begin{proof}
	The first statement follows from the description of intersection of two simplices in $b\s(\ca)$ as in Remark~\ref{rmk:intersection}.
	For the second statement, we follow the argument in \cite{s87}*{pp.\,609-611}. Suppose $x$ is in the relative interior of $\Delta$ and that there is a hyperplane $H\in \ca$ such that $x\in H$. Then $\Delta\subset H$ and $b_{F_i}\in H$ for each $i$. By \cite[Lemma 3]{s87}, the $p_{F_i}(v)$ all lie on the same side of $H$. So,  $\{p_{F_i}(v)-tb_{F_i}\}_{0\le i\le k, 0\le t\le 1}$ lies on the same side of $H$. Hence, the image of $J$ is contained in $\cm(\ca\otimes\mathbb C)$. The final statement of the lemma follows from Theorem~\ref{thm:salvetti}.
\end{proof}

Although both $\Psi$ and $\Psi'$ depend on a choice of points in the fan (i.e., on the choice of the $b_{F_i}$'s), different choices lead to maps $\Psi$ and $\Psi'$ that are homotopic inside $\cm(\ca\otimes\mathbb C)$ via a straight line homotopy. 

Let $\g_x$ and $\Gamma(\ca)$ be as in Subsection~\ref{subsec:deligne groupoid}. By combining Lemma~\ref{lem:modified embedding} with results of \cite{deligne,s87}, one can prove the following theorem.
\begin{Theorem}
	\label{thm:pi1}
	Suppose $\ca$ is a finite, central, real arrangement. Let $x\in\Ga(\ca)$ be a vertex. Consider the embeddings $\Ga(\ca)\to \s(\ca)$ and $\Ga(\ca)\to \cm(\ca\otimes \mathbb C)$ (where the second map is induced by the modified Salvetti's embedding of \eqref{eq:modified sal embedding}). These embeddings induce well-defined homomorphisms $\g_x\to \pi_1(\s(\ca),x)$ and $\g_x\to \pi_1(\cm(\ca\otimes \mathbb C),x)$  both of which  are isomorphisms.
\end{Theorem}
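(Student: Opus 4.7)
The plan is as follows. First I would reduce both statements to the first. By Lemma~\ref{lem:modified embedding} the modified embedding $\Psi':\s(\ca)\to \cm(\ca\otimes\cc)$ is a homotopy equivalence, and a direct check from \eqref{eq:modified sal embedding} shows that the restriction of $\Psi'$ to the $1$-skeleton of $\s(\ca)$ realizes (up to the straight-line homotopy $J$) the embedding $\Gamma(\ca)\hookrightarrow \cm(\ca\otimes\cc)$ appearing in the statement. Thus the second homomorphism is the composition of the first with the isomorphism $\Psi'_*$, so it suffices to prove that $\g_x\to \pi_1(\s(\ca),x)$ is an isomorphism.

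Next I would handle well-definedness and surjectivity. With the orientation convention of Remark~\ref{rmk:orientation}, the oriented edges of $\Gamma(\ca)$ are in natural bijection with the oriented $1$-cells of $\s(\ca)$, so a path in $\Gamma(\ca)$ based at $x$ yields an edge loop in $\s(\ca)$, and cellular approximation gives surjectivity onto $\pi_1(\s(\ca),x)$. Of the defining relations of $\sim$, items (a)--(c) hold tautologically in any fundamental groupoid. The content is in (d): any two positive minimal paths from $y$ to $z$ represent the same element of $\pi_1(\s(\ca),x)$. I would prove this by induction on the dimension of the smallest face $F$ of $Z(\ca)$ whose $1$-skeleton contains the images of both paths (cf.\ Lemma~\ref{lem:span}). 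The base case $\dim F=2$ is direct: $F$ is a polygon, and the two positive ``geodesic'' arcs in $F$ from $y$ to $\ant(y,F)=z$ are precisely the boundary of a single $2$-cell $[F,y]$ of $\s(\ca)$. For higher $\dim F$, the standard subcomplex of $\s(\ca)$ lying over $F$ is itself the Salvetti complex of the simpler arrangement $\ca_{A/B}$ (where $B$ is dual to $F$), and the passage between two positive minimal paths can be accomplished by a sequence of local swaps, each living inside a $2$-dimensional subface and thus reducing to the base case.

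The main obstacle is injectivity. My approach is to produce matching presentations. Choose a spanning tree in the $1$-skeleton of $\s(\ca)$ and use the standard presentation of the fundamental group of a CW complex: generators are oriented edges off the tree, and relators come from the attaching maps of the $2$-cells. Each $2$-cell of $\s(\ca)$ is of the form $[F,v]$ with $\dim F=2$, and inspection of the Salvetti identification \eqref{e:sal} shows that its attaching word is exactly the commutator of the two positive minimal length-$\dim F$ paths around $F$ from $v$ to $\ant(v,F)$; i.e., it is the $\dim F=2$ case of relation (d) (together with involutivity, which gives relation (a)). On the $\g_x$ side, by the inductive argument of the previous paragraph, every instance of (d) is a formal consequence of the $2$-dimensional instances together with the groupoid relations (b) and (c). Hence the natural map $\g_x\to \pi_1(\s(\ca),x)$ identifies two presentations with the same generators and the same relators, and so is an isomorphism. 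The delicate step is the inductive swapping argument in Paragraph~2, which mirrors the combinatorics used in the proof of Theorem~\ref{thm:deligne}(1); one has to verify that an arbitrary pair of positive minimal paths in a higher-dimensional face can be connected by a finite chain of elementary swaps across $2$-faces without leaving the standard subcomplex over $F$.
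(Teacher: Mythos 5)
Your overall strategy — reduce to $\g_x\to\pi_1(\s(\ca),x)$ via the modified embedding of Lemma~\ref{lem:modified embedding}, then match the Deligne-groupoid presentation with the CW presentation of $\pi_1(\s(\ca))$ — is exactly the content the paper intends, which it handles by citing \cite{deligne,s87} rather than spelling out an argument. So the approach is the right one. However, there is a genuine gap in the central combinatorial step, and a couple of smaller issues.

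The heart of the matter is the claim that relation (d) — equivalence of arbitrary positive minimal paths with common endpoints — is a consequence of the rank-$2$ instances (those coming from $2$-cells of $\s(\ca)$). You identify this correctly, but your proposed induction on ``the dimension of the smallest face $F$ of $Z(\ca)$ whose $1$-skeleton contains both paths'' is not well-founded: for a typical pair of vertices $y,z$ (e.g.\ antipodal ones), that smallest face is $Z(\ca)$ itself, so the ``inductive step'' is applied to $\ca$ unchanged and there is nothing smaller to induct from. Moreover the inductive step as written (``the passage \dots can be accomplished by a sequence of local swaps, each living inside a $2$-dimensional subface'') simply restates the desired conclusion rather than reducing it. A correct induction should be on the \emph{length} of the paths (the number $d(y,z)$ of separating hyperplanes), using the gate property of Lemma~\ref{lem:gate} to peel off the first crossing; this is essentially what one finds in \cite{s87} (and in Paris \cite{Paris00}), and it does not require $\ca$ to be simplicial. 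You attribute the needed ``elementary swap'' connectivity to the combinatorics behind Theorem~\ref{thm:deligne}(1), but that theorem is specific to simplicial arrangements and Garside theory, whereas the present theorem is stated for arbitrary finite central real arrangements; the right source is the gallery-theoretic statement in \cite{s87}.

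Two smaller points. First, once you know that (d) holds in $\pi_1(\s(\ca),x)$ (which is the only nontrivial part of well-definedness), your Paragraph~3 is longer than necessary: the map $\pi_1(\s(\ca),x)\to\g_x$ (send an edge loop to its $\sim$-class) is automatically well-defined because each $2$-cell attaching relation is a rank-$2$ instance of (d) and hence already holds in $\g$; the two maps are then mutually inverse on generators, giving the isomorphism without a separate presentation-matching argument. Second, the attaching word of a $2$-cell $[F,v]$ is $p_1p_2^{-1}$, where $p_1,p_2$ are the two positive minimal paths around $\db F$ from $v$ to $\ant(v,F)$ — this expresses $p_1=p_2$, not a commutator; you should replace ``commutator'' accordingly, although the intended meaning (``the $\dim F=2$ instance of (d)'') is clear from context.
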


\subsection{Standard subcomplexes and parabolic subgroups}
\label{subsec:standard subcomplexes}

Let $B\in\cq(\ca)$ be a subspace and let $C_B$ be a face of $Z(\ca)$ dual to $B$. The standard complex $K$ of $\s(\ca)$ associated to $C_B$ was defined in Subsection \ref{ss:curve complex groupoid}.  Alternatively, it can be  characterized as the union of all cells of $\s(\ca)$ of form $(C_B,v)$ with $v$ ranging over vertices of $C_B$. Note that $K$ is isomorphic to $\s(\ca_{A/B})$.

Let $\Psi_0:K\to \cm(\ca\otimes\mathbb C)$ be the restriction of the modified Salvetti embedding \eqref{eq:modified sal embedding} to $K$, i.e., if $\Delta=[b_{F_0},b_{F_1},\ldots,b_{F_k}]$ is a simplex of $bC_B$, and if $v$ is a vertex of $C_B$, we have the map,
\begin{equation}
\Psi_0:(x,v)\mapsto x+i\left(\sum_{i=0}^k\lambda_{F_i} p_{F_i}(v)\right).
\end{equation} 
Note that the barycenter $b$ of $C_B$ lies in  $B$. 

We perturb the choice of point in each fan in the definition of $\Psi_0$ to obtain another embedding $\Psi_1:K\to \cm(\ca\otimes\mathbb C)$ such that $b_{F}-b$ is orthogonal to $B$ for any face $F\subset C_B$. Let $J_1$ be the straight line homotopy between $\Psi_0$ and $\Psi_1$. Define $\Psi_2:K\to V=A+iA$ to be the constant map with image $b+ib$. Let $J_2$ be the straight line homotopy between $\Psi_1$ and $\Psi_2$. Let $J$ denote the concatenation of $J_1$ and $J_2$.

The complexifications of the real vector spaces $A$ and $B$ are denoted by $V=A\otimes \cc$ and $E=B\otimes \cc$. The complex vector spaces $V$ and $E$ are identified with $A+iA$ and $B+iB$.  
As before, let $V_\odot$ be the blowup of $V$ along $\ca\otimes \mathbb C$. Let $\db_E V_\odot=E_\odot\times S(V/E)_\odot$ denote the codimension one face of $V_\odot$ associated with $E$.

We claim that $J$ induces a homotopy $\widetilde J:K\times[0,1] \to V_\odot$. Since the interior of $V_\odot$ is identified with $\cm(\ca\otimes \mathbb C)$ and $J(\cdot,t)\in \cm(\ca\otimes\mathbb C)$ for $t<1$, we can define $\widetilde J(\cdot,t)$ to be $J(\cdot,t)$ when $0\le t<1$.
Note that $\Psi_1(x,v)-\Psi_2(x,v)$ is a vector orthogonal to $E$, so this vector defines a point $\mu(x,v)$ in the interior of $S(V/E)_{\odot}$. We can extend $\wt{J}$ to $t=1$ by $\widetilde J((x,v),1):=(b+ib,\mu(x,v))\in \db_E V_\odot=E_\odot\times S(V/E)_\odot$ (note that $b+ib\in \cm(\ca^B\otimes\mathbb C)$, which is the interior $E_\odot$). One checks that $\widetilde J$ is continuous.

Let $B^\perp$ be the orthogonal complement to $B$ in $A$ at $b$. Taking $D$ to be  a small enough open disk in $B^\perp$ centered at $b$,  the arrangement $\ca$ induces an arrangement in $D$ which can be identified with $\ca_{A/B}$. We can assume without loss of generality that the image of $\Psi_1$ is contained in $D\times D$. By Lemma~\ref{lem:modified embedding}, $\Psi_1:K\to (D\times D)- (\bigcup_{H\in \ca}H\times H)$ is a homotopy equivalence. Thus,  the map $K\to S(V/E)_\odot$, defined by $\widetilde J(\cdot, 1)$, is a homotopy equivalence. We summarize the above discussion in the following proposition.

\begin{proposition}
	\label{prop:standard subcomplex homotopy}
	Let $B$ be a subspace of $A$. Let $C_B$ be a face of $Z(\ca)$ dual to $B$ and let $b=bC_B=C_B\cap B$. Suppose $K$ is the standard subcomplex of $\s(\ca)$ associated with $C_B$. Let $\Psi_0:K\to \cm(\ca\otimes\mathbb C)$ be the restriction of the modified Salvetti embedding of \eqref{eq:modified sal embedding}.
	
	Let $D$ be small open disk orthogonal to $B$ at $b$ of complementary dimension. Let $E=B\otimes \mathbb C$ and let $\db_E V_\odot=E_\odot\times S(V/E)_\odot$ be the codimension one face of $V_\odot$ associated with $E$. Then
	\begin{enumerate1}
		\item The map $\Psi_0$ is homotopic in $\cm(\ca\otimes\mathbb C)$ to a map $\Psi_1:K\to (D\times D)- (\bigcup_{H\in \ca}H\times H)$; moreover, $\Psi_1$ is a homotopy equivalence;
		\item The map $\Psi_0$ is homotopic in $V_\odot$ to $\Psi_2:K\to \{x\}\times S(V/E)_\odot$ for some $x\in E_\odot$; moreover, $\Psi_2$ is a homotopy equivalence.
	\end{enumerate1}
\end{proposition}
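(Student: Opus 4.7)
Most of the argument is already sketched in the paragraphs immediately preceding the statement, so the plan is to verify carefully that those constructions yield well-defined continuous maps and genuine homotopy equivalences.

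\textbf{Step 1 (Proof of (1)).} The plan is to exploit the fact that the modified Salvetti embedding depends on a choice of interior points $\{b_F\}$ of the cones in $\fan(\ca)$. By Lemma~\ref{lem:modified embedding}, any two such choices yield embeddings $K\to \cm(\ca\otimes\cc)$ that are homotopic inside $\cm(\ca\otimes\cc)$ through the straight-line homotopy (the key point being that on each simplex, the straight-line homotopy moves $b_F$ linearly inside its relatively open cone, so the proof of Lemma~\ref{lem:modified embedding} applies verbatim). So first I would choose a new family $\{b'_F\}$ with the additional property that for every face $F\subset C_B$, the vector $b'_F-b$ is orthogonal to $B$, and define $\Psi_1$ by the formula \eqref{eq:modified sal embedding} using these new points. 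By scaling all chosen interior points inward toward the origin by a sufficiently small factor, one may further arrange that the image of $\Psi_1$ lies inside $D\times D$. Since the restriction of $\ca$ to $D$ is identified with $\ca_{A/B}$, the target $(D\times D)-\bigcup_{H\in\ca}(H\times H)$ is homeomorphic to $\cm(\ca_{A/B}\otimes\cc)$, and the resulting $\Psi_1$ is the modified Salvetti embedding of $\s(\ca_{A/B})\cong K$ into $\cm(\ca_{A/B}\otimes\cc)$; hence Theorem~\ref{thm:salvetti} (or Lemma~\ref{lem:modified embedding}) shows it is a homotopy equivalence.

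\textbf{Step 2 (Proof of (2)).} Using Step 1 I may replace $\Psi_0$ by $\Psi_1$. Let $J_2\colon K\times [0,1]\to V$ be the straight-line homotopy from $\Psi_1$ to the constant map $\Psi_2\equiv b+ib$. For $t<1$ each vector $\Psi_1(x,v)-(b+ib)$ has a nonzero component along $E^{\perp}$ (since the vectors $b'_F-b$ for $F\subset C_B$ lie in $B^\perp\subset E^{\perp}$ and their $i$-part $p_{F}(v)-b$ also lies in $B^{\perp}$ generically; more precisely one can see from the formula that the image misses every hyperplane of $\ca$ because the $B^\perp$-component gives a point in $\cm(\ca_{A/B}\otimes\cc)$), so $J_2(\cdot,t)\in \cm(\ca\otimes\cc)$ for $0\le t<1$. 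To pass to the blowup, I use the description $V_\odot\cong \ol{\Ima\gr}$ of Method~1 and the product decomposition $\db_E V_\odot\cong E_\odot\times S(V/E)_\odot$ of Proposition~\ref{p:description}. Define $\widetilde J\colon K\times[0,1]\to V_\odot$ by $\widetilde J=J_2$ for $t<1$, and by
\[
\widetilde J((x,v),1)=(b+ib,\,\mu(x,v)),\qquad \mu(x,v):=[\Psi_1(x,v)-(b+ib)]\in S(V/E),
\]
where we use that the displayed vector is nonzero modulo $E$. Continuity at $t=1$ follows from the fact that for $t$ close to $1$, the image $J_2(\cdot,t)$ lies in a small neighborhood of $b+ib$, and under the blowup coordinates $V\odot \cs$ of Definition~\ref{d:BS} the $S(V/E)$-coordinate of $J_2((x,v),t)$ is exactly $\mu(x,v)$ for every $t\in(0,1]$; continuity of the other coordinates into the factors $S(V/F)$ for $F\neq E$ follows because for every such $F$ in the building set, the normal direction of the curve $t\mapsto J_2((x,v),t)$ to $F$ extends continuously to $t=1$ (it is given by evaluating the formula for $\mu$ after projecting to $V/F$).

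\textbf{Step 3 (Homotopy equivalence of $\Psi_2$).} Here I use that $\mu$ is essentially the radial projection into $S(V/E)$ of the modified Salvetti embedding of $K\cong \s(\ca_{A/B})$ into the normal complement $B^\perp\times iB^\perp-\bigcup_{H\in\ca_{A/B}}(H\times H)$, which as in Step~1 is identified via the blow-down map $S(V/E)_\odot\to S(V/E)$ with the homotopy-equivalent subset of $S(V/E)_\odot$ obtained by intersecting with the unit sphere in $V/E$. Since $S(V/E)_\odot$ deformation retracts onto this subset (essentially by radial projection in $V/E-\{0\}$, which is the very retraction underlying the blowup in \eqref{e:ve}), the map $K\to \{x\}\times S(V/E)_\odot$ determined by $\mu$ is a homotopy equivalence.

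\textbf{Main obstacle.} I expect the only technically delicate point to be the continuity of $\widetilde J$ at $t=1$ and the verification that the resulting map into $S(V/E)_\odot$ is indeed a homotopy equivalence (not just a map into a set that is known abstractly to have the right homotopy type). Both can be handled by working in the explicit coordinates on the blowup provided by Method~1 in Subsection~\ref{ss:arrangements} together with the description of $\partial_E V_\odot$ in Proposition~\ref{p:description}; the radial-projection identification of $S(V/E)_\odot$ with the unit-sphere version of the modified Salvetti target is exactly what makes the argument go through, and it is also what makes the final map a homotopy equivalence via Theorem~\ref{thm:salvetti} applied to $\ca_{A/B}$.
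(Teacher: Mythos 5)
Your proposal is correct and follows essentially the same route as the paper: perturb the chosen fan points so that they are orthogonal to $B$ along $C_B$ (the paper's $J_1$), then use the straight-line homotopy to the constant map $b+ib$ (the paper's $J_2$), lift the concatenation to $V_\odot$, and observe that the resulting face map is a homotopy equivalence because $\Psi_1$ is one onto $(D\times D)-\bigcup_{H}(H\times H)$ and this identifies with the interior of $S(V/E)_\odot$. Two small imprecisions worth noting: the phrase ``generically'' should be ``always after the perturbation'' (the vectors $p_F(v)-b$ lie in $B^\perp$ exactly because the vertices of $C_B$ were perturbed, not generically); and in the parenthetical about continuity in the $S(V/F)$ factors, the claimed formula only applies when $E\subset F$, while for $E\not\subset F$ the limit is simply the $S(V/F)$-coordinate of $b+ib$.
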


\subsection{Orthogonal complements of standard subcomplexes}
Let $\ca, A,B,V, E,C_B,$ and $b=b_{C_B}$ be  as in the previous subsection. Suppose $Z=Z(\ca)$ and $Z'=Z(\ca^B)$.
Let $\cp(Z')$ denote the set of faces of $Z'$.  For each $F\in\cp(Z')$ there is a unique face $\hat F$ of $Z$ such that $B\cap \hat F = F$.  If $v$ is a vertex of $Z'$, then $F(v):=\hat v$ is a dual face to $B$.    If $v$, $v'$ are vertices of $Z'$, then $F(v)$ and $F(v')$ are parallel faces of $Z$.  Hence, parallel translation defines a bijection from $\vertex F(v)$ to $\vertex F(v')$.  For each $v\in \vertex Z'$ choose a vertex $u(v)\in F(v)$ so that $u(v)$ corresponds to $u(v')$ under parallel translation. Since we can also treat $b$ as a vertex of $Z'$, the meaning of $u(b)$ is clear.

\begin{lemma}
	\label{lem:proj}
	Let $F$ be a face of $Z'$ and let $v$ be a vertex of $Z'$. Then $u(\prj_F(v))=\prj_{\hat F}(u(v))$. 
\end{lemma}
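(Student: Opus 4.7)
The plan is to verify the lemma using the characterization of $\prj_{F_0}(x)$ from Lemma~\ref{lem:gate}(3): a vertex $x_F\in F_0$ equals $\prj_{F_0}(x)$ if and only if no hyperplane in $\ch_{F_0}$ separates $x$ from $x_F$. Setting $v^*:=\prj_F(v)$, the claim $u(v^*)=\prj_{\hat F}(u(v))$ reduces to two tasks: (a) show $u(v^*)\in\hat F$, and (b) show that no hyperplane dual to an edge of $\hat F$ separates $u(v)$ from $u(v^*)$.

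For (a), I would use the cone description from Subsection~\ref{ss:zonotope}. Because $\fan(\ca^B)\subseteq\fan(\ca)$, the face $F$ of $Z'$ and the face $\hat F$ of $Z$ arise from one and the same cone $U_F$, and the vertex $v^*\in F$ corresponds to a chamber $U_{v^*}$ of $\fan(\ca^B)$ whose closure contains $U_F$. Then $\hat{v^*}$ is the face of $Z$ assigned to $U_{v^*}$; any vertex of $\hat{v^*}$ corresponds to a chamber $U'$ of $\fan(\ca)$ containing $U_{v^*}$ in its closure, hence containing $U_F$, so this vertex also belongs to $\hat F$. Thus $\hat{v^*}\subseteq\hat F$, and in particular $u(v^*)\in\hat{v^*}\subseteq\hat F$.

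For (b), let $E\subseteq B$ be the common dual subspace of $F$ (in $\cq(\ca^B)$) and $\hat F$ (in $\cq(\ca)$), these agreeing because both faces come from $U_F$. Then $\ch_{\hat F}=\{H\in\ca:E\le H\}$ partitions as $\ch_1\sqcup\ch_2$ with $\ch_1=\{H\in\ca:B\le H\}$ and $\ch_2=\{H\in\ca:B\nleq H,\ E\le H\}$. For $H\in\ch_1$, non-separation of $u(v)$ and $u(v^*)$ is the very definition of parallel translation between the $B$-dual faces $\hat v$ and $\hat{v^*}$. For $H\in\ch_2$, the restriction $H\cap B$ lies in $\ch_F$, so Lemma~\ref{lem:gate}(3) applied in $Z'$ gives that $H\cap B$ does not separate $v$ from $v^*$, whence $H$ does not separate $v$ from $v^*$ in $A$. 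Moreover, since $\hat v$ is dual to $B$, its edges are dual only to hyperplanes containing $B$; as $H\nsupseteq B$, no edge of $\hat v$ is dual to $H$, and so an edge path inside $\hat v$ never crosses $H$. Thus every vertex of $\hat v$, in particular $u(v)$, lies on the same side of $H$ as the barycenter $v$, and the same argument places $u(v^*)$ on the same side of $H$ as $v^*$. Combining these, $u(v)$ and $u(v^*)$ lie on the same side of $H$.

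With (a) and (b) verified, Lemma~\ref{lem:gate}(3) yields $u(v^*)=\prj_{\hat F}(u(v))$. The step I expect to require the most care is the side-of-hyperplane analysis in the $\ch_2$ case, which is really the statement that each face of $Z$ dual to $B$ lies entirely on one side of every hyperplane of $\ca$ not containing $B$; this is the mechanism that transfers non-separation information from $Z'$ back to $Z$ and links the two projections via the map $u$.
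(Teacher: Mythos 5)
Your proof is correct and follows essentially the same strategy as the paper's: both invoke the characterization of the nearest-vertex projection from Lemma~\ref{lem:gate}(3) and reduce the problem to checking, hyperplane by hyperplane across $\ch_{\hat F}$, that $u(v)$ and $u(\prj_F(v))$ are not separated, splitting $\ch_{\hat F}$ into those hyperplanes containing $B$ (handled by the definition of parallel translation) and those crossing $F$ itself (handled by transferring the non-separation of $v$ and $\prj_F(v)$ via the observation that $B$-dual faces lie entirely on one side of any hyperplane not containing $B$). Your step (a), verifying explicitly that $u(\prj_F(v))\in\hat F$ via the cone/fan picture, is a small but welcome addition of rigor that the paper leaves implicit.
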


\begin{proof}
	Let $\ch_F$ (resp. $\ch_{\hat F}$) be the collection of hyperplanes of $\ca$ which intersect an edge of $F$ (resp. $\hat F$) in one point. Then $\ch_F\subset\ch_{\hat F}$. Since each element of $\ch_{\hat F}$ contains $b_{\hat F}=b_F$, each element of $\ch'=\ch_{\hat F}\setminus \ch_F$ contains $B$. By Lemma~\ref{lem:gate} (3), for any $H\in \ch_F$, $v$ and $\prj_F(v)$ are on the same side of $H$. Since no hyperplane of $\ca$ separates $v$ from $u(v)$, $u(v)$ and $u(\prj_F(v))$ are on the same side of $H$ for any $H\in\ch_F$. The same statement holds if $H\in\ca'$, since each element of $\ca'$ contains $B$. The lemma follows after applying Lemma~\ref{lem:gate} (3) to $\hat F$.
\end{proof}

\begin{definition}
	\label{def:phif}
	For each face $F$ of $Z'$,  choose a homeomorphism $\Phi_F: C_B\times F\to \hat F$ such that
	\begin{enumeratei}
		\item For each vertex $v\in Z'$, $\Phi_v$ maps $C_B\times\{v\}$ to the face of $Z'$ which is parallel to $C_B$ and contains $v$ (we regard $Z'$ as a subset of $Z$); moreover, $\Phi_v$ is a simplicial isomorphism induced by parallel translation.
		\item For each edge $e\subset Z'$  endpoints  $v_1$ and $v_2$, $\Phi_e(\{u(b)\}\times e)$ is an edge path in $Z^{(1)}$ of shortest length from $\Phi_{v_1}(\{u(b)\}\times \{v_1\})$ to $\Phi_{v_2}(\{u(b)\}\times \{v_2\})$.
		\item For two faces $F_1\subset F_2$ of $Z'$, the maps $\Phi_{F_1}$ and $\Phi_{F_2}$ agree on $C_B\times F_1$.
		\item We have $\Phi_F(\{b\}\times F)= F$ (where the $F$ on the right-hand side is understood to be a subset of $\hat F$.
	\end{enumeratei} 
	Define $\Ts(\ca^B)$ to be $C_B\times \s(\ca^B)$.
\end{definition}

\begin{lemma}
	\label{lem:Phi}
	The homeomorphisms $\Phi_F$ defined above fit together to induce a map $\Phi:\Ts(\ca^B)\to \s(\ca)$. Moreover, the image of $\Phi$ is $\bigcup_{v\in \vertex Z'}\  (Z,u(v))$.
\end{lemma}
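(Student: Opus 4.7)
The plan is to define $\Phi$ uniformly on each ``sheet'' indexed by $v \in \vertex Z'$ of $\Ts(\ca^B)$, and then verify that the definitions agree on the overlaps arising from the Salvetti quotient. First I would observe that because $Z' = Z\cap B$ (as recalled in Subsection~\ref{ss:zonotope}), the case $F = Z'$ in Definition~\ref{def:phif} yields $\hat{Z'} = Z$, and hence a homeomorphism $\Phi_{Z'}\colon C_B\times Z'\to Z$; by property~(iii), the restriction of $\Phi_{Z'}$ to $C_B\times F$ agrees with $\Phi_F$ for every $F\in\cp(Z')$, and in particular carries $C_B\times F$ onto $\hat F$. For each $v \in \vertex Z'$ I then define $\Phi$ on (the image in $\Ts(\ca^B)$ of) $C_B\times Z'\times\{v\}$ by
\[
\Phi(c,z,v)=[\Phi_{Z'}(c,z),\,u(v)],
\]
where $[\,\cdot\,,\,\cdot\,]$ denotes the image under the Salvetti quotient $Z\times\vertex Z \to \s(\ca)$.

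Next I would verify well-definedness. The Salvetti relation on $\Ts(\ca^B)=C_B\times\s(\ca^B)$ identifies $(c,z,v_1)$ with $(c,z,v_2)$ precisely when $\prj^{Z'}_F(v_1)=\prj^{Z'}_F(v_2)$, where $F$ is the minimal face of $Z'$ containing $z$. Let $F''$ be the minimal face of $Z$ containing $\Phi_{Z'}(c,z)$; since $\Phi_{Z'}(c,z)\in\hat F$, we have $F''\subseteq \hat F$. By Lemma~\ref{lem:proj}, $\prj^Z_{\hat F}(u(v_i)) = u(\prj^{Z'}_F(v_i))$ for $i=1,2$, and these are equal by hypothesis. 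Since $F''\subseteq \hat F$ gives $\ch_{F''}\subseteq \ch_{\hat F}$, the characterization of the gate projection in Lemma~\ref{lem:gate}(3) shows $\prj^Z_{F''}=\prj^Z_{F''}\circ\prj^Z_{\hat F}$; applying this gives $\prj^Z_{F''}(u(v_1))=\prj^Z_{F''}(u(v_2))$, so $[\Phi_{Z'}(c,z),u(v_1)]=[\Phi_{Z'}(c,z),u(v_2)]$ in $\s(\ca)$. Compatibility across different sheets for a given point $z$ is thus established, and property~(iii) handles compatibility of the local definitions $\Phi_F$ themselves.

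For the image computation, the inclusion $\image\Phi\subseteq\bigcup_{v}(Z,u(v))$ is immediate, since the $v$-sheet maps into the image of $Z\times\{u(v)\}$ under the Salvetti quotient, which is exactly $(Z,u(v))$. Conversely, because $\Phi_{Z'}$ is a homeomorphism onto $Z$, the $v$-sheet maps surjectively onto $(Z,u(v))$, so each $(Z,u(v))$ lies in the image.

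The main obstacle is the well-definedness step. The subtlety is that property~(ii) of Definition~\ref{def:phif} shows that $\Phi_F$ need not be cellular---a single edge $\{u(b)\}\times e$ may be stretched to an edge path of length greater than one---so one cannot simply track cells under $\Phi_F$. By instead tracking the image of each individual point and identifying the minimal face $F''\subseteq \hat F$ of $Z$ containing that image, the gate-projection identity reduces cleanly to Lemma~\ref{lem:proj} together with the transitivity of the gate projection from $\hat F$ to its subface $F''$.
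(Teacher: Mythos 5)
Your proof is correct, but it takes a somewhat different route from the paper's. The paper verifies well-definedness at the level of \emph{cells}: it assigns to each cell $C_B\times(F,v)$ of $\Ts(\ca^B)$ the cell $(\hat F, u(v))$ of $\s(\ca)$, and shows via Lemma~\ref{lem:proj} that this assignment is order-preserving on the face poset --- namely, if $(F_1,v_1)\subset(F_2,v_2)$ in $\s(\ca^B)$, then $\prj_{\hat F_1}(u(v_2))=u(\prj_{F_1}(v_2))=u(v_1)$, so $(\hat F_1,u(v_1))\subset(\hat F_2,u(v_2))$; combined with property~(iii) of Definition~\ref{def:phif}, this is exactly the compatibility needed for $\Phi$ to descend. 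Your argument instead verifies well-definedness \emph{pointwise}, working with the top-dimensional sheets $C_B\times(Z',v)$ and the point-level characterization of the Salvetti identification. This is also valid, but it forces you to track the minimal face $F''\subseteq\hat F$ of $Z$ containing $\Phi_{Z'}(c,z)$ (which can be strictly smaller than $\hat F$, since $\Phi_{Z'}$ is only required to be a homeomorphism, not a cellular map) and then invoke the transitivity identity $\prj_{F''}=\prj_{F''}\circ\prj_{\hat F}$ for nested faces, which the paper's cellular bookkeeping bypasses. What your route buys is that it makes the non-cellularity of $\Phi_F$ explicit and handles it head-on; what the paper's route buys is economy, since the order-preservation of the face-poset map is the only thing one needs to check. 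Both hinge on the same core computation from Lemma~\ref{lem:proj}, and your image computation agrees with the paper's (the sheet $C_B\times(Z',v)$ maps homeomorphically onto $(Z,u(v))$).
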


\begin{proof}
	A face of $\Ts(\ca^B)$ is represented by $(F,v)$ where $F$ is a face of $Z'$ and $v$ is a vertex of $F$. Define $\Phi$ on $C_B\times (F,v)$ by taking $C_B\times (F,v)$ to $(\hat F,u(v))$ via $\Phi_F$. Recall that for two cells $(F_1,v_1)$ and $(F_2,v_2)$ in $Z'$, $(F_1,v_1)\subset(F_2,v_2)$ if and only if $F_1\subset F_2$ and $\prj_{F_1}(v_2)=v_1$. So, by Lemma~\ref{lem:proj}, $\hat F_1\subset \hat F_2$ and $\prj_{\hat F_1}(u(v_2))=u(\prj_{F_1}(v_2))=u(v_1)$. Therefore, $(\hat F_1,u(v_1))\subset (\hat F_2,u(v_2))$ as cells of $\s(\ca)$. This shows that $\Phi$ is well-defined. The final statement in the lemma follows from the definition of $\Phi$.
\end{proof}

Let $\Psi_B:\s(\ca^B) \to E$ and $\Psi_A:\s(\ca)\to V$ be two modified Salvetti embeddings (cf. \eqref{eq:modified sal embedding}). Define $f_0$ to be the composition, $$\s(\ca^B)\stackrel{j}{\to} \Ts(\ca^B)\stackrel{k}{\to} \s(\ca)\stackrel{\Psi_A}{\to} V,$$
where $j$ sends $\s(\ca^B)$ to $\{b\}\times \s(\ca^B)$. Perturb the choice of points in each fan in the definition of $\Psi_A$ so that 
\begin{enumerate1}
	\item for each vertex $v\in Z'$, the vector $u(v)-v$ is orthogonal to $B$;
	\item for any two vertices $v_1,v_2\in Z'$, the vectors $u(v_1)-v_1$ and $u(v_2)-v_2$ are parallel. 
\end{enumerate1}
The perturbation of $\Psi_A$ leads to a perturbation of $f_0$. Denote the resulting map by $f_1$. Let $H_1$ be the straight-line homotopy between $f_0$ and $f_1$. 
Define $f_2$ to be the composition, $\s(\ca^B)\stackrel{\Psi_B}{\to} E\hookrightarrow V$, where the second map is the inclusion. Next, we compare $f_1$ and $f_2$.

Recall that any simplex in $b\s(\ca^B)$ has the form $(\Delta,v)$ where $\Delta$ is a simplex in $bZ'$, and $v$ is a vertex of $Z'$. Let $\Delta=[b_{F_0},b_{F_1},\ldots,b_{F_k}]$, where each $F_i$ is a face of $Z'$. Note that $b_{\hat F_i}=b_{F_i}$. Then 
$$
f_2((b_{F_i},v))=b_{F_i} + i(p_{F_i}(v)).
$$
Note that $\Phi$ sends $(\Delta,v)$ to $(\Delta, u(v))$, which is a simplex of $b\s(\ca)$. Thus,
$$
f_1((b_{F_i},v))=b_{\hat F_i} + i(p_{\hat F_i}(u(v)))=b_{F_i} + i(u(p_{F_i}(v))),
$$
where the second inequality follows from Lemma~\ref{lem:proj}.

By our choice of points in each fan, it follows that there is a unit vector $\vec{\mu}\in A$ orthogonal to $B$  such that for each vertex $v\in Z'$,  $u(v)-v$ is the vector $\alpha_v\vec{\mu}$, where $\alpha_v$ is some positive number. So, for each vertex $x$ of $b\s(\ca^B)$,  $H_1(x)-H_0(x)$ is proportional to $i\vec{\mu}$. Since $H_1$ and $H_0$ are linear on each simplex of $b\s(\ca^B)$, the same  holds for any $x\in b\s(\ca^B)$. Let $H_2$ be the straight line homotopy between $f_1$ and $f_2$, and let $H$ be the concatenation of $H_1$ and $H_2$. Note that $H(\cdot,t)\in \cm(\ca\otimes \mathbb C)$ whenever $t<1$.

Then $H(\cdot,t)$ induces a homotopy $\widetilde H:b\s(\ca^B)\times [0,1]\to V_\odot$ as follows. As the interior of $V_\odot$ is identified with $\cm(\ca\otimes \mathbb C)$, put $\widetilde H(x,t)=H(x,t)$ when $t<1$ and define $\widetilde H(x,1)=(\Psi_B(x),\mu)\in \db_E V_\odot=E_\odot\times S(V/E)_\odot$. Again, the interior of $E_\odot$ is equal to $\cm(\ca^B\otimes\mathbb C)$ in which the image of $\Psi_B$ lies, and the point $\mu\in S(V/E)_\odot$ is determined by the vector $i\vec{\mu}$ (note that $i\vec{\mu}\perp E$). One checks that $\widetilde H$ is continuous. Moreover, by Lemma~\ref{lem:modified embedding}, $\widetilde H(\cdot,1):\s(\ca^B)\to E_\odot$ is a homotopy equivalence.

\begin{lemma}
	\label{lem:fit}
	Let $K$ be the standard subcomplex of $\s(\ca)$ associated with $C_B$. Put $K^\perp=\Phi(\{b\}\times \s(\ca^B))$. Then
	\begin{enumerate1}
		\item The restriction of $\Phi$ to $\{b\}\times \s(\ca^B)$ is an embedding.
		\item The intersection $K\cap K^\perp$ is single vertex of $b\s(\ca)$ represented by the point $(b,u(b))$. (Here we treat $b$ as a vertex of $Z'$; hence, $u(b)$ makes sense.)
	\end{enumerate1}
\end{lemma}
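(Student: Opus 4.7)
The plan is to use the description $\s(\ca) = (Z \times \vertex Z)/\sim$, in which each point has a unique representative $(y, v)$ with $v \in \vertex F_y$ (where $F_y$ is the minimal face of $Z$ containing $y$), together with the cell-compatibility conditions~(iii) and~(iv) of Definition~\ref{def:phif} and the projection identity of Lemma~\ref{lem:proj}. A useful preliminary observation is that for faces $F_1, F_2$ of $Z'$, $\hat F_1 \cap \hat F_2 = \widehat{F_1 \cap F_2}$, which follows from $(\hat F_1 \cap \hat F_2) \cap B = F_1 \cap F_2$ and uniqueness of $\hat{\cdot}$.

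For~(1), suppose $\Phi(b, p_1) = \Phi(b, p_2)$ with $p_i = (x_i, v'_i) \in (F_i, v'_i)$. The image in $\s(\ca)$ is represented by $(\Phi_{F_i}(b, x_i), u(v'_i))$. Setting $y = \Phi_{F_1}(b, x_1) = \Phi_{F_2}(b, x_2)$ and $G = F_1 \cap F_2$, condition~(iii) gives $\Phi_{F_i}^{-1}(\hat G) = C_B \times G$, so $x_1 = x_2 \in G$ (call this common value $x$) by injectivity of the continuous bijection $\Phi_G|_{\{b\}\times G} : \{b\} \times G \to G$. Let $H$ be the minimal face of $Z'$ containing $x$, and let $v''_i = \prj_H(v'_i)$, so that $p_i = (x, v''_i)$ lies in the minimal cell $(H, v''_i)$ of $\s(\ca^B)$. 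Since $\Phi_H$ is a homeomorphism $C_B \times H \to \hat H$ and $x$ is interior to $H$, $y = \Phi_H(b, x)$ is interior to $\hat H$, so $F_y = \hat H$. The equality of representatives in $\s(\ca)$ then forces $\prj_{\hat H}(u(v''_1)) = \prj_{\hat H}(u(v''_2))$, and Lemma~\ref{lem:proj} collapses this to $u(v''_1) = u(v''_2)$, whence $v''_1 = v''_2$ by injectivity of parallel translation, and $p_1 = p_2$.

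For~(2), any point of $K^\perp$ has first coordinate in $B$ (since $\Phi_F(\{b\} \times F) = F \subset B$), while any point of $K$ has first coordinate in $C_B$; duality $C_B \cap B = \{b\}$ forces the first coordinate to be $b$, giving $F_y = C_B$ and $v \in \vertex C_B$. Unpacking $(b, v) \in K^\perp$, any preimage lies in a cell $(F, v')$ with $b \in F$, and $\Phi_F(b, x) = b$ forces $x = b$; so the image is represented by $(b, u(v'))$. Applying Lemma~\ref{lem:proj} with the vertex $\{b\}$ of $Z'$ (whose $\hat{\cdot}$ is $C_B$) gives $\prj_{C_B}(u(v')) = u(\prj_{\{b\}}(v')) = u(b)$, so $v = u(b)$. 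Finally, $(b, u(b))$ is the barycenter of the cell $(C_B, u(b))$ of $\s(\ca)$, hence a vertex of $b\s(\ca)$.

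The most delicate step is identifying $F_y$ with $\hat H$ in~(1), which uses that $\Phi_H$ maps the interior of $C_B \times H$ homeomorphically to the interior of $\hat H$. Once this is in place, both parts reduce to injectivity statements about the individual maps $\Phi_F|_{\{b\} \times F}$ and the vertex assignment $v \mapsto u(v)$ (built into its definition by parallel translation).
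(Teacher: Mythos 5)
Your proof is correct and rests on the same essential ingredients as the paper's: the projection identity of Lemma~\ref{lem:proj} and the compatibility conditions (iii), (iv) of Definition~\ref{def:phif}, together with the fact that a point of $\s(\ca)$ has a unique first coordinate in $Z$ and a unique representative over the minimal face containing that first coordinate. The only difference is one of packaging: the paper argues at the level of simplices of the barycentric subdivisions, observing via Remark~\ref{rmk:intersection} and Lemma~\ref{lem:proj} that $\Phi$ sends the intersection $(\Delta,v_1)\cap(\Delta,v_2)=(\Delta',v)$ to $(\Delta,u(v_1))\cap(\Delta,u(v_2))=(\Delta',u(v))$, so that $\Phi$ preserves the cell-incidence structure and is therefore injective; you instead fix a potential coincidence $\Phi(b,p_1)=\Phi(b,p_2)$ and chase it through the minimal-cell representative, pinning down the face $F_y=\hat H$ via interiority of $(b,x)$ in $C_B\times H$ and then collapsing $\prj_{\hat H}(u(v_i'))$ with Lemma~\ref{lem:proj}. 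These are two readings of the same argument. For part~(2), the paper compresses everything into "the unique top-dimensional cell of $K$ meeting $K^\perp$ is $(C_B,u(b))$"; your unpacking — first coordinate forced into $C_B\cap B=\{b\}$, then $\Phi_F(b,x)=b$ forces $x=b$, then $\prj_{C_B}(u(v'))=u(b)$ by Lemma~\ref{lem:proj} — is exactly what that sentence is hiding, so again the route is the same.

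One very minor caution, not a gap: your preliminary claim $\hat F_1\cap\hat F_2=\widehat{F_1\cap F_2}$ is invoked via "uniqueness of $\hat{\cdot}$," but as stated in the paper, uniqueness only asserts that a given face $F$ of $Z'$ determines $\hat F$, not that the assignment $G\mapsto G\cap B$ from faces of $Z$ to faces of $Z'$ is injective (a vertex $\{v\}$ of $Z$ also satisfies $\{v\}\cap B=\{v\}$). What you actually use downstream is only the containment $\hat G\subset\hat F_i$ for $G=F_1\cap F_2\subset F_i$, which does follow directly from the definition of $\hat{\cdot}$ via the inclusion $\fan(\ca^B)\subset\fan(\ca)$, so the argument is sound; it would be cleaner to cite that containment rather than the equality.
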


\begin{proof}
	Let $\Delta$ be a simplex of $bZ'$. Let $v_1,v_2$ be vertices of $Z'$. It follows from Remark~\ref{rmk:intersection} and Lemma~\ref{lem:proj} that if $(\Delta,v_1)\cap (\Delta,v_2)=(\Delta',v)$, then $(\Delta,u(v_1))\cap (\Delta,u(v_2))=(\Delta',u(v))$. As $\Phi$ maps $(\Delta,v)$ to $(\Delta,u(v))$, statement (1) follows. Note that the unique top-dimensional cell of $K$ that has nonempty intersection with $K^\perp$ is of the form $(C_B,u(b))$. Statement (2) follows.
\end{proof}

\begin{definition}
	The subcomplex $K^\perp$ of $b\s(\ca)$ in the above lemma can be described directly as follows: it is the union of all simplices of form $(\Delta,u(v))$ with $(\Delta,v)$ ranging over simplices of $b\s(\ca^B)$. We call $K^\perp$ the \emph{orthogonal complement} of $K$ at $(b,u(b))$.
\end{definition}

Take a copy of $K$ and a copy of $K^\perp$ and glue them together at $b^*:=(b,u(b))$ to obtain a space $K^*$. So, $K^*$ is the wedge of $b\s(\ca)$ and $b\s(\ca^B)$, and there is a natural embedding $K^*\to \s(\ca)$. Let $g_0:K^*\to V_\odot$ be the map induced by the modified Salvetti embedding $\s(\ca)\hookrightarrow \cm(\ca\otimes\mathbb C)$ (cf.\ \eqref{eq:modified sal embedding}). Let $\widetilde{J},J_1$ and $J_2$ be the homotopies defined in Subsection~\ref{subsec:standard subcomplexes}. Since $J_1$ and $H_1$ arise by perturbing of the choice of points in fans and since this can be done simultaneously and consistently, we can assume that these homotopies fit together to give a homotopy $K^*\times [0,1]\to \cm(\ca\otimes \mathbb C)$. Moreover, the restrictions of $J_2$ and $H_2$ to $(b,u(b))\times [0,1]$ agree (they both give a segment from $(b,u(b))$ to $(b,b)$). By Lemma~\ref{lem:fit} (2), $\widetilde H$ and $\widetilde J$ together induce a homotopy  $K^*\times[0,1]\to V_\odot$ from $g_0$ to $g_1$, where $g_1$ is obtained by gluing together $\widetilde H(\cdot, 1)$ and $\widetilde J(\cdot, 1)$. We summarize the above discussion in the following proposition.

\begin{proposition}
	\label{prop:homotopy}
	Let $B\subset A$ be a subspace in  $\cq(\ca)$. Let $C_B$ be a face of $Z(\ca)$ dual to $B$ and let $K$ be the standard subcomplex of $\s(\ca)$ associated with $C_B$. For a top-dimensional cell $C$ of $K$, let $b^*$ be the barycenter of $C$ and let $K^\perp$ be the orthogonal complement to $K$ at $b^*$. As in the preceding paragraph, glue $K$ to $K^\perp$ at $b^*$ to obtain $K^*$. Then the map $g_0:K^*\to V_\odot$ induced by the modified Salvetti embedding is homotopic (in $V_\odot$) to a continuous map $g_1:K^*\to \db_E V_\odot=E_\odot\times S(V/E)_\odot$ satisfying the following conditions.
	\begin{enumerate1}
		\item There exists a point $y$ in the interior of $E_\odot$ such that $g_1(K^\perp)\subset E_\odot\times\{y\}$ and $(g_1)\vert_{K^{\perp}}:K^\perp\to E_\odot\times\{y\}$ is a homotopy equivalence.
		\item There exists a point $y'$ in the interior of $S(V/E)_\odot$ such that $g_1(K)\subset\{y'\}\times S(V/E)_\odot$ and $(g_1)\vert_{K}:K\to \{y'\}\times S(V/E)_\odot$ is a homotopy equivalence.
	\end{enumerate1}
\end{proposition}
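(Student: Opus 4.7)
The plan is to build the homotopy $g_0 \simeq g_1$ separately on the two wedge summands $K$ and $K^\perp$ of $K^*$, using the constructions carried out in Subsection~\ref{subsec:standard subcomplexes} and in the paragraphs immediately preceding the proposition, and then to check that the two homotopies agree on $\{b^*\}\times[0,1]$ so that they glue continuously. Since the target face $\db_E V_\odot = E_\odot\times S(V/E)_\odot$ is a product, and condition (1) asks that $K^\perp$ land in a single horizontal slice while condition (2) asks that $K$ land in a single vertical slice, the two constructions do not interfere away from $b^*$; so the real work is local at $b^*$.

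First I would handle $K$ by invoking Proposition~\ref{prop:standard subcomplex homotopy}(2). Concretely, I would concatenate the straight-line homotopy $J_1$, which perturbs the fan-point choices so that $b_F - b$ is orthogonal to $B$ for every $F\subset C_B$, with the straight-line homotopy $J_2$ that collapses the resulting map down to the constant map at $b+ib$. This concatenation stays inside $\cm(\ca\otimes\cc)$ for $t<1$ and extends continuously to $V_\odot$ at $t=1$ by recording the unit direction of $\Psi_1(x,v)-\Psi_2(x,v)$ as a point in $S(V/E)_\odot$. By Lemma~\ref{lem:modified embedding} applied inside a small normal disk $D$ in $B^\perp$ (in which the induced arrangement is naturally identified with $\ca_{A/B}$), the $t=1$ restriction is a homotopy equivalence onto $\{b+ib\}\times S(V/E)_\odot$.

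Next I would handle $K^\perp$, which via $\Phi$ is identified with $\s(\ca^B)$. I would perturb the fan-point choices in the modified Salvetti map $\Psi_A$ so that, for every vertex $v\in Z(\ca^B)$, the vector $u(v)-v$ equals $\alpha_v\vec{\mu}$ for a single unit vector $\vec{\mu}\in B^\perp$ and a positive scalar $\alpha_v$; this yields the straight-line homotopy $H_1$ from the original map $f_0$ to the perturbed map $f_1$. A direct computation using Lemma~\ref{lem:proj} shows that $f_1-f_2$ is always proportional to $i\vec{\mu}$, where $f_2:=\Psi_B$, so the straight-line homotopy $H_2$ from $f_1$ to $f_2$ stays inside $\cm(\ca\otimes\cc)$. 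The concatenation $H$ then extends to $\widetilde H$ by sending $t=1$ to $(\Psi_B(x),\mu)\in E_\odot\times S(V/E)_\odot$, where $\mu\in S(V/E)_\odot$ is the point determined by $i\vec{\mu}$; Lemma~\ref{lem:modified embedding} again gives that the $t=1$ restriction is a homotopy equivalence onto $E_\odot\times\{\mu\}$.

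The last step, which I expect to be the main obstacle, is to verify that $\widetilde J$ and $\widetilde H$ restrict to the same path on $\{b^*\}\times[0,1]$, so that they glue to a continuous homotopy $K^*\times[0,1]\to V_\odot$. The matching of $J_1$ and $H_1$ at $b^*$ can be arranged by making both perturbations with a single consistent choice of fan points in $\s(\ca)$, since both perturbations only affect $\Psi_A$. The matching of $J_2$ and $H_2$ at $b^*$ is automatic: on both sides, $b^* = (b,u(b))$ is sent by the straight-line segment to $(b,b)$. The one delicate point is ensuring that the unit direction recorded in $S(V/E)_\odot$ at $t=1$ agrees on the two sides of $b^*$; here the crucial input is that the same vector $\vec{\mu}$ governs both the direction of the perturbation $u(v)-v$ used to define $\widetilde H$ and the direction of $\Psi_1(b^*)-\Psi_2(b^*)$ used to define $\widetilde J$, which in turn is forced by the consistency built into parts (i)--(ii) of Definition~\ref{def:phif}. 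Granted this, $g_0$ is homotopic to the desired $g_1$ in $V_\odot$.
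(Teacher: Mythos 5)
Your proposal reproduces the paper's argument essentially verbatim: the paper also builds $\widetilde J$ on $K$ (from Proposition~\ref{prop:standard subcomplex homotopy}) and $\widetilde H$ on $K^\perp$ (from the $\vec\mu$-perturbation of $\Psi_A$ and Lemma~\ref{lem:proj}), observes that the fan-point perturbations defining $J_1$ and $H_1$ can be made simultaneously since they affect disjoint (or compatibly constrained) choices, notes that $J_2$ and $H_2$ both trace the segment from $(b,u(b))$ to $(b,b)$ at $b^*$, and invokes Lemma~\ref{lem:fit}(2) to glue. Your extra remark that the $t=1$ directions agree at $b^*$ because $u(b)-b$ is proportional to $\vec\mu$ is a correct and slightly more explicit rendering of what the paper leaves implicit; the attribution to Definition~\ref{def:phif}(i)--(ii) is a little loose (the relevant constraint is the stated perturbation condition on the $u(v)$'s, not the structure of $\Phi$ per se), but this does not affect the argument.
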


\subsection{The isomorphism between $\calg$ and $\ctop$}
\begin{proposition}
	\label{prop:injective and centralizer}
	Suppose $\ca$ is a finite, central, simplicial real arrangement in a real vector space $A$ and let $V=A\otimes\mathbb C$. Let $B$ be a subspace i $\cq(\ca)$ and let $E=B\otimes \mathbb C$. 
	\begin{enumerate1}
		\item The inclusion map $i:\db_E V_\odot=E_\odot\times S(V/E)_\odot\to V_\odot$ induces an injective map of fundamental groups. Hence, any component of the inverse image of $\db_E V$ in the universal cover $\widetilde{V}_\odot$ is contractible.
		\item Let $U$ be the center of $\pi_1(S(V/E)_\odot)$. Then $i_*(\pi_1(\db_E V_\odot))$ is the centralizer of $i_*(U)$ in $\pi_1(V_\odot)$, and this also is equal to the normalizer of $i_*(U)$ in $\pi_1(V_\odot)$.
	\end{enumerate1}
\end{proposition}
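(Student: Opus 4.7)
The plan is to deduce both parts of the proposition by transporting everything to the Salvetti complex and then invoking Corollary~\ref{cor:injective and centralizer}. First, using Theorem~\ref{thm:pi1}, I identify $\pi_1(V_\odot)$ with $\pi_1(\cm(\ca\otimes\cc))$ and in turn with the isotropy group $G := \g_x(\ca)$ of the Deligne groupoid at a chosen vertex $x$ of $\Ga(\ca)$. Fix a face $C_B$ of $Z(\ca)$ dual to $B$, let $K$ be the standard subcomplex of $\s(\ca)$ associated to $C_B$, let $K^\perp$ be the orthogonal complement at a basepoint $b^* = (b, u(b))$ as in Lemma~\ref{lem:fit}, and form $K^* = K \cup_{b^*} K^\perp$. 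By Proposition~\ref{prop:homotopy}, the inclusion $K^* \hookrightarrow \s(\ca) \simeq V_\odot$ is homotopic in $V_\odot$ to a map with image in $\db_E V_\odot = E_\odot \times S(V/E)_\odot$, sending $K^\perp$ homotopy equivalently onto an $E_\odot$-factor and $K$ homotopy equivalently onto an $S(V/E)_\odot$-factor.

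Next I identify fundamental groups. Because $\db_E V_\odot$ is a product,
\[
\pi_1(\db_E V_\odot) \cong \pi_1(E_\odot) \times \pi_1(S(V/E)_\odot),
\]
and applying Theorem~\ref{thm:pi1} to $\ca^B$ and to $\ca_{A/B}$ (both simplicial by Remark~\ref{rmk:inherit}) gives an isomorphism of this group with $\g(\ca^B) \times \g(\ca_{A/B}) \cong \g(\ca_{A/B} \oplus \ca^B)$. A direct check using the definition of $\Phi$ in Lemma~\ref{lem:Phi} and the construction of $\theta$ preceding Corollary~\ref{cor:injective and centralizer} shows that the homomorphism $i_* : \pi_1(\db_E V_\odot) \to G$ coincides (up to conjugation) with $\theta_*$. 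Part~(1) of Corollary~\ref{cor:injective and centralizer} then gives the injectivity of $i_*$, which is the first claim of (1). For the contractibility claim, note that by Deligne's theorem $\cm(\ca^B \otimes \cc)$ and $\cm(\ca_{A/B} \otimes \cc)$ are aspherical, so $E_\odot$, $S(V/E)_\odot$, and hence $\db_E V_\odot$ are aspherical. A component of $\pi^{-1}(\db_E V_\odot)$ inside $\widetilde{V}_\odot$ is then a covering space of $\db_E V_\odot$ whose fundamental group is the kernel of $i_*$, which we just showed is trivial; so such a component is the universal cover of an aspherical space, hence contractible.

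For part~(2) I must identify the central $\zz$-subgroup $U$ of $\pi_1(S(V/E)_\odot)$ on the Deligne side. Under the identification $\pi_1(S(V/E)_\odot) \cong \g(\ca_{A/B})$, the Hopf fiber (which generates $U$ and, by Lemma~\ref{l:center}, is central) corresponds to the element $(\Delta_{\bar x}(\ca_{A/B}))^2$. Tracing this through the homotopy of Proposition~\ref{prop:homotopy} and the map $\theta$, the image $i_*(U)$ in $G$ is generated by $(\Delta_x(F_x))^2$, where $F_x$ is the face parallel to $C_B$ containing $x$. Then Corollary~\ref{cor:injective and centralizer}(3) gives
\[
N_G(i_*(U)) \;=\; C_G(i_*(U)) \;=\; \theta_*\bigl(\g_{x'}(\ca_{A/B} \oplus \ca^B)\bigr) \;=\; i_*(\pi_1(\db_E V_\odot)),
\]
which is exactly (2).

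The main obstacle is the identification of the Hopf-fiber generator of $U$ with $(\Delta_{\bar x}(\ca_{A/B}))^2$. This is the analogue, for the Deligne groupoid of a central simplicial arrangement, of the classical statement that the ``full twist'' is central in a spherical Artin group. To verify it directly I would realize the Hopf loop $t \mapsto e^{2\pi i t}\cdot p$ in $\cm(\ca_{A/B}\otimes\cc)$ as an explicit edge path in $\Ga(\ca_{A/B})$ via the modified Salvetti embedding of Lemma~\ref{lem:modified embedding}, observe that this path crosses each hyperplane of $\ca_{A/B}$ exactly twice, and then use Lemma~\ref{lem:longest}(1) together with Lemma~\ref{lem:intersection number} to identify its class with $\Delta^2$; everything else in the proof is a bookkeeping reduction to Corollary~\ref{cor:injective and centralizer}.
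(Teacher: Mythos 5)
Your proposal is essentially the same as the paper's own proof: both identify the face inclusion with the maps $\Phi$, $\phi$, $\theta$, both transport everything through $K$, $K^\perp$, $K^*$ and Proposition~\ref{prop:homotopy}, and both then appeal to Corollary~\ref{cor:injective and centralizer} for injectivity and the centralizer/normalizer equality. The contractibility step in~(1) via asphericity of $E_\odot$ and $S(V/E)_\odot$ is also the intended argument.

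Two remarks. First, you say the Hopf fiber \emph{generates} $U$ (the center of $\pi_1(S(V/E)_\odot)$); by Lemma~\ref{l:center} that is only true when $\ca_{A/B}$ is irreducible, and the proposition is stated for arbitrary $B\in\cq(\ca)$. In the reducible case $U\cong\zz^k$ and the Hopf element is only the diagonal. The proposition's conclusion still holds, but you get it by sandwiching: Corollary~\ref{cor:injective and centralizer}(3) gives $C_G(\langle\Delta^2\rangle)=\theta_*(\g_{x'}(\ca_{A/B}\times\ca^B))$, this group centralizes all of $U$ (since $U$ is central in $\pi_1(\db_E V_\odot)$), and $\langle\Delta^2\rangle\subset U$ gives the reverse inclusion $C_G(i_*(U))\subset C_G(\langle\Delta^2\rangle)$; the normalizer statement needs a similar check. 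Second, your flagged ``main obstacle'' --- that the Hopf loop in $\cm(\ca_{A/B}\otimes\cc)$ corresponds to $(\Delta_{\bar x}(\ca_{A/B}))^2$ under the Salvetti/Deligne identification --- is a real ingredient that the paper leaves implicit; your sketch via the explicit edge path, intersection numbers, and Lemma~\ref{lem:longest}(1) is the right way to verify it.
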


\begin{proof}
	Let $\Phi$ be the map from Lemma~\ref{lem:Phi} and let $\phi:\Ga(\ca^B)\to\Ga(\ca)$ be the map from Definition~\ref{def:Phi}. Let $\Ga(\ca^B)$ and $\Ga(\ca)$ be the graphs defined in Section~\ref{sec:gar}. We identify $\Ga(\ca)$ as the 1-skeleton of $\s(\ca)$ and $\Ga(\ca^B)$ as a subset of $\Ts(\ca^B)=C_B\times \s(\ca^B)$ of the form $\{u(b)\}\times (\s(\ca^B))^{1}$ (where the edges of $\s(\ca)$ and $\s(\ca^B)$ are oriented as in Remark~\ref{rmk:orientation}). It follows from Definition~\ref{def:phif} (2) that we can assume $\phi$ is the restriction of $\Phi$ to $\Ga(\ca^B)$. Theorem~\ref{thm:pi1} implies that the morphism induced by $\Phi\vert_{\{u(b)\}\times \s(\ca^B)}:\{u(b)\}\times \s(\ca^B)\to \s(\ca)$ is described, on the level of fundamental groups,  by a morphism of groupoids $\phi:\g(\ca^B)\to\g(\ca)$. Take a vertex $v$ of $\s(\ca^B)$ and let $x=\Phi((u(b),v))$. Let $K\cong \s(\ca_{A/B})$ be as before. Let $h_1:\pi_1(K,x)\to \pi_1(\s(\ca),x)$ be the map induced by the inclusion $K\to \s(\ca)$. Let $h_2:\pi_1(\s(\ca^B),v)\to \pi_1(\s(\ca),x)$ be the map induced by $\Phi_{|\{u(b)\}\times \s(\ca^B)}$. By Corollary~\ref{cor:injective and centralizer},
	\begin{enumerate1}
		\item $\image h_1$ and $\image h_2$ commute;
		\item $h=h_1\times h_2:\pi_1(K,x)\times \pi_1(\s(\ca^B),v)\to \pi_1(\s(\ca),x)$ is injective;
		\item if $U$ denotes the center of $\pi_1(\s(\ca^B),v)$, then both the centralizer and normalizer of $h_1(U)$ in $\pi_1(\s(\ca),x)$ are generated by $\image h_1$ and $\image h_2$.
	\end{enumerate1}
	The existence of the map $\Phi$ implies that these properties still hold if we choose as base point $\Phi((b,v))$ instead of $x$ in the definition of $h_1$, and let $h_2$ be induced by $\Phi_{|\{b\}\times \s(\ca^B)}$.  Proposition~\ref{prop:homotopy} can be used to complete the proof.
\end{proof}

Let $\calg(\ca)$ be as in Definition~\ref{d:algebraic curve complex}. By Proposition~\ref{prop:standard subcomplex homotopy}~(1),  when $\ca$ is irreducible, $\cgro(\ca)$ is isomorphic $\calg(\ca)$.

\begin{Theorem}
	\label{thm:main body}
	Suppose $\ca$ is the complexification of a finite, central real simplicial arrangement of hyperplanes in an $n$-dimensional vector space and let $l$ be the number of irreducible components of $\ca$. As in Definition~\ref{d:compactcore} let $X$ be the compact core of $V_\odot$ and let $Y$ be the universal cover of $X$.  Then Properties A, C and D in Section~\ref{ss:curvetop} hold and the following statements are true. 
	\begin{enumeratei}
		\item
		The algebraic and topologicial versions of the curve complex $\calg$ and $\ctop$ are identical (and we denote this simplicial complex by $\cac$).
		\item 
		The   faces in  $\db X$ are indexed by the set of simplices in a complex $\ci_0$ which is defined in Definition~\ref{d:I0} in Section~\ref{ss:hyperplanes}.  The faces of $Y$ are indexed by $\cac$ and the group $G=\pi_1(X)$ acts on $\cac$ with quotient space $\ci_0$.
		\item
		The simplicial complex $\cac$ is homotopy equivalent to a wedge of $(n-l-1)$-spheres (where $\dim \cac = n-l-1$).
		\item Each face of $X$ is aspherical and its fundamental group injects into $G$.
		\item
		The fundamental group of each codimensional-one face of $X$ is the normalizer of certain parabolic subgroup.
		\item The stabilizer of each simplex of $\cac$
		is the fundamental group of $\cm(\ca')$ where $\ca'$ is also the complexification of some real simplicial arrangement.
	\end{enumeratei}
\end{Theorem}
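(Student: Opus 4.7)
The strategy is to reduce everything to Propositions~\ref{prop:iso} and~\ref{prop:induction} by establishing Properties A, C, D of Section~\ref{ss:curvetop} for the family $\mathfrak C$ of complexifications of real central simplicial arrangements. Closure of $\mathfrak C$ under direct sums is immediate, and closure under $\ca\mapsto\ca^E$ and $\ca\mapsto\ca_{V/E}$ is Remark~\ref{rmk:inherit}, so $\mathfrak C$ is admissible in the sense of Definition~\ref{def:admissible}. Property A for $\ca\in\mathfrak C$ is Proposition~\ref{prop:injective and centralizer}(1). Property C follows from Proposition~\ref{prop:injective and centralizer}(3), once the topological Hopf-fiber subgroup $Z_E\subset \pi_1(S(V/E)_\odot)$ is matched with the Garside subgroup $\langle(\Delta_x(F_x))^2\rangle$; this matching is supplied by Proposition~\ref{prop:standard subcomplex homotopy}(2) applied to the standard subcomplex $K\cong\s(\ca_{A/B})$, since the center of $\pi_1$ of an irreducible simplicial arrangement complement is the cyclic subgroup generated by the Hopf fiber (Lemma~\ref{l:center}, using Deligne's asphericity). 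Property D is Lemma~\ref{lem:conjugate}: combined with Lemma~\ref{lem:commute}, it asserts that any mutually commuting collection of central $\zz$-subgroups of irreducible parabolics can be simultaneously conjugated into the collection of Hopf subgroups arising from a single simplex of $\ci_0$.

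With A, C, D verified for every $\ca\in\mathfrak C$, Proposition~\ref{prop:induction} supplies Property B and assertion (vi), and Proposition~\ref{prop:iso}(1)--(2) yields (i) and (ii). Assertion (v) is the content of Property C applied to a codimension-one face. For (iv), Theorem~\ref{thm:higher codimension pieces} writes each face of $X$ as a product of factors of the form $S(\hat E/E)_\odot$ and $(E_\alpha)_\odot$; each factor is homotopy equivalent to a complement $\cm(\ca')$ for $\ca'$ the complexification of a subnormal or restricted simplicial arrangement (Remark~\ref{rmk:inherit}), so is aspherical by Deligne. Injectivity of the fundamental group of each face into $G$ is Property A$'$, which follows from A.

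For (iii), Proposition~\ref{prop:iso}(3) combined with (iv) shows that $\cac$ has the integral homology of a wedge of $(n-l-1)$-spheres; Proposition~\ref{prop:iso}(4) closes the argument once $\cac$ is either of dimension $\le 1$ or simply connected. When $\dim\cac\ge 2$ and two or more irreducible factors $\ca_i$ contribute a non-empty $\cac(\ca_i)$, Definition~\ref{d:algebraic curve complex} identifies $\cac$ as a join of non-empty path-connected spaces (each factor being path connected by applying the homology computation to it), so van Kampen gives $\pi_1(\cac)=1$. In the remaining case, $\cac$ is the curve complex of a single irreducible factor $\ca_i$ with $\dim V_i\ge 4$; then Proposition~\ref{prop:sc} gives $\cgro$ simply connected, and the Hopf-fiber identification used for Property C provides $\cgro=\calg=\cac$.

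The main obstacle is Property D, i.e.\ Lemma~\ref{lem:conjugate}: the simultaneous-conjugation argument depends on the $pn$-normal form, the decomposition of centralizers into elementary $B$-segments (Lemma~\ref{lem:decomposition}, Corollary~\ref{cor:centralizer}), and the lattice structure on the positive monoid of the Deligne groupoid (Theorem~\ref{thm:deligne}), all of which crucially depend on the simplicial hypothesis. A secondary subtlety, invoked repeatedly, is translating between the Garside descriptions in $\g(\ca)$ and the topological descriptions in $V_\odot$; this is the role of the Salvetti embedding and the embedding homotopies constructed in Subsections~\ref{ss:embedding}--\ref{subsec:standard subcomplexes}.
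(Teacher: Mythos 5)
Your proposal is correct and takes essentially the same approach as the paper: the paper's (very terse) proof verifies Properties A, C, D via Proposition~\ref{prop:injective and centralizer} and Lemma~\ref{lem:conjugate}, and then concludes by citing Propositions~\ref{prop:iso}, \ref{prop:sc}, and \ref{prop:induction}, exactly as you do. Two small notes: you write ``Proposition~\ref{prop:injective and centralizer}(3),'' but that proposition has only two parts — you evidently mean Corollary~\ref{cor:injective and centralizer}(3) together with the Salvetti-embedding translation, which is precisely the content of Proposition~\ref{prop:injective and centralizer}(2) that the paper cites directly; and in the case analysis for (iii), the phrase ``each factor being path connected'' should be ``at least one factor being path connected (the rest nonempty),'' since a 0-dimensional $\calg(\ca_i)$ (arising when $\dim V_i = 2$) is a disconnected set of vertices — but the connectivity estimate for joins still closes the argument.
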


\begin{proof}
	Once we verify Properties A, C and D, the theorem follows from Propositions~\ref{prop:iso}, \ref{prop:sc} and \ref{prop:induction}. Property A follows from Proposition~\ref{prop:injective and centralizer}~(1). 
	Property C is follows from Proposition~\ref{prop:injective and centralizer}~(2). Property D follows from Lemma~\ref{lem:conjugate}.
\end{proof}

\begin{bibdiv}
	\begin{biblist}

	\bib{ados}{article} {
		
		AUTHOR = {Avramidi, Grigori}, 
		author = {Davis, Michael W.}, 
		author = {Okun, Boris},
		author = {Schreve, Kevin}, 
		TITLE = {Action dimension of right-angled Artin groups}, 
		Journal = {Bull. of the London Math. Society},
		Volume = {48}, Year = {2016}, Number = {1}, Pages = {115--126},
		date = {2015},
		
		}
		
	\bib{bessis}{article} {
		
		AUTHOR = {Bessis, David},
		
		TITLE = {Finite complex reflection arrangements are $K(\pi,1)$.}, 
		Journal = {Annals of Mathematics},
		Volume = {181}, Year = {2015}, Number = {3}, Pages = {809--904},
		date = {2015},
		
		}		

\comment{		
\bib{bestvirt}{article}{
  author={Bestvina, Mladen},
  title={The virtual cohomological dimension of Coxeter groups},
  journal={Geometric Group Theory Vol 1, LMS Lecture Notes},
  volume={191},
  pages={19-23},
}

\bib{bf00}{article}{
   author={Bestvina, Mladen},
   author={Feighn, Mark},
   title={The topology at infinity of ${\rm Out}(F_n)$},
   journal={Invent. Math.},
   volume={140},
   date={2000},
   number={3},
   pages={651--692},
   issn={0020-9910},
}
		
\bib{bf02}{article} {
		AUTHOR = {Bestvina, Mladen}, author = {Feighn, Mark}, TITLE = {Proper actions of lattices on contractible manifolds}, JOURNAL = {Invent.
		Math.},
		
		VOLUME = {150}, YEAR = {2002}, NUMBER = {2}, PAGES = {237--256}, ISSN = {0020-9910},
		
		URL = {http://dx.doi.org.proxy.lib.ohio-state.edu/10.1007/s00222-002-0239-6}, }
		
		\bib{bkk}{article} {author = {Bestvina, Mladen}, author = {Kapovich, Michael}, author = {Kleiner, Bruce}, TITLE = {Van {K}ampen's embedding obstruction for discrete groups}, JOURNAL = {Invent.
		Math.},
		,
		VOLUME = {150}, YEAR = {2002}, NUMBER = {2}, PAGES = {219--235}, ISSN = {0020-9910},
		
		URL = {http://dx.doi.org.proxy.lib.ohio-state.edu/10.1007/s00222-002-0246-7}, } 
}		
	
\bib{bjorner1999oriented}{book}{
	title={Oriented matroids},
	author={Bj{\"o}rner, Anders},
	author={Las Vergnas, Michel}
	author={Sturmfels, Bernd}
	author={White, Neil}
	author={Ziegler, Gunter M},
	number={46},
	year={1999},
	publisher={Cambridge University Press}
}

\bib{bj}{book}{
   author={Borel, Armand},
   author={Ji, Lizhen},
   title={Compactifications of symmetric and locally symmetric spaces},
   series={Mathematics: Theory \& Applications},
   publisher={Birkh\"{a}user Boston, Inc., Boston, MA},
   date={2006},
   pages={xvi+479},
   isbn={978-0-8176-3247-2},
   isbn={0-8176-3247-6},
}
		
\bib{bs}{article}{
   author={Borel, A.},
   author={Serre, J.-P.},
   title={Corners and arithmetic groups},
   note={Avec un appendice: Arrondissement des vari\'{e}t\'{e}s \`a coins, par A.
   Douady et L. H\'{e}rault},
   journal={Comment. Math. Helv.},
   volume={48},
   date={1973},
   pages={436--491},
   issn={0010-2571},
}		

\bib{bourbaki}{book}{
  author={Bourbaki, Nicolas},
  title={Lie groups and Lie algebras. Chapters 4--6},
  series={Elements of Mathematics (Berlin)},
  note={Translated from the 1968 French original by Andrew Pressley},
  publisher={Springer-Verlag, Berlin},
  date={2002},
  pages={xii+300},
  isbn={3-540-42650-7},
}

\bib{bh}{book}{
  author={Bridson, Martin R.},
  author={Haefliger, Andr{\'e}},
  title={Metric spaces of non-positive curvature},
  series={Grundlehren der Mathematischen Wissenschaften [Fundamental
  Principles of Mathematical Sciences]},
  volume={319},
  publisher={Springer-Verlag, Berlin},
  date={1999},
  pages={xxii+643},
  isbn={3-540-64324-9},
  doi={10.1007/978-3-662-12494-9},
}

\comment{
\bib{buxSV}{article}{
   author={Bux, Kai-Uwe},
   author={Smillie, Peter},
   author={Vogtmann, Karen},
   title={On the bordification of outer space},
   journal={J. Lond. Math. Soc. (2)},
   volume={98},
   date={2018},
   number={1},
   pages={12--34},
   issn={0024-6107},
}

\bib{charney}{article} {
	AUTHOR = {Charney, Ruth},
	TITLE = {Geodesic automation and growth functions for {A}rtin groups of
		finite type},
	JOURNAL = {Math. Ann.},
	FJOURNAL = {Mathematische Annalen},
	VOLUME = {301},
	YEAR = {1995},
	NUMBER = {2},
	PAGES = {307--324},
	ISSN = {0025-5831},
	MRCLASS = {20F32 (20F10 20F36)},
	MRNUMBER = {1314589},
	MRREVIEWER = {Susan Hermiller},
	URL = {https://doi.org/10.1007/BF01446631},
}
} 

\bib{cd1}{article}{
  author={Charney, Ruth},
  author={Davis, Michael W.},
  title={The $K(\pi,1)$-problem for hyperplane complements associated to
  infinite reflection groups},
  journal={J. Amer. Math. Soc.},
  volume={8},
  date={1995},
  number={3},
  pages={597--627},
  issn={0894-0347},
}

\bib{cd2}{article}{
  author={Charney, Ruth},
  author={Davis, Michael W.},
  title={Finite $K(\pi, 1)$s for Artin groups},
  conference={
   title={Prospects in topology},
   address={Princeton, NJ},
   date={1994},
  },
  book={
   series={Ann. of Math. Stud.},
   volume={138},
   publisher={Princeton Univ. Press, Princeton, NJ},
  },
  date={1995},
  pages={110--124},
}

\bib{crapo}{article}{
  author={Crapo, Henry},
  title={A Higher Invariant for Matroids},
  journal={Journal of Combinatorial Theory},
  volume={2},
  date={1967},
  number={2},
  pages={406-417},
}

\comment{
\bib{cp}{article}{
  author={Crisp, John},
  author = {Paris, Luis},
 
  title={The solution to a conjecture of Tits on the subgroup generated by the squares of the generators of an Artin group},
  journal={Inventiones},
  volume={145},
  date={2001},
  number={1},
  pages={19-36},
}
} 
\bib{cumplido}{article}{
	title={On the minimal positive standardizer of a parabolic subgroup of an Artin--Tits group},
	author={Cumplido, Mar{\'\i}a},
	journal={Journal of Algebraic Combinatorics},
	volume={49},
	number={3},
	pages={337--359},
	year={2019},
	publisher={Springer}
}

\bib{cgw}{article}{
   author={Cumplido, Mar\'{\i}a},
   author={Gebhardt, Volker},
   author={Gonz\'{a}lez-Meneses, Juan},
   author={Wiest, Bert},
   title={On parabolic subgroups of Artin-Tits groups of spherical type},
   journal={Adv. Math.},
   volume={352},
   date={2019},
   pages={572--610},
   issn={0001-8708},
}

\bib{supersolvable}{article}{
	title={Supersolvable simplicial arrangements},
	author={Cuntz, Michael}, author={M{\"u}cksch, Paul},
	journal={Advances in Applied Mathematics},
	volume={107},
	pages={32--73},
	year={2019},
	publisher={Elsevier}
}

\bib{d78}{article}{
  author={Davis, Michael},
  title={Smooth $G$-manifolds as collections of fiber bundles},
  journal={Pacific J. Math.},
  volume={77},
  date={1978},
  number={2},
  pages={315--363},
  issn={0030-8730},
}

\bib{d98}{article}{
  author={Davis, Michael W.},
  title={Buildings are ${\rm CAT}(0)$},
  conference={
   title={Geometry and cohomology in group theory},
   address={Durham},
   date={1994},
  },
  book={
   series={London Math. Soc. Lecture Note Ser.},
   volume={252},
   publisher={Cambridge Univ. Press, Cambridge},
  },
  date={1998},
  pages={108--123},
}

\bib{dbook}{book}{
  author={Davis, Michael W.},
  title={The geometry and topology of Coxeter groups},
  series={London Mathematical Society Monographs Series},
  volume={32},
  publisher={Princeton University Press, Princeton, NJ},
  date={2008},
  pages={xvi+584},
  isbn={978-0-691-13138-2},
  isbn={0-691-13138-4},
}
\bib{davis12}{article}{
  author={Davis, Michael W.},
  title={Right-angularity, flag complexes, asphericity},
  journal={Geom. Dedicata},
  volume={159},
  date={2012},
  pages={239--262},
  issn={0046-5755},
}

\bib{dh17}{article}{
   author={Davis, Michael W.},
   author={Huang, Jingyin},
   title={Determining the action dimension of an Artin group by using its
   complex of abelian subgroups},
   journal={Bull. Lond. Math. Soc.},
   volume={49},
   date={2017},
   number={4},
   pages={725--741},
   issn={0024-6093},
}

\bib{dls}{article}{
  author={Davis, Michael W.},
  author={Le, Giang},
  author={Schreve, Kevin}
  title= {Action dimensions of simple complexes of groups},
  Journal = {J. of Topology}, 
  date={2019},
  volume={12},
  pages={1266-1314},
}

\bib{djlo}{article}{
   author={Davis, Michael W.},
   author={Januszkiewicz, Tadeusz},
   author={Leary, Ian J.},
   author={Okun, Boris},
   title={Cohomology of hyperplane complements with group ring coefficients},
   journal={Int. Math. Res. Not. IMRN},
   date={2011},
   number={9},
   pages={2110--2116},
   issn={1073-7928},
}

\comment{
\bib{djl}{article}{
  author={Davis, Michael W.},
  author={Januszkiewicz, Tadeusz},
  author={Leary, Ian J.},
  title={The $L^2$-Cohomology of Hyperplane Complements},
  journal={Groups, Geometry and Dynamics},
  date={2007},
  number={1},
  pages={301-309},
}
} 

\bib{djs}{article}{
   author={Davis, Michael W.},
   author={Januszkiewicz, Tadeusz},
   author={Scott, Richard},
   title={Nonpositive curvature of blow-ups},
   journal={Selecta Math. (N.S.)},
   volume={4},
   date={1998},
   number={4},
   pages={491--547},
   issn={1022-1824},
}

\bib{dk}{article}{
  author={Davis, Michael W.},
  author={Kropholler, Peter H.},
  title={Criteria for asphericity of polyhedral products: corrigenda to
  ``right-angularity, flag complexes, asphericity''},
  journal={Geom. Dedicata},
  volume={179},
  date={2015},
  pages={39--44},
  issn={0046-5755},
}

\comment{
 \bib{do01}{article}{
	author = {Davis, Michael W.},
	author = {Okun, Boris},
	title = {Vanishing theorems and conjectures for the {$\ell^2$}-homology of right-angled {C}oxeter groups},
	journal = {Geom. Topol.},

	volume = {5}, YEAR = {2001},
	pages = {7--74},
	issn = {1465-3060},

	url = {http://dx.doi.org.proxy.lib.ohio-state.edu/10.2140/gt.2001.5.7}, }
} 

\bib{do12}{article}{
  author={Davis, Michael W.},
  author={Okun, Boris},
  title={Cohomology computations for Artin groups, Bestvina-Brady groups,
  and graph products},
  journal={Groups Geom. Dyn.},
  volume={6},
  date={2012},
  number={3},
  pages={485--531},
  issn={1661-7207},
}

\bib{dp}{article}{ 
author = {De Concini, Cee}, 
author = {Procesi, Cee}, 
title = {Wonderful Models of Subspace Arrangements}, 
JOURNAL = {Selecta Mathematica},
VOLUME = {1}, 
YEAR = {1995}, 
NUMBER = {3}, 
PAGES = {459--494}, 
}

\bib{deligne}{article}{
  author={Deligne, Pierre},
  title={Les immeubles des groupes de tresses g\'en\'eralis\'es},
  language={French},
  journal={Invent. Math.},
  volume={17},
  date={1972},
  pages={273--302},
  issn={0020-9910},
}

  \bib{fr}{article}{ 
author = {Falk, Michael}, 
author = {Randall, Robert}, 
title = {The lower central series of a fiber-type arrangement}, 
JOURNAL = {Invent. Math.},
VOLUME = {82}, 
YEAR = {1985}, 
PAGES = {77--88}, 

  }
  
\bib{fm}{article}{ 
author = {Feichtner, Eva Maria}, 
author = {M{\"u}ller, Irene}, 
title = {On the Topology of Nested Set Complexes}, 
JOURNAL = {Proc. Amer. Math. Soc.},
VOLUME = {131}, 
YEAR = {2003}, 
PAGES = {1695--1704}, 

  }

  \bib{folkman}{article}{ 
author = {Folkman, John}, 
title = {The homology groups of a lattice,}, 
JOURNAL = {J. Math. Mech.},
VOLUME = {15}, 
YEAR = {1966}, 
PAGES = {631-636}, 

  }
 \bib{gaiffi99}{article}{
 	title={Compactification of configuration spaces},
 	author={Gaiffi, Giovanni},
 	year={1999},
 	publisher={Edizioni della Normale}
 }
  \bib{gaiffi03}{article}{
   author={Gaiffi, Giovanni},
   title={Models for real subspace arrangements and stratified manifolds},
   journal={Int. Math. Res. Not.},
   date={2003},
   number={12},
   pages={627--656},
   issn={1073-7928},
}

\bib{gaiffi04}{article}{
   author={Gaiffi, Giovanni},
   title={Real structures of models of arrangements},
   journal={Int. Math. Res. Not.},
   date={2004},
   number={64},
   pages={3439--3467},
   issn={1073-7928},
}  

 \comment{ 
\bib{haefliger}{article}{
  author={Haefliger, Andr\'e},
  title={Extension of complexes of groups},
  language={English, with French summary},
  journal={Ann. Inst. Fourier (Grenoble)},
  volume={42},
  date={1992},
  number={1-2},
  pages={275--311},
  issn={0373-0956},
  review={\MR{1162563}},
}
}

\bib{harvey}{article}{
   author={Harvey, W. J.},
   title={Boundary structure of the modular group},
   conference={
      title={Riemann surfaces and related topics: Proceedings of the 1978
      Stony Brook Conference},
      address={State Univ. New York, Stony Brook, N.Y.},
      date={1978},
   },
   book={
      series={Ann. of Math. Stud.},
      volume={97},
      publisher={Princeton Univ. Press, Princeton, N.J.},
   },
   date={1981},
   pages={245--251},
}
		
\bib{hatcher}{book}{
  author={Hatcher, Allen},
  title={Algebraic topology},
  publisher={Cambridge University Press, Cambridge},
  date={2002},
  pages={xii+544},
  isbn={0-521-79160-X},
  isbn={0-521-79540-0},
}

\bib{hat}{article}{ 
author = {Hattori, Alan}, 
title = {Topology of $\cc^n$ minus a finite number of affine hyperplanes in general
position}, 
JOURNAL = {J. Fac. Sci. Univ. Tokyo},
VOLUME = {22}, 
YEAR = {1975}, 
PAGES = {205-219}, 
  }
 
 \bib{janich}{article}{
   author={J\"{a}nich, Klaus},
   title={On the classification of $O(n)$-manifolds},
   journal={Math. Ann.},
   volume={176},
   date={1968},
   pages={53--76},
   issn={0025-5831},
} 

\bib{kimkoberda}{article}{
   author={Kim, Sang-Hyun},
   author={Koberda, Thomas},
   title={The geometry of the curve graph of a right-angled Artin group},
   journal={Internat. J. Algebra Comput.},
   volume={24},
   date={2014},
   number={2},
   pages={121--169},
   issn={0218-1967},
}

\bib{giang}{thesis}{ 
title ={ The Action Dimension of Artin Groups}, 
author ={ Le, Giang}, 
year = {2016}, 
school ={ Department of Mathematics, Ohio State University}, 
type = {phd}, } 

\bib{Kapranov}{article}{
	title={Chow quotients of Grassmannians. I},
	author={Kapranov, Mikhail M},
	journal={Adv. Soviet Math},
	volume={16},
	number={2},
	pages={29--110},
	year={1993}
}

\bib{lek}{article}{
  author={van der Lek, Harm},
  title={Extended Artin groups},
  conference={
   title={Singularities, Part 2},
   address={Arcata, Calif.},
   date={1981},
  },
  book={
   series={Proc. Sympos. Pure Math.},
   volume={40},
   publisher={Amer. Math. Soc., Providence, RI},
  },
  date={1983},
  pages={117--121},
}

\comment{
\bib{mondello}{article}{
   author={Mondello, Gabriele},
   title={Riemann surfaces with boundary and natural triangulations of the
   Teichm\"{u}ller space},
   journal={J. Eur. Math. Soc. (JEMS)},
   volume={13},
   date={2011},
   number={3},
   pages={635--684},
   issn={1435-9855},
}
}

\bib{WitteMorris}{book}{
   author={Morris, Dave Witte},
   title={Introduction to arithmetic groups},
   publisher={Deductive Press, [place of publication not identified]},
   date={2015},
   pages={xii+475},
   isbn={978-0-9865716-0-2},
   isbn={978-0-9865716-1-9},
}

\comment{
\bib{msw03}{article}{
  author={Mosher, Lee},
  author={Sageev, Michah},
   author={Whyte, Kevin},
  title={Quasi-actions on trees I. Bounded valence},
  journal={Annals of Mathematics},
  volume={158},
  date={2003},

  pages={115--164},  
}
}

\bib{Paris00}{article} {
	author = {Paris, Luis},
	TITLE = {On the fundamental group of the complement of a complex
		hyperplane arrangement},
	BOOKTITLE = {Arrangements---{T}okyo 1998},
	SERIES = {Adv. Stud. Pure Math.},
	VOLUME = {27},
	PAGES = {257--272},
	PUBLISHER = {Kinokuniya, Tokyo},
	YEAR = {2000},
}

\bib{Parissupersolvable}{article}{
	title={Intersection subgroups of complex hyperplane arrangements},
	author={Paris, Luis},
	journal={Topology and its Applications},
	volume={105},
	number={3},
	pages={319--343},
	year={2000},
	publisher={Elsevier}
}

\bib{Parisparabolic}{article}{
	title={Parabolic subgroups of Artin groups},
	author={Paris, Luis},
	journal={Journal of Algebra},
	volume={196},
	number={2},
	pages={369--399},
	year={1997},
	publisher={Elsevier}
}

\bib{Parisdeligne}{article}{
	title={The Deligne complex of a real arrangement of hyperplanes},
	author={Paris, Luis},
	journal={Nagoya Mathematical Journal},
	volume={131},
	pages={39--65},
	year={1993},
	publisher={Cambridge University Press}
}

\comment{
\bib{ny}{book}{
  author={Nowak, Piotr W.},
  author={Yu, Guoliang},
  title={Large scale geometry},
  series={EMS Textbooks in Mathematics},
  publisher={European Mathematical Society (EMS), Z\"urich},
  date={2012},
  pages={xiv+189},
  isbn={978-3-03719-112-5},
  doi={10.4171/112},
}
}

\bib{ot}{book}{
  author={Orlik, Peter},
  author={Terao,Hiroaki},
  title={Arrangements of hyperplanes},
 
  publisher={Springer-Verlag, Berlin Heidelberg GmbH},
  date={1992},
  pages={ix+337},
 
}

\bib{s87}{article}{
  author={Salvetti, Mario},
  title={Topology of the complement of real hyperplanes in $\cc^n$ },
  journal={Invent. Math.},
  volume={88},
  date={1987},
  pages={603-618},
}

\bib{shepardtodd1954}{article}{
  author={Shepard, G. C.},
  author = {Todd, J. A.},
  title={Finite Unitary Reflection Groups},
  journal = {Canadian Journal of Mathematics},
  pages = {274-304},
  volume = {6},
  date = {1954},
  }

\bib{serre}{book}{
  author={Serre, Jean-Pierre},
  title={Trees},
  note={Translated from the French by John Stillwell},
  publisher={Springer-Verlag, Berlin-New York},
  date={1980},
  pages={ix+142},
  isbn={3-540-10103-9},
}

\bib{siegel}{book}{
   author={Siegel, Carl Ludwig},
   title={Zur Reduktionstheorie quadratischer Formen},
   language={German},
   series={Publications of the Mathematical Society of Japan, Vol. 5},
   publisher={The Mathematical Society of Japan, Tokyo},
   date={1959},
   pages={ix+69},
}

\bib{solomon}{article}{
	title={The Steinberg character of a finite group with BN-pair},
	author={Solomon, Louis},
	booktitle={Theory of Finite Groups (Symposium, Harvard Univ., Cambridge, Mass., 1968)},
	pages={213--221},
	year={1969}
}

\bib{squier}{article}{
   author={Squier, Craig C.},
   title={The homological algebra of Artin groups},
   journal={Math. Scand.},
   volume={75},
   date={1994},
   number={1},
   pages={5--43},
   issn={0025-5521},
}

\bib{ziegler}{book}{
  author={Ziegler, Gunter},
  title={Lectures on Polytopes},
  publisher={Springer-Verlag, Berlin-New York},
  date={1995},
  pages={ix+152},
}

	\end{biblist}
\end{bibdiv}



Michael W. Davis, Department of Mathematics, The Ohio State University, 231 W. 18th Ave., Columbus, Ohio 43210, \url{davis.12@osu.edu}

Jingyin Huang, Department of Mathematics, The Ohio State University, 231 W. 18th Ave., Columbus, Ohio 43210, 
\url{huang.929@osu.edu}

\obeylines
\end{document}